\documentclass[10pt]{amsart}
\usepackage{eepic}
\usepackage{epsfig}
\usepackage{graphicx}
\usepackage{eufrak}
\usepackage{mathrsfs}
\usepackage[english]{babel}
\usepackage{color}
\usepackage{mathabx}

\usepackage{palatino, mathpazo}
\usepackage{amscd}      
\usepackage{amssymb}
\usepackage{amsmath}
\usepackage{dsfont}
\usepackage{xypic}      
\LaTeXdiagrams          
\usepackage[all,v2]{xy}
\xyoption{2cell} \UseAllTwocells \xyoption{frame} \CompileMatrices
\usepackage{latexsym}
\usepackage{epsfig}
\usepackage{amsfonts}
\usepackage{enumerate}

\allowdisplaybreaks[3]

\newtheorem{prop}{Proposition}[section]
\newtheorem{lem}[prop]{Lemma}

\newtheorem{cor}[prop]{Corollary}
\newtheorem{thm}[prop]{Theorem}
\newtheorem{rmk}[prop]{Remark}
\newtheorem{example}{Example}

\newtheorem{conjecture}[prop]{Conjecture}


\newtheorem{defn}[prop]{Definition}

            {\nolinebreak $\Box$ \end{trivlist}}

\newcommand{\noprint}[1]{}

\renewcommand{\tilde}{\widetilde}

\newcommand{\Ext}{\mbox{Ext}}
\newcommand{\spf}{\mbox{Spf}}
\newcommand{\Hom}{\mbox{Hom}}

\newcommand{\an}{{\mbox{\tiny an}}}

\newcommand{\et}{{\mbox{\tiny \'{e}t}}}

\newcommand{\alg}{\mbox{\tiny alg}}

\newcommand{\virt}{\mbox{\tiny virt}}
\newcommand{\cone}{\mbox{cone}}

\newcommand{\XX}{{\mathfrak X}}

\renewcommand{\SS}{{\mathfrak S}}
\newcommand{\TT}{{\mathfrak T}}

\newcommand{\YY}{{\mathfrak Y}}
\newcommand{\UU}{{\mathfrak U}}

\newcommand{\ZZ}{{\mathfrak Z}}

\newcommand{\FF}{{\mathfrak F}}

\newcommand{\MM}{{\mathfrak M}}

\newcommand{\RR}{{\mathfrak R}}

\newcommand{\VV}{{\mathfrak V}}
\newcommand{\WW}{{\mathfrak W}}

\newcommand{\Tt}{{\mathfrak t}}

\newcommand{\CC}{{\mathfrak C}}
\newcommand{\zz}{{\mathbb Z}}

\newcommand{\kk}{{\mathbb K}}

\newcommand{\aaa}{{\mathbb A}}

\newcommand{\nn}{{\mathbb N}}

\renewcommand{\ll}{{\mathbb L}}
\newcommand{\qq}{{\mathbb Q}}
\newcommand{\pp}{{\mathbb P}}
\newcommand{\cc}{{\mathbb C}}
\newcommand{\rr}{{\mathbb R}}

\newcommand{\Gm}{{{\mathbb G}_{\mbox{\tiny\rm m}}}}

\newcommand{\eE}{{\mathcal E}}
\newcommand{\iI}{{\mathcal I}}

\newcommand{\lL}{{\mathcal L}}
\newcommand{\ii}{{\mathcal I}}

\newcommand{\sS}{{\mathcal S}}
\newcommand{\sT}{{\mathcal T}}
\newcommand{\pP}{{\mathcal P}}

\newcommand{\oO}{{\mathcal O}}

\newcommand{\sX}{{\mathcal X}}

\newcommand{\mM}{{\mathcal M}}
\newcommand{\hH}{{\mathcal H}}
\newcommand{\yY}{{\mathcal Y}}

\newcommand{\fF}{{\mathcal F}}
\newcommand{\aA}{{\mathcal A}}
\newcommand{\stft}{\mbox{stft}}
\newcommand{\Coh}{\mbox{Coh}}
\newcommand{\Fsch}{\mathscr{F}sch}
\newcommand{\sSch}{\mathscr{S}ch}
\newcommand{\FP}{\mathscr{P}}
\newcommand{\mMF}{\mathscr{MF}}
\newcommand{\PV}{\mathscr{PV}}

\newcommand{\bC}{{\mathbf C}}
\newcommand{\GBSRig}{\text{GBSRig}}
\newcommand{\BSRig}{\text{BSRig}}
\DeclareMathOperator{\MV}{MV}

\DeclareMathOperator{\Def}{Def}
\DeclareMathOperator{\RDef}{RDef}
\DeclareMathOperator{\Var}{Var}
\DeclareMathOperator{\loc}{loc}
\DeclareMathOperator{\val}{val}
\DeclareMathOperator{\id}{id}
\DeclareMathOperator{\Crit}{Crit}

\DeclareMathOperator{\Sch}{Sch}

\DeclareMathOperator{\DT}{DT}
\DeclareMathOperator{\Hilb}{Hilb}
\DeclareMathOperator{\pic}{Pic}
\DeclareMathOperator{\Aut}{Aut}
\DeclareMathOperator{\SP}{Sp}
\DeclareMathOperator{\An}{An}
\DeclareMathOperator{\Isom}{Isom}
\DeclareMathOperator{\Div}{Div}
\DeclareMathOperator{\Sim}{Sim}
\DeclareMathOperator{\LinLoc}{LinLoc}
\DeclareMathOperator{\ConvLoc}{ConvLoc}
\DeclareMathOperator{\SConvLoc}{SConvLoc}
\DeclareMathOperator{\Lin}{Lin}
\DeclareMathOperator{\Conv}{Conv}
\DeclareMathOperator{\SConv}{SConv}
\DeclareMathOperator{\Tor}{Tor}
\DeclareMathOperator{\rig}{rig}
\DeclareMathOperator{\Rig}{Rig}
\DeclareMathOperator{\ind}{ind}

\newcommand{\F}{\ensuremath{\mathscr{F}}}

\newcommand{\C}{\ensuremath{\mathscr{C}}}

\newcommand{\Gr}{\mathop{\rm Gr}\nolimits}

\newcommand{\tr}{\mathop{\rm tr}\nolimits}

\renewcommand{\top}{\mathop{\rm top}}

\newcommand{\ord}{\mathop{\rm ord}\nolimits}

\newcommand{\red}{\mathop{\rm red}\nolimits}

\newcommand{\spec}{\mathop{\rm Spec}\nolimits}

\numberwithin{equation}{subsection}

\hyphenpenalty=6000 \tolerance=10000



\title[Moduli space of stable sheaves via non-archimedean geometry]{The moduli space of stable coherent sheaves via non-archimedean geometry}
\author{Yunfeng Jiang}
\address{Department of Mathematics\\ University of Kansas\\ 405 Snow Hall 1460 Jayhawk Blvd\\Lawrence KS 66045 USA} 
\email{y.jiang@ku.edu}

\begin{document}
\sloppy \maketitle
\begin{abstract}
We provide a construction of the moduli space of stable coherent sheaves in the world of  non-archimedean geometry, where we use the notion of Berkovich non-archimedean analytic spaces.   The motivation for our construction is Tony Yue Yu's non-archimedean enumerative geometry in Gromov-Witten theory.  The construction of the moduli space of stable sheaves using Berkovich analytic spaces will give rise to the non-archimedean version of Donaldson-Thomas invariants. 

In this paper we give the moduli construction over a non-archimedean field $\kk$. We use the machinery of formal schemes, that is, we define and construct the formal moduli stack of (semi)-stable 
coherent sheaves over a discrete valuation ring $R$, and  taking generic fiber we get the  non-archimedean analytic moduli  of semistable coherent sheaves over the fractional non-archimedean field $\kk$.  For a moduli space of stable sheaves of 
an algebraic variety $X$ over an algebraically closed field $\kappa$, the analytification of such a moduli space gives an example of the non-archimedean moduli space.    

We generalize Joyce's $d$-critical scheme structure in \cite{Joyce} or Kiem-Li's virtual critical manifolds in \cite{KL} to the world of formal schemes, and Berkovich non-archimedean analytic spaces.  As an application, we provide a proof for the motivic localization formula for a $d$-critical non-archimedean $\kk$-analytic space using global motive of vanishing cycles  and motivic integration on oriented formal  $d$-critical schemes. This generalizes  Maulik's motivic localization formula for the motivic Donaldson-Thomas invariants.
\end{abstract}

\maketitle

\tableofcontents

\section{Introduction}

\subsection{Structure of the paper}
This paper contains two parts.  The first part is a construction of the moduli space (stack) of (semi)-stable coherent sheaves in the world of non-archimedean geometry.   The motivation for our research is Tony Yue Yu's study of Gromov compactness of the moduli space of stable maps in non-archimedean analytic geometry in \cite{Tony_Yu1}.  The central part of his theory, motivated by the work of Kontsevich-Soibelman, is to define the open version of Gromov-Witten invariants using non-archimedean analytic geometry.   Tony Yu has made several progresses along this direction, see \cite{Tony_Yu2}, \cite{Tony_Yu3}.  

The advantage of T. Yu's theory is that the invariants he defined satisfy the Kontsevich-Soibelman wall crossing formula as in \cite{KS}.  So this theory should give the right invariants, and these invariants can be used to construct mirrors for some log Calabi-Yau geometries, see \cite{Tony_Yu3}. This is parallel to the  Gross-Siebert program in the series of works \cite{GS1}, \cite{GPKS1}, \cite{GPKS2},  \cite{GS}, where their goal is also to construct mirrors using open or punctured version of Gromov-Witten invariants. 
All of these achievements provide deep evidences in mirror symmetry.

One of the important conjectures in enumerative geometry is the Gromov-Witten/Donaldson-Thomas (GW/DT) correspondence by MNOP \cite{MNOP1}, \cite{MNOP2}. The GW/DT-correspondence conjecture holds for any smooth projective threefold. We only restrict to Calabi-Yau threefolds.   Roughly speaking, for a projective Calabi-Yau threefold $Y$, GW/DT-correspondence states that the Gromov-Witten partition function of the curve counting invariants of $Y$ via stable maps is equivalent to the Donaldson-Thomas (in \cite{Thomas}) partition function of the curve counting invariants  via ideal sheaves after a change of variables.  
T. Yu's theory \cite{Tony_Yu1} is the Gromov-Witten like invariants on non-archimedean analytic spaces. 
So there should exist a theory of the sheaf counting in  non-archimedean analytic  geometry.  In this paper we provide the first step for the sheaf counting in  the non-archimedean sense, i.e., we construct the moduli space (stack) of the (semi)-stable coherent sheaves for non-archimedean analytic spaces.  We work in the category of formal schemes and Berkovich non-archimedean analytic spaces for this construction.  Of course it is ambitious at the moment to see if the counting sheaf theory provides better sights on the construction of mirrors than Gromov-Witten like invariants. 

We define the formal moduli functor of the semistable sheaves over a {\em stft} formal scheme, and prove that the functor is represented by an algebraic stack using Artin's criterions for the representability of algebraic stacks in \cite{Artin}. 

The Berkovich non-archimedean analytic space is natural to the study of degenerations of algebraic varieties.  Over filed of character zero,  every Berkovich analytic space  $X$ has a simple normal crossing (SNC) formal model $\XX$ over $R$,  which is a {\em stft} formal scheme over a discrete valuation ring $R$ such that its special fiber $\XX_s$ is a $\kappa$-scheme and  has only simple normal crossing divisors and the generic fiber $\XX_\eta\cong X$. The  non-archimedean analytic space  $X$ is independent to the formal model we choose, i.e., if we have a different formal model $\XX^\prime$, then the generic fiber $\XX^\prime_{\eta}$ is also isomorphic to $X$.  For instance, elliptic curves can be degenerated into a circle of projective lines, called the skeleton of the degeneration.  One can blow-up the special fiber so that there are some branches on the circle, but the skeleton does not change.   Using this idea we construct the universal stack $\MM$ of SNC formal models for a fixed pair $(X,\XX)$; and the formal moduli stack of stable ideal sheaves of $\MM$. 

The Berkovich non-archimedean analytic spaces can be used to study the degeneration of stable coherent sheaves for the variety $Y$ with simple normal crossing divisors.   
In \cite{Li1}, \cite{LW}, the moduli space of relative stable coherent sheaves for a pair $(Y,D)$ is defined by the so called ``expanded degenerations" and the central idea is to solve the normality property of the stable coherent sheaf with the divisor $D$.  Geometrically this means that the underlying curve associated with the stable sheaf  intersects with the divisor $D$ transversally.   This normality property can also be studied by the techniques of logarithmic  structures as in \cite{AC}, \cite{GS2}.   Using the SNC formal model of the non-archimedean analytic space $X$,   the normality property of stable coherent sheaves on a $\kappa$-scheme $\XX_s$ with respect to a SNC divisor $D$ is automatically satisfied, since the  non-archimedean analytic space $X$ does not depends on the SNC formal model and one can do admissible formal blow-ups along the special fiber to keep the transversality property.  We prove that the formal  moduli stack of stable ideal sheaves of 
$\MM$ is the formal completion of the moduli  stack of ideal sheaves of the stack of expanded degenerations. 
It is very interesting to see if one can use non-archimedean analytic spaces to study the degeneration formula for both 
Gromov-Witten and Donaldson-Thomas invariants in \cite{Li2}, \cite{LW}.

The second part is motivic Donaldson-Thomas theory. We outline the basic materials and questions here, and more details can be found in \S  \ref{sec_motivic_DT_introduction}    and in the sections of Part II. 

Let $Y$ be a smooth Calabi-Yau threefold.  The moduli space $X$ of stable coherent sheaves over $Y$ with fixed topological invariants admits a symmetric obstruction theory \cite{Behrend} and the Donaldson-Thomas invariant of $Y$ is the weighted Euler characteristic of $X$ weighted by the Behrend function $\nu_{X}$ \cite[Theorem 4.18]{Behrend}, which coincides with the Donaldson-Thomas invariant of Thomas \cite{Thomas} by using virtual fundamental classes. 
This proves that Donaldson-Thomas invariants are motivic invariants. 

The general construction of motivic Donaldson-Thomas invariants are given by Kontsevich-Soibelman  \cite{KS} for any Calabi-Yau category, and in degree zero by Behrend-Bryan-Szendroi \cite{BBS}.  The key part in the motivic Donaldson-Thomas theory is to construct a global motive, or a global vanishing cycle sheaf  for $X$ such that taking Euler characteristic of the cohomology of such a sheaf we get the weighted Euler characteristic of $X$.  In a series of papers \cite{BBJ}, \cite{BBBJ}, \cite{BJM}, Joyce etc achieved this goal by using the symplectic derived schemes or stacks, since the moduli space $X$ can be extended naturally to a $(-1)$-shifted symplectic derived scheme $\mathbf{X}$ in \cite{PTVV}.  

The underlying scheme $X$ of a $(-1)$-shifted symplectic derived scheme $\mathbf{X}$ is a $d$-critical locus in the sense of \cite{Joyce} or a virtual critical manifold in the sense of Kiem-Li in \cite{KL} .  In this paper we generalize Joyce's definition of $d$-critical schemes to the setting of formal schemes and Berkovich  non-archimedean analytic spaces.  This at least gives a formal and analytic version of Joyce's $d$-critical scheme structures, and will have applications in motivic Donaldson-Thomas theory.   It is hoped that the formal and analytic $d$-critical schemes or non-archimedean analytic spaces will have more applications.    Similar to the case of $d$-critical schemes and  $(-1)$-shifted symplectic derived schemes, we hope that the $d$-critical formal schemes and $d$-critical non-archimedean $\kk$-analytic spaces are the underlying schemes (spaces) of the corresponding 
$(-1)$-shifted symplectic derived formal schemes and derived non-archimedean analytic spaces, see the corresponding results in this direction \cite[Chapter 8]{Lurie}, \cite{Porta_Yu1},\cite{Porta_Yu2}.  

We also generalize Joyce's orientation of $d$-critical schemes to $d$-critical formal schemes and $d$-critical non-archimedean analytic spaces. This provide a global motive $\mMF_{X,s}^{\phi}$ of vanishing cycles for the  $d$-critical non-archimedean analytic spaces and $d$-critical formal schemes.    This global motive $\mMF_{X,s}^{\phi}$ lies in the localized 
Grothendieck ring $\overline{\mM}_{\kappa}$ of varieties over $\kappa$. 
If the $d$-critical non-archimedean analytic space $(X,s)$ admits a good $\Gm$-action, which is circle-compact, we generalize Maulik's motivic localization formula for the global motive $\mMF_{X,s}^{\phi}$ of $(X,s)$ by using motivic integration for formal schemes in \cite{Nicaise}, \cite{Jiang4}, see Theorem \ref{intro_thm_Maulik}. 

\subsection{Main results of the construction of moduli spaces}
We list our main results in this section on the moduli construction. 

\subsubsection{Main results}

Let $Y$ be a projective scheme over $\kappa$ with a polarization $\oO_Y(1)$.  Let us fix a Hilbert polynomial $P$.  We first recall the construction theorem of moduli space of (semi)-stable sheaves in \cite[Theorem 4.3.4]{HL}.

\begin{thm}
There is a projective scheme $M(P)$ which universally represents the moduli functor $\mM(P)$.  Closed points in 
$M(P)$ are in bijection with equivalence classes of semistable sheaves with Hilbert polynomial $P$.  Moreover there is an open subset $M^s(P)$ that  universally represents the moduli functor $\mathcal{M}^s(P)$ of family of stable sheaves.
\end{thm}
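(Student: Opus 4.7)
The plan is to follow the classical GIT construction due to Gieseker, Maruyama, and Simpson, essentially reproducing the argument of \cite{HL} in four phases: boundedness, reduction to a Quot scheme, GIT quotient, and verification of universality.

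First I would establish boundedness of the family of semistable sheaves with fixed Hilbert polynomial $P$, using Grothendieck's lemma together with Simpson's or Le~Potier's bounds on $h^0$ of torsion-free pieces. Once boundedness is in hand, one can choose an integer $m \gg 0$ such that for every semistable sheaf $F$ on $Y$ with Hilbert polynomial $P$, the twisted sheaf $F(m)$ is globally generated, satisfies $H^i(Y, F(m)) = 0$ for $i > 0$, and has $h^0(F(m)) = P(m)$. Fixing an isomorphism $\kappa^{P(m)} \cong H^0(F(m))$ then produces a surjection
\begin{equation*}
q : V \otimes \oO_Y(-m) \twoheadrightarrow F, \qquad V := \kappa^{P(m)},
\end{equation*}
so that every semistable $F$ is represented by a point of the Quot scheme $Q := \Quot_Y(V \otimes \oO_Y(-m), P)$.

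Next I would identify, inside $Q$, the locally closed subscheme $R \subset Q$ of those quotients $q : V \otimes \oO_Y(-m) \twoheadrightarrow F$ for which $H^0(q(m))$ is an isomorphism, and within this the open subloci $R^{ss}$ and $R^s$ of semistable and stable quotients. The group $G := SL(V)$ acts naturally on $Q$ preserving $R^{ss}$ and $R^s$, and two points of $R^{ss}$ parametrize isomorphic sheaves precisely when they lie in the same $G$-orbit. The heart of the argument is to linearize the $G$-action using the Grothendieck embedding $Q \hookrightarrow \Gr(V \otimes H^0(\oO_Y(\ell - m)), P(\ell))$ for $\ell \gg m$, and to check via the Hilbert-Mumford numerical criterion that, for $\ell$ sufficiently large, GIT (semi)stability of a point of $R$ with respect to this linearization is equivalent to Gieseker (semi)stability of the corresponding sheaf $F$. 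This equivalence is the main obstacle: one direction uses maximal destabilizing subsheaves to produce destabilizing one-parameter subgroups, while the converse uses Grothendieck's lemma to bound sections of subsheaves, and the comparison of leading coefficients of Hilbert polynomials must be handled carefully to get GIT-stability from slope/Gieseker-stability.

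Once the numerical criterion is verified, Mumford's GIT gives a projective good quotient $M(P) := R^{ss} /\!/ G$ and an open subscheme $M^s(P) := R^s / G$ that is a geometric quotient. Finally I would verify that $M(P)$ corepresents $\mM(P)$ and that $M^s(P)$ universally corepresents $\mathcal{M}^s(P)$: the universal family on $R^s$ is $G$-equivariant, so it descends after taking a $PGL$-quotient (using that stable sheaves have only scalar automorphisms, so the action factors through a free $PGL(V)$-action on $R^s$), and any family of stable sheaves parametrized by a scheme $T$ can, after passing to an étale cover trivializing $R^1\pi_*\sE(m)$, be lifted to a $G$-equivariant map $T \to R^s$ which descends to the desired morphism $T \to M^s(P)$. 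The bijection on closed points follows from the fact that $G$-orbit closures in $R^{ss}$ meet in points corresponding to $S$-equivalent semistable sheaves, so that closed orbits correspond to polystable, hence equivalence classes of semistable, sheaves.
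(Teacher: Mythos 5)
Your proposal is correct and follows exactly the classical Gieseker--Maruyama--Simpson GIT construction (boundedness, Quot scheme, Hilbert--Mumford comparison of GIT and Gieseker stability, good/geometric quotients) that the paper relies on by citing \cite[Theorem 4.3.4]{HL}, so there is nothing methodologically different to compare. One small point in your favor: you correctly speak of (universally) \emph{corepresenting} the functor, which is the accurate formulation of the cited result, since $\mM(P)$ is in general only corepresented owing to automorphisms and $S$-equivalence.
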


We prove a similar theorem in non-archimedean analytic geometry. 
For the consideration of Donaldson-Thomas theory, we only restrict to three dimensional smooth non-archimedean $\kk$-analytic spaces, although the main result below is true for any smooth non-archimedean $\kk$-analytic space. 
Let  $\iI^{P}_{\kk}(X)$  denote the moduli space  of analytic ideal sheaves of curves on $X$ with Hilbert polynomial $P$. 
The main result is:

\begin{thm}\label{main_thm1}(Corollary \ref{cor_moduli_space_X_proper})
Let $X$ be a smooth non-archimedean $\kk$-analytic space of dimension three.  Suppose that $\hat{L}$ is a K\"ahler structure on $X$ with respect to an SNC formal model $\XX$ of $X$. Let  $\iI^{P}_{\kk}(X)$  denote the moduli stack  of analytic ideal sheaves on $X$ with Hilbert polynomial $P$ such the degree of $P$ with respect to $\hat{L}$ is bounded. Then 
$\iI^{P}_{\kk}(X)$ is a compact $\kk$-analytic space.  Let $\kappa$ has character zero.  If the $\kk$-analytic space $X$ is proper,  then $\iI^{P}_{\kk}(X)$ is a proper $\kk$-analytic stack. 
\end{thm}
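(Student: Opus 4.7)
The strategy is to deduce the statement from the formal moduli construction over the discrete valuation ring $R$ with fraction field $\kk$, and then pass to the Berkovich generic fiber. First, I would fix an SNC formal model $\XX$ of $X$ over $R$ and arrange for the K\"ahler structure $\hat{L}$ to restrict to a relative polarization on $\XX$, inducing in particular a polarization on the special fiber $\XX_s$. Define the formal moduli functor $\iI^P(\XX)$ parametrizing ideal sheaves on $\XX$, flat over $R$, whose restriction to $\XX_s$ has Hilbert polynomial $P$ with respect to the induced polarization. By the representability theorem established earlier in the paper, verifying Artin's criteria for stft formal schemes, this functor is represented by an algebraic stack locally of finite type over $R$. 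Applying the Berkovich generic fiber functor yields $\iI^P_\kk(X) \cong \iI^P(\XX)_\eta$, and independence of the SNC model follows from the fact that $X$ itself is independent of the formal model.

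The next step is boundedness. The hypothesis that the degree of $P$ with respect to $\hat{L}$ is bounded translates, via restriction to $\XX_s$, into classical Grothendieck/Simpson boundedness for ideal sheaves of subschemes of the proper $\kappa$-scheme $\XX_s$ with the prescribed polynomial. Consequently the relevant substack of $\iI^P(\XX)$ is quasi-compact over $R$. Since the Berkovich analytification of a quasi-compact stft algebraic formal stack over $R$ is a compact $\kk$-analytic stack, it follows that $\iI^P_\kk(X)$ is compact.

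For properness when $X$ is proper and $\kappa$ has characteristic zero, I would verify the valuative criterion. Given a map from the analytic spectrum of a non-archimedean valued field extension $\kk'$ into $\iI^P_\kk(X)$, one must extend uniquely to the spectrum of its valuation ring. Spreading out across an appropriately refined SNC formal model, this reduces to extending a flat family of stable ideal sheaves across a punctured formal disk. Here I would invoke a Langton-type semistable reduction for coherent sheaves: in characteristic zero one can choose the SNC model $\XX$ to be proper over $R$ whenever $X$ is proper, and then Langton's theorem applied to the special fiber produces the required unique extension, with separatedness inherited from the classical moduli of stable sheaves on proper $\kappa$-schemes.

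The main obstacle I anticipate is controlling both the boundedness estimate and the valuative criterion across different SNC formal models: although $X$ itself is independent of $\XX$, the formal moduli $\iI^P(\XX)$ may change under admissible blow-up of the special fiber, and one must confirm that quasi-compactness and valuative properness of the generic fiber are preserved under such modifications. A secondary subtlety is adapting Langton's extension argument to the category of formal schemes rather than ordinary schemes over a base curve, since we are working directly with $R$ and its non-archimedean fraction field $\kk$; this requires checking that the relevant torsion-pair manipulations on coherent sheaves make sense on $\XX_s$ with its SNC boundary and glue compatibly under the generic fiber functor.
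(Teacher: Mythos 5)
Your overall skeleton---fix an SNC formal model $\XX$, represent the formal moduli stack via Artin's criteria, and pass to the Berkovich generic fiber using the compatibility of the moduli construction with the special and generic fiber functors---is the same as the paper's (Theorem \ref{moduli_locally_noetherian_scheme}, Proposition \ref{prop_moduli_special_fiber_functor}, Theorem \ref{thm_moduli_generic_fiber_functor}), and your compactness step (boundedness giving a quasi-compact formal stack, whose generic fiber is compact) is in the same spirit. The divergence, and the gap, is in the properness step. The notion of properness used here for $\kk$-analytic spaces and stacks is ``compact and without boundary''; a valuative criterion for maps from analytic spectra of valued extension fields does not by itself deliver that, and you give no bridge from the lifting property back to compactness plus boundarylessness. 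The paper avoids this issue entirely: it uses that $X$ is proper if and only if the SNC formal model $\XX$ is proper (\cite{Temkin2}), proves that the special-fiber moduli stack $\iI^{P}_{\kappa}(\XX_s)$ is proper (Proposition \ref{prop_moduli_central_fiber_proper}), and then transports properness to $\iI^{P}_{\kk}(X)$ through the identifications $(\iI^{P}_{R}(\XX))_s\cong\iI^{P}_{\kappa}(\XX_s)$ and $(\iI^{P}_{R}(\XX))_\eta\cong\iI^{P}_{\kk}(X)$ of Proposition \ref{prop_moduli_special_fiber_functor} and Theorem \ref{thm_moduli_generic_fiber_functor}.

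Moreover, Langton-type semistable reduction is not the right tool for ideal sheaves. Langton produces a torsion-free semistable limit, which need not be the ideal sheaf of a subscheme, and on the singular SNC fiber $\XX_s$ a rank-one torsion-free limit need not embed into $\oO_{\XX_s}$; the transversality/admissibility of limits with respect to the double divisors is precisely the delicate point the paper treats separately via the universal stack of SNC formal models and its comparison with the Li--Wu expanded degenerations (\S \ref{sec_stack_SNC_formal_model}--\S \ref{sec_stack_expanded_degeneration}), not something Langton handles. The paper's actual mechanism for properness of $\iI^{P}_{\kappa}(\XX_s)$ is more elementary: an ideal sheaf of a curve in $\XX_s$ restricts to the smooth proper strata $D_I$ with an inclusion-exclusion splitting $P=\sum_{I}(-1)^{|I|-1}P_I$, giving a closed embedding of $\iI^{P}_{\kappa}(\XX_s)$ into a finite union of products of the proper moduli stacks $\iI^{P_I}_{\kappa}(D_I)$, whence properness. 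If you wish to retain a valuative-criterion flavor you should at least replace Langton by flat limits in the Hilbert scheme of the projective scheme $\XX_s$ (properness of Hilbert schemes), and you would still need to explain why lifting over valuation rings yields properness in the Berkovich sense; as written, both halves of your properness argument are missing essential justification.
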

\begin{rmk}
Here for  a smooth  non-archimedean $\kk$-analytic space $X$, ``proper" means ``compact" and ``no boundary".
\end{rmk}

Our strategy to prove  Theorem \ref{main_thm1} is through the formal model $\XX$ of $X$.  So it is routine to construct a formal version of the above result.  Let $\mM(P)$ denote the general moduli functor of semistable coherent sheaves with Hilbert polynomial $P$. 

Let $\sS$ be a locally noetherian base scheme, and $\sX/\sS$ a scheme locally of finite presentation over $\sS$. 

\begin{thm}\label{main_thm2}(Theorem \ref{moduli_locally_noetherian_scheme})(Construction of the moduli stack of semistable coherent sheaves over locally noetherian scheme)
The moduli space $\mM_{\sX/\sS}(P)$ of semistable  sheaves $\F$ over $\sX$ with fixed Hilbert polynomial $P$ exists and is an algebraic stack. The coarse moduli space $M_{\sX/\sS}(P)$  is a scheme. 
\end{thm}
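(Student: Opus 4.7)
The plan is to follow the classical Gieseker--Maruyama--Simpson construction of moduli of semistable coherent sheaves, extended to a locally noetherian base in the spirit of Langer's work, and to phrase the existence result in the language of Artin algebraic stacks via the criteria already invoked earlier in the paper.

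First I would define $\mM_{\sX/\sS}(P)$ as the category fibered in groupoids over $(\sSch/\sS)$ whose fiber over $T\to \sS$ consists of $T$-flat, finitely presented quasi-coherent sheaves $\F$ on $\sX\times_{\sS} T$ that are Gieseker semistable with Hilbert polynomial $P$ on every geometric fiber, with morphisms the $\oO_{\sX\times_{\sS}T}$-linear isomorphisms. Faithfully flat descent for quasi-coherent sheaves, together with the openness of semistability in flat families, yields the stack axioms.

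Second, I would verify Artin's criteria for algebraicity. Limit preservation follows from $\sX/\sS$ being locally of finite presentation and the standard compatibility of coherent sheaves with filtered colimits. The Rim--Schlessinger conditions on the deformation functor reduce to the observation that the tangent module $\Ext^{1}(\F,\F)$ and the obstruction module $\Ext^{2}(\F,\F)$ are finite dimensional over any residue field. Effectivity of formal deformations is an instance of Grothendieck's existence theorem: a compatible system on the infinitesimal neighborhoods of a complete noetherian local point algebraizes to a coherent sheaf. Openness of versality then follows from upper semicontinuity of $\Ext^{1}$ together with openness of the flatness locus for a finitely presented sheaf.

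Third, to obtain a smooth presentation I would invoke Langer's boundedness theorem to conclude that the family of semistable sheaves with Hilbert polynomial $P$ on the geometric fibers of $\sX/\sS$ is bounded over every noetherian piece of $\sS$. Consequently, for $m$ sufficiently large, each such $\F$ is $m$-regular and admits a surjection $\oO_{\sX}(-m)^{P(m)}\twoheadrightarrow \F$, so the open subscheme of the relative Quot scheme $\Quot_{\sX/\sS}(\oO_{\sX}(-m)^{P(m)}, P)$ parametrizing semistable quotients provides a smooth $GL_{P(m)}$-atlas for $\mM_{\sX/\sS}(P)$, confirming algebraicity. For the coarse moduli scheme $M_{\sX/\sS}(P)$ I would then perform a relative GIT quotient of this Quot scheme by $GL_{P(m)}$ with respect to a Simpson-type linearization; the matching of Gieseker and GIT semistability (as in Huybrechts--Lehn, Chapter~4) ensures that the GIT semistable locus recovers exactly the Gieseker semistable sheaves, and the resulting relative projective quotient globalizes to $M_{\sX/\sS}(P)$. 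The main technical obstacle is the boundedness step in the full relative generality stated here, particularly in mixed and positive characteristic; this is precisely the content of Langer's theorem, and with boundedness in hand the remaining verifications of Artin's axioms and the GIT construction are standard.
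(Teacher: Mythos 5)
Your proposal follows the same high-level strategy as the paper for algebraicity --- verification of Artin's representability criteria, with the Schlessinger/Rim pushout condition, limit preservation, Grothendieck's existence theorem for effectivity of formal deformations, and the standard $\Ext^{1}/\Ext^{2}$ deformation-obstruction theory --- and in this part the two arguments are essentially the same. The paper, however, stops there: the proof of Theorem \ref{moduli_locally_noetherian_scheme} only checks the four conditions of Artin's Theorem 5.3 and never returns to the second sentence of the statement you were asked to prove, namely that the coarse moduli space $M_{\sX/\sS}(P)$ is a scheme. Your proposal goes further and supplies precisely the missing ingredients: Langer's boundedness theorem to get a uniform $m$-regularity bound over each noetherian piece of $\sS$, the open locus of semistable quotients in the relative $\Quot_{\sX/\sS}(\oO_{\sX}(-m)^{P(m)},P)$ as a smooth $GL_{P(m)}$-atlas, and a relative GIT quotient with a Simpson-type linearization producing $M_{\sX/\sS}(P)$. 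These are exactly the steps one would import from Huybrechts--Lehn Chapter 4 (which the paper cites for the absolute case in Theorem 1.1) to prove the coarse-moduli claim, so your route is more complete; the boundedness step you flag is also a genuine technical point that the paper's Artin-criteria argument silently presupposes when it asserts local finite presentation. One small caveat worth noting: the paper's verification of limit preservation asserts that semistability is a closed condition, whereas your proposal correctly invokes openness of semistability in flat families, which is the standard statement and the one needed for both the descent axioms and the Quot-scheme atlas.
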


Our result for the formal moduli space is:
\begin{thm}\label{main_thm3}(Theorem \ref{thm_formal_model_up_to_etale_covering})(Construction of the moduli stack of formal semistable sheaves over $R$)
Let $\XX$ be a {\em stft} formal $R$-scheme. Let $T$ be a strictly $\kk$-affinoid space and let 
$$(\F\to T, f)$$
be a family of $\kk$-analytic semistable coherent sheaves on $\XX_\eta$ over $T$. Then up to passing to a quasi \'etale covering of $T$,  there exists a formal model $\TT$ of $T$ and a family of formal semistable sheaves
$$(\F\to \TT, \hat{f})$$
of $\XX$ over $\TT$ such that when applying the generic fiber functor, we get the family $(\F\to T, f)$ back. 
\end{thm}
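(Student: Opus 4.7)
The plan is to exploit Raynaud's correspondence between rigid/Berkovich spaces and admissible formal $R$-schemes together with the Bosch--Lütkebohmert--Raynaud flattening technique, adapted to the stft setting and to families of coherent sheaves. Start from the canonical formal model $\TT_{0}=\spf \mathcal{A}^{\circ}$ associated to the strictly $\kk$-affinoid algebra $\mathcal{A}$ with $T=\mathcal{M}(\mathcal{A})$, and form the relative stft formal $R$-scheme $\XX\hat{\times}_{R}\TT_{0}$, whose Berkovich generic fiber is $\XX_\eta \hat{\times}_{\kk}T$. The target is to produce a coherent sheaf $\hat{\F}$ on $\XX\hat{\times}_{R}\TT$, for some admissible blow-up $\TT\to \TT_{0}$ (possibly after a quasi-étale base change on $T$), which is $\TT$-flat, recovers $\F$ under the generic fiber functor, and whose fibers over the special fiber $\TT_{s}$ define semistable sheaves in the sense set up earlier for stft formal schemes.

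First I would extend $\F$ to a coherent formal sheaf. By the version of Raynaud's theorem for coherent sheaves (Bosch--Lütkebohmert for rigid spaces, and its stft-Berkovich translation), any coherent $\mathcal{O}_{\XX_{\eta}\hat{\times}T}$-module admits, after replacing $\TT_{0}$ by a suitable admissible formal blow-up, a coherent formal model $\hat{\F}$ whose rigidification is $\F$. Admissible formal blow-ups do not alter the generic fiber, so the identification $\TT_\eta\cong T$ is preserved. The passage to a quasi-étale cover of $T$ is only needed at this point if $\F$ is not globally finitely presented on the rigid side in a way compatible with $\TT_{0}$; Temkin-style quasi-étale descent of formal models then lets us glue local formal extensions into a global $\hat{\F}$.

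Next I would enforce flatness. The morphism $\hat{\F}\to \TT$ is generically flat by hypothesis, so the Bosch--Lütkebohmert--Raynaud flattening theorem applies: there exists a further admissible formal blow-up $\TT'\to \TT$ such that the strict transform of $\hat{\F}$ becomes $\TT'$-flat. Again the generic fiber is untouched, so after renaming $\TT'$ to $\TT$ we have a flat formal family $(\hat{\F}\to \TT,\hat{f})$ extending $(\F\to T,f)$. At this stage the restriction of $\hat{\F}$ to each scheme-theoretic fiber over points of $\TT$ is a coherent sheaf on $\XX$, and the restriction over the generic points of $T$ recovers the given semistable sheaves.

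Finally I would verify the semistability condition on the special fiber. Because semistability is an open condition in flat families of coherent sheaves with fixed Hilbert polynomial (this is used in the proof of Theorem \ref{main_thm2} via Artin's criteria), the locus in $\TT$ over which the fibers of $\hat{\F}$ are semistable is open and contains the generic fiber $T$. By a further admissible formal blow-up concentrated on $\TT_{s}$, or by passing to a quasi-étale cover and taking a suitable formal model, we can arrange that this open locus is all of $\TT$, so that $(\hat{\F}\to \TT,\hat{f})$ is a family of formal semistable sheaves in the sense of the formal moduli functor. The main obstacle I expect is precisely this last step: controlling semistability of the formal extension on the special fiber, because flatness plus rigid semistability does not automatically force the degenerations on $\XX_{s}$ to remain semistable with respect to the chosen polarization, and it is here that passage to a quasi-étale covering of $T$ (rather than just an admissible blow-up) appears essential, since it gives enough freedom to spread out the Harder--Narasimhan data of the formal fiber sheaves in a uniform way.
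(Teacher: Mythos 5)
Your first half runs parallel to the paper: the existence of a flat coherent formal model of $(\F\to T,f)$ over (an admissible blow-up of) $\spf(\mathcal{A}^{\circ})$ is exactly what the paper extracts from Bosch--L\"utkebohmert, so the extension and flattening steps are fine. The genuine gap is in the semistability step. The underlying topological space of the formal scheme $\TT$ is its special fiber $\TT_s$, so your key assertion that ``the locus in $\TT$ over which the fibers of $\hat{\F}$ are semistable is open and contains the generic fiber $T$'' has no content: $T$ is not a subspace of $\TT$, and openness of semistability (Langer) is a theorem about algebraic families over noetherian bases, which you cannot quote directly for a formal family. The paper resolves precisely this point by an algebraization step that your proposal omits: writing $\TT=\spf(A)$, it invokes formal GAGA (Grothendieck existence, Conrad) to realize the flat formal family as the completion of an algebraic family over $\TT^{\alg}=\spec(A)$; there the locus where $t$ is invertible is an honest open subscheme on which the family is semistable by hypothesis, Langer's openness applies, and one obtains an open locus, equivalently an \'etale cover $\TT'^{\alg}\to\TT^{\alg}$, over which the family is semistable; its $t$-adic completion is the desired formal model, and the quasi-\'etale covering of $T$ in the statement is the generic fiber of this cover.

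Your two proposed fixes for the last step do not repair this. An admissible formal blow-up $\TT'\to\TT$ is proper with $\TT'_s$ mapping onto $\TT_s$, so it cannot ``arrange that the semistable locus is all of $\TT$'': the points of the special fiber over which the fibers fail to be semistable do not disappear, and no argument is given that the strict transform improves them. Moreover, as recalled in the paper's discussion of the specialization map, the preimage under $sp$ of an \emph{open} subset of $\TT_s$ is \emph{closed} in $T$; hence restricting a model to the open semistable locus and recompleting only recovers a closed analytic subdomain of $T$, which is exactly why the statement allows a quasi-\'etale covering of $T$ and why the argument must be carried out on the algebraic model $\spec(A)$, where that covering can actually be produced. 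In short, you need to insert the formal GAGA algebraization before appealing to openness of semistability; the appeal to ``spreading out Harder--Narasimhan data'' as stated is not a proof.
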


Here is a result about the moduli space of stable coherent sheaves on a degeneration family.  Assume that the character of $\kappa$  is zero. 
Let $X$ be a quasicompact  non-archimedean $\kk$-analytic space. 
 A simple normal crossing (SNC) formal model of $X$ is a {\em stft} formal scheme $\XX$, such that its special fiber $\XX_s$ has only simple normal crossing divisors, and its generic fiber $\XX_\eta\cong X$.  Let us fix such a  pair $(X, \XX)$. 
 
 Let $\{D_i: i\in I_{\XX}\}$ be the irreducible components of $\XX_s$, and let $D_{I}:=\cap_{j\in I}D_{j}$ for any $I\subset I_\XX$.  The non-archimedean $\kk$-analytic space $X$ is independent to the SNC formal model $\XX$ we choose, i.e., if $\XX^\prime\to\XX$ is an admissible formal blow-up along some $D_I$, then $\XX^\prime\cong \XX_\eta\cong X$.  Using admissible formal blow-ups, we construct a universal stack $\MM$ of SNC formal models of $(X,\XX)$.  The stack  $\MM$ is a stack over $\spf(R)$ with generic fiber $X$. 
We also define the moduli stack $\iI_{R}^{P}(\MM)$ of stable ideal sheaves with Hilbert polynomial $P$ to the universal stack $\MM$, see \S \ref{sec_stack_SNC_formal_model}. 

Let us fix to a simple case.  Let $X$ be a $\kappa$-scheme, and $\pi: W\to\aaa^1_{\kappa}$ be  degeneration family of 
$X$ in \cite{LW} such that $W_0=D_1\cup_{D_{12}}D_2$.  Let $\XX\to\spf(R)$ be the formal completion of $W$ along the origin.  Then $\XX$ is a SNC formal model of $X^{\an}$. 
Let $\iI_{R}^{P}(\MM)$ be the moduli stack of admissible ideal sheaves of curves with Hilbert polynomial $P$. 
From \cite[Theorem 4.14]{LW}, let $\iI_{\kappa}^{P}(\MM^{\alg})$ be the moduli stack of stable ideal sheaves over the stack $\MM^{\alg}$ of expanded degenerations with Hilbert polynomial $P$.  Then we have 
\begin{thm}(Theorem \ref{thm_formal_completion_expanded_degeneration})
We have 
$\iI_{R}^{P}(\MM)=\widehat{\iI^{P}_{\kappa}(\MM^{\alg})}$, the formal completion along the origin $0\in\aaa^1_{\kappa}$. 
\end{thm}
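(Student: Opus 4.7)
\begin{pf} (Proof proposal.)
The plan is to reduce the statement to a comparison between the underlying ``base'' stacks and then transfer the sheaf-theoretic structure across that comparison. The heart of the argument is the observation that the two constructions, admissible formal blow-ups of the SNC formal model $\XX$ along the singular stratum $D_{12}$, and Li--Wu's expanded degenerations of $W\to\aaa^1_\kappa$ along $W_0$, produce the ``same'' family of enlargements of $W_0$ once we pass to a formal neighbourhood of $0\in\aaa^1_\kappa$. Concretely, inserting a chain of $\pp^1$-bundles along $D_{12}$ in the expanded degeneration picture of \cite{LW} is exactly the operation of replacing $\XX$ by the successive admissible formal blow-ups along the (inverse image of the) codimension-two stratum $D_{12}$. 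Therefore I would first prove a purely base-level statement:
\[
\MM \;\cong\; \widehat{\MM^{\alg}}_{0},
\]
where the right hand side denotes the formal completion of Li--Wu's stack $\MM^{\alg}$ along the origin of $\aaa^1_\kappa$.

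To establish this identification I would exhibit functors in both directions. Given a test $R$-object $\TT\to\MM$, one produces an SNC formal model $\XX_\TT$ over $\TT$; after taking an algebraization (working locally around $0$, this is routine) one obtains an object of $\MM^{\alg}$ over $\spec R$, whose formal completion along $0$ recovers $\XX_\TT$. Conversely, any expanded degeneration in $\MM^{\alg}$ over an Artinian base concentrated at $0$ is, by construction of \cite{LW}, a chain of $\pp^1$-insertions along $D_{12}$ and hence, after formal completion, an iterated admissible formal blow-up of $\XX$. The two constructions are inverse because both stacks are built from the same combinatorial data (the simplicial complex of possible chains of insertions/blow-ups) glued with the same equivalence relation (blowing down a component that does not affect the generic fiber $X$ in one picture; contracting a ruled component in the other). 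The non-archimedean rigidity statement in the introduction, that the analytic generic fiber is independent of the SNC model, matches on the other side the independence of the choice of expansion modulo the action of the automorphism group of the expanded base.

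Once $\MM\cong\widehat{\MM^{\alg}}_0$ is in hand, I would transport the moduli problem along this equivalence. The key point is that the notion of \emph{admissible} ideal sheaf on $\MM$ used in \S\ref{sec_stack_SNC_formal_model} is defined by the transversality of the underlying curve with respect to the SNC divisor in the special fiber of each formal model, and in the SNC formal-model setting this transversality is automatic after sufficient admissible blow-up, as already explained in the introduction. On the Li--Wu side the \emph{stable} ideal sheaves in $\iI^{P}_{\kappa}(\MM^{\alg})$ are by definition (cf.\ \cite[\S 4]{LW}) those whose associated curve meets the singular locus transversally after a suitable expansion and is stable with respect to the automorphism group. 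These two conditions are the same condition expressed in the two equivalent bases; under the identification of the previous paragraph an admissible ideal sheaf of $\MM$ corresponds exactly to a stable ideal sheaf on the expansion it parametrizes, with matching Hilbert polynomial $P$. Hence the two moduli functors agree over the formal neighbourhood of $0$.

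Finally, since formal completion commutes with forming representing stacks for the moduli problems we are considering, applying the generic-fiber/formal-completion adjunction and the representability result of Theorem \ref{main_thm2} to the locally noetherian scheme structure on $\iI^{P}_{\kappa}(\MM^{\alg})$ gives
\[
\widehat{\iI^{P}_{\kappa}(\MM^{\alg})}\;\cong\;\iI^{P}_{R}\bigl(\widehat{\MM^{\alg}}_{0}\bigr)\;\cong\;\iI^{P}_{R}(\MM),
\]
which is the desired equality. The main obstacle I foresee is the first step, namely the precise matching $\MM\cong\widehat{\MM^{\alg}}_0$: while geometrically very natural, it requires a careful bookkeeping between the admissible blow-up equivalence relation used in the formal-scheme construction and the automorphism/stability equivalence used by Li--Wu to cut $\MM^{\alg}$ down to a reasonable stack. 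Once that combinatorial comparison is made, the transfer of the ideal-sheaf moduli is essentially formal.
\end{pf}
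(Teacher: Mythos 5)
Your proposal is correct and follows essentially the same route as the paper: the paper's (very brief) proof also compares the two constructions over a test object $\SS=\widehat{S}$, matching the effective expanded degenerations $W[n]\times_{\aaa^1_\kappa}S$ with the universal formal schemes $\XX[n]\times_R\SS$ (i.e.\ identifying $\MM$ with the formal completion of $\MM^{\alg}$ along $0$), and then identifies the admissible/stable ideal-sheaf moduli over the two bases. Your write-up simply spells out the base-level comparison and the matching of admissibility conditions that the paper leaves implicit.
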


Let $\iI_{\kappa}^{P_1}(D_i, D_{12})$ be the moduli stack of relative stable ideal sheaves in \cite{LW} to the stack of relative expanded pairs. 
For a decomposition $\gamma=(P_{1}, P_2, P_{12})$ of the Hilbert polynomial $P$, we have a gluing result.   See \S \ref{sec_stack_expanded_degeneration} for more details.

\begin{thm}(\cite[Theorem 5.28]{LW})\label{thm_moduli_space_absolute_relative}
Let $X$ be a proper $\kappa$-scheme and $\XX$ its $t$-adic formal completion such that $\XX_s=D_1\cup_{D_{12}}D_2$.  Then the moduli stack 
$\iI^{P}_{\kappa}(\MM_s)$, after applying the special fiber functor, has a canonical gluing isomorphism
$$\iI_{\kappa}^{P}(\MM_s)\stackrel{\sim}{\rightarrow}\iI_{\kappa}^{P_1}(D_1, D_{12})\times _{\iI_{\kappa}^{P_{12}}(D_{12})}\iI_{\kappa}^{P_1}(D_1, D_{12})$$
of Deligne-Mumford stacks.
\end{thm}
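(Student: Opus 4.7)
The plan is to reduce this statement to the corresponding result of Li-Wu for expanded degenerations, and then transport the gluing isomorphism through the comparison theorem established earlier.

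First, I would invoke Theorem \ref{thm_formal_completion_expanded_degeneration}, which identifies $\iI^{P}_{R}(\MM)$ with the formal completion $\widehat{\iI^{P}_{\kappa}(\MM^{\alg})}$ of Li-Wu's moduli stack of stable ideal sheaves on the stack of expanded degenerations along $0\in\aaa^{1}_{\kappa}$. Passing to the special fiber commutes with this formal completion on the level of underlying reduced stacks, so $\iI^{P}_{\kappa}(\MM_s)$ agrees with the fiber over $0$ of $\iI^{P}_{\kappa}(\MM^{\alg})$. This brings us inside the algebraic framework of \cite{LW}, where the gluing theorem is available.

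Next I would construct the map in one direction by restriction. Given a family of stable ideal sheaves $\iI\subset\oO_{\mathcal{W}}$ on the expanded degeneration over $0$, the underlying curve meets $D_{12}$ transversally by the normality/admissibility condition built into the definition of $\iI^{P}_{\kappa}(\MM_s)$ (this is precisely what the SNC formal model buys us, as emphasized in the introduction). Restriction to $D_1$ and $D_2$ together with the pair structure along $D_{12}$ produces two relative stable ideal sheaves in the sense of \cite{Li1,LW}, with Hilbert polynomials $P_1$ and $P_2$ that both restrict to the same $P_{12}$ on $D_{12}$. This gives a canonical morphism
\[
\Phi\colon \iI_{\kappa}^{P}(\MM_s)\longrightarrow \iI_{\kappa}^{P_1}(D_1,D_{12})\times_{\iI_{\kappa}^{P_{12}}(D_{12})}\iI_{\kappa}^{P_2}(D_2,D_{12}),
\]
where the fiber product is over the two restriction morphisms to the moduli of sheaves on $D_{12}$.

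For the inverse, I would glue. Given a pair of relative stable ideal sheaves $(\iI_1,\iI_2)$ on $(D_i,D_{12})$ whose restrictions to $D_{12}$ agree, Li-Wu's predeformability and admissibility conditions on each side match at $D_{12}$, so one can form a unique ideal sheaf $\iI$ on the glued expanded degeneration $D_1\cup_{D_{12}}D_2$ (after possibly passing to a suitable expansion, which is exactly what the stack $\MM^{\alg}_s$ parametrizes). Stability is checked component-wise and, together with properness of the restriction morphisms, yields an object of $\iI^{P}_{\kappa}(\MM_s)$; this constructs $\Psi$, inverse to $\Phi$.

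The main obstacle is the bookkeeping at $D_{12}$: one must check that the relative stability condition (with markings along $D_{12}$) is preserved under both restriction and gluing, and that automorphism groups on both sides match so that the gluing is an isomorphism of stacks rather than merely of coarse spaces. This is exactly the content of the Deligne-Mumford analysis carried out in \cite[\S5]{LW}, so once the identification with $\iI^{P}_{\kappa}(\MM^{\alg})_s$ is in place, the verification reduces to citing the corresponding step there. The only new ingredient is that the formal-scheme point of view via $\MM$ makes the transversality of the curve with $D_{12}$ automatic from the SNC property, so no auxiliary expansion argument is needed to enforce it.
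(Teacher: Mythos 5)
Your proposal is correct and follows essentially the same route as the paper, which gives no independent proof of this statement but simply quotes it as \cite[Theorem 5.28]{LW}: you reduce via Theorem \ref{thm_formal_completion_expanded_degeneration} to the central fiber of Li--Wu's stack of stable ideal sheaves on expanded degenerations and then cite their gluing/Deligne--Mumford analysis for the restriction and gluing maps along $D_{12}$. Your formulation with $\iI_{\kappa}^{P_2}(D_2,D_{12})$ as the second factor (rather than the repeated $\iI_{\kappa}^{P_1}(D_1,D_{12})$ in the statement) is the intended reading, and your identification of the base of the fiber product with the moduli of length-$P_{12}$ ideal sheaves on $D_{12}$ agrees with the Hilbert scheme $\Hilb_{\kappa}^{P_{12}}(D_{12})$ used in the body of the paper.
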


In character zero, every smooth non-archimedean $\kk$-space $X$ has a  SNC formal model $\XX$.  If $\XX$ is proper, then $\XX_s$ is proper, and we prove that the moduli stack $\iI_{\kappa}^{P}(\XX_s)$ is a proper stack, see Proposition \ref{prop_moduli_central_fiber_proper}.  Then the moduli space  $\iI^{P}_{\kk}(X)$   is proper since the moduli stack $\iI^{P}_{\kappa}(\XX_s)$ is proper. 

\subsubsection{Outline of the proof of the main results}
In \S \ref{formal_Berkovich_space} we review the basic materials for the formal schemes and Berkovich analytic spaces.  The moduli stack of semistable sheaves over a locally noetherian scheme is constructed in \S \ref{sec_construction_noetherian_scheme}. This proves Theorem \ref{main_thm2}.  We construct the non-archimedean analytic moduli stack of semistable sheaves over $\kk$ in \S \ref{sec_non-archimedean_moduli_stack} and prove the main results in Theorem \ref{main_thm3}.

\subsection{Main results on motivic Donaldson-Thomas invariants}

In this section we apply the construction of formal and non-archimedean moduli space of stable sheaves to the motivic localization formula of motivic Donaldson-Thomas invariants. 

Let $(X,s)$ be an oriented $d$-critical non-archimedean $\kk$-analytic space.  Let $(\XX,s)$ be a SNC formal model of $X$, then $(\XX,s)$ is an oriented $d$-critical formal $R$-scheme.   We define the absolute motive
$$\mMF_{X,s}^{\phi}=\int_{\XX_s}\mMF_{\XX,s}, $$ where 
$\int_{\XX_s}$ means pushforward to a point.  
The motive $\mMF_{X,s}^{\phi}$ is independent to the choice of the SNC formal model. 
If $X$ admits a  good, circle-compact action of $\Gm$, on each fixed strata $X_i^{\Gm}$, Maulik defined the virtual index of $X_i^{\Gm}$  as 
\begin{equation}\label{intro_index_formula}
\ind^{\virt}(X_i^{\Gm},X)=\dim_{\kk}(T_{x}(X)_+)-\dim_{\kk}(T_{x}(X)_-)
\end{equation}
where $T_{x}(X)_+$ and $T_{x}(X)_-$ are the weight positive and negative parts of the $\Gm$-action on the tangent space 
$T_x(X)$ for a generic point $x$ in the strata $X_i^{\Gm}$. 
Then we have the following result of the motivic localization formula. 

\begin{thm}\label{intro_thm_Maulik}(Theorem \ref{thm_Maulik})
Let $(X,s)$ be a $d$-critical non-archimedean $\kk$-analytic space and $\mu$ is a good, circle-compact action of $\Gm$ on $X$, which preserves the orientation $K_{X,s}^{\frac{1}{2}}$. Then on each fixed strata $X_i^{\Gm}$, there is an oriented  
$d$-critical $\kk$-analytic space structure $(X_i^{\Gm},s_i^{\Gm})$, hence a global motive $\mMF_{X_i^{\Gm},s_i^{\Gm}}^{\phi}$. Moreover we have the following motivic localization formula
$$\mMF_{X,s}^{\phi}=\sum_{i\in J}\ll^{-\ind^{\virt}(X_i^{\Gm},X)/2}\odot \mMF_{X_i^{\Gm},s_i^{\Gm}}^{\phi}\in \overline{\mM}_{\kappa}^{\hat{\mu}}.$$
\end{thm}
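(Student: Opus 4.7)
My plan is to reduce the statement to the local picture provided by $d$-critical charts and then run Maulik's original argument inside the motivic integration framework for formal schemes developed in \cite{Nicaise}, \cite{Jiang4}. The first step is to equip each fixed stratum $X_i^{\Gm}$ with a $d$-critical non-archimedean $\kk$-analytic structure $s_i^{\Gm}$ and to show that the orientation $K_{X,s}^{1/2}$ restricts to an orientation on $X_i^{\Gm}$. For this I would pick an SNC formal model $(\XX,s)$ of $(X,s)$, whose existence is guaranteed by the results of \S\ref{formal_Berkovich_space}; averaging over the compact subgroup $S^1\subset \Gm$ lets me assume that $\XX$ carries a compatible good $\Gm$-action and that $s$ is $\Gm$-equivariant. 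A $\Gm$-equivariant critical chart $(\UU,f,i)$ then decomposes locally as $\UU=\UU^+\times \UU^{\Gm}\times \UU^-$ with $f$ of weight zero, so that $(\UU^{\Gm}, f|_{\UU^{\Gm}}, i^{\Gm})$ is a critical chart for $X_i^{\Gm}$, and the line bundle splitting of $K_\UU|_{\UU^{\Gm}}$ induces the required orientation on $X_i^{\Gm}$ compatibly with $K_{X,s}^{1/2}$.

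Next I would compute the global motive $\mMF_{X,s}^{\phi}=\int_{\XX_s}\mMF_{\XX,s}$ by combining a Bialynicki--Birula decomposition of $\XX_s$ into locally closed pieces flowing to the various components of $\XX_s^{\Gm}$ with the circle-compactness of the $\Gm$-action. Locally on each equivariant critical chart, the motivic vanishing cycle factors, via the motivic Thom--Sebastiani formula of Denef--Loeser and its formal-scheme version recalled in \cite{Nicaise}, as a convolution of the vanishing cycle on $\UU^{\Gm}$ with the vanishing cycles coming from the positive and negative weight eigenspaces of the ambient tangent space. Circle-compactness ensures that the otherwise improper motivic integrals over the non-proper positive and negative flow cells are finite, and a direct Koszul-type computation matches each such contribution with the factor $\ll^{-\ind^{\virt}(X_i^{\Gm},X)/2}$ via the index formula \eqref{intro_index_formula}, yielding
\[ \int_{\XX_s}\mMF_{\XX,s}\;=\;\sum_{i\in J}\ll^{-\ind^{\virt}(X_i^{\Gm},X)/2}\odot \int_{\XX_s^{\Gm,i}}\mMF_{\XX_i^{\Gm},s_i^{\Gm}}, \]
which is the desired formula after recognising each term on the right as $\mMF_{X_i^{\Gm},s_i^{\Gm}}^{\phi}$.

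The main obstacle, in my view, is the compatibility of the orientation $K_{X,s}^{1/2}$ with the $\Gm$-decomposition: one must check that its restriction to each $X_i^{\Gm}$ agrees canonically with the orientation used to build $\mMF_{X_i^{\Gm},s_i^{\Gm}}^{\phi}$, and in particular that the half-power $\ll^{-\ind^{\virt}/2}$ is globally well-defined on each stratum rather than only up to a sign coming from the square-root choice on the moving part of the virtual tangent complex. In Maulik's algebraic setting this was resolved by globalising a polarisation on that moving part; in the non-archimedean setting the analogous statement must be combined with the admissible-blow-up invariance of the $d$-critical structure and its orientation established earlier in the paper, so that the final formula is genuinely independent of the chosen SNC formal model $\XX$. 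Once this orientation-matching issue is settled, the remaining ingredients---equivariant critical charts, the Bialynicki--Birula decomposition on $\XX_s$, and the motivic Thom--Sebastiani formula---are routine given the preparatory results of Part I and the formal motivic integration machinery of \cite{Nicaise}, \cite{Jiang4}.
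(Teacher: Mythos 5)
Your reduction to a formal model and the construction of equivariant critical charts $(\RR_i^{\Gm},\UU_i^{\Gm},f|_{\UU_i^{\Gm}},i|_{\RR_i^{\Gm}})$ with the induced orientation is essentially what the paper does in Step 1 of the proof of Theorem \ref{thm_Maulik} (minus the ``averaging over $S^1\subset\Gm$'', which has no meaning here; the paper simply extends the good, circle-compact $\Gm$-action to the formal model). The genuine gap is in your central computational step. You propose to factor the motivic vanishing cycle on an equivariant chart, via motivic Thom--Sebastiani, ``as a convolution of the vanishing cycle on $\UU^{\Gm}$ with the vanishing cycles coming from the positive and negative weight eigenspaces.'' But a $\Gm$-invariant $f$ on $\UU=\UU^{\Gm}\times\UU_+\times\UU_-$ is only of weight zero; it is \emph{not} in general a sum $f^{\Gm}(x_0)+q(x_+,x_-)$ with separated variables, since weight-zero monomials mix $x_+$ and $x_-$ (e.g.\ terms like $x_+x_-$ with coefficients depending on $x_0$). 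So there is no product decomposition of the pair $(\UU,f)$ to which Thom--Sebastiani applies, and the proposed convolution factorization fails at the first nontrivial example. Likewise, a Bialynicki--Birula stratification of the (singular) special fiber $\XX_s$ by itself does not tell you how the vanishing-cycle motive restricts to an attracting cell; that is precisely the nontrivial content of the localization formula, not an input to it.

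What actually makes the argument work in the paper is different: circle-compactness translates, via the specialization map, into valuation constraints on the generic fiber of the chart, $\val(x_+)>0$ (open ball) and $\val(x_-)\geq 0$ (closed ball); the locus $\mathit{R}_1$ where both $x_+\neq 0$ and $x_-\neq 0$ has motivic volume zero by an annulus argument carried out with Cluckers--Loeser constructible functions (as in \cite{Jiang4}); and on the complementary locus $\Gm$-invariance forces $f(x_0,x_+,x_-)=f(x_0,0,0)$, so the motivic integral splits as (volumes of an open and a closed ball) times the motive of the fixed chart, producing exactly the factor $\ll^{m_-}$ and hence $\ll^{-\ind^{\virt}/2}$ via (\ref{intro_index_formula}). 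None of this appears in your outline, and it cannot be replaced by Thom--Sebastiani. Finally, the issue you flag as the ``main obstacle'' --- compatibility of $K_{X,s}^{1/2}$ with the fixed strata and well-definedness of $\ll^{-\ind^{\virt}/2}$ --- is indeed needed, but you leave it unresolved; in the paper it is handled by the explicit diagram of local isomorphisms defining $K_{\XX_i^{\Gm},s_i^{\Gm}}^{1/2}$ in Step 1 together with the gluing Step 3, where the principal $\zz_2$-bundle terms $\Upsilon(Q_{\RR,\UU,f,i})$ are shown to restrict to $\sum_i\Upsilon(Q_{\RR_i^{\Gm},\UU_i^{\Gm},f|,i|})$ again because the non-fixed locus has vanishing motivic volume. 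As written, your proposal would not close without importing this entire mechanism.
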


We use the techniques of motivic integration for formal schemes developed in \cite{Nicaise} to prove Theorem \ref{intro_thm_Maulik}. If $(\XX, s)$ is the  oriented 
$d$-critical formal scheme corresponding to the moduli space $X$ of stable coherent sheaves over Calabi-Yau threefold $Y$, and denote by $\mMF_{\XX}^{\phi}$ the global motive on $\XX$.  Suppose that there is a $\mathbb{G}_m$ action on the scheme $\XX$, which is {\em good} and circle compact. In this case,  we get the  motivic localization formula  of  Maulik in \cite{Maulik}.  

\begin{cor}\label{intro_prop_localization_motivic_Donaldson-Thomas_Invariants}(Proposition \ref{prop_localization_motivic_Donaldson-Thomas_Invariants})
Let $Y$ be a smooth Calabi-Yau threefold over $\kappa$ of character zero, and $X=M_n(Y,\beta)$ the moduli scheme of stable coherent sheaves in $\Coh(Y)$ with topological data $(1,0,\beta,n)$.    Then from \cite{BJM}, if there exists an orientation $K_{X,s}^{\frac{1}{2}}$,  there exists  a unique global motive $\mMF_{X}^{\phi}\in \overline{\mM}_{X}^{\hat{\mu}}$.  Moreover if $X$ admits a good circle-compact $\Gm$-action which preserves the orientation $K_{X,s}^{\frac{1}{2}}$, then
$$\int_{X}\mMF_{X,s}^{\phi}=\sum_{i\in J}\ll^{-\ind^{\virt}(X_i^{\Gm}, X)/2}\odot \int_{X_i}\mMF_{X_i^{\Gm}, s_i^{\Gm}}^{\phi}$$
where $X^{\Gm}=\bigsqcup_{i\in J}X_i^{\Gm}$ is the fixed locus of $X$ under the $\Gm$-action.  
The notation $\int_{X}\mMF_{X,s}^{\phi}$ means pushforward to a point, i.e., the absolute motive, and $\ind^{\virt}(X_i^{\Gm}, X)$ is the virtual index on tangent space similar as in (\ref{intro_index_formula}). 
\end{cor}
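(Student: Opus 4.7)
The plan is to realize this corollary as a direct specialization of Theorem \ref{intro_thm_Maulik} to the moduli scheme $X=M_n(Y,\beta)$, after checking the hypotheses. First, I would invoke the results of \cite{PTVV} to put a $(-1)$-shifted symplectic structure on the derived enhancement $\mathbf{X}=\mathbf{M}_n(Y,\beta)$, using that $Y$ is a smooth Calabi-Yau threefold. By \cite{BBJ}, the truncation of a $(-1)$-shifted symplectic derived scheme carries a canonical $d$-critical scheme structure $s$ in the sense of Joyce, so $(X,s)$ is a $d$-critical scheme. Given the orientation $K_{X,s}^{\frac{1}{2}}$ assumed in the statement, \cite{BJM} produces the unique global motive $\mMF_{X}^{\phi}\in\overline{\mM}_{X}^{\hat{\mu}}$ whose fibre at a Zariski local critical chart $(U,f)$ is the Denef-Loeser motivic vanishing cycle of $f$ up to the orientation twist.

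Next I would transport this data to the formal/non-archimedean setting of the first half of the paper. Since $\kappa$ has characteristic zero, the algebraic $d$-critical scheme $(X,s)$ has a SNC formal model $(\XX,s_{\XX})$ after base change to $R$, and by the formal and analytic generalization of Joyce's $d$-critical scheme structure developed earlier in the paper, this upgrades to an oriented $d$-critical formal $R$-scheme whose generic fiber is an oriented $d$-critical non-archimedean $\kk$-analytic space $(X^{\rig},s^{\rig})$. The compatibility built into the construction of $\mMF_{X,s}^{\phi}$ via the formal motivic integration of \cite{Nicaise} identifies the pushforward $\int_{X}\mMF_{X,s}^{\phi}$ with the absolute motive $\mMF_{X^{\rig},s^{\rig}}^{\phi}=\int_{\XX_{s}}\mMF_{\XX,s_{\XX}}$ defined in Section \ref{sec_motivic_DT_introduction}.

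Third, I would verify the hypotheses of Theorem \ref{intro_thm_Maulik}. The assumed good circle-compact $\Gm$-action on $X$ lifts to the SNC formal model after possibly passing to an admissible formal blow-up along $\Gm$-stable strata, and the preservation of $K_{X,s}^{\frac{1}{2}}$ on the algebraic side transfers to preservation of the analytic orientation by functoriality of the square-root line bundle construction. The goodness and circle-compactness properties are local conditions on the tangent space which are preserved under analytification, so they pass to $(X^{\rig},s^{\rig})$. The fixed loci decompose as $X^{\Gm}=\bigsqcup_{i\in J}X_{i}^{\Gm}$ on the algebraic side, and after analytification yield the fixed strata of the analytic action; on each stratum the induced $d$-critical structure and orientation coincide with $(X_{i}^{\Gm},s_{i}^{\Gm})$ by the functoriality statements of Joyce's theory and its formal analog.

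Finally, applying Theorem \ref{intro_thm_Maulik} to $(X^{\rig},s^{\rig})$ yields
\[
\mMF_{X^{\rig},s^{\rig}}^{\phi}=\sum_{i\in J}\ll^{-\ind^{\virt}(X_{i}^{\Gm},X)/2}\odot \mMF_{X_{i,^{\rig}}^{\Gm},s_{i}^{\Gm}}^{\phi}\in\overline{\mM}_{\kappa}^{\hat{\mu}},
\]
and translating both sides back to the algebraic pushforwards via the compatibility noted above gives the desired formula. The main obstacle I expect is the third step: confirming that a good circle-compact $\Gm$-action on the algebraic moduli space lifts in an orientation-preserving way to an SNC formal model, and that the induced $d$-critical structure on each analytic fixed stratum matches the one obtained by restricting Joyce's algebraic construction, so that the virtual indices appearing in the two formulations agree. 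The remaining bookkeeping, comparing pushforwards of motives of vanishing cycles along the algebraic-to-formal-to-analytic chain, is routine once this compatibility is in place.
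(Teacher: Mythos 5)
Your overall strategy is the paper's: put the $d$-critical structure on $X$ via \cite{PTVV}, \cite{BBJ}, \cite{BJM}, transfer it to a formal model, and then quote Theorem \ref{intro_thm_Maulik}. But the middle step, as you set it up, has a genuine flaw. You propose to take ``a SNC formal model $(\XX,s_{\XX})$ after base change to $R$'' and, later, to lift the $\Gm$-action ``after possibly passing to an admissible formal blow-up along $\Gm$-stable strata.'' Neither move is available here. SNC formal models in this paper are produced (via Temkin) only for \emph{smooth} $\kk$-analytic spaces, whereas $X=M_n(Y,\beta)$ is in general highly singular, so its associated analytic space has no SNC model in the required sense without modifying the space. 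More fundamentally, a $d$-critical structure is a section $s\in H^0(\FP_X)$ on the space itself; it does not pull back along an admissible formal blow-up or any other non-isomorphic modification, and the orientation $K_{X,s}^{1/2}$ and the virtual indices would not transfer either. So the model on which you intend to apply the localization theorem would not carry the data needed to state it, and the claimed identification $\int_X\mMF_{X,s}^{\phi}=\int_{\XX_s}\mMF_{\XX,s}^{\phi}$ would no longer be ``built into the construction,'' since after blow-ups the special fiber is no longer $X$.

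The paper avoids all of this by choosing the trivial model: regard $X$ over $\spec(\kappa[t])$ and take the $t$-adic formal completion $\XX=\widehat{X}$, so that $\XX_s=X$, $\FP_{\XX}=\widehat{\FP_X}$, and $K_{\XX,s}$ is the completion of $K_{X,s}$ (this is exactly the corollary following Theorem \ref{thm_global_motive}). The good circle-compact $\Gm$-action and the orientation then extend to $\XX$ by functoriality of completion, the fixed loci $\XX_i^{\Gm}$ are the completions of $X_i^{\Gm}$ with $(\XX_i^{\Gm})_s=X_i^{\Gm}$, and $\ind^{\virt}(\XX_i^{\Gm},\XX)=\ind^{\virt}(X_i^{\Gm},X)$; applying the formal-scheme formula inside Theorem \ref{thm_Maulik} and reading it off on the special fiber gives the statement immediately. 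If you replace your SNC-model step by this trivial $t$-adic completion, the rest of your argument (checking goodness, circle-compactness, orientation preservation, and matching the fixed-stratum structures) goes through as you describe, and the ``main obstacle'' you flagged disappears.
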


Our method to prove Corollary \ref{intro_prop_localization_motivic_Donaldson-Thomas_Invariants} is to use formal schemes.  We take the $t$-adic formal completion 
$\XX=\widehat{X}$ of $X$ and its generic fiber $\XX_\eta$ have a formal $d$-critical scheme structure $(\XX,s)$ and 
a $d$-critical non-archimedean analytic space structure $(\XX_\eta,s)$.  
The canonical line bundle 
$K_{\XX,s}$ is isomorphic to the formal completion of the canonical line bundle 
$K_{X,s}$ where $(X,s)$ is the $d$-critical scheme in \cite{Joyce}.  Moreover, if there exists an orientation $K_{X,s}^{\frac{1}{2}}$, then   $K_{\XX,s}^{\frac{1}{2}}$ exists and there exists a unique 
$\mMF_{\XX,s}^{\phi}\in\overline{\mM}_{X}^{\hat{\mu}}$ such that if $\XX$ admits a good circle-compact $\Gm$-action which preserves the orientation $K_{\XX,s}^{\frac{1}{2}}$, then
$$\int_{\XX_s}\mMF_{\XX,s}^{\phi}=\sum_{i\in J}\ll^{-\ind^{\virt}(\XX_i^{\Gm}, \XX)/2}\odot \int_{(\XX_i^{\Gm})_s}\mMF_{\XX_i^{\Gm}, s_i^{\Gm}}^{\phi}$$
where $\XX^{\Gm}=\bigsqcup_{i\in J}\XX_i^{\Gm}$ is the fixed locus of $\XX$ under the $\Gm$-action. 
Since $\XX_s=X$, $(\XX_i^{\Gm})_s=X^{\Gm}_{i}$, and 
$\ind^{\virt}(\XX_i^{\Gm}, \XX)=\ind^{\virt}(X_i^{\Gm}, X)$.  Thus we get the localization formula in Corollary \ref{intro_prop_localization_motivic_Donaldson-Thomas_Invariants}.

This motivic localization formula in Corollary \ref{intro_prop_localization_motivic_Donaldson-Thomas_Invariants} is very useful in the calculations of refined Donaldson-Thomas invariants, for instance, in \cite{CKK}, the authors use this formula to calculate the refined Donaldson-Thomas invariants for local $\pp^2$. 
We expect more interesting applications about this formula.

\subsection{Related and  future works}

As we mentioned earlier, this work is motivated by Tony Yue Yu's study of non-archimedean enumerative geometry in \cite{Tony_Yu1}, \cite{Tony_Yu2}.  
Motivated by the MNOP conjecture equating the Gromov-Witten and Donaldson-Thomas invariants,  it is interesting to construct a sheaf counting theory  of Tony Yu's non-archimedean curve counting theory  by using Behrend's weighted Euler characteristic  of the moduli space of stable sheaves over smooth Calabi-Yau threefolds.  

The moduli stack $\mM_{g,n}(Y)$ of stable maps to a Calabi-Yau threefold $Y$  admits a perfect obstruction theory of Li-Tian \cite{LT}, and Behrend-Fantechi \cite{BF}.  Hence there is a dimensional zero virtual fundamental cycle 
$[\mM_{g,n}(Y)]^{\virt}$.  In the non-archimedean counting of stable maps as in \cite{Tony_Yu1}, \cite{Tony_Yu2} (the paper \cite{Tony_Yu2}  works on log Calabi-Yau surfaces), the author didn't use perfect obstruction theory to directly define his invariants $N_{L,\beta}$ on the corresponding moduli space $\mM_{L,\beta}$ for a tropical spine $L$ in the Berkovich retraction $Y^{\an}\to B$ and curve degree $\beta$,  instead he uses a restriction of the usual virtual fundamental class of a higher dimensional stable map spaces to the moduli space $\mM_{L,\beta}$.  In the forthcoming work of  \cite{Porta_Yu2}, Porta and Yu will construct a perfect obstruction theory on the  non-archimedean moduli  analytic space and define the virtual fundamental cycle directly.  In \cite{Jiang5}, we will address such a problem of symmetric obstruction theory on the non-archimedean moduli space of stable coherent sheaves.   The notion of $d$-critical formal schemes and $d$-critical non-archimedean analytic spaces will take an important role. We will study the Kashiwara-Schapira index theorem for 
non-archimedean analytic spaces, and a non-archimedean version of Behrend's theorem equating the Donaldson-Thomas invariants with the weighted Euler characteristic of a canonical constructible sheaf of vanishing cycles. 

\subsection{Convention}
 Let us fix some notations.  Throughout this paper,  $R$ will be a  complete discrete valuation ring $R=\kappa[\![t]\!]$, with quotient field $\kk:=\cc(\!(t)\!)$, and perfect residue field $\kappa$.  We fix a uniformizing element $t$ in $R$, i.e., a generator of the maximal ideal.  The field $\kk$ is a non-archimedean field with valuation $v$ such that 
$v(t)=1$. The absolute value $|\cdot|=e^{-v(\cdot)}$.    All formal schemes over $R$ is {\em stft} (separate and topologically of finite type)  in sense of \cite{NS}, and the non-archimedean analytic spaces over $\kk$ are quasi-compact Berkovich analytic spaces \cite{Ber1}. 
For the applications in \S \ref{sec_moduli_absolute_relative_sheaves} and \S \ref{sec_motivic_DT_invariants}, we consider the schemes and stacks over $\kappa=\cc$, the field of complex numbers. 

For any $\kappa$-variety $X$, we denote by $\Div(X)$ the abelian group of divisors of $X$, and 
$N^1(X)=\Div(X)/\Div^0(X)$ the divisor class group, where $\Div^0(X)$ is the group of principal divisors.  
An element $D\in N^1(X)$ is said to be {\em nef} if  the intersection $D\cdot C\geq 0$ for any curve $C$ in $X$; {\em ample}
if $D$ is a ample divisor. 

For the complete discrete valuation ring $R$, $R\{x_1,\cdots,x_n\}$ is denoted by the Tate algebra, which is the ring of convergent formal power series over $R$. 
For a strictly affinoid algebra $\aA$ over the non-archimedean $\kk$,  we use $\SP(\aA)$ to represent the affine rigid variety; and  $\SP_{B}(\aA)$ the Berkovich spectrum, i.e., the space of semi-norms of $\aA$ endowed with Berkovich real topology. 
Denote by $\Fsch_R$ the category of {\em stft} formal schemes over $R$.   Also denote by $\An_\kk$ the category of strictly $\kk$-analytic spaces, equipped  with the quasi-\'etale topology, and $\Rig_\kk$ the category of $\kk$-rigid varieties, equipped  with the Grothendieck topology.

We use Fraktur symbols $\XX$ to represent $R$-formal schemes.  We use general symbols  $X$ to represent both $\kappa$-varieties or schemes and Berkovich non-archimedean $\kk$-analytic spaces. 
In \S \ref{sec_construction_noetherian_scheme} the Calligraphic symbols $\sX$ represent the schemes over a locally noetherian scheme $\sS$. 
For a Berkovich analytic space $X$,  we use  $\chi(X)$ to represent the Euler characteristics the \'etale cohomology of $X$. 
We use $\ll$ to represent the Lefschetz motive $[\aaa^1_{\kappa}]$. 

In Part I, we use $\mM(P)$ to represent the moduli functor of (semi)-stable coherent sheaves on algebraic schemes, formal schemes and non-archimedean analytic spaces.  While in Part II,  $\mM$ and  $\mM^{\hat{\mu}}$   represent the 
localized Grothendieck ring and the equivariant localized Grothendieck ring.
\subsection*{Acknowledgments}

Y. J. would like to thank Yifeng Liu, Johannes Nicaise,  Song Sun, and Tony Yue Yu  for valuable discussions on Berkovich analytic spaces,  especially Johannes Nicaise for answering questions about the motivic integration of formal schemes in \cite{Nicaise}, Yifeng Liu for the hospitality when visiting Northwestern University, and Tony Yue Yu for the correspondence on his non-archimedean enumerative geometry in Gromov-Witten theory and valuable suggestions on the generalized version of the motivic localization formula to non-archimedean analytic spaces.   
Y. J. thanks Junwu Tu for the discussion of deformation quantization, and its relation to perverse sheaf of vanishing cycles and twisted de Rham complex; and Qile Chen for the discussion of log stable maps to generalized  Deligne-Faltings pairs. 

Many thanks to Professors Tom Coates, Alessio Corti and R. Thomas for the support in Imperial College London, where the author started to think about the research along this direction, and is inspired by joint work \cite{JT} with R. Thomas on the Behrend function and Lagrangian intersections.  Y. J. also thanks Professor Jun Li and Professor Dominic Joyce for the discussion of d-critical schemes and virtual critical manifolds, and Professor Sheldon Katz for the discussion about the motivic localization formula on motivic Donaldson-Thomas invariants when visiting UIUC in January 2017. 
Y. J. thanks Professors Jim Bryan, Andrei Okounkov and Balazs Szendroi for email correspondence on the index formula of plane partitions, and especially thanks to Balazs Szendroi for pointing an error in the last example in an earlier version of the paper. 
This work is partially supported by  NSF DMS-1600997.

\section*{Part I:}


\section{Preliminaries on formal schemes and Berkovich analytic spaces}\label{formal_Berkovich_space}

\subsection{Formal schemes, rigid varieties and Berkovich analytic spaces}

An adic $R$-algebra $A$ is  said to be {\em topologically finitely generated over} $R$ if $A$ is  topologically $R$-isomorphic to a quotient algebra of the algebra of restricted power series $R\{x_1,\cdots, x_n\}$.
The algebra $R\{x_1,\cdots, x_n\}$ is the Tate algebra which is the subalgebra of $R[\![x_1,\cdots, x_n]\!]$ consisting of the elements
$$\sum_{(i_1,\cdots, i_m)\in\nn^m}\left(c_i \prod_{j=1}^{m}x_{j}^{i_j}\right)$$
such that $c_i\to 0$ (with respect to the $t$-adic topology on $\kk$) as $|i|=i_1+\cdots +i_m$ tens to $\infty$. 
Let $J=tR$ be the ideal of definition of $R$, then from \cite[Ch. 0. 7.5.3]{EGA1}, the quotient algebra has an ideal of definition $JA$.
An {\em stft} affine formal scheme is the formal spectrum $\spf(A)$ for a topologically finitely generated $R$-algebra $A$.
 
In general, an {\em stft} formal $R$-scheme $\XX$ is a separated formal scheme, topologically of finite type over $R$, which is covered by a cover $\{\XX_i\}$ of {\em stft} affine formal subschemes of the formal $\XX_i=\spf(A_i)$ for $A_i$  topologically finitely generated over $R$.  We denote its special fiber by $\XX_s$, and its generic fiber by $\XX_\eta$.  
$\XX_s$ is a $\kappa$-scheme, which in the affine case $\XX=\spf(A)$, $\XX_s=\spec(A/(J))$. 
In general $\XX_s$ is covered by affine $\kappa$-schemes $\spec(A_i/(J))$. 
The generic fiber 
$\XX_\eta$ is a quasi-compact Berkovich  non-archimedean $\kk$-analytic space in the sense of \cite{Ber1}.  In the category of rigid varieties as in \cite{BGR}, $\XX_\eta$ is a separate quasi-compact rigid $\kk$-variety.  In the case that $\XX=\spf(A)$, in notations we have 
$\XX_\eta=\SP_{B}(\aA)$ for $\aA=A\otimes_{R}\kk$ in the sense of Berkovich, and $\XX_\eta=\SP(\aA)$ in the category of rigid varieties.  Like the difference between varieties and schemes, the Berkovich spectrum will add generic points and has a real topology, and rigid varieties have Grothendieck topologies.  If there is no confusion, we will mixed-use these two notions. 
For instance, if $\XX=\spf(R\{x_1,\cdots,x_n\})$, then 
the generic fiber is the closed unit disc
$D^n(0,1)$ in the  affine space 
$$\aaa^n=\SP_{B}(\kk[x_1,\cdots, x_n])=\bigcup_{r\geq 0} D^n(0,r)$$
where $D^n(0,r)=\SP_B(\kk\{r_1^{-1}x_1, \cdots, r_n^{-1}x_n\})$ for $r=(r_1,\cdots, r_n)$. 
From Berkovich's classification theorem, there are four type of Berkovich points in $\aaa^n$ and each point is the limit of a sequence of points 
$\Vert\cdot\Vert_{D_n}$ corresponding to a nested sequence $D_1\supset D_2\supset\cdots$
of balls of positive radius.

We fix a locally finite covering $\{\XX_i\}_{i\in I}$ of $\XX$, where $\XX_i$ are affine formal subschemes of the form $\spf(A_i)$ and $A_i$ is a topologically finitely generated $R$-algebra.  Then for any $i,j\in I$ the intersection $\XX_{ij}=\XX_i\cap\XX_j$ is also an affine subscheme of the same form.  The generic fiber $\XX_{ij,\eta}$ is a closed analytic domain in $\XX_{i,\eta}$, and the canonical morphism $\XX_{ij,\eta}\to\XX_{i,\eta}\times\XX_{j,\eta}$ is a closed immersion.  From \cite{Ber1}, we can glue $\XX_{i,\eta}$ to get an analytic space $\XX_{\eta}$.

\subsection{The specialization map} 
The special fiber $\mathfrak{X}_s$ is a $R/(J)=\kappa$-scheme. In the affine case $\XX=\spf(A)$, 
$\XX_s=\spec(\tilde{A})$, where $\tilde{A}=A/JA$. The specialization  map  as in \cite[\S 2.2]{NS}
$$sp: \XX_{\eta}\to\XX_{s}$$
sends the points in the generic fibre $\XX_\eta$ to the special fibre $\XX_s$.
Let $x\in \XX_{\eta}$ be a point, which corresponds to a semi-norm 
$$x: \aA\to \rr_{\geq 0}.$$
Let $\hH(x)=\aA/\wp_{x}$, where $\wp_x$ is the kernel of $x$. 
Then the point $x$ gives a character map 
$$\tilde{\chi}_{x}: \tilde{A}\to \tilde{\hH(x)}=\hH(x)/\aA^{\circ\circ},$$ 
where
$\aA^{\circ\circ}=\{f\in\aA| |f(x)|< 1 ~\mbox{for all}~x\in\SP_B(\aA)\}$  is the maximal ideal of 
$\aA$.  Then the kernel of the map $\tilde{\chi}_x$ is defined to be the image of $x$ under 
$sp$. 

Let $\yY\subset \XX_s$ be a closed subset, which is given by an ideal $(\tilde{f}_1,\cdots,\tilde{f}_n)$ for 
$f_i\in A$. Then 
$sp^{-1}(\yY)=\{x\in \XX_\eta| |f_i(x)|<1, 1\leq i\leq n\}$ is open in $\SP_{B}(\aA)=\XX_\eta$.
This correspondence means that under the reduction map $\pi$, the preimage of a closed subset is open, and similarly the preimage of an open subset is closed.  This is one of the special properties  for Berkovich analytic spaces.

\subsection{Formal affine critical schemes}\label{sec_affine_formal_critical_scheme}

In this section we talk about the notion of formal affine critical schemes.  
Let $f$ be a nonzero polynomial in $\kappa[T]:=\kappa[T_1,\cdots,T_m]$.  This defines a flat morphism from 
$\aaa_{\kappa}^m=\spec(\kappa[T])\to \spec(\kappa[t])$. Let 
$$\hat{f}: \XX\to \spf(R)$$
be the $t$-adic completion of the morphism $f$, where 
$\XX=\spf (A)$ and $A=R\{T\}/(f-t)$. The algebra  $R\{T\}:=R\{T_1,\cdots,T_m\}$ is the algebra of convergent  power series.
The formal scheme $\XX\to \spf(R)$ is a $\stft$
formal scheme, see \cite{NS}.
The special fiber $\XX_s$ is a $\kappa$-scheme $\spec(A/(t))$, which is canonically  isomorphic to the fiber of 
$f$ over $0$.
The generic fiber $\XX_{\eta}=\SP_{B}(A\otimes_{R}\kk)$ is a Berkovich analytic 
space over the field $\kk$. 

Let $\Crit(f)$ be the critical subscheme of $f$ inside $\aaa_{\kappa}^m$.  We make a assumption that 
$\Crit(f)\subset\XX_s$. Set 
\begin{equation}\label{affine_formal_critical_scheme}
\hat{f}: \hat{\XX}\to \spf(R)
\end{equation}
to be the formal completion  of $\XX$ along $\Crit(f)$.  Then the special fiber $\hat{\XX}_s$  is the subscheme 
$\Crit(f)$.  The generic fiber $\hat{\XX}_\eta=sp^{-1}(\Crit(f))$  is a subanalytic space of $\XX_\eta$. 

\begin{defn}\label{affine_formal_critical_scheme_defn}
Let $f$ be a polynomial function on $\kappa[T_1,\cdots, T_n]$.  We call the formal scheme in (\ref{affine_formal_critical_scheme}) the formal critical scheme associated with $f$. 
\end{defn}

\begin{defn}\label{analyticmilnorfiber}(\cite{NS})
Let $\mathcal{Y}\subset \XX_s$ be a closed subscheme, the analytic Milnor fiber $\mathfrak{F}_{\mathcal{Y}}(f)$ of 
$f$ is defined as 
$$\mathfrak{F}_{\mathcal{Y}}(f)=sp^{-1}(\mathcal{Y})$$

For any $x\in\XX_s$,  $\FF_x:=\mathfrak{F}_{x}(f)$ is called the  analytic Milnor fiber  of $f$ at $x$. 
\end{defn}

\subsection{Sheaf of vanishing cycles}

We recall the vanishing functor for schemes in \cite{SGA7}, which is reviewed in \cite[\S 5]{Ber4}.  Let $S=\spec(R)$ be the spectrum of $R$.  The scheme $S$ consists of the closed point $s=\spec(\kappa)$ and the generic point 
$\eta=\spec(\kk)$.  The field $\kk$ is quasi-complete (\cite[\S 2.4]{Ber1}) and the valuation on $\kk$extends uniquely to the separable closure $\kk^s$, and so the integral closure of $R$ in $\kk^s$ coincides with $R^s$, the ring of integers of $\kk^s$.  Set 
$\overline{S}=\spec(R^s)=\{\overline{s},\overline{\eta}\}$. 
Let $X$ be a scheme over $S$, and let $X_s$and $X_\eta$ (resp. $X_{\overline{s}}$ and $X_{\overline{\eta}}$) be the closed and the generic fibers of $X$ (resp. $\overline{X}=X\times_{S}\overline{S}$).  We have a canonical diagram:
\[
\xymatrix{
X_{\eta}\ar@{^{(}->}[r]^{j}& X & X_s\ar[l]_{i}\\
X_{\overline{\eta}}\ar[u]\ar@{^{(}->}[r]^{\overline{j}}& \overline{X}\ar[u]& X_{\overline{s}}\ar[u]\ar[l]_{\overline{i}}
}
\]
The nearby cycles functor is given by
$\Psi_{\eta}(\fF)=\overline{i}^*(\overline{j}_{*}\overline{\fF})$, where $\overline{\fF}$ is the pullback of a sheaf $\fF$ on $X_\eta$ to $X_{\overline{\eta}}$. The functor $\Psi_{\eta}$ takes values in the category of {\'e}tale sheaves on $X_{\overline{s}}$ that are endowed with a continuous action of $G_{\eta}=G(\kk^s/\kk)$ compatible with the action of 
$G(\kappa^s/\kappa)$ on $X_{\overline{s}}$.

Let $\Fsch$ be the category of $\stft$ formal $R$-schemes.
Let $\XX\in \Fsch$ be a formal $R$-scheme.  For $n\geq 1$, denote the scheme 
$(\XX, \oO_{\XX}/t^n\oO_{\XX})$ by $\XX_n$.  
A morphism of formal schemes over $R$, $\phi: \YY\to\XX$ is said to be \'etale if for all 
$n\geq 1$, the induced morphisms of schemes $\phi_n: \YY_n\to\XX_n$ are \'etale.

Let $\phi: \YY\to \XX$ be a morphism of formal schemes. Then it induces the morphism between the generic and central fibres, i.e.
$\phi_\eta: \YY_\eta\to\XX_\eta$ and $\phi_s: \YY_s\to\XX_s$, where
$\phi_\eta$ is a morphism of Berkovich analytic spaces and $\phi_s$ is a morphism of schemes. 
Here are two known results from  \cite{Ber4} which are needed to construct vanishing cycles.

\begin{lem}(\cite[Lemma 2.1]{Ber4})\label{lem_functor_special_fiber}
The correspondence $\YY\mapsto\YY_s$ gives an equivalence between the category of formal schemes \'etale over $\XX$ and the category of schemes \'etale over $\XX_s$.
\end{lem}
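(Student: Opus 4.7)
The plan is to establish the equivalence by constructing a quasi-inverse to $\YY\mapsto\YY_s$ via the infinitesimal lifting property of étale morphisms. Recall that, by the definition given in the excerpt, $\phi\colon\YY\to\XX$ is étale precisely when each $\phi_n\colon\YY_n\to\XX_n$ is étale; in particular, if $\YY$ is étale over $\XX$, then $\YY_s=\YY_1$ is automatically étale over $\XX_s$, so the functor is well-defined.

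To construct a quasi-inverse, I would start with an étale scheme $Y_0\to\XX_s$ and build inductively a compatible system of étale schemes $Y_n\to\XX_n$ with $Y_n\times_{\XX_n}\XX_{n-1}\cong Y_{n-1}$. The closed immersion $\XX_{n-1}\hookrightarrow\XX_n$ is defined by a nilpotent ideal (since $t^n\oO_\XX$ is nilpotent modulo $t^{n+1}\oO_\XX$ at each step of the filtration), and the classical infinitesimal lifting property of étale morphisms (SGA 1, Exp.~I) produces a unique étale $Y_n\to\XX_n$ lifting $Y_{n-1}\to\XX_{n-1}$. Taking the formal colimit (same underlying space as $Y_0$, structure sheaf $\varprojlim_n\oO_{Y_n}$) gives a formal $R$-scheme $\YY$ equipped with an étale morphism to $\XX$, which by construction has $\YY_s\cong Y_0$.

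For full faithfulness, given étale formal schemes $\YY,\YY'\to\XX$ and a morphism $f_0\colon\YY_s\to\YY'_s$ over $\XX_s$, the same infinitesimal lifting property applied to the étale morphism $\YY'_n\to\XX_n$ produces a unique lift $f_n\colon\YY_n\to\YY'_n$ extending $f_{n-1}$; these assemble to a unique formal morphism $f\colon\YY\to\YY'$ with $f|_{\YY_s}=f_0$. Essential surjectivity follows from the construction above, and together with full faithfulness this gives the equivalence.

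The main obstacle I anticipate is showing that the formal scheme $\YY$ produced by the colimit is actually \emph{stft} over $\XX$, not merely a formal scheme in a loose sense. This is local on $\XX$, so one reduces to the case $\XX=\spf(A)$ with $A$ topologically finitely generated over $R$ and $Y_0=\spec(B_0)$ with $B_0$ étale over $A/tA$. Writing $B_0$ locally in standard étale form $(A/tA)[T]_g/(P)$ with $P$ monic and $P'$ a unit modulo $g$, one lifts $P$ and $g$ term-by-term to $A\{T\}$ (using completeness of $A$ in the $t$-adic topology) to obtain $B=A\{T\}_{\tilde g}/(\tilde P)$, which is topologically finitely generated over $A$ and whose $t$-adic completion realizes $\varprojlim_n(B_0\otimes_{A/tA}A/t^nA)$ by the uniqueness of the étale lift. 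Gluing these local presentations along overlaps, again using the uniqueness of lifts to identify the transition data, produces the desired stft formal scheme $\YY$.
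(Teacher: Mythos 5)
Your proof is correct and matches the standard argument behind Berkovich's Lemma~2.1, which is all the paper invokes (it offers no proof of its own): infinitesimal invariance of the \'etale site applied to the nilpotent thickenings $\XX_{n-1}\hookrightarrow\XX_n$, passage to the formal colimit, and a local check of topological finite presentation via standard \'etale form. The only nit is a harmless off-by-one in your indexing, since with the paper's convention $\XX_s=\XX_1$, so the induction should start from $Y_1\to\XX_1$ rather than $Y_0$.
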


\begin{lem} (\cite[Lemma 2.2]{Ber4})\label{key-lemma1}
Let $\phi: \YY\to \XX$ be an \'etale morphism of formal schemes.
Then  $$\phi_{\eta}(\YY_\eta)=sp^{-1}(\phi_s(\YY_s)).$$
\end{lem}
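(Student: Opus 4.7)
The plan is to prove the two inclusions separately. The inclusion $\phi_\eta(\YY_\eta)\subseteq sp^{-1}(\phi_s(\YY_s))$ is essentially functorial: the specialization map is natural with respect to morphisms of \stft\ formal $R$-schemes, so $sp\circ \phi_\eta=\phi_s\circ sp$. Hence for any $y\in\YY_\eta$, $sp(\phi_\eta(y))=\phi_s(sp(y))\in\phi_s(\YY_s)$, and therefore $\phi_\eta(y)\in sp^{-1}(\phi_s(\YY_s))$. This can be verified on affine charts $\XX=\spf A$, $\YY=\spf B$ using the description of $sp$ in \S 2.2 via reduction of the bounded character attached to a semi-norm.

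For the reverse inclusion $sp^{-1}(\phi_s(\YY_s))\subseteq \phi_\eta(\YY_\eta)$, I would work locally, reducing via a compatible pair of affine covers $\XX=\bigcup\spf A_i$ and $\YY=\bigcup\spf B_j$ to the case $\XX=\spf A$, $\YY=\spf B$ with $B$ topologically finitely presented and \'etale over $A$. Fix $x\in\XX_\eta$ with $\bar x:=sp(x)\in\phi_s(\YY_s)$, and choose $\bar y\in\YY_s$ with $\phi_s(\bar y)=\bar x$. The point $x$ corresponds to a bounded character $\chi_x:A\to \hH(x)^\circ$ whose reduction $\tilde\chi_x:A/tA\to \widetilde{\hH(x)}$ has kernel $\bar x$; equivalently, it is encoded by a morphism $\spec(\hH(x)^\circ)\to\XX$ sending the closed point to $\bar x$ and the generic point to $x$. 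Since $\phi$ is \'etale at $\bar y$ over $\bar x$ and $\hH(x)^\circ$ is Henselian (being the valuation ring of a complete non-archimedean field), the standard lifting property of \'etale morphisms against Henselian local rings would produce a lift $\spec(\hH(x)^\circ)\to \YY$ through $\bar y$, whose generic point defines the desired $y\in\YY_\eta$ with $\phi_\eta(y)=x$.

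The main obstacle is that this lifting requires the residue field extension $\kappa(\bar y)/\kappa(\bar x)$ to embed into $\widetilde{\hH(x)}/\kappa(\bar x)$, which need not hold a priori. The fix, which is the technical heart of the argument, is to pass to a finite separable extension $K/\hH(x)$ accommodating the separable extension $\kappa(\bar y)/\kappa(\bar x)$ (separability comes from \'etaleness of $\phi_s$); Hensel's lemma applied to the defining \'etale equation of $B$ over $A$ then produces a lift $\spec(K^\circ)\to\YY$. The corresponding Berkovich point of $\YY_\eta\hat\otimes_\kk K$ descends under the canonical surjection $\YY_\eta\hat\otimes_\kk K\twoheadrightarrow \YY_\eta$ of underlying topological spaces to a genuine point $y\in\YY_\eta$, and its image in $\XX_\eta$ is $x$ by construction. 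Gluing the local lifts over the chosen covers of $\XX$ and $\YY$ (the lift is unique once $\bar y$ is fixed, so local compatibilities are automatic) yields the inclusion $sp^{-1}(\phi_s(\YY_s))\subseteq\phi_\eta(\YY_\eta)$ and completes the proof.
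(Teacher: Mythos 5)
The paper states this lemma with a citation to Berkovich's ``Vanishing cycles for formal schemes'' (Lemma~2.2 there) and offers no proof, so there is no in-paper argument to compare against; the judgement is on your argument versus Berkovich's original.

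Your argument is correct. The easy inclusion does come from $sp\circ\phi_\eta=\phi_s\circ sp$. For the hard inclusion, lifting the bounded character $\chi_x\colon A\to\hH(x)^\circ$ along the \'etale map using Henselianity of $\hH(x)^\circ$ (the valuation ring of the complete field $\hH(x)$) is exactly the right move, and you identify the real obstruction --- $\kappa(\bar y)$ may fail to embed into $\widetilde{\hH(x)}$ over $\kappa(\bar x)$ --- together with its fix: pass to a finite \emph{unramified} (not merely separable) extension $K/\hH(x)$ whose residue field absorbs $\kappa(\bar y)$; such a $K$ exists precisely because $\kappa(\bar y)/\kappa(\bar x)$ is finite separable and $\hH(x)^\circ$ is Henselian. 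Two tightenings: once you have the lift $B\to K^\circ$, the bounded multiplicative seminorm it induces on $B\otimes_R\kk$ directly defines a point $y\in\YY_\eta$ with $\phi_\eta(y)=x$ (because the norm on $K$ restricts to that of $\hH(x)$), so the detour through $\YY_\eta\hat\otimes_\kk K$ and its surjection onto $\YY_\eta$ is unnecessary; and there is nothing to glue, since the reverse inclusion is checked one Berkovich point at a time and the local argument already produces a preimage for each $x$.

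Berkovich's own argument takes a somewhat more global route: after reducing to affine $\XX$, $\YY$, he factors $\phi_s$ (and, via Lemma~\ref{lem_functor_special_fiber}, $\phi$) as an open immersion followed by a finite \'etale morphism; for the finite case $\phi_\eta$ is a finite morphism of affinoid spaces, hence proper with Zariski-closed image in $\XX_\eta$, and he verifies the inclusion only on the dense set of points with $[\hH(x):\kk]<\infty$, where Hensel's lemma applies without enlarging $\hH(x)$. Your version trades that topological input (closedness of the image, density of rigid points) for a uniform pointwise lift at the cost of the auxiliary extension $K$. Both are valid; yours is more direct and elementary, Berkovich's needs less work at the Hensel step but leans on the structure theory of finite morphisms of affinoid spaces.
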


The \'etale morphim between $\kk$-analytic spaces can be similarly defined, see \cite[\S 2]{Ber4}. 
Denote by $\XX_{\eta_{\et}}$ the \'etale site of $\XX_\eta$, which is the site induced from the Grothendieck topology of all \'etale morphisms of $\kk$-analytic spaces. 
Let  $\XX^{\sim}_{\eta_{\et}}$ be the category of sheaves of sets on the \'etale site $\XX_{\eta_{\et}}$.

For two Berkovich $\kk$-analytic spaces $\XX_\eta$ and $\YY_\eta$. A morphism $\psi: \YY_\eta\to\XX_\eta$ is called ``quasi-\'etale" if for every point $y\in \YY_\eta$ there exist affinoid domains 
$V_{\eta,1}, \cdots, V_{\eta,n}\subset \YY_\eta$ such that the union $V_{\eta,1}\cup\cdots\cup V_{\eta,n}$ is a neighbourhood of $y$ and each $V_{\eta,i}$ is identified with an affinoid domain in a $\kk$-analytic space \'etale over $\XX_\eta$.  

A basic fact from \cite[Proposition 2.3]{Ber4} is that an \'etale morphism $\phi: \YY\to \XX$ of formal schemes induces a quasi-\'etale morphism $\phi_\eta: \YY_\eta\to\XX_\eta$ over the generic fibres.
Denote by $\XX_{\eta_{q\et}}$ the quasi-\'etale site of $\XX_\eta$, which is the site induced from the Grothendieck topology of all quasi-\'etale morphisms of $\kk$-analytic spaces.
Let $\XX^{\sim}_{\eta_{q\et}}$ be the category of sheaves of sets on the quasi-\'etale site $\XX_{\eta_{q\et}}$. 
There exists a natural morphism of sites
\begin{equation}
\mu:  \XX^{\sim}_{\eta_{q\et}}\to \XX^{\sim}_{\eta_{\et}}, 
\end{equation}
which is understood as the pullback. 

Let $\YY_s\mapsto\YY$ be the functor obtained from inverting the functor in Lemma \ref{lem_functor_special_fiber}.
Then from Lemma \ref{key-lemma1} and the fact that \'etale morphisms on formal schemes induce quasi-\'etale morphisms on generic fibres, the composition of the functors $\YY_s\mapsto\YY$ and 
$\YY\mapsto\YY_\eta$ gives a morphism of sites
\begin{equation}
\nu: \XX_{\eta,q\et}\to\XX_{s,\et}.
\end{equation}

Let 
\begin{equation}
\Theta=\nu_*\mu^*: \XX^{\sim}_{\eta_{\et}}\to\XX^{\sim}_{\eta_{q\et}}\to\XX^{\sim}_{s_{\et}}
\end{equation}
be the functor obtained from composition. 
Let $F$ be an \'etale abelian torsion sheaf over $\XX_\eta$.  Let $\XX_{\overline{\eta}}=\XX_\eta\otimes \kk^s$, and $\overline{F}$ the pullback of $F$ to $\XX_{\overline{\eta}}$.
Then define the nearby cycle functor $\Psi_{\eta}$ by 
\begin{defn} The nearby cycle functor is defined as 
$$\Psi_{\eta}(F)=\Theta_{\widehat{\kk^s}}(\overline{F}).$$
The vanishing cycle functor $\Phi_{\eta}$ is defined to be the cone 
$$\cone[F\to\Psi_{\eta}(F)].$$
\end{defn}

Let $x\in \XX_s$ be a point and $\qq_l$ be an \'etale abelian sheaf. Then the stalk 
$$R^{i}\Psi_{\eta}(\qq_l)_x\cong H^i_{\et}(\FF_x,\qq_l)$$ 
is isomorphic to the \'etale cohomology $H^i_{\et}(\FF_x,\qq_l)$ of the analytic Milnor fibre $\FF_x$.  The stalk of the vanishing cycle
$$R^{i}\Phi_{\eta}(\qq_l)_x\cong \tilde{H}^i_{\et}(\FF_x,\qq_l)$$  
is isomorphic to the  reduced \'etale cohomology $\tilde{H}^i_{\et}(\FF_x,\qq_l)$ of the analytic Milnor fibre $\FF_x$.

\subsection{ Berkovich comparison Theorem}\label{sec_Berkovich_comparison_theorem}

 Let $\hat{F}$ be the corresponding \'etale abelian sheaf on $\XX_\eta$.
 
 \begin{prop}(\cite[Theorem 5.1]{Ber4})\label{comparison}
 There exists an isomorphism for an \'etale abelian torsion sheaf $F$ over $X_\eta$:
 $$ i^*(R^qj_*(F))\cong R^q(\Psi_\eta(\hat{F})).$$
 \end{prop}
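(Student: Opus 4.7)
The plan is to prove the comparison by reducing to a stalkwise statement at each geometric point $\bar{x}$ of $\XX_s$, and then identifying both stalks with the étale cohomology of a Milnor-type fiber (algebraic on one side, analytic on the other). Concretely, let $X$ be an algebraic $R$-scheme whose $t$-adic completion is $\XX$, and assume $F$ extends to an étale torsion sheaf on $X_\eta$ with $\hat F$ its pullback to $\XX_\eta$. Both sides of the asserted isomorphism are sheaves on $\XX_s = X_s$, so it suffices to compare their stalks at every geometric point. The construction naturally produces a comparison morphism $i^*R^qj_*F \to R^q\Psi_\eta(\hat F)$ by applying the functor $\Theta = \nu_*\mu^*$ of the previous subsection and observing that a henselization of $X$ at $\bar x$ maps canonically to $\XX$.

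First I would identify the two stalks. On the algebraic side, by the standard computation of $i^*Rj_*$ via henselization, the stalk $(i^*R^qj_*F)_{\bar x}$ is $H^q_{\et}(\spec(\oO_{X,\bar x}^{sh}) \times_S \bar\eta,\, F)$, i.e. the cohomology of the generic fiber of the strict henselization at $\bar x$ with coefficients in $F$. On the analytic side, by the stalk formula recalled right before the theorem, $(R^q\Psi_\eta(\hat F))_{\bar x}$ equals $H^q_{\et}(\FF_{\bar x},\, \hat F)$, the étale cohomology of the analytic Milnor fiber $\FF_{\bar x} = sp^{-1}(\bar x)$. Hence the theorem reduces to an isomorphism
\begin{equation*}
H^q_{\et}\bigl(\spec(\oO_{X,\bar x}^{sh}) \times_S \bar\eta,\, F\bigr) \;\xrightarrow{\sim}\; H^q_{\et}(\FF_{\bar x},\, \hat F).
\end{equation*}

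Next I would establish this Milnor-fiber comparison. The key geometric fact is that $\FF_{\bar x}$, viewed as the generic fiber of the formal completion $\widehat{X}_{/\bar x}$ of $X$ at $\bar x$, is the non-archimedean analytic avatar of the generic fiber of the strict henselization: the formal completion factors through the henselization, and Berkovich's étale comparison theorems (his formal/algebraic comparison for étale cohomology of formal schemes with torsion coefficients, together with the invariance of étale cohomology under henselization for constructible sheaves) give the required isomorphism. I would first prove this for constant torsion coefficients on a smooth or strictly semistable local model and then extend to arbitrary étale torsion sheaves $F$ by a standard devissage: use the local-to-global spectral sequence and the fact that torsion sheaves are filtered colimits of constructible ones, together with the compatibility of both $\Psi_\eta$ and $i^*Rj_*$ with filtered colimits on noetherian sites.

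The main obstacle is the last step, namely the cohomological comparison between the generic fiber of the strict henselization and the analytic Milnor fiber: both are "small" non-quasi-compact objects with subtle étale cohomology, and one must invoke Berkovich's comparison results (and the compatibility of the specialization map with henselization) rather than a purely formal argument. To package this cleanly I would check, via the morphism of sites induced by the canonical map from the henselized generic fiber to $\FF_{\bar x}$, that the comparison morphism constructed functorially from $\Theta = \nu_*\mu^*$ agrees stalkwise with the Berkovich isomorphism; this compatibility, combined with the devissage above, yields the desired isomorphism $i^*R^qj_*F \cong R^q\Psi_\eta(\hat F)$ globally on $\XX_s$.
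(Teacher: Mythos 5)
The paper does not prove this proposition at all: it is quoted verbatim from Berkovich \cite[Theorem 5.1]{Ber4}, and the hard analytic content lives entirely in that reference. Measured against this, your proposal has a genuine gap: it is circular at exactly the decisive step. Your reduction to stalks is fine in spirit — both sides are sheaves on $\XX_s=X_s$, the stalk of $i^*R^qj_*F$ at a geometric point $\bar x$ is computed by the (strictly) henselian local ring, and the stalk of $R^q\Psi_\eta(\hat F)$ is $H^q_{\et}(\FF_{\bar x},\hat F)$, the cohomology of the analytic Milnor fiber. But the resulting stalkwise statement, comparing $H^q_{\et}\bigl(\spec(\oO_{X,\bar x}^{sh})\times_S\eta,F\bigr)$ with $H^q_{\et}(\FF_{\bar x},\hat F)$, \emph{is} the theorem; it is not a known input one can "invoke". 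At this point you appeal to "Berkovich's comparison theorems (formal/algebraic comparison for étale cohomology of formal schemes, invariance under henselization)", which is precisely the result being proved. Berkovich's actual argument is a long independent induction (quasi-étale coverings, elementary fibrations, reduction to curves/discs, continuity theorems for the cohomology of germs), none of which is reproduced or replaced by your outline, so the proposal reduces the theorem to itself.

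Two smaller points. First, your devissage step ("prove it for constant coefficients on a smooth or strictly semistable local model, then extend to arbitrary étale torsion sheaves by a local-to-global spectral sequence and filtered colimits") is not a routine reduction: passing from constant to general torsion coefficients requires handling sheaves pushed forward from finite (quasi-)étale covers and the compatibility of $\Theta=\nu_*\mu^*$ with such pushforwards, which is a nontrivial part of the machinery, not a formal colimit argument. Second, you should be careful about base change: as defined in the paper, $\Psi_\eta(\hat F)=\Theta_{\widehat{\kk^s}}(\overline F)$ involves extension to $\widehat{\kk^s}$, whereas your stalk computation of $i^*R^qj_*F$ is over $\eta$ itself; the comparison (and the Galois equivariance) has to be formulated after the same base change on both sides, a point your outline glosses over. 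If your goal is a self-contained proof rather than a reduction to \cite{Ber4}, the missing ingredient is precisely the stalkwise comparison between the henselian generic fiber and $\FF_{\bar x}$, and that requires reproducing Berkovich's inductive argument, not citing it.
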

 
 Let $\hat{F}$ be an \'etale abelian constructible sheaf over the analytic Milnor fibre $\FF_x$, and 
$H^q_{\et}(\FF_x, \hat{F})$ the \'etale cohomology of $\FF_x$.
 
 \begin{prop}(\cite{Jiang2})
 Let $f$ be a regular function in $A=\kappa[x_1,\cdots,x_n]$ and let $F_x$ be the topological Milnor fibre of $f$ at $x$. 
 Suppose that the formal scheme $\XX$ is the $t$-adic completion of the morphism 
 $f: \spec(A)\to\spec(\kappa[t])$. Then 
$$H^i(F_x,\kappa)\cong H^i(\FF_x, \zz_l)\otimes \kappa.$$
This isomorphism is compatible with the monodromy action.
 \end{prop}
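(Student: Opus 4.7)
The plan is to chain together three comparison theorems, each already well-known, and then verify equivariance for the monodromy actions at each stage. The underlying isomorphism will essentially be formal; the delicate point is compatibility of the various monodromies.

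First, apply the Berkovich comparison Proposition \ref{comparison} (from \cite{Ber4}), stated immediately above, to the constant \'etale torsion sheaf $F = \zz/\ell^n$ on the algebraic generic fibre $X_\eta$ of the scheme $X = \spec(A)$ viewed over $\spec(\kappa[t])$ via the morphism $f$. This yields a canonical isomorphism of stalks
$$(i^*R^q j_* \zz/\ell^n)_x \cong R^q \Psi_\eta(\widehat{\zz/\ell^n})_x,$$
where $i,j$ are the algebraic closed/open immersions of $X_s$ and $X_\eta$ into $X$. By the stalk formula already recorded in the text, the right-hand side equals $H^q_{\et}(\FF_x, \zz/\ell^n)$. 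Taking the inverse limit over $n$ produces the $\ell$-adic statement $(i^*R^qj_*\zz_\ell)_x \cong H^q_{\et}(\FF_x, \zz_\ell)$.

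Second, invoke the standard SGA 7 description of algebraic nearby cycles: the stalk $(i^*R^qj_*\zz_\ell)_x$ computes the \'etale cohomology of the algebraic Milnor fibre at $x$, namely the geometric generic fibre of the strict henselisation of $X$ at $x$ over the henselisation of $\spec(\kappa[t])$ at the origin. Third, since $\kappa = \cc$, Artin's comparison theorem together with the identification of the algebraic Milnor fibration with the classical topological one (M.~Artin, SGA 4; see Deligne's appendix to SGA 7~II) identifies this \'etale cohomology with the singular cohomology $H^q(F_x, \zz_\ell)$ of the topological Milnor fibre. Tensoring with $\kappa = \cc$ then gives the claimed isomorphism $H^q(F_x, \kappa) \cong H^q_{\et}(\FF_x, \zz_\ell) \otimes \kappa$.

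For the compatibility with monodromy, each of the three isomorphisms above is equivariant for the natural $G(\kk^s/\kk)$-action. The Berkovich comparison intertwines the Galois actions on $\XX_{\overline{\eta}}$ and on the algebraic $\overline{X}$ by construction of $\Psi_\eta$; the SGA 7 stalk formula is Galois-equivariant by definition; and under Artin's comparison the resulting Galois action corresponds, via the identification of a topological generator of $\widehat{\zz}(1)$ with a small loop around $0 \in \spec(\kappa[t])$, to the classical geometric monodromy automorphism of the Milnor fibration. The main obstacle will be pinning down precisely this last identification --- that the Galois action on the algebraic Milnor fibre, transported through \'etale-to-singular comparison, coincides with the topological monodromy on $F_x$. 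Although a well-documented feature of the nearby-cycles formalism over $\cc$, tracking it carefully across the three successive isomorphisms is the only nontrivial verification; everything else reduces to citing the three comparison results.
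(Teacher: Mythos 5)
The paper states this proposition as a citation to \cite{Jiang2} and does not reprove it, so there is no in-text proof to compare against; but the proposition is placed directly after Berkovich's comparison (Proposition~\ref{comparison}) and the stalk formula $R^i\Psi_\eta(\qq_\ell)_x\cong H^i_{\et}(\FF_x,\qq_\ell)$, and your three-step chain --- Berkovich comparison, SGA~7 nearby cycles, Artin/Deligne comparison over $\cc$ --- is precisely what that machinery is set up to deliver. This is the standard route, parallel to Nicaise--Sebag \cite{NS}, and is correct.

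Two small technicalities worth tightening when you write it out. Proposition~\ref{comparison} and the SGA~7 functor $\Psi_\eta$ are formulated for a scheme over the complete trait $\spec(R)$ with $R=\kappa[\![t]\!]$, not over $\spec(\kappa[t])$, so you should base change $\spec(A)$ along $\kappa[t]\to R$ before invoking them; this is harmless because the $t$-adic formal completion $\XX$ and the strict henselisation at a geometric point of the special fibre (and hence both sides of each comparison) are insensitive to this base change, but it should be said. And the concluding $\otimes\,\kappa$ uses a choice of embedding $\zz_\ell\hookrightarrow\cc$, so the resulting isomorphism is not canonical; the asserted monodromy compatibility is nevertheless unambiguous because Deligne's comparison already matches the geometric monodromy on $H^*(F_x,\zz_\ell)$ with the action of a topological generator of tame inertia on the algebraic nearby cycles $\zz_\ell$-linearly, before any complex scalars are introduced.
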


 \subsection{Formal model of non-archimedean analytic spaces}
 
 \begin{defn}
 Let $X$ be  a {\em stft} quasi-compact  non-archimedean $\kk$-analytic space.  A {\em formal model} $\XX$ of $X$ consists of a {\em stft} formal scheme $\XX$ over $R$ such that the generic fiber $\XX_\eta$ is isomorphic to $X$. 
 \end{defn}
 
 We recall the simple normal crossing (SNC for short) formal schemes. 
 \begin{defn}
 A {\em stft} formal scheme $\XX$ is SNC if:
 \begin{enumerate}
 \item  Every point of $\XX$ has an open affine neighborhood  $\UU$ such that 
 $\UU\to\spf(R)$ factors through an \'etale morphism
 $$\phi: \UU\to \spf(R\{x_0,\cdots, x_n, x^{-1}_{d+1},\cdots, x_n^{-1}/(x_0^{m_0}\cdots x_{d}^{m_d}-\varpi))$$
 where $(m_0,\cdots, m_d)\in\zz_{>0}^{d+1}$.  We assume that any $m_i$ does not equal to the characteristic of $\kappa$.
 \item All the intersections of the irreducible components of the special fiber $\XX_s$ are either empty or geometrically irreducible. 
 \end{enumerate}
 \end{defn}
 
 Let $X$ be a {\stft} quasi-compact  non-archimedean $\kk$-analytic space and let $\XX$ be a SNC formal model of $X$.  If the characteristic $\mbox{Ch}(\kappa)=0$, then Temkin \cite{Temkin} shows that the SNC formal model $\XX$ always exists by resolution of singularities.  Let 
 $$\{D_i| i\in I_{\XX}\}$$
 denote the set of irreducible components of $\XX_s^{\red}$ with the reduced scheme structure.  For any $I\subset I_{\XX}$, let 
 $D_{I}=\cap_{i\in I}D_i$.  We denote by $m_i$ the multiplicity of $D_i$ in $\XX_s$. 
 
 We recall the Clemens polytope $S_{\XX}$ for the formal scheme $\XX$. 
 \begin{defn}
 The {\em Clemens polytope} $S_{\XX}$ for a SNC formal scheme $\XX$ is the simplicial subcomplex of the simplex 
 $\Delta^{I_{\XX}}$ such that for every non-empty subset $I\subset I_\XX$, the simplex $\Delta^{I}$ is a face of $S_{\XX}$ if and only if $D_{I}$ is non-empty.
 \end{defn}
 
 From \cite{Berkovich5}, there is a deformation retraction map
 $$\tau: \XX_\eta\to S_{\XX}.$$
 According to \cite{Tony_Yu2}, this retraction map corresponds to the Gross fibration and is the non-archimedean version of the SYZ-fibration. 
 
 \begin{defn}
 A {\em simple function} $\varphi$ on the Clemens polytope $S_{\XX}$ is a real valued function that tis affine on every simplicial face of $S_\XX$.  For $i\in I_{\XX}$, let $\varphi(i)$ be the value of $\varphi$ at the vertex $i$.
 
Let $\Div_0(\XX)_{\rr}$ be the vector space of $\rr$-divisors on $\XX$, which is, by definition, the Cartier divisors on $\XX$ supported on $\XX_s$.   $\Div_0(\XX)_{\rr}$ has the dimension of $|I_\XX|$.  The effective divisors $D$ on $\XX$ can be similarly defined and is locally given by a function $u$.  The valuation $\val(u(x))$ defines a continuous function on $\XX_\eta$ which we denote it by $\varphi_{D}^{0}$. Then 
$$D\mapsto \varphi_{D}^{0}$$
extends by linearity to a map
from $\Div_0(\XX)_{\rr}$ to the space of continuous functions $C^{0}(\XX_\eta)$ on $\XX_\eta$. Hence we get a map
\begin{equation}\label{map_tau}
\tau: \XX_\eta\to \Div_0(\XX)_{\rr}^{*}
\end{equation} 
by 
$\langle\tau(x), D\rangle=\varphi_{D}^0(x)$. 
 \end{defn}
 
 \begin{prop}(\cite[Proposition 2.7]{Tony_Yu1})
 The map $\tau$ defined in (\ref{map_tau}) has the following properties:
\begin{enumerate}
\item The image of $\tau$ concides with $S_\XX$;
\item For any $D=\sum_i a_i D_i\in \Div_0(\XX)_{\rr}$, there exists a unique $\varphi_D$ on $S_\XX$ such that 
$\varphi_{D}^{0}=\varphi_D\circ \tau$.
\end{enumerate}
\end{prop}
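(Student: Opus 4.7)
The plan is to reduce both parts to a monomial computation in an \'etale SNC chart, and then glue. Fix $x\in\XX_\eta$ and let $I\subseteq I_\XX$ index the irreducible components $D_i$ of $\XX_s^{\red}$ containing $sp(x)$. Since $\XX$ is SNC, around $sp(x)$ there is an \'etale morphism
$$\phi\colon\UU\to\spf\bigl(R\{x_0,\ldots,x_n,x_{d+1}^{-1},\ldots,x_n^{-1}\}/(x_0^{m_0}\cdots x_d^{m_d}-\varpi)\bigr),$$
with $I=\{0,\ldots,d\}$ and $D_j$ locally cut out by $x_j$ for $j\in I$. I would first observe that for $j\notin I$ the local equation of $D_j$ is a unit at $sp(x)$, hence $\varphi_{D_j}^0(x)=0$, while for $j\in I$ one has $\varphi_{D_j}^0(x)=\val(x_j(x))>0$; the chart relation $\prod_{j\in I} x_j^{m_j}=\varpi$ together with multiplicativity of $\val$ forces the normalization
$$\sum_{j\in I} m_j\,\varphi_{D_j}^0(x)=\val(\varpi)=1.$$
This is the key computation: it shows $\tau(x)$ lies in the simplex $\Delta^I$ with barycentric coordinates $\beta_j:=m_j\varphi_{D_j}^0(x)$, and since $D_I\neq\emptyset$, $\Delta^I$ is a face of $S_\XX$. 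Thus $\tau(\XX_\eta)\subseteq S_\XX$.

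For the reverse inclusion I would exhibit an explicit preimage for each $p\in S_\XX$ via a monomial valuation. Given $p\in\Delta^I$ with barycentric coordinates $(\beta_j)_{j\in I}$, set $\alpha_j=\beta_j/m_j$; the identity $\sum m_j\alpha_j=1=\val(\varpi)$ is compatible with the chart relation, so the assignment $\val(x_j)=\alpha_j$ for $j\in I$ and $\val(x_k)=0$ otherwise extends uniquely to a monomial valuation on the chart algebra, which gives a point $x\in\UU_\eta\subseteq\XX_\eta$ with $\tau(x)=p$. The vertices of $\Delta^I$ are then realized by the divisorial points of $\XX_\eta$ associated to the $D_i$, recovering the standard correspondence between irreducible components of $\XX_s$ and type-$2$ Berkovich points. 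This completes (1).

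For part (2) I would define $\varphi_{D_i}$ to be the unique simple function on $S_\XX$ whose vertex values are $\varphi_{D_i}(j)=\delta_{ij}/m_i$, and extend by linearity $\varphi_D:=\sum_i a_i\varphi_{D_i}$ for $D=\sum_i a_i D_i$. Using the computation above, for any $x\in\XX_\eta$ with $\tau(x)\in\Delta^I$,
$$\varphi_{D_i}(\tau(x))=\sum_{j\in I}\beta_j\,\varphi_{D_i}(j)=\beta_i/m_i=\varphi_{D_i}^0(x)$$
(with $\beta_i:=0$ if $i\notin I$), and summing over $i$ gives the desired identity $\varphi_D^0=\varphi_D\circ\tau$. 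Uniqueness follows immediately from the surjectivity in (1), since a simple function on $S_\XX$ is determined by its values on the image of $\tau$, which is all of $S_\XX$.

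The main obstacle I expect is the local-to-global step: verifying that $\varphi_{D_j}^0(x)=\val(x_j(x))$ is well defined independently of the \'etale SNC chart, so that the normalization identity patches across overlaps. This reduces to showing that two local defining equations of $D_j$ at $sp(x)$ differ by a unit, and rests on the SNC hypothesis combined with Lemma~\ref{key-lemma1}, which identifies $sp^{-1}$ of a local stratum with its preimage in $\UU_\eta$ under the \'etale cover $\phi$. A secondary subtlety is to confirm that the monomial valuation constructed in (1) lifts to an actual point of the Berkovich space $\XX_\eta$ rather than merely a seminorm on the chart algebra; this again uses the \'etaleness of $\phi$ to transport the point from $\UU_\eta$ into $\XX_\eta$.
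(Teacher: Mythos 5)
This proposition is quoted directly from \cite[Proposition 2.7]{Tony_Yu1}; the paper itself supplies no proof, so there is nothing internal to compare against. Your sketch is the standard argument (essentially the one behind Yu's cited result) and is correct: the monomial computation in an \'etale SNC chart giving $\sum_{j\in I} m_j\,\varphi_{D_j}^0(x)=\val(\varpi)=1$ shows $\tau(x)\in\Delta^I\subseteq S_\XX$, surjectivity comes from Gauss-type monomial points lifted through the chart, and part (2) follows by affinely extending the vertex values $\delta_{ij}/m_i$ and using surjectivity for uniqueness. Two small caveats: ``extends uniquely to a monomial valuation'' should rather be ``the Gauss-type seminorm with these weights is multiplicative on the chart algebra'' (uniqueness of the extension is neither true in general nor needed), and the lifting of that point into $\UU_\eta$ uses that $\phi_s(\UU_s)$ is open, hence stable under generization, so it contains the generic point of the stratum through the chosen closed point of $D_I$, after which Lemma \ref{key-lemma1} applies.
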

\begin{rmk}
 The Clemens polytope $S_\XX$ is the tropicalization of the $\kk$-analytic space $X$, and it satisfies some functoriality property as in \cite[Proposition 2.9]{Tony_Yu1}.
\end{rmk}

\section{Construction of moduli of (semi)-stable sheaves over locally noetherian schemes}\label{sec_construction_noetherian_scheme}

Let $\sS$ be a locally noetherian base scheme.  We define the moduli space of semistable sheaves $\mM_{\sX}(P)$
on the scheme $\sX$ locally of finite presentation over $\sS$. 
We construct the moduli stack of semistable sheaves over a general scheme locally of finite presentation over a locally noetherian scheme $\sS$.  This will be used to construct $\kk$-analytic semistable sheaves over a $\kk$-analytic space $X$. 

We fix $\sS$ to be a locally noetherian scheme.  Let $\sSch_{\sS}$ be the category of schemes over $\sS$. 
\begin{defn}
Let $\sX/\sS$ be a projective scheme over $\sS$, and $\oO_{\sX}(1)$ be the Serre line bundle.  Fix a Hilbert polynomial $P\in \qq[z]$, define the functor
$$\mM_{\sS}(P)(\sX): (\sSch_{\sS})\to Sets$$
such that 
$$\{\sT\to \sS\}\mapsto \Big\{ \substack{\text{flat~} \sT\text{-family of semistable sheaves~}\fF\to \sT, \\
\text{on~} \sX~ \text{with Hilbert polynomial~}P} \Big\}/\sim$$
where the equivalent relation $\sim$ is given by:
$$\fF\sim \fF^\prime, \fF, \fF^\prime\in\mM(\sT) \Leftrightarrow \fF\cong \fF^\prime\otimes p^\star(L), L\in \pic(\sT)$$
where $p: \fF\to \sT$ is the family the semistable sheaves. 
\end{defn}

\begin{rmk}
We recall the semistability of coherent sheaves here.  The sheaf $\fF$ over $\sX/\sS$ is (semi)-stable if $\fF$ is pure and any proper subsheaf $\fF^\prime\subset \fF$ implies that $P(\fF^\prime)< (\leq)P(\fF)$.

A morphism of families of semistable sheaves 
$$(\fF\stackrel{p}{\rightarrow} \sT)\to (\fF^\prime\stackrel{q}{\rightarrow} \sT)$$
is given by a commutative diagram:
$$
\xymatrix{
\fF\ar[r]\ar[d]&\fF^\prime\ar[d]\\
\sT\ar[r]^{f}&\sT^\prime
}
$$
such that $\fF=f^\star(\fF^\prime)$. 
\end{rmk}

Then $\mM_{\sS}(P)(\sX)$ is a category of semistable sheaves over $\sX/\sS$ fibered by groupoids over $\sSch_{\sS}$. 

\begin{thm}\label{moduli_locally_noetherian_scheme}
Let $\sX$ be a scheme locally of finite presentation over a locally noetherian scheme $\sS$. Then the functor 
$\mM_{\sS}(P)(\sX)$ is an algebraic stack locally of finite presentation over $\sS$. 
\end{thm}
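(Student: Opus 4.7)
The plan is to apply Artin's criteria \cite{Artin} for representability by an algebraic stack locally of finite presentation. The first step is to verify the descent (stack) condition for the fpqc topology: a family of semistable sheaves with fixed Hilbert polynomial $P$ is in particular a $\sT$-flat coherent sheaf on $\sX_\sT$, so fpqc descent is inherited from the classical descent theory for quasi-coherent sheaves; semistability and the Hilbert polynomial are compatible with base change, so they descend. The equivalence relation identifying families up to twist by a line bundle from the base is handled by a standard stackification of the prestack of actual families.

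Second, I would establish representability of the diagonal. For two families $\fF,\fF'$ over $\sT$, the functor $\underline{\Hom}_{\sT}(\fF,\fF')$ is representable by a linear $\sT$-scheme (Grothendieck--Nitsure, after spreading out to a noetherian affine base and reducing to the projective case via boundedness), and $\underline{\Isom}_\sT(\fF,\fF')$ is an open subscheme. Quotienting by the natural $\pic(\sT)$-action then yields the diagonal of $\mM_\sS(P)(\sX)$ as an algebraic space.

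Third, I would verify the remaining Artin conditions: local finite presentation via standard limit arguments (EGA~IV.8), deformation theory via the $\Ext$-calculus (tangent space $\Ext^1(\fF,\fF)$, obstructions in $\Ext^2(\fF,\fF)$) which gives the Schlessinger--Rim conditions, openness of versality via the smoothness criterion for the tangent--obstruction complex, and effectivity of formal deformations via Grothendieck's existence theorem (EGA~III). The last of these requires some form of properness of the support, which is supplied by the boundedness of families of semistable sheaves with fixed Hilbert polynomial (Grothendieck--Simpson--Langer).

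The principal obstacle is reconciling the generality of the statement, in which $\sX$ is only assumed locally of finite presentation over $\sS$, with the preceding definition, which uses a polarization $\oO_\sX(1)$ and thus implicitly presupposes $\sX/\sS$ projective. The resolution is to work locally on $\sS$ where a polarization is available, check Artin's conditions there, and then argue that the resulting local algebraic stacks glue to a global one. The delicate bookkeeping lies in showing that boundedness, the representability of $\underline{\Hom}$, and Grothendieck existence all continue to hold in the locally finite presentation setting once one has trapped the supports of semistable sheaves with Hilbert polynomial $P$ inside a proper subscheme; this is where I expect the main technical work of the proof to concentrate.
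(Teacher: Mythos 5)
Your proposal follows essentially the same route as the paper: both rest on verifying Artin's representability criteria from \cite{Artin} (Theorem 5.3), with the deformation--obstruction theory given by $\Ext^1(\eE,\eE)$ and $\Ext^2(\eE,\eE)$ and effectivity of formal objects supplied by Grothendieck existence. The paper's verification is terser---it checks the pushout condition for Artinian thickenings, limit preservation, and a quasi-separation condition on automorphisms, and it silently passes over the polarization/boundedness point you single out as the principal obstacle---but the overall strategy coincides with yours.
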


The proof of Theorem \ref{moduli_locally_noetherian_scheme} is based on checking the conditions $(1), (2), (3), (4)$ in Theorem 5.3 of \cite{Artin}. We list them as lemmas.
First from Tag07WP in the stack project  \cite{Stack_Project}, 
\begin{lem}
Let $\sT_1, \sT_2, \sT, \sT^\prime$ be spectra of local Artinian rings of finite type over $\sS$. Assume that $\sT\to \sT_1$ is a closed immersion and that 
$$
\xymatrix{
\sT\ar[r]\ar[d]& \sT_1\ar[d]\\
\sT_2\ar[r]& \sT^\prime=\sT_1\cup_{\sT}\sT_2
}
$$
is a pushout diagram in $\sSch_{\sS}$, then the functor of fiber categories 
$$\mM_{\sS}(P)(\sT^\prime)\stackrel{\sim}{\rightarrow} \mM_{\sS}(P)(\sT_1)\times_{\mM_{\sS}(P)(\sT)} \mM_{\sS}(P)(\sT_2)$$
is an equivalence of groupoids. 
\end{lem}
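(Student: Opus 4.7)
\vspace{5pt}\noindent\textbf{Proof plan.}
The plan is to reduce the statement to a Milnor-type patching of flat modules over a ring pushout (a standard result of Ferrand), and then check that flatness, the fixed Hilbert polynomial, and the semistability condition are all preserved under that gluing, so that the fiber-product description lifts from modules to objects of $\mM_{\sS}(P)(\sX)$.

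Write $\sT_i=\spec(A_i)$ and $\sT=\spec(A)$ with $A_1\twoheadrightarrow A$ surjective (the ring-theoretic translation of the closed immersion $\sT\hookrightarrow \sT_1$), so that the pushout is $\sT^\prime=\spec(A^\prime)$ with $A^\prime:=A_1\times_A A_2$. After base-changing to $\sX$, I would first reduce to the affine setting by working on affine opens $\spec(B)\subset\sX$, where we deal with modules over $B\otimes A^\prime=(B\otimes A_1)\times_{B\otimes A}(B\otimes A_2)$. I then invoke Ferrand's gluing theorem, which yields an equivalence between finitely generated $A^\prime$-modules flat over $A^\prime$ and triples $(M_1, M_2, \phi)$ of finitely generated flat $A_i$-modules together with an isomorphism $\phi\colon M_1\otimes_{A_1}A\cong M_2\otimes_{A_2}A$; the inverse sends such a triple to the fiber product $M^\prime=M_1\times_{M}M_2$. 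Gluing along an affine cover of $\sX$ then yields the corresponding equivalence for coherent sheaves on $\sX_{\sT^\prime}$ flat over $\sT^\prime$.

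It then remains to check that this equivalence respects the Hilbert polynomial, the semistability condition, and the line-bundle twist equivalence relation defining $\mM_{\sS}(P)$. Since all four rings share a common residue field (the closed point of the local Artinian $\sT_i$), a flat family over $\sT^\prime$ has Hilbert polynomial $P$ iff its restrictions to $\sT_1$ and $\sT_2$ do, so the $P$-condition is preserved. Semistability is a property of the sheaf on the unique closed fiber, which coincides for $\sT^\prime$ and each $\sT_i$, hence is automatically inherited. Each of $\sT^\prime, \sT_1, \sT_2, \sT$ is local so $\pic=0$, rendering the line-bundle-twist equivalence relation trivial on every groupoid in question; essential surjectivity of the comparison functor therefore reduces to Ferrand's patching, and full faithfulness reduces to the obvious statement that a morphism of $A^\prime$-modules is the same as a compatible pair of morphisms over $A_1$ and $A_2$ which agree after restriction to $A$. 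The hard part will be ensuring flatness and finite generation of the patched module $M^\prime$ over $A^\prime$; this is where the surjectivity $A_1\twoheadrightarrow A$ is essential, and it is precisely the content of Ferrand's theorem. Once that is granted, the remaining verifications are formal because we stay throughout within spectra of local Artinian rings of finite type.
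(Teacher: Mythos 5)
Your overall strategy coincides with the paper's: the paper disposes of this lemma by citing the Stacks Project results on pushouts along closed immersions and on gluing quasi-coherent modules over them, which is exactly the Ferrand-type patching you invoke, and your supplementary checks (Hilbert polynomial and semistability are conditions on the closed fibre, $\pic$ of a local Artinian scheme is trivial) are the right ones. However, as written your affine reduction contains a step that fails. You assert $B\otimes A^\prime=(B\otimes A_1)\times_{B\otimes A}(B\otimes A_2)$ for an affine piece $\spec(B)\subset\sX$; this needs flatness of $B$ over the base, which is not among the hypotheses ($\sX$ is only locally of finite presentation over $\sS$). Concretely, take $\sS=\spec(k[x])$, $B=k[x]/(x)$, $A_1=k[x]/(x^2)$, $A_2=k[y]/(y^2)$ with $x\mapsto y$, and $A=k$, with the evident surjections to $A$. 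Then $A^\prime\cong k[u,v]/(u,v)^2$ with $x$ acting as $u+v$, so $B\otimes A^\prime=A^\prime/(u+v)A^\prime$ has length $2$, while $(B\otimes A_1)\times_{B\otimes A}(B\otimes A_2)\cong k$ has length $1$: the natural map has nontrivial kernel, the obstruction being the connecting map out of $\Tor_1^{k[x]}(B,A)\neq 0$.

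A second, related imprecision: the absolute form of Ferrand's theorem you quote (finitely generated $A^\prime$-modules flat over $A^\prime$) is not the statement the moduli functor needs, since families are $\oO_{\sX\times_\sS\sT^\prime}$-modules flat over $\sT^\prime$, i.e.\ $B^\prime$-modules flat over $A^\prime$ for the $A^\prime$-algebra $B^\prime=B\otimes A^\prime$; such sheaves (e.g.\ ideal sheaves) are in general not flat over $\sX_{\sT^\prime}$ itself, so even granting your ring identity the absolute statement would glue the wrong class of modules. The repair is to use the relative version of the gluing, which is precisely the content of the Stacks Project lemmas the paper cites: for any $A^\prime$-algebra $B^\prime$, the category of $B^\prime$-modules flat over $A^\prime$ is equivalent to the fibre product of the categories of $B^\prime\otimes_{A^\prime}A_1$-modules flat over $A_1$ and $B^\prime\otimes_{A^\prime}A_2$-modules flat over $A_2$, glued over $B^\prime\otimes_{A^\prime}A$; no identification of $B^\prime$ with a fibre product of rings is required, and one has $B^\prime\otimes_{A^\prime}A_i=B\otimes A_i$, so the restriction functors are the expected ones. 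With that substitution, the rest of your argument — flatness and finiteness of the glued module, invariance of $P$ under the residue-field extensions involved, semistability as a closed-fibre condition, and triviality of the line-bundle twist on local bases — goes through and recovers the paper's proof.
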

\begin{proof}
The pushout property here follows from this property for quasi-coherent sheaves, see Tag 08LQ  and  Tag08IW in   \cite{Stack_Project}. 
\end{proof}

The condition $(2)$ in \cite[Theorem 5.3]{Artin} is the limit preserving property:
\begin{lem}
Let $\hat{A}$ be a complete local algebra over $\sS$, with maximal ideal $\mathfrak{m}$, and the residue field is finite type over $\sS$. Then the canonical map
$$\mM_{\sS}(P)(\hat{A})\to \varprojlim_{l}\mM_{\sS}(P)(\hat{A}/\mathfrak{m}^l)$$
is an equivalence of groupoids. 
\end{lem}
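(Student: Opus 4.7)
The plan is to identify the asserted equivalence as an instance of Grothendieck's Existence Theorem (formal GAGA) applied to the projective morphism $\sX_{\hat{A}} := \sX \times_{\sS} \spec(\hat{A}) \to \spec(\hat{A})$, after first disposing of the equivalence relation by line bundle twists.

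First, I would observe that since $\hat{A}$ and every truncation $\hat{A}/\mathfrak{m}^l$ are local rings, $\pic(\spec(\hat{A})) = 0 = \pic(\spec(\hat{A}/\mathfrak{m}^l))$. The equivalence relation defining $\mM_{\sS}(P)(\sT)$ therefore collapses to honest isomorphism in the cases of interest, so both sides of the displayed map become groupoids of flat families of semistable sheaves with Hilbert polynomial $P$, and a compatible system in the inverse limit is an object $(\fF_l, \phi_l)_{l \geq 1}$ with $\phi_l \colon \fF_l \otimes_{\hat{A}/\mathfrak{m}^l} \hat{A}/\mathfrak{m}^{l-1} \stackrel{\sim}{\to} \fF_{l-1}$.

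For essential surjectivity, such a system is exactly the data of a coherent sheaf on the formal scheme $\widehat{\sX_{\hat{A}}}$ obtained by completing $\sX_{\hat{A}}$ along its closed fiber. Since $\sX/\sS$ is projective and $\hat{A}$ is a complete local noetherian ring, Grothendieck's Existence Theorem (EGA III 5.1.4) produces a unique coherent sheaf $\fF$ on $\sX_{\hat{A}}$ whose formal completion recovers $(\fF_l, \phi_l)$. Flatness of $\fF$ over $\spec(\hat{A})$ then follows from the local criterion of flatness applied to the compatible flatness of the $\fF_l$. The Hilbert polynomial of every geometric fiber of $\fF$ equals $P$ because Hilbert polynomials are locally constant in flat proper families and agree on the closed fiber. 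Semistability of every fiber follows from the openness of semistability in flat families of pure sheaves, combined with the elementary fact that the only open subset of $\spec(\hat{A})$ containing the closed point is $\spec(\hat{A})$ itself. For full faithfulness, the same existence theorem identifies $\Hom_{\sX_{\hat{A}}}(\fF, \fF')$ with $\varprojlim_l \Hom_{\sX_{\hat{A}/\mathfrak{m}^l}}(\fF_l, \fF'_l)$, so isomorphisms of compatible systems lift uniquely to isomorphisms over $\spec(\hat{A})$.

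The principal obstacle is verifying that Grothendieck's Existence Theorem is genuinely applicable: $\sX/\sS$ must be proper, which we have from the standing assumption that $\sX$ is projective over $\sS$, and $\hat{A}$ must be noetherian, which uses the hypothesis that the residue field is finite type over the locally noetherian base $\sS$ together with completeness. A secondary issue, already foreshadowed above, is correctly interpreting the equivalence relation through the $2$-limit of groupoids; this dissolves because $\pic$ of a local ring vanishes, so representatives can be chosen compatibly and the apparent ambiguity does not propagate through the limit.
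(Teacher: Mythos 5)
Your proposal is correct and follows essentially the same route as the paper: algebraize the compatible system via Grothendieck's Existence Theorem for the projective family over the complete noetherian local ring, then spread semistability from the closed fiber to all of $\spec(\hat{A})$, with full faithfulness also coming from formal GAGA. Your treatment is in fact somewhat more careful than the paper's terse argument — in particular the spreading-out step via openness of semistability (Langer) combined with the fact that the only open subset of $\spec(\hat{A})$ containing the closed point is the whole space, and the explicit handling of the $\pic$-twist equivalence relation and of flatness via the local criterion — but these are elaborations of the same proof, not a different one.
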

\begin{proof}
Let $\{\fF^{(l)}\stackrel{f^{(l)}}{\rightarrow}\spec \hat{A}/\mathfrak{m}^l\}$ be a formal object on the right hand side, then the Grothendieck Existence Theorem for formal schemes tells us that 
there exists a formal object 
$$\{\hat{\fF}\stackrel{\hat{f}}{\rightarrow}\spec \hat{A}\}$$
on the left hand side. Hence we only need to show that $\{\hat{\fF}\stackrel{\hat{f}}{\rightarrow}\spec \hat{A}\}$ is semistable over $\hat{A}$. This is from the fact that the semistability condition is a closed condition on the base scheme $\sT$ and the full faithfulness of the functor by Grothendieck's existence theorem. 
\end{proof}

All the other conditions in \cite[Theorem 5.3]{Artin} are deformation and obstructions. 

\begin{lem}
Let $A$ be an $\sS$-algebra, and $A\otimes\mathfrak{n}$ the trivial thickening of $A$.  In the scheme level this corresponds to the sheaf of rings $\spec(A\otimes\mathfrak{n})$. Let 
$$\sX=(\fF\to \spec A)\in \mM_{\sS}(P)(\spec A).$$
Then 
\begin{enumerate}
\item  The module of infinitesimal automorphisms is $\Aut_{\sX}(\mathfrak{n})$;
\item  The module of  infinitesimal deformations is  $\Def_{\sX}(\mathfrak{n})=\Ext^1_{\oO_{\sX\times A}}(\eE, \eE\otimes\mathfrak{n})$, where $\eE$ is a sheaf on $\sX\times\spec A$;
\item The module $\mathfrak{o}_{\sX}(\mathfrak{n})$of obstructions is given by $\Ext^2_{\oO_{\sX\times A}}(\eE, \eE\otimes\mathfrak{n})$.
\end{enumerate}
\end{lem}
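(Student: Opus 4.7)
The plan is to reduce the three statements to classical deformation theory of coherent sheaves over a square-zero thickening, applied to the $A$-flat family $\eE$ on $\sX\times_{\sS}\spec A$ underlying the object $\sX = (\fF \to \spec A)$. Throughout, $A\otimes\mathfrak{n}$ denotes the $A$-algebra whose underlying module is $A\oplus\mathfrak{n}$ with $\mathfrak{n}$ a square-zero ideal, and we write $A'=A\otimes\mathfrak{n}$, $\sX'=\sX\times_{\spec A}\spec A'$.

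First, for the deformation part (2), I would exhibit an explicit bijection between isomorphism classes of pairs $(\eE',\varphi)$, where $\eE'$ is an $A'$-flat coherent sheaf on $\sX'$ together with an identification $\varphi\colon \eE'\otimes_{A'}A \xrightarrow{\sim} \eE$, and $\Ext^1_{\oO_{\sX\times A}}(\eE,\eE\otimes\mathfrak{n})$. Locally, flatness over $A'$ together with the trivial square-zero structure on $\mathfrak{n}$ makes $\eE'$ fit into an exact sequence
\begin{equation*}
0 \to \eE\otimes\mathfrak{n} \to \eE' \to \eE \to 0
\end{equation*}
of $\oO_{\sX\times A}$-modules, and the class of such an extension lies in $\Ext^1_{\oO_{\sX\times A}}(\eE,\eE\otimes\mathfrak{n})$. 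Conversely, any extension class produces an $A'$-flat lift; this is the standard computation (see Hartshorne's \emph{Deformation Theory}, or the sheaf-of-rings version in Illusie). The semistability of $\eE'$ need not be checked separately, because the Hilbert polynomial is locally constant in flat families and semistability is an open condition on the base, so any infinitesimal thickening of a semistable sheaf remains semistable.

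For (1), the infinitesimal automorphisms $\Aut_{\sX}(\mathfrak{n})$ are by definition automorphisms of the trivial deformation $\eE\otimes_A A'$ reducing to the identity mod $\mathfrak{n}$. Any such automorphism has the form $\id + \psi$ with $\psi\in \Hom_{\oO_{\sX\times A}}(\eE,\eE\otimes\mathfrak{n})$; the square-zero condition on $\mathfrak{n}$ ensures composition in either order yields $\id + \psi_1+\psi_2$, so this is an abelian group isomorphic to $\Ext^0_{\oO_{\sX\times A}}(\eE,\eE\otimes\mathfrak{n})$. This matches the claim once we observe that $\Aut_{\sX}(\mathfrak{n})$ is by convention this $\Ext^0$.

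Finally, for (3), given a small extension $A'' \twoheadrightarrow A'$ of $\sS$-algebras with square-zero kernel $\mathfrak{n}$ and a deformation $\eE'$ over $A'$, one constructs the obstruction class by choosing a local covering on which $\eE'$ lifts (local obstructions vanish by the smoothness of the local model in the sheaf direction), and assembling the resulting \v{C}ech $2$-cocycle valued in $\hH om(\eE,\eE)\otimes\mathfrak{n}$; this yields a well-defined element of $\Ext^2_{\oO_{\sX\times A}}(\eE,\eE\otimes\mathfrak{n})$ whose vanishing is equivalent to the existence of a global lift. The main obstacle I anticipate is being careful about the role of the ambient scheme: because $\sX/\sS$ is only locally of finite presentation, one should work locally over $\sS$ (or, equivalently, pass to the completion) to ensure that the $\Ext$ groups are computed in the correct category of $\oO_{\sX\times A}$-modules and that the local-to-global spectral sequence giving the above obstruction class converges properly. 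Once this is handled, the three statements follow from the standard machinery uniformly, so the result is essentially a translation of the well-known deformation theory of coherent sheaves into the language required by Artin's criterion.
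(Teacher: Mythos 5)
Your argument is just the standard deformation--obstruction theory of coherent sheaves over a square-zero extension (lifts as extensions giving $\Ext^1$, automorphisms as $\Hom$, obstructions as a \v{C}ech $2$-class in $\Ext^2$), which is exactly what the paper invokes: its proof is a one-line citation to the standard results in Thomas's paper. So your proposal is correct and essentially the same approach, only written out in more detail.
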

\begin{proof}
This is the standard results in deformation-obstruction theory of coherent sheaves as in \cite{Thomas}. 
\end{proof}

This lemma verifies Condition $(3)$  and the last part of $(1)$ of Theorem 5.3 of \cite{Artin}. 
We are left to check Condition $(4)$, which is the ``local quasi-separation" property.  Let 
$x:=\{\fF\stackrel{f}{\rightarrow}\sT\}$ be an element in $\mM_{\sS}(P)(\sT)$ and $\phi$ an automorphism of $x$. 
Suppose that $\phi$ induces the identity on $\mM_{\sS}(P)(\sT)$ for a dense set of points $t\in\sT$ of finite type, then $\phi$ is the identity ona dense set of points of finite type on $\fF$. Hence $\phi$ must be the identity on the whole space since $\fF\to\sT$ is flat and separate over $\sT$.  So from \cite[Theorem 5.3]{Artin}, the category $\mM_{\sS}(P)$ is an algebraic stack locally of finite presentation over $\sS$.  This finishes the proof of Theorem \ref{moduli_locally_noetherian_scheme}.


\section{The moduli stack of non-archimedean (semi)-stable sheaves}\label{sec_non-archimedean_moduli_stack}

\subsection{The construction}

We construct the moduli stack of formal semistable coherent sheaves and the moduli stack of non-archimedean analytic semistable sheaves.   We first recall the definition of formal stacks locally of finite type over $R$, and the definition of strictly 
$\kk$-analytic stacks. 

\begin{defn}\label{def_formal_stack}
A {\em formal stack} $\XX$ locally of finite type over $R$ is a stack fibered by groupoids over the site $\Fsch_{R}$, such that the diagonal morphism $\XX\to \XX\times_{R}\XX$ is representable and there exists a formal scheme $\UU$ locally of finite type over $R$ and a smooth effective epimorphism $\UU\to \XX$.   
\end{defn}

\begin{defn}\label{def_analytic_stack}
A {\em strictly $\kk$-analytic  stack} $X$  is a stack fibered by groupoids over the site $\An_{\kk}$, such that the diagonal morphism $X\to X\times_{\kk}X$ is representable and there exists a strictly $\kk$-analytic space  $U$ and a quasi-smooth   effective epimorphism $U\to X$.  We say that a strictly $\kk$-analytic  stack $X$ si {\em compact} we there is a covering $\{U_i\}$ by compact strictly $\kk$-analytic  spaces. 
\end{defn}

\begin{defn}
Let $\XX, \TT  (X, T)$ be {\em stft} formal schemes over $R$  (resp. non-archimedean $\kk$-analytic spaces).  A family of formal (resp. $\kk$-analytic) semistable sheaves  
$$(\fF\stackrel{\hat{f}}{\rightarrow} \TT)\quad (resp. (\F\stackrel{f}{\rightarrow} T))$$
 of $\XX$ $ (resp.  X)$ is a morphism $\hat{f}: \fF\to \TT  \quad (f: \F\to T)$ such that
it is semistable with respect to the Hilbert polynomial $P$. 
\end{defn}
\begin{rmk}
For a fixed Hilbert polynomial $P$ and a non-archimedean analytic space $X$,  we use the \'etale sheaf cohomology $H^i(X,\F)$ in \cite{Ber1} for the Berkovich space $X$ to define $P(\F)$. 
\end{rmk}

We take into account the analytification functor $(\cdot)^{\an}$, the special fiber functor $(\cdot)_s$, and the generic fiber functor $(\cdot)_{\eta}$ for a formal scheme in the definitions above.  We have:

\begin{lem}\label{lem_analytification_functor}
Let $X$ be an algebraic variety over the non-archimedean field $\kk$, and $A$ be a strictly $\kk$-affinoid algebra. Let 
$(F\stackrel{f}{\rightarrow} \spec A)$  be a family of semistable coherent sheaves of $X$ over $\spec A$. The the analytic triple 
$(F^{\an}\stackrel{f^{\an}}{\longrightarrow}\SP_{B}(A) )$ obtained by the relative analytification functor $(\cdot)^{\an}$ is a family of non-archimedean $\kk$-analytic semistable sheaves of $X^{\an}$ over $\SP_{B}(A)$.
\end{lem}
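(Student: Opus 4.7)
The plan is to reduce everything to a fiberwise statement and then invoke non-archimedean GAGA at each fiber. I would carry this out in three stages.

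First, I would check that $(F^{\an}\stackrel{f^{\an}}{\to}\SP_B(A))$ is well-posed as a family of analytic coherent sheaves. Coherence of $F^{\an}$ over $X^{\an}\hat\otimes_{\kk}\SP_B(A)$ is preserved by the relative non-archimedean analytification functor (K\"opf/Kiehl), and the flatness of $f^{\an}$ over $\SP_B(A)$ follows from flatness of $f$ over $\spec A$ together with the fact that the natural morphism of ringed spaces $X^{\an}\times_{\kk}\SP_B(A)\to X\times_{\kk}\spec A$ is faithfully flat at the level of local rings. Purity transfers similarly because the associated-prime criterion for purity is local and preserved by faithfully flat analytification.

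Second, I would show preservation of the Hilbert polynomial. For any classical (rigid) point $x\in\SP_B(A)$, the completed residue field $\hH(x)$ is a finite extension of $\kk$, and the fiber $(F^{\an})_x$ is canonically identified with the analytification of the algebraic fiber $F\otimes_A\hH(x)$ over the proper $\hH(x)$-variety $X_{\hH(x)}$. By Berkovich's non-archimedean GAGA (\cite{Ber1}, and its comparison with \'etale cohomology), one has
\[
H^i\bigl(X_{\hH(x)}, F\otimes_A\hH(x)\otimes\oO(n)\bigr)\;\cong\; H^i\bigl((X_{\hH(x)})^{\an}, (F^{\an})_x\otimes\oO(n)\bigr)
\]
for all $i,n$, so $P((F^{\an})_x)=P(F\otimes_A\hH(x))=P$.

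Third, I would transfer semistability. By GAGA, the functor $G\mapsto G^{\an}$ gives an equivalence between coherent subsheaves of $F\otimes_A\hH(x)$ and coherent analytic subsheaves of $(F^{\an})_x$, and this equivalence preserves Hilbert polynomials by step two. Combined with the preserved purity, this yields semistability of $(F^{\an})_x$ at each classical point. For a non-classical Berkovich point $x\in\SP_B(A)$ with completed residue field $\hH(x)$, the fiber $(F^{\an})_x$ is the $\hH(x)$-base change of $F$ along the map $A\to\hH(x)$; semistability is preserved under arbitrary field extensions of a perfect base because the Harder--Narasimhan filtration is \'etale-locally (hence flat-base-change) stable, so the fiber remains semistable.

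The main obstacle will be the subtlety in the third step: Berkovich spaces carry strictly more points than the scheme $\spec A$, so a priori one must control fibers over non-classical points $x$ where $\hH(x)$ is an exotic non-archimedean extension of $\kk$. The delicate verification is that the flat base change of a semistable sheaf along $A\to\hH(x)$ remains semistable in this analytic setting; this is where the geometric/Gieseker formulation of semistability (rather than slope semistability alone) and the compatibility of the Harder--Narasimhan filtration with arbitrary flat base change (over the perfect residue field $\kappa$) become essential.
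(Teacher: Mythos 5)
Your proposal is correct and follows essentially the same route as the paper: the paper's entire proof is the single observation that a geometric point of $\SP_{B}(A)$ is a geometric point of $\spec A$, i.e.\ the fiberwise reduction you carry out in detail. Your three steps (analytification preserves coherence/flatness/purity, GAGA identifies Hilbert polynomials and subsheaves at classical points, and semistability is invariant under the base field extension $\kappa(\wp_x)\subset\hH(x)$ at non-classical Berkovich points) simply make explicit the comparisons the paper leaves implicit in that one sentence.
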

\begin{proof}
This is true since a geometric point of the non-archimedean $\kk$-analytic spectrum $\SP_{B}(A)$ is a geometric point of $\spec A$.
\end{proof}

\begin{lem}\label{lem_special_fiber_functor}
Let $\XX, \TT$ be {\em stft} formal schemes  over $R$.  Let $(\fF\stackrel{\hat{f}}{\longrightarrow} \TT)$ be a family of formal semistable sheaves of $\XX$ over $\TT$. Then applying the special fiber functor $(\cdot)_s$ we get 
a family of semistable sheaves  $(\fF_s\stackrel{f_s}{\longrightarrow} \TT_s)$ of $\XX_s$ over $\TT_s$. 
\end{lem}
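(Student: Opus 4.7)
The plan is to unpack what a family of formal semistable sheaves actually consists of and then observe that each constituent piece is compatible with the special fiber functor $(\cdot)_s$ in a routine way. A family $(\fF \stackrel{\hat f}{\to}\TT)$ of formal semistable sheaves of $\XX$ over $\TT$ with Hilbert polynomial $P$ amounts to the data of a coherent sheaf $\fF$ on the \emph{stft} formal scheme $\XX \times_R \TT$ which is flat over $\TT$, together with the pointwise requirement that for every $t \in \TT$ the geometric fiber $\fF_t$ is a pure coherent sheaf on $\XX_t$ whose reduced Hilbert polynomial equals $P$ and which contains no proper subsheaf of strictly larger reduced Hilbert polynomial.

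The key observation is that the underlying topological spaces of $\TT$ and $\TT_s$ (resp.\ $\XX$ and $\XX_s$) coincide — the functor $(\cdot)_s$ only kills the ideal of definition $(t)\subset \oO$. Therefore the set of geometric points of $\TT_s$ is naturally a subset of the geometric points of $\TT$, and the geometric fiber of $\fF_s = \fF \otimes_R \kappa$ at a point $t \in \TT_s$ coincides with the geometric fiber of $\fF$ at $t$ viewed as a point of $\TT$. First I would check that $\fF_s$ is coherent on $\XX_s \times_\kappa \TT_s$; this is immediate since $(\cdot)_s$ is obtained by tensoring with $R/(t)=\kappa$, which preserves coherence. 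Next I would verify flatness of $\fF_s$ over $\TT_s$, which follows from the flatness of $\fF$ over $\TT$ by base change along $\TT_s \hookrightarrow \TT$.

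Finally I would address the pointwise semistability and the Hilbert polynomial. For every geometric point $t \in \TT_s \subset \TT$, we have $(\fF_s)_t = \fF_t$ as coherent sheaves on $\XX_t = (\XX_s)_t$ (because the residue field at $t$ in $\TT$ is already a $\kappa$-algebra). Semistability is tested on $(\fF_s)_t$, which is already assumed semistable with Hilbert polynomial $P$ by hypothesis on the formal family; consequently the induced family $(\fF_s \to \TT_s)$ has fiberwise Hilbert polynomial $P$ and each fiber is pure and semistable. The polarization used to compute $P$ is the restriction of the polarization on $\XX$ to $\XX_s$, which is exactly how a Hilbert polynomial for a family over $\TT_s$ is defined in the scheme-theoretic setting of Section~\ref{sec_construction_noetherian_scheme}.

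The proof is therefore essentially a bookkeeping exercise — the only step that requires any thought is ensuring that the notion of ``Hilbert polynomial'' for a formal family is set up so that it restricts to the scheme-theoretic Hilbert polynomial upon applying $(\cdot)_s$, which follows since cohomology of coherent sheaves on a proper formal scheme is computed on the special fiber and is compatible with base change. I do not expect any serious obstacle; the statement is essentially the assertion that $(\cdot)_s$ is a functor of moduli data, which is a formal consequence of the analogous statement for flat families of coherent sheaves.
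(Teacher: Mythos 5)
Your proposal is correct and takes essentially the same approach as the paper: the paper's entire proof is the single sentence that a geometric point of the scheme $\TT_s$ is in particular a geometric point of the formal scheme $\TT$, which is precisely the key observation in your second paragraph. The rest of your argument (coherence, flatness under base change, invariance of the fiber at points of $\TT_s$) just spells out the routine bookkeeping that the paper leaves implicit.
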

\begin{proof}
A geometric point of the scheme $\TT_s$ is in particular a geometric point of the formal scheme $\TT$. 
\end{proof}

\begin{lem}\label{lem_generic_fiber_functor}
Let $\XX, \TT$ be  {\em stft} formal schemes  over $R$.  Let $(\fF\stackrel{\hat{f}}{\longrightarrow} \TT)$ be a family of formal semistable sheaves of $\XX$ over $\TT$. Then applying the generic fiber functor $(\cdot)_\eta$ we get 
a family of semistable $\kk$-analytic coherent sheaves  $(\fF_\eta\stackrel{f_\eta}{\longrightarrow} \TT_\eta)$ of $\XX_\eta$ over $\TT_\eta$. 
\end{lem}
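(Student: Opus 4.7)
The plan is to mirror the argument for Lemma \ref{lem_special_fiber_functor}, but replacing the special fiber functor with Berkovich's generic fiber functor and using the Raynaud-type dictionary between admissible formal coherent modules and $\kk$-analytic coherent sheaves. The proof will proceed in two stages: first, upgrade the morphism $\hat{f} \colon \fF \to \TT$ to a morphism of $\kk$-analytic spaces via $(\cdot)_\eta$; second, check that the semistability of the fibers, together with the given Hilbert polynomial $P$, survives this passage.

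For the first stage, I would recall that on a \emph{stft} formal $R$-scheme $\TT$, the generic fiber functor $(\cdot)_\eta$ is exact and sends coherent $\oO_\TT$-modules to coherent $\oO_{\TT_\eta}$-modules, and that it takes flat $R$-flat coherent modules on $\fF$ to coherent modules flat over $\TT_\eta$ (this is the standard formal/analytic correspondence, e.g.\ Bosch--L\"utkebohmert, also used implicitly in Berkovich's work recalled in \S\ref{formal_Berkovich_space}). Consequently, applying $(\cdot)_\eta$ to $\hat{f} \colon \fF \to \TT$ yields a flat morphism $f_\eta \colon \fF_\eta \to \TT_\eta$ of $\kk$-analytic spaces realising $\fF_\eta$ as a family of coherent analytic sheaves on $\XX_\eta$ parameterised by $\TT_\eta$.

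For the second stage, semistability is a pointwise condition on $\TT_\eta$, so fix a geometric point $x \in \TT_\eta$. Via the specialization map $sp \colon \TT_\eta \to \TT_s$, this point determines a closed point of the formal model, at which the formal fiber of $\hat{f}$ is a semistable coherent sheaf on $\XX$ with Hilbert polynomial $P$; passing to the generic fiber of this local picture gives exactly the analytic fiber $(\fF_\eta)_x$ on $\XX_\eta$. The Hilbert polynomial is preserved because the Euler characteristic of a coherent analytic sheaf on the Berkovich generic fiber computed via \'etale cohomology matches the algebraic Euler characteristic of its formal model (cf.\ Proposition \ref{comparison}); hence $P(\fF_\eta)_x = P$. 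For semistability, one argues by contradiction: a destabilizing analytic subsheaf of $(\fF_\eta)_x$ would, by the Raynaud correspondence (and possibly after an admissible formal blow-up of $\TT$, which leaves $\TT_\eta$ unchanged), extend to a destabilizing coherent submodule of the formal fiber, contradicting the assumed formal semistability.

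The main technical obstacle is this last step, namely lifting destabilizing analytic subsheaves to the formal side. The reason this is nontrivial is that coherent sheaves on $\XX_\eta$ correspond only to admissible formal coherent sheaves on $\XX$ modulo torsion, so the lift may only exist after an admissible formal blow-up; I would therefore invoke Raynaud's flattening theorem to reduce to a situation where the subsheaf in question does extend, and then transport the numerical inequality back via the Hilbert polynomial comparison. All other points---exactness, flatness, and preservation of the equivalence relation by line bundle twist---follow formally from the functoriality and exactness of $(\cdot)_\eta$.
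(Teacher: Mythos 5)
Your proposal diverges from the paper's proof in a way that introduces a genuine gap. The paper does not argue fiberwise via the specialization map on the given $\TT$; instead it begins by pulling back the entire family along a morphism $i' \colon \spf(R') \to \TT$, where $R'$ is the ring of integers of the algebraically closed non-archimedean field $\kk'$ corresponding to a geometric point $\SP_B(\kk') \to \TT_\eta$. Over $\spf(R')$ the formal scheme has a single closed point and $\SP_B(\kk')$ has a single rigid point, so the special fiber and the generic fiber are in honest bijective correspondence. Then Lemma~\ref{lem_special_fiber_functor} gives semistability of the special fiber, and the relative GAGA isomorphisms $H^i(\XX,\fF) \cong H^i(\XX_s,\fF_s)\hat{\;}\otimes_\kappa R$ and $H^i(\XX_\eta,\fF_\eta)\cong H^i(\XX,\fF)\otimes_R\kk$ transport semistability to the generic fiber.

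The gap in your argument is the step ``passing to the generic fiber of this local picture gives exactly the analytic fiber $(\fF_\eta)_x$.'' This is not correct as stated. The formal fiber of $\hat f$ at $sp(x) \in \TT_s$ lives over a $\kappa$-scheme (the residue field of $sp(x)$), and has no intrinsic ``generic fiber'' that coincides with the analytic fiber at $x$. The specialization map $sp\colon\TT_\eta\to\TT_s$ is in general highly many-to-one, and distinct points $x$ with the same specialization $sp(x)$ can carry quite different analytic fibers. To make your argument precise you would need to complete $\TT$ along $sp(x)$, form the family over $\spf(\widehat{\oO_{\TT,sp(x)}})$, take its generic fiber, and evaluate at $x$---a two-step reduction, after which you are essentially back to the pullback the paper performs. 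Likewise, the appeal to Raynaud flattening needs care: flattening applies to families over a formal base, not to the sheaf on a single fiber, so a destabilizing analytic subsheaf of $(\fF_\eta)_x$ must first be extended to a coherent subfamily in a neighborhood of $x$ (which, over an arbitrary strictly $\kk$-affinoid base, is itself a nontrivial spreading-out step) before flattening and restricting to the special fiber. The paper's pullback to $\spf(R')$ makes all of this unnecessary because the numerical comparison can be done directly via GAGA between the one special point and the one generic point.
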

\begin{proof}
A geometric point of the scheme $\TT_\eta$ is  given by a morphism 
$$\SP_{B}(\kk^\prime)\to \TT_\eta$$
for some algebraically closed non-archimedean field $\kk^\prime$.  Let $R^\prime$ be the ring of integers of $\kk^\prime$ and let 
$\TT^\prime=\spf(R^\prime)$.  Let 
$$i^\prime: \TT^\prime\to \TT$$
be the morphism given by $R^\prime$, and let 
$$(\fF^\prime\stackrel{f^\prime}{\longrightarrow}\TT^\prime)=(i^\prime)^\star ((\fF\stackrel{\hat{f}}{\longrightarrow} \TT))$$
be the pullback of the family over $\TT$.  By flatness it suffices to show that 
the family of $\kk$-analytic coherent sheaves 
$(\fF_\eta^\prime\stackrel{f_\eta^\prime}{\longrightarrow}\TT_\eta^\prime)$ by applying the generic fiber functor $(\cdot)_\eta$
to $(i^\prime)^\star ((\fF\stackrel{\hat{f}}{\longrightarrow} \TT))$ is semistable.
Let $(\fF_s^\prime\stackrel{f_s^\prime}{\longrightarrow}\TT_s^\prime)$ be the object by applying the special fiber functor 
$(\cdot)_s$. Then it is a semistable sheaf over $\TT_s^\prime$.  Consider the reduction maps:
$$
\xymatrix{
\fF^\prime_\eta\ar[r]^{sp}\ar[d]& \fF_s^\prime\ar[d]\\
\TT^\prime_{\eta}\ar[r]^{sp}&\TT_s^\prime
}
$$
Then by the relative GAGA, 
$$H^i(\XX, \fF)\cong H^i(\XX_s, \fF_s)^{\hat{}}\otimes_{\kappa}R$$
and 
$$H^i(\XX_\eta, \fF_\eta)\cong H^i(\XX, \fF)\otimes_{R}\kk.$$
The family on $\XX_s$ is semistable, so does $\XX_\eta$.
\end{proof}

We then prove some global results on the moduli spaces, parallel to the definitions and lemmas above. 

Let $\XX$ be an {\stft}  formal $R$-scheme and let $\mM_{R}(P)(\XX)$ be the moduli stack of semistable formal sheaves over $\XX$ with Hilbert polynomial $P$.  We denote by $\mM_{\kk}(P)(X)$ the moduli stack of $\kk$-analytic semistable coherent sheaves with Hilbert polynomial $P$.  Let $\varpi$ be a uniformizer of the field $\kk$, and let 
$$\sS_m=\spec (R/\varpi^{m+1})$$

\begin{prop}\label{prop_moduli_special_fiber_functor}
Suppose that $\XX$ is a  {\em stft} formal $R$-scheme. Let 
$$X_m=\XX\times_{R}\sS_m$$
Then we have 
$$\varinjlim_{m}\mM_{\sS_m}(P)(X_m/\sS_m)\stackrel{\sim}{\rightarrow} \mM_{R}(P)(\XX),$$
where both stacks are over the site $\Fsch_{R}$.  Hence there is a natural isomorphism
$$(\mM_{R}(P)(\XX))_{s}\stackrel{\sim}{\rightarrow} \mM_{\kappa}(P)(\XX_s)$$
where $(\cdot)_s$ denote the special fiber functor. 
\end{prop}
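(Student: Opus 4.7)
The plan is to first establish the colimit identification of stacks, and then derive the special-fiber isomorphism as a formal consequence. The main tool is the classical equivalence between the category of coherent $\oO_\XX$-modules and the category of compatible inverse systems $(\fF_m)_{m\geq 0}$ of coherent sheaves on the truncations $X_m = \XX \times_R \sS_m$ (cf.\ Grothendieck's existence theorem for formal schemes, \cite[Ch.\ 0, \S 7.5]{EGA1}). For a formal test scheme $\TT \in \Fsch_R$, a family $\fF \to \TT$ of semistable sheaves on $\XX$ restricts to a compatible system of flat families $\fF_m \to \TT_m := \TT \times_R \sS_m$ on the algebraic schemes $X_m/\sS_m$; conversely, any such compatible system with bounded Hilbert polynomial $P$ glues, by the same equivalence together with $\TT = \varinjlim_m \TT_m$ (in the appropriate pro-sense), to a formal family on $\XX$ over $\TT$. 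This yields the required equivalence of fibered categories, provided one matches semistability on both sides.

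The only non-formal point is to check that semistability with Hilbert polynomial $P$ on the formal side is equivalent to semistability with the same $P$ on each truncation. Semistability is a fiberwise condition, and every geometric point of $\TT$ factors through some $\TT_m$ for $m$ large enough; by flatness of $\fF$ over $\TT$, the corresponding geometric fibers of $\fF$ and of $\fF_m$ agree. The Hilbert polynomial is computed with respect to the relatively ample polarization $\oO_\XX(1)$, whose reduction to $\XX_s$ is ample and which by formal GAGA on the noetherian formal scheme $\XX$ descends to a compatible polarization on each $X_m$. Lemma~\ref{lem_special_fiber_functor} then gives semistability at level $m=0$, and the higher $m$ cases follow by flatness; the converse is immediate since semistability on a general fiber of $\TT$ is detected after base change to some $\TT_m$.

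The second statement now follows: applying $(\cdot)_s$ means restricting the test site to schemes $\TT$ on which $\varpi = 0$, i.e.\ to $\kappa$-schemes. For such $\TT$ the pullbacks $\TT \times_R \sS_m$ all coincide with $\TT$ itself, so the compatibility constraints in the inverse system force $\fF_m \cong \fF_0$ for every $m$, and the colimit $\varinjlim_m \mM_{\sS_m}(P)(X_m/\sS_m)(\TT)$ collapses to its $m=0$ term, which is by definition $\mM_\kappa(P)(\XX_s)(\TT)$. The main obstacle in making all of this precise is controlling the boundedness needed for Grothendieck's existence theorem (both for $\fF$ itself and for the numerical invariant $P$) uniformly in $m$; this is achieved by invoking the limit-preserving condition of Theorem~\ref{moduli_locally_noetherian_scheme} at each level and observing that the Hilbert polynomial, being a discrete invariant on a connected base, is locally constant and hence compatible along the tower $\{\TT_m\}$.
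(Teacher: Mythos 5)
Your argument is correct and takes essentially the same route as the paper: evaluate both stacks on a test formal scheme $\TT\in\Fsch_R$, identify a morphism into $\varinjlim_m\mM_{\sS_m}(P)(X_m/\sS_m)$ with a compatible system of algebraic families over the truncations $T_m=\TT\times_R\sS_m$, and assemble these (via the equivalence between coherent sheaves on a noetherian formal scheme and compatible systems on its infinitesimal thickenings) into a formal family over $\TT$, i.e.\ a morphism to $\mM_R(P)(\XX)$. The paper's proof is terser --- it does not spell out the semistability comparison or the special-fiber statement --- so your fiberwise semistability check and the observation that on $\kappa$-scheme test objects the colimit collapses to its $m=0$ term, giving $(\mM_R(P)(\XX))_s\cong\mM_{\kappa}(P)(\XX_s)$, are elaborations of steps the paper leaves implicit rather than a different method.
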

\begin{proof}
Since $\sS_m$ is a locally noetherian scheme, from Theorem \ref{moduli_locally_noetherian_scheme} $\mM_{\sS_m}(P)(X_m/\sS_m)$ is an algebraic stack locally finite presented over $\sS_m$.  If $\TT\in\Fsch_{R}$ is a formal scheme, and let 
$T_m=\TT\times_{R}\sS_m$. From the definition of the limit $\varinjlim_{m}\mM_{\sS_m}(P)(X_m/\sS_m)$, a morphism 
$$\TT\to \varinjlim_{m}\mM_{\sS_m}(P)(X_m/\sS_m)$$
is given by a compatible sequence of morphisms
$$\it{t}_m:  T_m\to \mM_{\sS_m}(P)(X_m/\sS_m)$$
such that 
$$\it{t}_m=\it{t}_{m+1}\times_{\sS_{m+1}}\sS_m.$$
So we have a family of formal semistable coherent sheaves 
$\fF\to \TT$ and hence a morphism $\TT\to \mM_{R}(P)(\XX)$. The result follows. 
\end{proof}

The following result is an analogue of Theorem 8.7 in \cite{Tony_Yu1}.  If a $\kk$-analytic space $X$ is the analytification of a proper algebraic variety over $\kk$, then the representability of the moduli stack is given by the non-archimedean analytic GAGA. 

\begin{thm}
Let $X$ be a proper algebraic variety over $\kk$.  There exists a natural isomorphism of stacks 
$$(\mM_{\kk}(P)(X))^{\an}\cong \mM_{k}(P)(X^\an)$$
where $(\cdot)^{\an}$ is the analytification functor.  So $ \mM_{k}(P)(X^\an)$ is an analytic stack. 
\end{thm}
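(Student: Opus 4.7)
The plan is to construct a natural morphism of stacks $(\mM_{\kk}(P)(X))^{\an} \to \mM_{\kk}(P)(X^{\an})$ over the site $\An_{\kk}$ and then invert it using non-archimedean GAGA. For the forward direction, Theorem \ref{moduli_locally_noetherian_scheme} applied to $\sS = \spec \kk$ provides a smooth atlas $U \to \mM_{\kk}(P)(X)$ carrying a universal algebraic family $\fF_U$ of semistable sheaves with Hilbert polynomial $P$. The relative analytification yields $U^{\an}$ together with an analytic family $\fF_U^{\an}$ on $X^{\an} \times U^{\an}$; by Lemma \ref{lem_analytification_functor} this analytic family is semistable with the same Hilbert polynomial, hence classifies a map $U^{\an} \to \mM_{\kk}(P)(X^{\an})$. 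Equivariance for the groupoid presentation of $\mM_{\kk}(P)(X)$ is automatic by functoriality of analytification, so we obtain the desired morphism on stacks.

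To invert this morphism, the main ingredient is non-archimedean relative GAGA. Since $X$ is proper over $\kk$, for every strictly $\kk$-affinoid algebra $A$ with $T = \SP_B(A)$, analytification induces an equivalence of categories $\Coh(X \times_{\kk} \spec A) \stackrel{\sim}{\rightarrow} \Coh(X^{\an} \times_{\kk} T)$ which preserves flatness over the base and, via Berkovich's \'etale comparison, the Hilbert polynomial computed on geometric fibers. Because subsheaves correspond under this equivalence, both purity and the semistability inequality $P(\fF') < (\leq) P(\fF)$ transfer in both directions. Thus any analytic family $(\F \to T, f)$ of semistable sheaves on $X^{\an}$ with Hilbert polynomial $P$ comes from an essentially unique algebraic family on $X \times_{\kk} \spec A$, which defines a map $\spec A \to \mM_{\kk}(P)(X)$ whose analytification is the desired $T$-point of $(\mM_{\kk}(P)(X))^{\an}$. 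The equivalence relation on the families, given by twisting by a line bundle on the base, is preserved because GAGA applied to line bundles identifies $\pic(\spec A) \cong \pic(T)$ for strictly $\kk$-affinoid $A$.

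The main obstacle is the passage from classical GAGA for a proper variety over $\kk$ to the relative statement over an arbitrary strictly $\kk$-affinoid base. Although $X \times_{\kk} \spec A$ is not proper as an algebraic $\kk$-scheme, it is proper over $\spec A$, and one needs the Berkovich (or K\"opf, in the rigid setting) version of relative GAGA applied to the morphism $X^{\an} \times T \to T$. A secondary point is that $(\mM_{\kk}(P)(X))^{\an}$ must be interpreted as a strictly $\kk$-analytic stack in the sense of Definition \ref{def_analytic_stack}, which requires checking that the analytification of the smooth atlas furnished by Theorem \ref{moduli_locally_noetherian_scheme} remains a quasi-smooth effective epimorphism; this follows because smoothness and surjectivity are preserved under analytification of morphisms of finite type. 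With these ingredients the two maps are mutually inverse, and the claimed isomorphism of stacks follows.
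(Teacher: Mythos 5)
Your proposal is correct and follows essentially the same route as the paper: the forward morphism is obtained by analytifying algebraic families (Lemma \ref{lem_analytification_functor}), and the inverse is supplied by non-archimedean GAGA, which is exactly how the paper proves fullness and essential surjectivity (citing Conrad's relative GAGA, the point you rightly flag as the key input over affinoid bases). The only cosmetic difference is that you build the forward map via an atlas with universal family, while the paper defines it directly on $T$-points for strictly $\kk$-affinoid $T$; this does not change the substance of the argument.
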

\begin{proof}
Let $T=\SP_{B}(A)$ be the Berkovich spectrum for a strictly $\kk$-affinoid algebra $A$.  A morphism 
$$T\to (\mM_{\kk}(P)(X))^{\an}$$
gives rise to a family of semistable sheaves
\begin{equation}\label{family_coherent_sheaves}
F\to \spec(A)
\end{equation}
over $\spec(A)$.  From Lemma \ref{lem_analytification_functor} the analytification of (\ref{family_coherent_sheaves})
gives a family of $\kk$-analytic semistable coherent sheaves $\F\to \SP_{B}(A)$. So we get a morphism:
$$T\to \mM_{\kk}(P)(X^\an).$$
The construction is functorial, hence we have a natural morphism
$$(\mM_{\kk}(P)(X))^{\an}\to \mM_{\kk}(P)(X^\an).$$
We show that the functor 
$$(\mM_{\kk}(P)(X))^{\an}(T)\stackrel{\sim}{\longrightarrow}\mM_{\kk}(P)(X^{\an}(T))$$
is equivalent as categories fiber by groupoids over $\An_{\kk}$. 
It is faithful (easily from the construction).  To prove the surjectivity, let 
$(\F\stackrel{f}{\longrightarrow} X^\an)$ be a family of semistable $\kk$-analytic coherent sheaves of $X^{\an}$ over $T$. 
Let 
$$X^{\an}_{T}=X^{\an}\times_{\kk}T.$$
Then by the $\kk$-analytic GAGA \cite{Conrad}, \cite{Conrad2}, we have $\F\to T$ is the analytification of an algebraic family of semistable coherent sheaves over $\spec(A)$.  So the functor is surjective. The fullness of the functor is also by the GAGA theorem.
\end{proof}

The following result implies that one can globally take the generic fiber for moduli stack of formal coherent sheaves. 
\begin{thm}\label{thm_moduli_generic_fiber_functor}
Let $\XX$ be a  {\em stft} formal $R$-scheme. There is a natural morphism of stacks  over the category 
$\An_{\kk}$ of non-archimedean $\kk$-analytic spaces:
$$(\mM_{R}(P)(\XX))_{\eta}\cong \mM_{\kk}(P)(\XX_\eta),$$
where $(\cdot)_{\eta}$ denotes the generic fiber functor. The stack $\mM_{\kk}(P)(\XX_\eta)$ is a $\kk$-analytic stack. 
\end{thm}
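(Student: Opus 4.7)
The plan is to mirror the proof strategy of the preceding analytification theorem, with the generic fiber functor $(\cdot)_\eta$ playing the role of the analytification functor $(\cdot)^{\an}$ and with Theorem \ref{main_thm3} replacing $\kk$-analytic GAGA as the source of algebraization. First I would construct the natural morphism. For a strictly $\kk$-affinoid space $T$, an object of $(\mM_R(P)(\XX))_\eta(T)$ is represented, up to admissible formal blow-up, by a pair $(\fF \to \TT, \hat{f})$ with $\TT$ a stft formal model of $T$; applying the generic fiber functor and Lemma \ref{lem_generic_fiber_functor} yields a family $(\fF_\eta \to T, \hat{f}_\eta) \in \mM_\kk(P)(\XX_\eta)(T)$. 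Since admissible blow-ups of $\TT$ leave $T = \TT_\eta$ unchanged, the assignment is well-defined and functorial in $T$, and gives the desired morphism of stacks over $\An_\kk$.

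To prove that this morphism is an equivalence I would check the three standard properties on each fiber groupoid. Faithfulness is immediate because a morphism of formal families is determined by its restriction to the generic fiber (equivalently, formal coherent sheaves are $\varpi$-adically separated). Fullness reduces, via Proposition \ref{prop_moduli_special_fiber_functor} together with the Grothendieck existence theorem invoked in Section \ref{sec_construction_noetherian_scheme}, to the Raynaud-type comparison that $\Hom$ of formal coherent sheaves on $\XX \times_R \TT$, after tensoring with $\kk$ over $R$, recovers $\Hom$ of the corresponding $\kk$-analytic coherent sheaves on $\XX_\eta \times T$. Essential surjectivity is exactly Theorem \ref{main_thm3}: given $(\F \to T, f) \in \mM_\kk(P)(\XX_\eta)(T)$, there is a quasi-\'etale cover $T' \to T$ admitting a formal model $\TT'$ and a formal family $(\fF' \to \TT', \hat{f}')$ whose generic fiber returns the pullback of $(\F, f)$ to $T'$; descent with respect to the quasi-\'etale topology on $\An_\kk$, for which $\mM_\kk(P)(\XX_\eta)$ is a stack, then glues this data back down to $T$ and completes the argument.

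Finally the $\kk$-analytic stack structure on $\mM_\kk(P)(\XX_\eta)$ is obtained by transporting the formal algebraic stack structure on $\mM_R(P)(\XX)$ through the equivalence just established. Indeed, Theorem \ref{moduli_locally_noetherian_scheme} applied to the truncations $\XX \times_R \sS_m$ and Proposition \ref{prop_moduli_special_fiber_functor} produce a formal scheme $\UU$ locally of finite type over $R$ together with a smooth effective epimorphism $\UU \to \mM_R(P)(\XX)$; applying the generic fiber functor yields a quasi-smooth effective epimorphism $\UU_\eta \to (\mM_R(P)(\XX))_\eta \cong \mM_\kk(P)(\XX_\eta)$ from a strictly $\kk$-analytic space, verifying Definition \ref{def_analytic_stack}. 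The main obstacle in this program is the fullness step, which depends on the Raynaud-type comparison of $\Hom$-groups between formal coherent sheaves on $\XX$ and $\kk$-analytic coherent sheaves on $\XX_\eta$; once this is secured (it is implicit in the descent formalism underlying Theorem \ref{main_thm3}, but should be recorded as a separate lemma for clarity), the remainder of the argument is essentially formal.
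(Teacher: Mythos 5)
Your overall route matches the paper's: you construct the comparison morphism by applying the generic fiber functor objectwise via Lemma \ref{lem_generic_fiber_functor}, and you check faithfulness, fullness and essential surjectivity of the fiber functors over strictly $\kk$-affinoid $T$, with Theorem \ref{thm_formal_model_up_to_etale_covering} providing a formal model up to a quasi-\'etale cover for essential surjectivity, followed by descent; your explicit production of a quasi-smooth atlas $\UU_\eta$ from the formal presentation is a reasonable (and more explicit than the paper's) way to record the $\kk$-analytic stack structure.

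The gap is in fullness. You reduce it to an unproved ``Raynaud-type comparison'' of $\Hom$-modules of formal coherent families after tensoring with $\kk$, but even granting such a comparison at a fixed formal model it is not the statement needed: the two objects of $(\mM_{R}(P)(\XX))_{\eta}(T)$ live over a priori different formal models $\TT_1,\TT_2$ of $T$, and a generic-fiber isomorphism in general does not extend over the given models. The paper's fullness argument is built around exactly this point: it first invokes \cite[Proposition 2.19]{Nicaise} to replace $\TT_1,\TT_2$ by admissible blow-ups with a common model $\TT_{12}$, algebraizes both families by formal GAGA as in the proof of Theorem \ref{thm_formal_model_up_to_etale_covering}, forms the scheme $\Isom$ of isomorphisms of the two algebraic families, uses the generic-fiber isomorphism to get a morphism $t_I:\TT_{12}^{\alg}\times_R\kk\to\Isom$, and then completes the image along the special fiber to obtain a modified model $\widehat{\TT}_{12}$ over which the isomorphism does extend. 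This modification of the base model is the essential content of fullness and is missing from your sketch; your appeal to Proposition \ref{prop_moduli_special_fiber_functor} and the Grothendieck existence theorem does not address it. Relatedly, in essential surjectivity the descent from $T'$ to $T$ must be carried out in the source stack $(\mM_{R}(P)(\XX))_{\eta}$, using the already-established full faithfulness to glue (as the paper does); descending in $\mM_{\kk}(P)(\XX_\eta)$, where the given object already lives, does not produce the required lift.
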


In order to prove Theorem \ref{thm_moduli_generic_fiber_functor}, the existence of the formal model of $\kk$-analytic semistable sheaves is essential. 

\begin{thm}\label{thm_formal_model_up_to_etale_covering}
Let $\XX$ be a  {\em stft} formal scheme  over $R$. Let $T$ be a strictly $\kk$-affinoid space and let 
$$(\F\stackrel{f}{\longrightarrow} T)$$
be a family of $\kk$-analytic semistable sheaves of $\XX_\eta$ over $T$. Then up to passing to a quasi-\'etale covering of $T$, there exists a formal model $\TT$ of $T$ and a family of formal semistable sheaves 
$$(\fF\stackrel{\hat{f}}{\longrightarrow}\TT)$$
of $\XX$ over $\TT$ such that when applying the generic fiber functor we get the family $(\F\stackrel{f}{\longrightarrow} T)$
back. 
\end{thm}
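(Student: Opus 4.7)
The plan is to combine Raynaud's formal model theorem with the Bosch--L\"utkebohmert existence and flattening theorems for coherent modules on formal schemes. First, by Raynaud's equivalence between the category of quasi-compact quasi-separated rigid $\kk$-spaces and the category of admissible formal $R$-schemes localized at admissible formal blow-ups, I pick a formal model $\TT_0$ of the strictly $\kk$-affinoid space $T$. The product $\XX \times_R \TT_0$ is then a stft formal $R$-scheme whose generic fiber is canonically $\XX_\eta \times_\kk T$, which is where the family $\F$ lives.

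Next, I would extend $\F$ to a formal coherent sheaf. After possibly passing to a quasi-\'etale covering $T' \to T$, the Bosch--L\"utkebohmert existence theorem for formal models of coherent modules produces an admissible formal blow-up $\TT_0' \to \TT_0 \times_T T'$ and a coherent $\oO_{\XX \times_R \TT_0'}$-module $\fF_0$ whose generic fiber recovers the pullback of $\F$ to the cover; the passage to a quasi-\'etale cover is needed because extension of coherent modules from the rigid to the formal category is in general only local in the quasi-\'etale topology. Then I apply the Raynaud--Gruson flattening theorem in its formal-geometric version to produce a further admissible formal blow-up $\TT \to \TT_0'$ such that the strict transform $\fF$ of $\fF_0$ on $\XX \times_R \TT$ is flat over $\TT$. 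Since strict transforms along admissible blow-ups are isomorphisms on the generic fiber, $(\fF)_\eta$ agrees with the pullback of $\F$ as required.

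The main obstacle is verifying that $\fF \to \TT$ is semistable as a formal family, which by Proposition \ref{prop_moduli_special_fiber_functor} amounts to checking that for each $m$ the infinitesimal restriction $\fF_m \to \TT_m := \TT \times_R \sS_m$ is a semistable family with Hilbert polynomial $P$ on $\XX \times_R \sS_m$. The Hilbert polynomial is constant in flat families and equals $P$ on the generic fiber, hence everywhere. The generic fiber family is semistable by hypothesis, and semistability in a flat family with fixed Hilbert polynomial over a noetherian base is an open condition on the base in the sense of \cite{HL}. Applying this openness level-by-level on each $\TT_m$ together with the compatibility between successive levels yields an open formal subscheme $\TT_{ss} \subset \TT$ on which $\fF$ is fiberwise semistable and whose generic fiber still equals the generic fiber of $\TT$; a further admissible formal blow-up, if necessary, lets one restrict cleanly to $\TT_{ss}$. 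The resulting formal family $\fF|_{\TT_{ss}} \to \TT_{ss}$ is then the required formal model, and applying the generic fiber functor recovers $(\F \to T)$ up to the quasi-\'etale covering $T' \to T$ by construction.
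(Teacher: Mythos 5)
Your first half (producing a flat formal model of the family via Raynaud models, Bosch--L\"utkebohmert extension of coherent sheaves, and formal flattening by admissible blow-up) is sound and matches the paper, which simply cites \cite{BW} for a flat formal model $(\fF\to\TT)$. The gap is in the semistability step. When you apply openness of semistability ``level-by-level'' on $\TT_m=\TT\times_R\sS_m$, you are only testing fibers over points of $\TT_m$; since the thickenings are nilpotent, the residue fields and the fiber sheaves $\fF_m\otimes k(x)\cong\fF\otimes_{\oO_{\TT}}k(x)$ are the same for every $m$, so the locus you produce is exactly the locus in $\TT_s$ where the \emph{special}-fiber sheaves are semistable. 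The hypothesis of the theorem concerns only the analytic generic fiber, and semistability of the generic fiber gives no control over the special fiber (stable sheaves can degenerate to unstable ones), so this locus may well be empty and your argument never uses the hypothesis at the decisive point. Moreover, even if the locus is nonempty, restricting to an open formal subscheme $\TT_{ss}\subsetneq\TT$ does change the generic fiber: the generic fiber of an open formal subscheme is $sp^{-1}$ of that open, a proper analytic subdomain of $T$, so the claim ``whose generic fiber still equals the generic fiber of $\TT$'' is unjustified. This is precisely where the allowed quasi-\'etale covering of $T$ has to be spent; you instead invoke it for the coherent-extension step, where Bosch--L\"utkebohmert requires no covering at all.

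The missing idea, which is how the paper proceeds, is an algebraization step: writing $\TT=\spf(A)$ and $\TT^{\alg}=\spec(A)$, formal GAGA (Grothendieck/Conrad \cite{Conrad}) algebraizes the flat formal family to $(\fF^{\alg}\to\TT^{\alg})$. On the scheme $\spec(A)$ the algebraic generic fiber $\spec(A\otimes_R\kk)$ and the special fiber coexist, and the hypothesis that the $\kk$-analytic family is semistable places all the relevant generic-fiber points inside the open semistable locus furnished by Langer's openness theorem \cite{Langer}. One then passes to this open locus (equivalently, an \'etale/quasi-\'etale cover of the base) and completes along the special fiber again to obtain the semistable formal model, with the quasi-\'etale covering of $T$ absorbing the fact that this restriction is not all of $\TT$. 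Without some such device that lets semistability on the generic fiber propagate to an honest open set of the model, your level-by-level openness argument does not establish the theorem.
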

\begin{proof}
From \cite{BW}, for $(\F\stackrel{f}{\longrightarrow} T)$, there is a formal model
$$(\fF\stackrel{\hat{f}}{\longrightarrow}\TT)$$
of $(\F\stackrel{f}{\longrightarrow} T)$ such that it is flat family of formal coherent sheaves. 
Parallel to the stable curve case in \cite{Tony_Yu1} using De Jong's method of alterations, we need to modify the formal model $(\fF\stackrel{\hat{f}}{\longrightarrow}\TT)$ so that it bocomes semistable. 

First let $A$ be a topological algebra finitely presented over $R$ such that 
$\TT=\spf(A)$.  Let 
$$\TT^{\alg}=\spec(A).$$
Since $(\fF\stackrel{\hat{f}}{\longrightarrow}\TT)$ is a flat family of coherent sheaves over $\TT$, by formal GAGA by Grothendieck and Conrad \cite{Conrad}, $(\fF\stackrel{\hat{f}}{\longrightarrow}\TT)$ is isomorphic to the completion of a family of algebraic coherent sheaves 
$$(\fF^{\alg}\stackrel{f^{\alg}}{\longrightarrow}\TT^{\alg})$$
along the special fiber of $\TT$.  Then since the semistability of sheaves is an open condition \cite{Langer}, there must exists an open locus $\TT$, which is equivalent to an \'etale covering $\TT^{\prime\alg}$ of $\TT^{\alg}$ such that 
$$\fF|_{\TT^\prime}\to \TT^{\prime\alg}$$
is a semistable family.  Then we take the completion 
$$\widehat{\fF|_{\TT^{\prime\alg}}}\times_{R}\kk\to \widehat{\TT^{\prime\alg}}\times_{R}\kk$$
along the special fiber over $\kappa$.   Then we get the desired semistable family  of $\kk$-analytic coherent sheaves. 
\end{proof}

\subsection{Proof of Theorem \ref{thm_moduli_generic_fiber_functor}}

First if we have an affine  {\em stft} formal scheme $\TT$  over $R$, then a morphism 
$$\TT\to \mM_{R}(P)(\XX)$$
gives rise to a family of formal semistable coherent sheaves of $\XX$ over $\TT$.  By Lemma \ref{lem_generic_fiber_functor}, when applying the generic fiber functor we get a family of $\kk$-analytic semistable coherent sheaves. Hence we have a morphism 
$$(\mM_{R}(P)(\XX))_{\eta}\to \mM_{\kk}(P)(\XX_\eta).$$
We prove that the functor 
$$\Phi: (\mM_{R}(P)(\XX))_{\eta}(T)\stackrel{\sim}{\rightarrow} \mM_{\kk}(P)(\XX_\eta)(T)$$ 
is an equivalence of groupoids for any strictly $\kk$-affinoid space $T$. 

By construction,  the functor is faithful.  We prove that the functor $\Phi$ is full.  Suppose that we have two families 
$$(\fF_1\stackrel{\hat{f}_1}{\longrightarrow}\TT_1); \quad (\fF_2\stackrel{\hat{f}_2}{\longrightarrow}\TT_2)$$
of formal semistable sheaves of $\XX$ over $\TT_1$ and $\TT_2$, respectively.  Assume that we have an isomorphism of 
$\kk$-analytic semistable sheaves
$$((\fF_1)_\eta\stackrel{(\hat{f}_1)_\eta}{\longrightarrow}(\TT_1)_\eta); \quad ((\fF_2)_\eta\stackrel{(\hat{f}_2)_\eta}{\longrightarrow}(\TT_2)_\eta)$$
when passing to the generic fiber we have the following commutative diagram: 
$$
\xymatrix{
(\fF_1)_\eta\ar[r]^{\sim}\ar[d]_{(\hat{f}_1)_\eta}& (\fF_2)_\eta\ar[d]^{(\hat{f}_2)_\eta}\\
(\TT_1)_\eta\ar[r]^{\sim} &(\TT_2)_\eta.
}
$$
From \cite[Proposition 2.19]{Nicaise}, up to replacing $\TT_1$ and $\TT_2$ by admissible blow-ups, we can assume that 
$\TT_1\cong \TT_2$, which we denote it by $\TT_{12}$.  As in the proof of 
Theorem \ref{thm_formal_model_up_to_etale_covering}, up to passing to the Zariski open covering of $\TT_{12}$, the formal GAGA implies that 
$$(\fF_1\stackrel{\hat{f}_1}{\longrightarrow}\TT_1)$$
and 
$$(\fF_2\stackrel{\hat{f}_2}{\longrightarrow}\TT_2)$$
come from completions of algebraic coherent families 
$$(\fF_1^{\alg}\stackrel{f^{\alg}_1}{\longrightarrow}\TT_1^{\alg})$$
and 
$$(\fF_2^{\alg}\stackrel{f_2^{\alg}}{\longrightarrow}\TT_2^{\alg}).$$
Up to the quasi-\'etale covering of $\TT_{12}^{\alg}$, we can make them semistable. 
Let 
$$\Isom:=\Isom_{\TT_{12}^{\alg}}((\fF_1^{\alg}\stackrel{(f^{\alg}_1}{\longrightarrow}\TT_{12}^{\alg}), (\fF_2^{\alg}\stackrel{(f^{\alg}_2}{\longrightarrow}\TT_{12}^{\alg}))$$
be the isomorphism scheme parameterizing isomorphisms of these two algebraic families. We have a morphism
$$t_{I}: \TT_{12}^{\alg}\times_{R}\kk\to \Isom $$
given by the isomorphism over the generic fibers. Let 
$\widehat{\TT}_{12}^{\alg}$ be the image of $t_{I}$, and let 
$\widehat{\TT}:=\widehat{\widehat{\TT}}_{12}^{\alg}$ be the completion of $\widehat{\TT}_{12}^{\alg}$ along the special fiber over $\kappa$. Then the isomorphism 
$$((\fF_1)_\eta\stackrel{(\hat{f}_1)_\eta}{\longrightarrow}(\TT_1)_\eta)\stackrel{\sim}{\rightarrow} ((\fF_2)_\eta\stackrel{(\hat{f}_2)_\eta}{\longrightarrow}(\TT_2)_\eta) $$
is extended over $\widehat{\TT}_{12}$ and we are done the fullness of the functor $\Phi$. 

The surjectivity of $\Phi$ is given by the following argument. Let 
$$t: T\to \mM_{\kk}(P)(\XX_{\eta})$$
be a morphism, which gives a family of $\kk$-analytic semistable coherent sheaves of $\XX_\eta$ over $T$.  By Theorem 
\ref{thm_formal_model_up_to_etale_covering}, up to quasi-\'etale covering $T^\prime\to T$, there exists a formal scheme 
$\TT^\prime$ and a family of formal semistable coherent sheaves 
$$(\fF^\prime\stackrel{\hat{f}^\prime}{\longrightarrow} \TT^\prime)$$
of $\XX$ over $\TT^\prime$.  Applying the generic fiber functor to above we get the family $(\F\to T)$ back. 
So we obtain morphisms
$$\Tt^\prime\to \mM_{R}(P)(\XX)$$
and 
$$T^\prime\to (\mM_{R}(P)(\XX))_{\eta}.$$
Using the proof of fullness the morphism $T^\prime\to (\mM_{R}(P)(\XX))_{\eta}$ descends to a morphism 
$T\to (\mM_{R}(P)(\XX))_{\eta}$, which gives the morphism $T\to \mM_{R}(P)(\XX_\eta)$. So $\Phi$ is an equivalence of groupoids. The theorem follows.

\subsection{The compactness property}

\subsubsection{K\"ahler structure on non-archimedean analytic spaces}

In the case of the curve counting via stable maps in Gromov-Witten theory, or the sheaf counting in a Calabi-Yau threefold $Y$ in Donaldson-Thomas theory, the degree $\beta$ of the curve is a second homology class $H_2(Y, \zz)$. 
In order to make this work in non-archimedean geometry, we make use of the 
K\"ahler structure of Tschinkel-Kontsevich for the $\kk$-analytic space $X$, as reviewed in \cite[\S 3]{Tony_Yu1}.

We review the most useful part of \cite{Tony_Yu1}.  For a $\kk$-analytic space $X$ with $\XX$ a SNC formal model of $X$.  Let 
$\Sim_{\XX}$ be the sheaf on $S_\XX$ such that for any open subset $U$ of $S_\XX$, $\Sim_{\XX}(U)$ is the set of simple functions of $S_\XX$ restricted to $U$. 

\begin{defn}
For $I\subset I_\XX$, let $N^1(D_{I})_{\rr}=N^1(D_{I})\otimes \rr$, where $D_I=\cap_{i\in I}D_i$.  Let $\Delta^{I}$ be a face of $S_\XX$.  Let $\varphi$ be a simple function on $S_\XX$, define 
$$\partial_{I}\varphi=\sum_{i\in I_\XX}m_i\cdot \varphi(i)\cdot [D_i]|_{D_{I}}\in N^1(D_I)_\rr$$
where $[D_i]$ is the class of the divisor $D_i$. 
\end{defn}

\begin{defn}
A simple function $\varphi$ is said to be {\em linear} (resp. {\em convex, strictly convex}) along the open simplicial face 
$(\Delta^{I})^0$ corresponding to $I$ if $\partial_{I}\varphi$ is {\em trivial} (resp. {\em nef, ample}) in $N^1(D_I)_{\rr}$. 

Let $\LinLoc(\varphi)$ (resp. $\ConvLoc(\varphi), \SConvLoc(\varphi)$) b the union of the open simplicial faces in $S_\XX$ along which $\varphi$ is {\em linear} (resp. {\em convex, strictly convex}).

As in \cite{Tony_Yu1}, let $\Lin_{\XX}$ (resp. {\em $\Conv_{\XX}, \SConv_{\XX}$}) the subsheaf of $\Sim_{\XX}$ whose germs are germs of {\em linear} (resp. {\em convex, strictly convex}) functions. The sheaf $\Lin_{\XX}$ acts on the sheaf $\Sim_{\XX}$ (resp. $\Conv_{\XX}, \SConv_{\XX}$) via:
$$\psi\mapsto (\varphi\mapsto \varphi+\psi)$$
where $\psi$ is a local section of $\Lin_{\XX}$ and $\varphi$ is a local section of $\Sim_\XX$ (resp. $\Conv_{\XX}, \SConv_{\XX}$).
\end{defn}

\begin{defn}
A virtual line bundle $L$ on a non-archimedean $\kk$-analytic space $X$ with respect to the formal model $\XX$ is a torsor over the sheaf $\Lin_\XX$.  A {\em simple} (resp. {\em convex, strictly convex}) metrixation $\widehat{L}$ of a virtual line bundle $L$ is a global section of the sheaf $\Sim_{\XX}\otimes L$ (resp. {\em $\Conv_{\XX}\otimes L, \SConv_{\XX}\otimes L$}), where the tensor product is taken over the sheaf $\Lin_\XX$. 
\end{defn}

\begin{defn}
A K\"hler structure $\widehat{L}$ on $X$ with respect to the formal model $\XX$ is a virtual line bundle $L$ over $X$ with a strictly convex metrization $\widehat{L}$. 
\end{defn}

For each $i\in I_\XX$, a simple metrization $\widehat{L}$ gives a germ of a simple function $\varphi_i$ at the vertex $i$ up to addition by linear functions.  We get a collection of numerical classes 
$$\partial_i\varphi_i\in N^1(D_i)_{\rr}$$
for $i\in I_\XX$.  The collection of classes $\partial_i\varphi_i\in N^1(D_i)_{\rr}$ for every $i\in I_\XX$  is called the {\em curvature} $c(\widehat{L})$ of the metrized virtual line bundle $\widehat{L}$. 
The curvature $c(\widehat{L})$ satisfies the property (\cite[Lemma 3.6]{Tony_Yu1}): for any $\Delta^{I}, I\subset I_\XX$, and $i, j\in I$, 
$$(\partial_i\varphi_i)|_{D_{I}}=(\partial_j\varphi_j)|_{D_{I}}.$$
Tony Yu also proves some functoriality preperty of the curvature:
\begin{prop}(\cite[Proposition  4.3]{Tony_Yu1})
Let $\hat{f}: \XX\to \YY$ be a morphism of SNC formal schemes over $R$.  Let $L$ be a virtual line bundle over $\YY_\eta$ and $\widehat{L}$ a metrization of $L$. Then we have 
$$\hat{f}_s^{\star}c(\widehat{L})=c(\hat{f}^{\star}(\widehat{L})).$$
\end{prop}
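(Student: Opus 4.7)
The plan is to establish the identity pointwise: for each vertex $i \in I_\XX$ of the Clemens polytope $S_\XX$, I would compare the restrictions of both sides in $N^1(D_i^\XX)_\rr$, reducing the equality to a short linear-algebra identity built out of multiplicities of components in the two special fibers. The curvature is by construction a collection of classes indexed by vertices, and by \cite[Lemma 3.6]{Tony_Yu1} this datum is determined by any one vertex incident to a given face, so it is enough to verify the formula at every $i \in I_\XX$.

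First, I would extract the combinatorics of the induced simplicial map on Clemens polytopes. Each irreducible component $D_i^\XX\subset\XX_s$ is sent by $\hat{f}_s$ into some intersection $D_{I(i)}^\YY$ with
$$I(i)=\{\,j\in I_\YY : \hat{f}_s(D_i^\XX)\subset D_j^\YY\,\},$$
and the fact that $\hat{f}$ is an $R$-morphism forces the divisorial identity $\hat{f}^\star[\YY_s]=[\XX_s]$, because both sides compute the divisor of the pulled-back uniformiser. Writing $\hat{f}^\star D_j^\YY=\sum_{k\in I_\XX}a_{kj}D_k^\XX$ with $a_{kj}\in\zz_{\geq 0}$, this produces the balance relation $\sum_{j}n_j a_{kj}=m_k$ for every $k\in I_\XX$, where $m_k$ and $n_j$ denote the multiplicities of the components in $\XX_s$ and $\YY_s$. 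The induced simplicial map $\bar{f}:S_\XX\to S_\YY$ then sends the vertex $k$ to the point of $\Delta^{I(k)}$ with barycentric coordinates $\bigl(n_j a_{kj}/m_k\bigr)_{j\in I(k)}$.

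Second, I would interpret the pullback of the metrized virtual line bundle. If $\varphi$ is a local simple function on $S_\YY$ representing $\widehat{L}$ near $\bar{f}(i)$, then affinity of $\varphi$ on each simplicial face makes $\bar{f}^\star\varphi=\varphi\circ\bar{f}$ a simple function on $S_\XX$ in a neighbourhood of $i$; since the pullback of a linear function is linear, this descends to a well-defined germ of a simple section of the torsor $\hat{f}^\star L$, representing $\hat{f}^\star\widehat{L}$ near $i$. The main comparison is then direct: on the one hand
$$\partial_i(\bar{f}^\star\varphi)=\sum_{k\in I_\XX}m_k\,\varphi(\bar{f}(k))\,[D_k^\XX]|_{D_i^\XX},$$
while on the other hand, restricting $c(\widehat{L})$ to $D_{I(i)}^\YY$ and pulling back along $\hat{f}_s|_{D_i^\XX}$ yields
$$\sum_{j\in I_\YY}n_j\,\varphi(j)\sum_{k\in I_\XX}a_{kj}\,[D_k^\XX]|_{D_i^\XX}.$$
Swapping the order of summation, the asserted equality reduces to the scalar identity
$$m_k\,\varphi(\bar{f}(k))=\sum_{j\in I_\YY}n_j a_{kj}\,\varphi(j)$$
for each $k\in I_\XX$, which is exactly affinity of $\varphi$ on the face $\Delta^{I(k)}$ evaluated against the barycentric coordinates computed in the first step.

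The hardest part will be the first step: rigorously justifying the divisorial identity $\hat{f}^\star[\YY_s]=[\XX_s]$ as Cartier divisors and the explicit formula for the barycentric coordinates of $\bar{f}$. This requires passing to the \'etale-local SNC model $R\{x_0,\ldots,x_n,x_{d+1}^{-1},\ldots,x_n^{-1}\}/(x_0^{m_0}\cdots x_d^{m_d}-\varpi)$ on both sides of $\hat{f}$ and tracking how each coordinate $x_j$ on $\YY$ factors as a monomial in the coordinates on $\XX$, producing the exponent matrix $(a_{kj})$ together with the balance condition. Once this local picture is in place, the remaining reduction to affine interpolation on a single simplex is routine.
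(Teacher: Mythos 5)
The paper does not actually prove this statement: it is quoted verbatim from Tony Yue Yu's work (\cite[Proposition 4.3]{Tony_Yu1}) as background, so there is no in-paper proof to compare against. Your reconstruction is correct and is essentially the expected/standard argument: the balance relation $\sum_j n_j a_{kj}=m_k$ does follow from $\hat f^{\star}\mathrm{div}(\varpi)=\mathrm{div}(\varpi)$ together with effectivity of the pulled-back components, the induced map on Clemens polytopes sends the vertex $k$ (the normalized divisorial valuation $\frac{1}{m_k}\mathrm{ord}_{D_k}$) to the point of $\Delta^{I(k)}$ with the barycentric coordinates you state, and the curvature identity then reduces, after swapping the order of summation, exactly to affinity of the local simple function on $\Delta^{I(k)}$. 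Two small points you rely on implicitly and should make explicit: the left-hand side $\hat f_s^{\star}c(\widehat L)$ is only well defined on $D_i^{\XX}$ because of the compatibility $(\partial_j\varphi_j)|_{D_I}=(\partial_{j'}\varphi_{j'})|_{D_I}$ (Lemma 3.6 of \cite{Tony_Yu1}), which lets you use a single representative $\varphi$ near $\bar f(i)$; and the fact that $\bar f^{\star}$ of a linear simple function is again linear (needed for $\hat f^{\star}L$ and $\hat f^{\star}\widehat L$ to make sense as torsor and metrization) is itself a consequence of the same computation $\partial_i(\bar f^{\star}\varphi)=\hat f_s^{\star}(\partial_{I(i)}\varphi)$, so it should be stated as part of the conclusion rather than assumed. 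With those remarks, and the \'etale-local monomial verification you already flag, the argument is complete.
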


\subsubsection{Degree of the virtual line bundle on curves}

In this section we define the degree of virtual line bundle on curves.  Let $X$ be  a smooth connected proper $\kk$-analytic curve.  Let $\XX$ be its formal model, then the Clemens polytope $S_\XX$ is a finite connected simple graph, see 
\cite[\S 4]{Ber}. Although the $\kk$-analytic curve $X$ is smooth, the singularities in the formal model (only double point singularities) correspond to the Type II Berkovich points in $X$.  In \cite{BPR}, Baker,  Payne, and Rabinoff prove a theorem which states that the semistable vertex sets of $X$ are in natural bijective correspondence with semistable models of $X$, where the semistable vertex sets of $X$ are a finite  Type II Berkovich points in $X$.   

Let us fix an order of the set $I_\XX$. For $i, j\in I_\XX$, we say $i\prec j$ if $i, j$ are connected by an edge and $i$ is inferior to $j$ with respect to the fixed order. For $i\in I_\XX$, let $U_i$ be the open neighborhood, which is the union of the vertex $i$ and all the open edges whose closure contain $i$.  If $e_{ij}$ is an edge, then $U_i\cap U_j$ is the interior of $e_{ij}$. 
As in \cite[\S 5]{Tony_Yu1}, using \v{C}ech cohomology of the open cover $\{U_i\}$, Tony Yu proves a degree map
\begin{equation}\label{deg_L}
\deg: H^1(\Lin_\XX)\stackrel{\sim}{\rightarrow} \rr
\end{equation}
and the degree of a virtual line bundle $L$ is defined by (\ref{deg_L}) since $L\in H^1(\Lin_\XX)$. 
Also the degree $\deg(c(\widehat{L}))$ is the curvature 
$c(\widehat{L})$ is given by:
$$\deg(c(\widehat{L}))=\sum_{i\in I_\XX}m_i\cdot \deg(\partial_i \varphi_i).$$

\subsubsection{Moduli of analytic curve counting via ideal sheaves}\label{sec_moduli_absolute_relative_sheaves}

For the counting sheaf theory, we fix our $\kk$-analytic space $X$ to be three dimensional over $\kk$.   
For instance, let $X$ be a smooth Calabi-Yau threefold  over $\kappa$, then the analytification $X^\an$ of $X$ is a smooth three-dimensional $\kk$-analytic space. Let $\XX$ be a SNC formal model of $X$.
Let $P$ be the Hilber polynomial determined by the Chern character 
$$c=(1, 0, \beta, n)\in H^*(X,\zz).$$ 

We fix a K\"ahler structure $\widehat{L}$ on $X$ with respect to a SNC formal model $\XX$.   In this section we mainly consider the ideal sheaves $I_C$ of a connected proper smooth $\kk$-analytic curve $C$, which is surely stable. 

\begin{defn}
The degree of the curve $C$ in $X$ is defined as follows.  Let $\CC$ be a SNC formal model of $C$ and $\CC \subset \XX$ such that $I_\CC$ is the ideal sheaf of $\CC$ in $\oO_{\XX}$.  Then $L|_{\CC}$ is a virtual line bundle on $C$ with respect to the formal model $\CC$.  Define 
$$\deg(I_C)=\deg (L|_{\CC}).$$
\end{defn}
\begin{rmk}
The degree of the ideal sheaf of the curve $C$ does not depend on the choice of the formal model $\CC$, since any two of them can be dominated by another one.  If the $\kk$-analytic curve $C$ is not smooth but with ordinary double points,  we define the degree of $C$ to be the sum of the degrees of the connected components after taking normalization. 
\end{rmk}

We fix a Hilbert polynomial $P$. 
Let $\iI^{P}_{\kk}(X)$ be the moduli stack of  $\kk$-analytic ideal sheaves over $X$ with Hilbert polynomial $P$. 
Similarly let $\iI^{P}_{R}(\XX)$ and $\iI_{\kappa}^{P}(\XX_s)$  be the moduli stack of stable formal  ideal sheaves over 
$\XX$ with Hilbert polynomial $P$  and algebraic ideal sheaves over $\XX_s$ with Hilbert polynomial $P$. 
In this section we prove a result that ideal sheaves (or more general stable coherent systems as in \cite{LW})  on a proper $\kk$-analytic space $X$, after a choice of SNC formal model $\XX$ of $X$, the sheaf when restricted to the special fiber $\XX_s$, will induces a stable sheaf on the SNC divisors of $\XX_s$.  Hence we deduce a decomposition result for the moduli space. 
From Theorem \ref{thm_moduli_generic_fiber_functor} and Proposition \ref{prop_moduli_special_fiber_functor}
$$\iI^{P}_{R}(\XX)_{\eta}\cong \iI^{P}_{\kk}(X)$$ 
$$\iI^{P}_{\kk}(\XX)_s\cong \iI^{P}_{\kappa}(\XX_s).$$

\begin{prop}\label{prop_moduli_central_fiber_proper}
If the formal scheme $\XX$ is proper and $\XX_s$ has only simple normal crossing divisors, then 
the moduli stack $\iI^{P}_{\kappa}(\XX_s)$ is proper. 
\end{prop}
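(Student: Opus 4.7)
The plan is to identify $\iI^P_\kappa(\XX_s)$ with a Hilbert-type functor on the proper $\kappa$-scheme $\XX_s$ and then invoke Grothendieck's classical properness theorem. Any $\kappa$-analytic ideal sheaf $I_Z \hookrightarrow \oO_{\XX_s}$ with Hilbert polynomial $P$ corresponds canonically, via the exact sequence
\[
0 \to I_Z \to \oO_{\XX_s} \to \oO_Z \to 0,
\]
to a closed subscheme $Z \subset \XX_s$ whose Hilbert polynomial $P'$ is determined by $P$ and the fixed polynomial of $\oO_{\XX_s}$ with respect to an auxiliary polarization. Since only scalar endomorphisms preserve the embedding $I_Z \hookrightarrow \oO_{\XX_s}$, the moduli stack $\iI^P_\kappa(\XX_s)$ is equivalent to the Hilbert functor $\Hilb^{P'}(\XX_s)$ parametrizing flat families of closed subschemes with Hilbert polynomial $P'$.

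To set up Grothendieck's theorem an ample line bundle on $\XX_s$ is needed. Since $\XX$ is a proper {\em stft} formal $R$-scheme, its special fiber $\XX_s$ is proper over $\kappa$, and an ample polarization can be extracted from the K\"ahler structure $\hat L$: the curvature $c(\hat L)$ restricts on each component $D_i$ to an ample class by strict convexity, so one gets an ample line bundle $\oO_{\XX_s}(1)$ with respect to which $P$ is well-defined. With this polarization, Grothendieck's construction produces a projective $\kappa$-scheme $\Hilb^{P'}(\XX_s)$. Combining this with Theorem \ref{moduli_locally_noetherian_scheme} applied to $\sS = \spec\kappa$ and $\sX = \XX_s$ shows that $\iI^P_\kappa(\XX_s)$ is an algebraic stack of finite type over $\kappa$, since boundedness holds on the proper scheme $\XX_s$ for families with fixed $P$. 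Separatedness and properness then reduce to the valuative criterion: given a DVR $V$ with fraction field $K$ and a flat family of ideal sheaves over $\spec K$ inside $\XX_s \times \spec V$, the schematic closure of the corresponding closed subscheme gives a unique flat extension to $\spec V$, whose Hilbert polynomial is preserved by flatness.

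The main anticipated obstacle is ensuring that the SNC singularities of $\XX_s$ do not disrupt the argument. In particular, schematic closures in $\XX_s \times \spec V$ may develop embedded components supported on the singular strata $D_I = \cap_{i \in I} D_i$, and one must verify that the resulting flat limit is still an ideal sheaf with the correct Hilbert polynomial and that two distinct limits never occur. Both issues are handled by the standard Hilbert scheme machinery: flatness over a DVR forces local constancy of the Hilbert polynomial, and the schematic closure of a closed subscheme of the generic fiber inside a proper ambient scheme is uniquely determined, regardless of whether the ambient scheme is smooth. Once these points are verified, $\Hilb^{P'}(\XX_s)$ is projective, and the equivalence with $\iI^P_\kappa(\XX_s)$ yields the properness of the moduli stack.
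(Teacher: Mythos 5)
Your argument is correct in outline but follows a genuinely different route from the paper. The paper never works with a Hilbert-type functor on the singular total space $\XX_s$: it restricts an ideal sheaf $I_Z$ of a curve to the smooth proper strata $D_I=\cap_{i\in I}D_i$ of the SNC special fiber, notes that the fixed polynomial splits by inclusion--exclusion, $P=\sum_{I}(-1)^{|I|-1}P_I$, and thereby obtains a closed embedding of $\iI^{P}_{\kappa}(\XX_s)$ into a union, over the splittings $\gamma=\{P_I\}$, of products of the stacks $\iI^{P_I}_{\kappa}(D_I)$; properness then follows because each factor is a moduli stack of ideal sheaves on a smooth proper variety, hence proper, and a closed substack of a proper stack is proper. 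You instead stay on $\XX_s$ itself: identify $\iI^{P}_{\kappa}(\XX_s)$ with the Hilbert functor of $\XX_s$, get quasi-compactness from Grothendieck's boundedness, and check the valuative criterion directly by taking flat limits via schematic closure over a DVR. What your route buys is that it avoids the stratum-by-stratum decomposition and any bookkeeping of which splittings $\gamma$ occur; what the paper's route buys is that all the properness input is placed on the smooth proper pieces $D_I$, so no polarization of the singular total space $\XX_s$ ever has to be produced.

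The step you should shore up is precisely that polarization. Grothendieck's projectivity and boundedness theorems require an honest ample line bundle on $\XX_s$, but the proposition only assumes that $\XX$ is proper with SNC special fiber, and the K\"ahler structure $\widehat{L}$ of the paper is a \emph{virtual} line bundle whose curvature is a collection of real classes $\partial_i\varphi_i\in N^1(D_i)_{\rr}$, ample on each component by strict convexity; passing from these real numerical classes to an actual ample line bundle $\oO_{\XX_s}(1)$ on the whole SNC scheme requires a rationality/integrality argument and a gluing of genuine line bundles across the intersections $D_I$, which you assert rather than prove (and a proper $\XX_s$ need not be projective a priori). Some such fix is unavoidable in your approach, since without a polarization the Hilbert polynomial of a family on $\XX_s$ is not even defined; the paper implicitly faces the analogous issue only for the $P_I$ on the smooth strata, where the ample classes are directly available. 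Granting an ample $\oO_{\XX_s}(1)$, your identification with the Hilbert functor, the boundedness, and the flat-limit/uniqueness argument over a DVR are standard and correct, so the proposal goes through as an alternative proof.
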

\begin{proof}
Still let $\{D_i|i\in I_\XX\}$ be the smooth irreducible components of $\XX_s$. Then every ideal sheaf $I_{Z}$ of a curve $Z\subset \XX_s$ with Hilbert polynomial $P$ admits a splitting of $P$, which is the set of 
$$\gamma=\{P_{I}| I\subset I_\XX\}$$
such that 
$$P=\sum_{I\subset I_\XX}(-1)^{|I|-1}P_{I}.$$
Thus there is a closed embedding of the moduli stacks
$$\iI^{P}_{\kappa}(\XX_s)\to \bigcup_{\gamma}\bigtimes_{\{P_I\}} \iI_{\kk}^{P_I}(D_I)$$
which sends an ideal $I_Z$ to its corresponding restriction on $D_I$. Since every $D_I$ is smooth and proper, the moduli stack $\iI_{\kk}^{P_I}(D_I)$ is proper.  Hence we get the properness of $\iI^{P}_{\kappa}(\XX_s)$.
\end{proof}

\begin{cor}\label{cor_moduli_space_X_proper}
Assume that $\mbox{Ch}(\kappa)=0$. Let $X$ be a proper $\kk$-analytic space, then the moduli stack $\iI^{P}_{\kk}(X)$is a proper $\kk$-anaytic stack. 
\end{cor}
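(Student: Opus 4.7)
The plan is to reduce properness of $\iI^{P}_{\kk}(X)$ to properness of the moduli stack on the special fiber, which is already established in Proposition \ref{prop_moduli_central_fiber_proper}, by exploiting the compatibility of moduli stack formation with the generic and special fiber functors.

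First I would invoke the hypothesis $\mathrm{Ch}(\kappa)=0$: by Temkin's resolution of singularities (cited earlier in the paper), the proper quasi-compact $\kk$-analytic space $X$ admits a simple normal crossing formal model $\XX$ over $R$. Since $X=\XX_\eta$ is proper in the sense of ``compact and boundaryless,'' and the specialization map is surjective onto $\XX_s$, standard compatibility of properness with the formation of generic fiber implies that the formal scheme $\XX$ is proper over $\spf(R)$, and hence its special fiber $\XX_s$ is a proper $\kappa$-scheme with only SNC divisors. Proposition \ref{prop_moduli_central_fiber_proper} then yields that the algebraic stack $\iI^{P}_{\kappa}(\XX_s)$ is proper over $\kappa$.

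Next I would transfer this to the formal setting. By Proposition \ref{prop_moduli_special_fiber_functor} we have the identification $(\iI^{P}_{R}(\XX))_s\cong \iI^{P}_{\kappa}(\XX_s)$, and since properness over $\spf(R)$ for a formal algebraic stack of finite presentation is detected on the special fiber (boundedness of families, separatedness, and the valuative criterion all reduce modulo the ideal of definition to the corresponding conditions for $\iI^{P}_{\kappa}(\XX_s)$, because the truncations $\iI^{P}_{\sS_m}(\XX\times_R\sS_m)$ are all proper over $\sS_m$ by the infinitesimal lifting property of semistable families inherited from Theorem \ref{moduli_locally_noetherian_scheme}), the formal moduli stack $\iI^{P}_{R}(\XX)$ is proper over $\spf(R)$. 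Finally, applying the generic fiber functor and invoking Theorem \ref{thm_moduli_generic_fiber_functor}, which identifies $(\iI^{P}_{R}(\XX))_\eta\cong \iI^{P}_{\kk}(X)$, together with the fact that the generic fiber of a proper formal $R$-stack is a proper $\kk$-analytic stack (via Berkovich's analogue for analytic stacks, analogous to Theorem \ref{main_thm1}), gives the properness of $\iI^{P}_{\kk}(X)$.

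The main obstacle will be the middle step: carefully verifying that properness of the formal moduli stack $\iI^{P}_{R}(\XX)$ over $\spf(R)$ follows from properness of its special fiber. This requires that one can promote valuative criterion data over $\spf(R)$-points to actual families of formal semistable sheaves, which is precisely the content of the formal GAGA and the existence-of-formal-model results used in Theorem \ref{thm_formal_model_up_to_etale_covering}; and it requires showing that the families are bounded uniformly in the formal direction, which follows from the Hilbert polynomial being fixed and the boundedness of semistable sheaves with fixed Hilbert polynomial on each $\sS_m$-thickening. Once this is in place the corollary follows as a three-line deduction.
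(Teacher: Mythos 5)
Your proposal follows essentially the same route as the paper's proof: pass to an SNC formal model $\XX$ (Temkin, $\mathrm{Ch}(\kappa)=0$), transfer properness of $X$ to $\XX$ and hence to $\XX_s$, invoke Proposition \ref{prop_moduli_central_fiber_proper} for properness of $\iI^{P}_{\kappa}(\XX_s)$, and then pull back through Proposition \ref{prop_moduli_special_fiber_functor} and Theorem \ref{thm_moduli_generic_fiber_functor}. One small improvement: the paper replaces your heuristic argument for ``$X$ proper $\Rightarrow$ $\XX$ proper'' with a direct citation to Temkin's result [Temkin2, Corollary 4.4], which cleanly settles that equivalence.
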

\begin{proof}
Let $\XX$ be the SNC formal model of $X$.  Then $\XX$ is proper if and only if the $\kk$-analytic space $X$ is proper \cite[Corollary 4.4]{Temkin2}.   From Proposition \ref{prop_moduli_central_fiber_proper} the moduli stack  $\iI^{P}_{\kappa}(\XX_s)$ is proper as an algebraic moduli stack. 
The result hence follows from Proposition \ref{prop_moduli_special_fiber_functor} and Theorem \ref{thm_moduli_generic_fiber_functor}. 
\end{proof}

\subsubsection{Universal stack of SNC formal models}\label{sec_stack_SNC_formal_model}

Let $X$ be a smooth non-archimedean $\kk$-analytic space, and fix a SNC formal model $\XX$ of $X$.  First let us recall a result in \cite[Proposition 2.19]{Nicaise}.
\begin{prop}
Let $\XX^\prime\to\XX$ be an admissible formal blow-up with center an admissible ideal $I$. Then 
$$\XX^\prime_{\eta}\cong \XX_\eta$$
as rigid varieties and Berkovich analytic spaces. 
\end{prop}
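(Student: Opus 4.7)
The plan is to reduce to the local affine case, then use the key fact that an admissible ideal becomes the unit ideal after inverting the uniformizer $t$, so its blow-up on the generic fiber becomes a trivial (identity) blow-up.

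First I would work locally: since admissible formal blow-ups are local on the base and commute with passage to open formal subschemes, I can assume $\XX = \spf(A)$ is affine with $A$ topologically finitely presented over $R$, and that $I \subset A$ is an admissible open ideal. By the definition of admissibility, $I$ contains $t^n A$ for some $n \geq 1$, equivalently $J^n \subset I$ where $J = tA$ is the ideal of definition. The admissible formal blow-up $\XX^\prime \to \XX$ is then obtained as the $t$-adic completion of the scheme-theoretic blow-up $\widetilde{\XX} := \Proj\!\bigl(\bigoplus_{k\geq 0} I^k\bigr) \to \spec(A)$ along the special fiber, with ideal of definition generated by $t$ pulled back.

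Next I would pass to the generic fiber. The generic fiber $\XX_\eta = \SP_B(A \otimes_R \kk)$ corresponds to inverting $t$. Since $I \supset t^n A$, tensoring over $R$ with $\kk$ gives
\[
I \otimes_R \kk \supset t^n A \otimes_R \kk = A \otimes_R \kk,
\]
so the image of $I$ in $A \otimes_R \kk$ is the unit ideal. Consequently, on the Rees algebra side, $\bigoplus_{k\geq 0}(I \otimes_R \kk)^k$ becomes a polynomial algebra over $A\otimes_R\kk$, and the projective blow-up $\Proj$ of a locally free rank-one ideal is an isomorphism onto $\spec(A \otimes_R \kk)$. Hence $\widetilde{\XX}_\eta \to \spec(A) \times_R \spec(\kk)$ is an isomorphism of $\kk$-schemes, and by compatibility of the generic fiber functor with analytification/completion we obtain $\XX^\prime_\eta \cong \XX_\eta$ as strictly $\kk$-affinoid spaces, hence as both rigid $\kk$-varieties and Berkovich $\kk$-analytic spaces.

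Finally I would globalize by gluing: the isomorphisms $\UU^\prime_\eta \cong \UU_\eta$ for affine opens $\UU = \spf(A) \subset \XX$ are canonical (they are induced by the natural morphism $\XX^\prime \to \XX$ restricted to $\eta$), so they agree on overlaps $\UU_1 \cap \UU_2$ and patch together to a global isomorphism $\XX^\prime_\eta \stackrel{\sim}{\to} \XX_\eta$. The main (but essentially bookkeeping) obstacle is verifying that the construction of admissible formal blow-ups via $t$-adic completion of scheme-theoretic blow-ups behaves well with respect to the Berkovich generic fiber functor; this is handled by the standard compatibility, already used implicitly in the preceding sections, between the formal generic fiber and the analytification of $A \otimes_R \kk$.
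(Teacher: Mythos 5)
The paper itself gives no argument for this proposition: it is recalled verbatim from Nicaise (Proposition 2.19 of \cite{Nicaise}), so the only comparison to make is with the standard proof behind that citation, and your proposal is indeed that standard proof (reduce to $\XX=\spf(A)$, note $t^n A\subseteq I$ so $I$ generates the unit ideal in $A\otimes_R\kk$, conclude the blow-up is trivial on the generic fiber, then glue). The reduction to the affine case, the unit-ideal observation, and the gluing step are all fine.

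The one place where you need to be careful is the step you dismiss as ``essentially bookkeeping'': identifying $\XX^\prime_\eta$ with the generic fiber of the scheme-theoretic blow-up after inverting $t$. Stated as ``compatibility of the generic fiber functor with completion/analytification'', this is dangerously close to assuming the proposition itself, since compatibility of the Raynaud--Berkovich generic fiber with admissible blow-ups is exactly what is being proved. There are two standard ways to close this, and one of them has to be run explicitly. (i) Chart computation: writing $I=(f_1,\dots,f_r)$ with $t^n\in I$, the formal blow-up is covered by the charts $\spf\,A\langle f_1/f_i,\dots,f_r/f_i\rangle$ (the Rees-algebra chart with its $t$-torsion killed); the generic fiber of the $i$-th chart is the rational subdomain $\{x\in\XX_\eta : |f_j(x)|\le |f_i(x)| \text{ for all } j\}$ of $\XX_\eta$, and these rational domains cover $\XX_\eta$ precisely because the $f_i$ have no common zero there (as $t$ is invertible on $\XX_\eta$ and $t^n\in I$); the chart gluing matches the gluing of this rational cover, giving $\XX^\prime_\eta\cong\XX_\eta$. (ii) Properness route: the scheme blow-up $\widetilde{\XX}\to\spec(A)$ is proper, so by Berkovich's comparison for completions of proper $A$-schemes the generic fiber $\XX^\prime_\eta$ is the preimage of $\XX_\eta=\SP_B(A\otimes_R\kk)$ under $(\widetilde{\XX}\otimes_R\kk)^{\an}\to(\spec(A\otimes_R\kk))^{\an}$ (properness, via the valuative criterion, is what guarantees every point of $\XX_\eta$ specializes into the special fiber of $\widetilde{\XX}$); since the bottom map is an isomorphism of $\kk$-schemes by your unit-ideal argument, the top map restricted to the tube is an isomorphism. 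With either of these inserted, your proof is complete and agrees with the argument the paper implicitly relies on; without it, the decisive step is missing.
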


We define a universal stack $\MM:=\MM^{(X,\XX)}$ of SNC formal models of $(X,\XX)$.  Recall  $I_{\XX}$ is the number 
of irreducible components of $\XX_s$.   For $I\subset I_{\XX}$ a subset, we let 
$$I_p:=\{\text{the first~}(|I|-1)\text{~elements in } I\}$$
and 
$$D_p:=\bigcap_{i\in I_p}D_i.$$
Using $I\subset I_\XX$, we construct the admissible blow-ups 
$\XX_{I}[n]\to\XX$ for any positive integer $n$ by induction. Let $\XX_{I}[0]=\XX$, and
$\pi_1: \XX_{I}[1]\to \XX$ be the admissible formal blow-up along $D_{I}$.  Let 
$\Delta_{I}$ be the exceptional locus of $\pi_1$, which is a projective bundle over $D_I$, and let 
$D_I[1]:=D_{I_p}\cap \Delta_{I}$.  Then $D_I[1]$ has the same singular type as $D_I$.  Let 
$\pi_2: \XX_{I}[2]\to \XX_{I}[1]$ be the admissible formal blow-up along $D_{I}[1]$.  
Suppose that we have $\pi_{n-1}: \XX_{I}[n-1]\to \XX_{I}[n-2]$.  Let 
$\Delta_{I}[n-2]$ be the exceptional locus of $\pi_{n-1}$, and let 
$D_{I}[n-1]=\Delta_{I}[n-2]\cap D_{I_p}$.  Then let 
$\pi_n: \XX_{I}[n]\to \XX_{I}[n-1]$ be the admissible formal blow-up along $D_{I}[n-1]$. 

Now we use $\XX_{I}[n]$ to construct the universal stack of SNC formal models. 
Let 
$\xi:\SS\to\spf(R)$ be a formal $R$-scheme. An effective formal scheme over $\SS$ is a family 
$$\WW:=\XX_I[n]\times_{R}\SS$$
for some $I\subset I_\XX$ 
over $\SS$ and a tautological projection 
$\WW\to \XX\times_{R}\SS$.  We denote by $\xi$ this effective family.  Let $\xi_1:  \SS_1\to\spf(R)$ and 
$\xi_2:  \SS_2\to\spf(R)$ be two effective families.  Then an arrow $r: \xi_1\to\xi_2$ is given by 
\[
\xymatrix{
\SS_1\ar[rr]^{r}\ar[dr]_{\xi_1}&& \SS_2\ar[dl]^{\xi_2} ~\\
&spf(R)&
}
\]
such that $\xi_1^\star\XX_I[n_1]\cong r^\star\xi_2^\star\XX_I[n_2]$.  We denote by $r$ this isomorphism. We say two effective formal families $\WW_1$ and $\WW_2$ over $\SS$ are isomorphic if $\WW_1$ is  $\SS$-isomorphic to  $\WW_2$
by $r$ and they are compatible with respect to the projections to $\XX\times_{R}\SS$. 
We define the universal stack $\MM$ of SNC formal models of $(X,\XX)$. 

\begin{defn}
Let $\SS$ be a formal $R$-scheme.  A universal SNC formal scheme over $\SS$ is a pair $(\WW,\rho)$, such that 
$\WW\to\SS$ is a family of formal schemes over $\SS$, and 
$$\rho: \WW\to\XX\times_{R}\SS$$
is a $\SS$-projection; so that 
there exists an open covering $\SS_\alpha$ of affine formal schemes of $\SS$ and 
$$\WW\times_{\SS}\SS_\alpha=\XX_I[n]\times_{R}\SS$$
for some $I\subset I_\XX$ 
and 
$\rho|_{\WW\times_{\SS}\SS_\alpha}: \XX_I[n]\times_{R}\SS\to\XX\times_{R}\SS$ is the tautological projection. Let 
$(\WW_1, \rho_1)$ and $(\WW_2, \rho_2)$ be two families of universal formal $\SS_1$- and $\SS_2$-schemes.  An arrow $\WW_1\to \WW_2$ is given by a 
$\spf(R)$-morphism $\SS_1\to \SS_2$ such that 
$\WW_1\cong \WW_2\times_{\SS_2}\SS_1$
is a $\SS_1$-isomorphism compatible with the projections to $\XX\times_{R}\SS$. 
\end{defn}

Then we define the functor 
\begin{equation}\label{defn_stack_universal}
\MM:=\MM^{(X,\XX)}: \Fsch_{R}\to (\text{groupoids})
\end{equation}
by
$$\SS\mapsto \{\text{the groupoid of family of universal SNC formal schemes over~}\SS\}.$$

Then a routine check from the definition of stacks shows that 
\begin{prop}\label{prop_stack_SNC_formal_model}
The functor $\MM$ is a stack over $\spf(R)$. There exists a morphism $\pi: \MM\to \spf(R)$ such that $\MM_\eta\cong X$. 
\end{prop}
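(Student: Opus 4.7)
The plan is to verify the stack axioms for $\MM$ with respect to the Zariski (or admissible fppf) topology on $\Fsch_R$, and then to produce the structure morphism $\pi:\MM\to\spf(R)$ and identify its generic fiber. The definition of $\MM$ is already manifestly that of a prestack fibered in groupoids, with pullback given by fiber product of $\SS$-schemes along the morphism $\SS_1\to\SS_2$; so the content reduces to descent for morphisms and descent for objects, plus a direct check at the generic fiber.

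For descent of morphisms, suppose $(\WW_1,\rho_1)$ and $(\WW_2,\rho_2)$ are two universal SNC formal schemes over $\SS$ and $\{\SS_\alpha\}$ is an open covering of $\SS$ with isomorphisms $\phi_\alpha:\WW_1|_{\SS_\alpha}\to\WW_2|_{\SS_\alpha}$ over $\SS_\alpha$ that agree on overlaps and are compatible with the $\rho_i$. Since formal $R$-schemes satisfy faithfully flat descent for morphisms (the $\Isom$-functor of formal schemes is a sheaf), the $\phi_\alpha$ glue to a unique $\SS$-morphism $\phi:\WW_1\to\WW_2$, and compatibility with $\rho$ is checked locally. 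For effective descent of objects, given compatible families $(\WW_\alpha,\rho_\alpha)$ with gluing data, each $\WW_\alpha$ is (on a refinement) of the form $\XX_I[n]\times_R\SS_\alpha$. Because admissible formal blow-ups along the strata $D_I$ are canonical and commute with flat base change, the local models agree on overlaps up to the given gluing data, and the descent for coherent ideals on $\XX\times_R\SS$ allows one to glue the blow-up centers and thus the $\WW_\alpha$ into a global family $(\WW,\rho)$ on $\SS$. The projection $\rho$ glues as a morphism of formal $\SS$-schemes, and the local presentation as $\XX_I[n]\times_R\SS_\alpha$ is preserved.

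The structure morphism $\pi:\MM\to\spf(R)$ is induced by the fact that every object over $\SS$ lives over $\spf(R)$ via $\SS\to\spf(R)$, and this assignment is strictly functorial. For the generic fiber, base change to $\kk$ sends the local models $\XX_I[n]\times_R\SS$ to $(\XX_I[n])_\eta\times_\kk\SS_\eta$, and the proposition recalled at the start of Section~2.6 (that admissible formal blow-ups do not alter the generic fiber) gives $(\XX_I[n])_\eta\cong\XX_\eta=X$ canonically. Hence for any $T\in\An_\kk$, every object of $\MM_\eta(T)$ is canonically isomorphic to the trivial family $X\times_\kk T\to X\times_\kk T$, with no nontrivial automorphisms, exhibiting $\MM_\eta$ as the trivial stack on $X$, i.e.\ $\MM_\eta\cong X$. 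The main technical obstacle is the descent step for objects: one must check carefully that the local presentations $\XX_I[n]\times_R\SS_\alpha$ agree on overlaps in a way compatible with the tautological projections to $\XX\times_R\SS$, which uses both the canonicity of the inductive blow-up construction $\XX_I[n]\to\XX$ and the flatness of the base change $\SS_\alpha\to\spf(R)$ along these admissible centers.
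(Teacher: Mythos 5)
Your proposal is correct and takes essentially the same route as the paper: the paper disposes of the stack axioms as ``a routine check from the definition of stacks,'' and its identification of the generic fiber rests, exactly as yours does, on the recalled fact that admissible formal blow-ups do not change the generic fiber, so every local model $\XX_I[n]\times_{R}\SS$ has generic fiber canonically $X\times_{\kk}\SS_\eta$ and the tautological projections $\rho$ rigidify away nontrivial automorphisms. The only point to phrase carefully is the final identification: ``$\MM_\eta\cong X$'' should be read, as in the paper and its following remark on limits of admissible blow-ups, as saying that the generic fiber of the (rigidified) universal family over $\MM$ is canonically $X$, which is exactly what your computation of $\MM_\eta(T)$ establishes.
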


\begin{rmk}
Proposition \ref{prop_stack_SNC_formal_model} is related to an interesting result for formal blow-ups.  Fixing a formal scheme $\XX$, let $\{\XX_i\}_{i\in K}$ be all the admissible formal blow-ups of $\XX$, then the direct limit 
$\lim_{i}(\overline{\XX}_i)$ of all the underlying space of the formal blow-ups modulo the equivalent relations is the non-archimedean analytic space $\XX_\eta$. 
\end{rmk}

\subsubsection{Relation to the stack of expanded degenerations}\label{sec_stack_expanded_degeneration}

Recall from \cite{Li1}, let $X$ be a $\kappa$-scheme and $W\to \aaa^1_{\kappa}$ be a degeneration family such that
$W_t\cong X$, and $W_0\cong D_1\cup_{D_{12}}D_2$.  Let $\XX\to\spf(R)$ be the formal completion of $W$ along $W_0$. 
Then $\XX\to \spf(R)$ is a {\em stft} formal scheme such that $\XX_s=W_0$. 
We have the universal stack of SNC formal models $\MM^{(X,\XX)}$ for $X$.  Since $\XX_s$ only has two irreducible components $D_1, D_2$,
there exists only one nontrivial admissible formal blow-up $\XX_{I}[n]$ for $|I|=|I_{\XX}|=2$. We just denote by
$\XX[n]:=\XX_{I_\XX}[n]$. 
For any formal model $\XX[n]$ in the  universal stack of SNC formal models $\MM^{(X,\XX)}$, the central fiber $(\XX[n])_s$ consists of $D_1, D_2$ and $n$ irreducible components 
$\Delta_i=\pp_{D_{12}}(N_{D_{12}/D_1}\oplus\oO)$ for $i=1, \cdots, n$ in between.    The intersection of $\Delta_i\cap \Delta_j\cong D_{12}$ and there are totally $n+1$ such singular $D_{12}$'s and we denote them by $E_i$ for $1\leq i\leq n+1$. 

\begin{defn}(\cite[Definition 3.1]{LW})
A coherent sheaf $F$ of $\XX_s$ is normal to a subscheme $D$ if $\Tor_{1}^{\oO_{\XX_s}}(F, \oO_{D})=0$.
\end{defn}

\begin{defn}(\cite[Definition 3.9]{LW})
A coherent sheaf $F$ of $\XX[n]$ is admissible if it is normal  to every $E_i$.
\end{defn}

Then we can construct a moduli stack $\iI_{R}^{P}(\MM)$ of admissible ideal sheaves of curves with Hilbert polynomial $P$.   This stack is defined by the following category: let $(\WW,\rho)\in \MM(\SS)$ be a universal family of formal schemes over $\SS$, then we define a family of admissible stable ideal sheaves over $\WW$. Thus we get a category of stable ideal sheaves over the universal stack of SNC formal models $\MM$.   

For the degeneration $W\to \aaa^1_{\kappa}$,  let $\MM^{\alg}$ be the stack of  algebraic expanded degenerations as in \cite[Proposition 1.10]{Li1} and \cite[Definition 2.5]{LW}.   We also have the the moduli stack $\iI^{P}_{\kappa}(\MM^{\alg})$  of stable ideal sheaves over the stack  $\MM^{\alg}$ of  expanded degenerations with Hilbert polynomial $P$ as in \cite[Proposition 4.4]{LW}, \cite[Theorem 4.14]{LW}.  The stack $\iI_{\kappa}^{P}(\MM^{\alg})$ is a stack over $\aaa^1_{\kappa}$, so  let $\iI_{\kappa}^{P}(\MM^{\alg})_0$ be the central fiber as in \cite[\S 5.7]{LW}. 

\begin{thm}\label{thm_formal_completion_expanded_degeneration}
We have 
$\iI_{R}^{P}(\MM)=\widehat{\iI^{P}_{\kappa}(\MM^{\alg})}$, the formal completion along the origin $0\in\aaa^1_{\kappa}$. 
So 
$\iI_{R}^{P}(\MM)$ is a formal stack  of finite type. 
\end{thm}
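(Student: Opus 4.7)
The plan is to compare, level by level in the expansion index $n$, the universal families $\XX[n]$ of admissible formal blow-ups constructed in \S \ref{sec_stack_SNC_formal_model} with the families $W[n]\to\aaa^{n+1}_{\kappa}$ of expanded degenerations of Li--Wu, and then descend the identification to the two moduli stacks of ideal sheaves. First I would recall from \cite{Li1} that $W[n]\to\aaa^{n+1}_{\kappa}$ is built by iterated blow-ups of $W\times_{\aaa^1_{\kappa}}\aaa^{n+1}_{\kappa}$ along the codimension-two singular loci over the coordinate hyperplanes; the standard diagonal slice $t_1=\cdots=t_{n+1}=t$ cuts out a one-parameter family whose central fiber is precisely $D_1\cup\Delta_1\cup\cdots\cup\Delta_n\cup D_2$ with $\Delta_i=\pp_{D_{12}}(N_{D_{12}/D_1}\oplus\oO)$. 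Taking the $t$-adic formal completion of this sliced family along its central fiber gives exactly the admissible blow-up $\XX[n]$ of the construction in \S \ref{sec_stack_SNC_formal_model}. The arrows in the two stacks (generated by contracting extra bubbles and the natural $\GG_m^{n+1}$-actions permuting scales) match under formal completion, so I obtain a morphism $\widehat{\MM^{\alg}}\to\MM$ of stacks over $\spf(R)$ that is an equivalence on groupoids of test objects.

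Next I would promote this to a comparison of the ideal-sheaf moduli. The admissibility condition of Li--Wu (normality of $F$ to every singular locus $E_i$, \cite[Def. 3.9]{LW}) is an open condition preserved by flat base change and by formal completion along the central fiber, and stability with respect to $P$ is likewise preserved. Given a formal test base $\TT\in\Fsch_{R}$, a formal admissible stable ideal sheaf on a universal family $(\WW,\rho)\in\MM(\TT)$ corresponds, by formal GAGA as in the proof of Theorem \ref{thm_formal_model_up_to_etale_covering}, to an algebraic admissible stable ideal sheaf on $W[n]\times_{\aaa^1_{\kappa}}\TT^{\alg}$ whose formal completion along the central fiber recovers the original. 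Running the argument in both directions, as in the proof of Theorem \ref{thm_moduli_generic_fiber_functor}, gives an equivalence of the fibered categories representing $\iI_{R}^{P}(\MM)$ and $\widehat{\iI_{\kappa}^{P}(\MM^{\alg})}$ on every affine formal test scheme, hence an isomorphism of formal stacks.

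Finally, for the finiteness assertion: by \cite[Thm. 4.14]{LW} the stack $\iI_{\kappa}^{P}(\MM^{\alg})$ is a DM stack locally of finite type over $\aaa^1_{\kappa}$, and for a fixed Hilbert polynomial $P$ the expansion length $n$ needed is bounded (only finitely many $\Delta_i$ can meet the support of a curve of bounded degree). Hence its formal completion along $0\in\aaa^1_{\kappa}$ is a formal stack of finite type over $\spf(R)$, proving the last clause.

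The main obstacle I expect is the stack-theoretic bookkeeping needed to make the identification $\widehat{\MM^{\alg}}\cong\MM$ functorial, since $\MM^{\alg}$ is presented as a $\GG_m^{n+1}$-quotient encoding rescalings of the $\Delta_i$'s while $\MM$ is defined via admissible formal blow-ups with an implicit diagonal parametrization: one must verify that the isomorphism classes of universal SNC formal schemes over an affine $\SS=\spf(A)$ are in natural bijection with $A$-valued points of the formal completion of the Li--Wu quotient, compatibly with the projection to $\XX\times_{R}\SS$. This is a direct calculation on complete local rings, but the automorphism bookkeeping (contractions, permutations, scale automorphisms) has to be handled carefully to ensure the comparison is an equivalence of groupoids rather than merely a bijection of isomorphism classes.
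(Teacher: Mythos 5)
Your overall strategy is the same as the paper's: the proof in the paper is a short sketch which, for a test base $\SS=\widehat{S}$ with $S$ a $\kappa$-scheme over $\aaa^1_{\kappa}$, matches the effective Li--Wu families built from $W[n]$ with the universal formal families $\XX[n]\times_{R}\SS$ and then identifies the two groupoids of admissible stable ideal sheaves; your proposal fills in that comparison with formal GAGA, openness of admissibility and stability, and a boundedness argument for the finiteness clause, which is more than the paper itself records.

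The one step you should revisit is the diagonal-slice identification. The restriction of $W[n]\to\aaa^{n+1}_{\kappa}$ to $t_1=\cdots=t_{n+1}=s$ is a family over $\kappa[\![s]\!]$ whose composite to the original base $\aaa^1_{t}$ is $t=s^{n+1}$, i.e.\ a ramified base change of $R$, and its central fiber is the reduced chain $D_1\cup\Delta_1\cup\cdots\cup\Delta_n\cup D_2$. By contrast, $\XX[n]$ is produced by admissible formal blow-ups over $\spf(R)$ itself, so its structure morphism to $\spf(R)$ is unchanged and the divisor of $t$ on $\XX[n]$ contains the exceptional components with multiplicities greater than one: already for $n=1$, in the local model $t=xy$ the blow-up of the ideal $(x,y)$ gives $\mathrm{div}(t)=\tilde{D}_1+\tilde{D}_2+2\Delta_1$. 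Hence the $t$-adic completion of the diagonal slice is not ``exactly'' $\XX[n]$, and the equivalence $\widehat{\MM^{\alg}}\to\MM$ cannot be set up that way as stated. The paper avoids the slice altogether and compares $\XX[n]\times_{R}\SS$ directly with the formal completion of the effective algebraic families obtained by pulling back $W[n]$ along maps of the base (\'etale locally, $S\to\aaa^{n+1}_{\kappa}$ compatible with $S\to\aaa^1_{\kappa}$); that is the comparison you should make, and you would also need to say how the multiplicity discrepancy (or a reduced/normalized model of $\XX[n]$) is handled. Once the family-level identification is corrected, the remainder of your argument --- transferring admissible stable ideal sheaves by formal GAGA in both directions over affine formal test schemes, and bounding the expansion length $n$ in terms of $P$ for the finite-type claim --- proceeds as you describe and is consistent with the paper's (much terser) proof.
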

\begin{proof}
Let $\SS$ be the formal completion $\widehat{S}$ of a $\kappa$-scheme $S$ over $\aaa^1_{\kappa}$ along the origin. 
We check this over $\MM^{\alg}(S)$ and $\MM(\SS)$ such that there are families of 
effective degenerations $W[n]\times_{\aaa^1_{\kappa}}S$ and universal formal scheme $\XX[n]\times_{R}\SS$.  The moduli stack  $\iI_{R}^{P}(\MM)$  over $\SS$ is exactly the formal completion $\widehat{\iI^{P}_{\kappa}(\MM^{\alg})}$ over 
$S$. Thus the result follows. 
\end{proof}

\begin{rmk}
\begin{enumerate}
\item  Suppose that the support of $F$ is a curve $C$ on $\XX_s$. The normality of $F$ means that $C$ intersects with $D_{ij}$ transversally;
\item  Transversality of the curve $C$ with divisors $D_{ij}$ of $\XX_s$ is essential to the study of relative Gromov-Witten and  Donaldson-Thomas theory as in \cite{Li1}, \cite{Li2}, \cite{LW}.  Gross-Siebert \cite{GS} use Log stable maps to Log schemes to define the log version of Gromov-Witten invariants, where the normality is solved by logarithmic techniques. 
See \cite{AC} for a program on the case that $Y$ is normal crossing. 
\item  Our result implies that it seems natural to use non-archimedean geometry to study degenerations of the moduli stack of stable coherent sheaves. 
It is  also interesting to produce the degeneration formula of Jun Li \cite{Li2} using non-archimedean geometry. 
\end{enumerate}
\end{rmk}

We set for the splittings of the Hilbert polynomial $P$, which is the set of 
$$\gamma=\{P_{1}, P_2, P_{12}\}$$
such that 
$$P=P_1+P_2-P_{12}.$$

Let 
$\iI_{\kappa}^{P_i}(D_i, D_{12})$  for $i=1,2$ be the moduli stack of  relative stable ideal sheaves of $D_i$ relative to  $D_{12}$ in the sense of \cite{LW} using the stack of relative expanded pairs.  
We have the following gluing theorem as in \cite[Theorem 5.28]{LW}.

\begin{thm}(\cite[Theorem 5.28]{LW})\label{thm_moduli_space_absolute_relative}
Let $X$ be a proper $\spec(\kappa[t])$-scheme and $\XX$ its $t$-adic formal completion such that $\XX_s=D_1\cup_{D_{12}}D_2$.  Then the moduli stack 
$\iI^{P}_{\kappa}(\MM_s)$, after applying the special fiber functor, has a canonical gluing isomorphism
$$\iI_{\kappa}^{P}(\MM_s)\stackrel{\sim}{\rightarrow}\iI_{\kappa}^{P_1}(D_1, D_{12})\times _{\Hilb_{\kappa}^{P_{12}}(D_{12})}\iI_{\kappa}^{P_1}(D_1, D_{12})$$
of Deligne-Mumford stacks, where $\Hilb_{\kappa}^{P_{12}}(D_{12})$ is the Hilbert scheme of points on $D_{12}$.
\end{thm}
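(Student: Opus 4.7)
The plan is to reduce the statement to Jun Li-Wu's gluing theorem \cite[Theorem 5.28]{LW} via the comparison result of Theorem \ref{thm_formal_completion_expanded_degeneration}. First, I would identify $\iI^P_\kappa(\MM_s)$ with the central fiber $\iI_\kappa^P(\MM^{\alg})_0$ of the stack of stable ideal sheaves on the stack of algebraic expanded degenerations of $X$. Indeed, applying the special fiber functor $(\cdot)_s$ to the formal stack $\iI_R^P(\MM)$ is the same as taking the fiber over $0\in\aaa^1_\kappa$ of the algebraic stack $\iI^P_\kappa(\MM^{\alg})$; this commutation follows from Theorem \ref{thm_formal_completion_expanded_degeneration}, since the special fiber of the formal completion of a flat $\aaa^1_\kappa$-stack along the origin coincides with its scheme-theoretic fiber at $0$. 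Under this identification, the universal family of formal SNC models $\MM$ on the special-fiber side recovers Li-Wu's stack of expanded degenerations $\MM^{\alg}$ of the pair $(X,\XX)$.

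Next, I would invoke Li-Wu's gluing theorem directly at the level of central fibers. Given an admissible stable ideal sheaf $I_Z$ on some expanded central fiber $\XX[n]_s=D_1\cup\Delta_1\cup\cdots\cup\Delta_n\cup D_2$ with Hilbert polynomial $P$, the admissibility condition, which requires normality to each singular divisor $E_i\cong D_{12}$, ensures that the restriction to the subchain $D_1\cup\Delta_1\cup\cdots\cup\Delta_k$ is a relative stable ideal sheaf on $(D_1,D_{12})$ in the sense of \cite{LW}, and symmetrically the complementary restriction is a relative stable ideal sheaf on $(D_2,D_{12})$; the transversality built into admissibility forces the two restrictions to the meeting divisor $D_{12}$ to agree as length-$P_{12}$ subschemes. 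The Hilbert polynomial decomposition $P=P_1+P_2-P_{12}$ records this data. Conversely, any pair of relative stable ideal sheaves on $(D_1,D_{12})$ and $(D_2,D_{12})$ whose restrictions to $D_{12}$ agree can be glued by the predeformability construction of Li-Wu to produce an admissible stable ideal sheaf on some $\XX[n]_s$, and the construction is functorial in the base. This yields the desired isomorphism of stacks, and the Deligne-Mumford property is preserved under an equivalence.

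The main obstacle is making the first identification completely precise at the level of $T$-families: one must verify that every flat family of admissible stable ideal sheaves on $\MM$ over a formal thickening $\SS$ of $\spf R$ arises by formal completion from a unique flat family in $\iI^P_\kappa(\MM^{\alg})$ over a corresponding thickening of $0\in\aaa^1_\kappa$, and that admissibility in the formal sense matches the predeformability condition of \cite{LW}. This content is already packaged into Theorem \ref{thm_formal_completion_expanded_degeneration}; once that identification is established, the remaining argument is a transcription of \cite[Theorem 5.28]{LW}, whose core is the delicate analysis of stability and transversality at the double locus $D_{12}$ together with the verification that the gluing datum $(P_1,P_2,P_{12})$ determines a stable object in the fiber product.
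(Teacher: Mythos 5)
Your proposal is correct and follows essentially the same route as the paper: the paper gives no independent proof of this statement, treating it as a quotation of \cite[Theorem 5.28]{LW} after the identification of the formal moduli stack with the formal completion of $\iI^{P}_{\kappa}(\MM^{\alg})$ provided by Theorem \ref{thm_formal_completion_expanded_degeneration}, which is exactly your reduction. Your additional sketch of the gluing mechanics (admissibility/normality to the $E_i\cong D_{12}$ forcing transversality, the splitting $P=P_1+P_2-P_{12}$, and the predeformable gluing of the two relative ideal sheaves over the common length-$P_{12}$ subscheme of $D_{12}$) is a faithful paraphrase of Li--Wu's argument rather than a new one.
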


\begin{rmk}
Note that in \cite{AC}, the authors constructed the moduli stack of log stable maps to a generalized Deligne-Faltings pair. 
A scheme $X$ with a SNC divisor $D$   is a generalized Deligne-Faltings pair.  It is interesting to see whether the techniques in \cite{AC} works for relative or log ideal sheaves of curves.
\end{rmk} 
\begin{rmk}
In the SNC formal model $\XX$, if $\XX_s$ consists of two irreducible components $D_1, D_2$, then 
Corollary \ref{cor_moduli_space_X_proper} can also be obtained from Theorem \ref{thm_moduli_space_absolute_relative}, since the moduli stack $\iI_{\kappa}^{P}(\MM_s)$ is proper if $\XX$ is proper.
\end{rmk}

\begin{cor}
Now let $A$ be a positive number.  Suppose that in $\iI^{P}_{\kk}(X)$, the degree of the stable sheaf with respect to the K\"ahler structure $\widehat{L}$ is $A$.  Then we have a decomposition 
$$A=A_1+A_2$$
where $A_i$ is the degree of the stable sheaf  in $\iI_{\kk}^{P_i}(D_i, D_{12})$  with respect to the K\"ahler structure $\widehat{L}|_{D_i}$.
\end{cor}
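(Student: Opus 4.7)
The plan is to transport the problem from the generic fiber to the special fiber via the functors already developed, apply the gluing isomorphism of Theorem \ref{thm_moduli_space_absolute_relative} to split the ideal sheaf into two relative pieces, and then check that the degree pairing defined by the K\"ahler structure $\widehat{L}$ is additive under this splitting.

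\textbf{Step 1 (passage to the special fiber).} Let $I_C\in \iI^{P}_{\kk}(X)$ be an ideal sheaf of a curve $C$ with $\deg(I_C)=A$ with respect to $\widehat{L}$. By Theorem \ref{thm_moduli_generic_fiber_functor} (and Theorem \ref{thm_formal_model_up_to_etale_covering}) we may, after passing to a quasi-\'etale covering of the base, pick a formal model $\hat I_\CC$ of $I_C$ over the SNC formal model $\XX$ of $X$, where $\CC\subset \XX$ is a formal SNC model of $C$. Applying the special fiber functor, and using Proposition \ref{prop_moduli_special_fiber_functor}, we obtain an admissible stable ideal sheaf on $\XX_s=D_1\cup_{D_{12}}D_2$ whose support is a curve $\CC_s\subset \XX_s$ that meets $D_{12}$ transversally (this is what the admissibility/normality of Section \ref{sec_stack_expanded_degeneration} guarantees). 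In particular $\CC_s$ decomposes as $\CC_s=\CC_1\cup_{\CC_{12}}\CC_2$ with $\CC_i\subset D_i$ a curve and $\CC_{12}\subset D_{12}$ a finite set of reduced points.

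\textbf{Step 2 (gluing decomposition).} By Theorem \ref{thm_moduli_space_absolute_relative} the image of $\hat I_\CC$ under the special fiber functor corresponds, under the canonical gluing isomorphism
\[
\iI_{\kappa}^{P}(\MM_s)\stackrel{\sim}{\rightarrow}\iI_{\kappa}^{P_1}(D_1,D_{12})\times _{\Hilb_{\kappa}^{P_{12}}(D_{12})}\iI_{\kappa}^{P_2}(D_2,D_{12}),
\]
to a pair of relative stable ideal sheaves $(I_{\CC_1},I_{\CC_2})$ on $(D_1,D_{12})$ and $(D_2,D_{12})$ respectively, agreeing on $D_{12}$. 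Thus the underlying curve $\CC_i$ is the curve determined by $I_{\CC_i}\in\iI_{\kk}^{P_i}(D_i,D_{12})$ (after lifting back to the analytic generic fiber via Theorem \ref{thm_moduli_generic_fiber_functor}).

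\textbf{Step 3 (additivity of the degree).} Set $A_i:=\deg\bigl(L|_{D_i}|_{\CC_i}\bigr)$ computed with respect to the restricted K\"ahler structure $\widehat{L}|_{D_i}$; the restriction is well defined because $\widehat{L}$ is a global section of $\Sim_{\XX}\otimes L$ and the functoriality of the curvature $c(\widehat{L})$ along $\CC\hookrightarrow \XX$ reduces the problem to the curve case. Recall that on the curve $C$ the degree is computed, via the \v Cech calculation of Section \ref{sec_moduli_absolute_relative_sheaves}, as
\[
\deg(L|_\CC)=\sum_{i\in I_\CC} m_i\cdot\deg(\partial_i\varphi_i),
\]
so the contribution of each vertex of the Clemens polytope $S_\CC$ is local. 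Because $\CC_1$ and $\CC_2$ are separated by the (zero-dimensional) intersection locus $\CC_{12}$, the graph $S_\CC$ splits as the wedge of the subgraphs $S_{\CC_1}$ and $S_{\CC_2}$ along the vertices lying over $\CC_{12}$, and the simple function associated to $\widehat{L}|_\CC$ restricts to the simple functions associated to $\widehat{L}|_{D_i}|_{\CC_i}$. The \v Cech-cohomological degree map (\ref{deg_L}) is additive with respect to this wedge decomposition, hence
\[
A=\deg(L|_{\CC})=\deg\bigl(L|_{D_1}|_{\CC_1}\bigr)+\deg\bigl(L|_{D_2}|_{\CC_2}\bigr)=A_1+A_2.
\]

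The main obstacle is Step 3: although the additivity is morally clear because the degree is defined by a local formula on the Clemens polytope, one has to verify that the simple metrization $\widehat{L}|_\CC$ really restricts to $\widehat{L}|_{D_i}|_{\CC_i}$ on each piece, and that the \v Cech calculation of the degree map is compatible with the pinching $S_\CC=S_{\CC_1}\cup_{\CC_{12}}S_{\CC_2}$. The functoriality of the curvature (Proposition of \cite{Tony_Yu1} recalled above) applied to the inclusions $\CC_i\hookrightarrow D_i\hookrightarrow \XX$ together with the fact that the transversality at $D_{12}$ forces the contributions of the edges over $\CC_{12}$ to coincide in both graphs supplies the needed compatibility.
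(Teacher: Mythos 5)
Your proposal is correct and follows essentially the same route as the paper, whose proof is just the one-line appeal to ``the decomposition of the stable sheaf and the normality property'': your Steps 1--2 unpack that decomposition via the gluing isomorphism of Theorem \ref{thm_moduli_space_absolute_relative}, and Step 3 makes explicit the vertex-wise additivity of $\deg(c(\widehat{L}))=\sum_i m_i\deg(\partial_i\varphi_i)$ that the paper leaves implicit. (Only a cosmetic remark: the intersection points in $\CC_{12}$ give \emph{edges} joining a vertex of $S_{\CC_1}$ to one of $S_{\CC_2}$ rather than shared vertices, but since the degree is a sum over vertices this does not affect the additivity argument.)
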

\begin{proof}
This is from the decomposition of the stable sheaf and the normality property. 
\end{proof}


\section*{Part II:}

 \section{Introduction to motivic Donaldson-Thomas theory}\label{sec_motivic_DT_introduction}

\subsection{Donaldson-Thomas invariants}

Let $Y$ be a smooth Calabi-Yau threefold or a smooth threefold Calabi-Yau Deligne-Mumford stack. The Donaldson-Thomas invariants of $Y$ count stable coherent sheaves on $Y$.   In \cite{Thomas}, R. Thomas  constructed a perfect obstruction theory $E^\bullet$ in the sense of Li-Tian \cite{LT}, and Behrend-Fantechi \cite{BF} on the moduli space $X$ of stable sheaves over $Y$, hence a virtual fundamental class $[X]^{\virt}$ on $X$.   If $X$ is proper, then the virtual dimension of $X$ is zero, and the integral
$$\DT_Y=\int_{[X]^{\virt}}1$$
is the Donaldson-Thomas invariant of $Y$.  Donaldson-Thomas invariants have been proved to have deep connections to Gromov-Witten theory and provided more deep understanding of the curve counting invariants, see \cite{MNOP1}, \cite{MNOP2}, \cite{PT}, etc. 

In the Calabi-Yau threefold case, in \cite{Behrend} Behrend proves that the moduli scheme $X$ of stable sheaves on $Y$ admits a symmetric obstruction theory which is defined by him in the same paper \cite{Behrend}.  Also Behrend constructs a canonical integer-valued constructible function 
$$\nu_{X}: X\to\zz$$
by using the local Euler obstruction of an intrinsic integral cycle 
$\mathbf{c}_{X}\in Z_*(X)$
on $X$.  We call $\nu_{X}$ the {\em Behrend function} of $X$.  If $X$ is proper, then in \cite[Theorem 4.18]{Behrend} Behrend proves that 
$$\DT_Y=\int_{[X]^{\virt}}1=\int_{X}c^{\tiny\mbox{CSM}}(\nu_X)=\chi(X,\nu_{X}),$$
where $\chi(X,\nu_{X})$ is the weighted Euler characteristic weighted by the Behrend function, and  $c^{\tiny\mbox{CSM}}(\nu_X)$ is the Chern-Schwartz-MacPherson class of the Behrend function $\nu_X$.  The above result is the index theorem of MacPherson, which is a generalization of Gauss-Bonnet theorem to singular scheme $X$. 
Same result for a proper Deligne-Mumford stack $X$ with a symmetric perfect obstruction theory is conjectured by Behrend in \cite{Behrend}, and  is proved in \cite{Jiang3}.  This makes the Donaldson-Thomas invariants {\em  motivic}. 

\subsection{Categorification of Donaldson-Thomas invariants}

Around 2006, Kai Behrend proposed  a natural question called ``the categorification" of Donaldson-Thomas invariants, i.e., to find a cohomology theory of the Donaldson-Thomas moduli space $X$ of stable sheaves on Calabi-Yau threefolds such that its Euler characteristic is the weighted Euler characteristic by the Behrend function.   If locally  the moduli space $X$ is the critical locus of a holomorphic function $f: M\to\kappa$ on a higher dimensional smooth scheme $M$, then the categorification of Donaldson-Thomas invariants is given by $\mMF_{M,f}^{\phi}$, the sheaf of vanishing cycles of the function $f$.  The sheaf $\mMF_{M,f}^{\phi}$ is a constructible sheaf and its Euler characteristic at a point $P\in X$ gives rise to the value of the Behrend function $\nu_{X}$ at $P$. 

So the question is whether locally the Donaldson-Thomas moduli space $X$ is the critical locus of a holomorphic function.  The local or germ deformation theory of $X$ is controlled by a differential graded Lie algebra $L$, then one can study this local question by studying the local controlled differential graded Lie algebra $L$.   Behrend and Getzler \cite{BG} actually studied the local behavior  of the moduli space using  cyclic differential graded Lie algebra and cyclic $L_\infty$-algebras, and announced that the local function $f$ is coming from the cyclic  $L_\infty$-algebra structure of the Donaldson-Thomas moduli space $X$  and is holomorphic.  But the paper is still not available. 

The local structure of the moduli space $X$ is solved by Ben-Bassat, Brav, Bussi and  Joyce \cite{BBJ}, \cite{BBBJ} by using the techniques of derived schemes of \cite{PTVV}, \cite{Lurie}.  In \cite{PTVV}, Pantev, Toen, Vaquie and  Vezzosi introduced the notion of $(-n)$-shifted symplectic structure on the derived schemes.  The moduli space $X$ of stable sheaves over the Calabi-Yau threefold $Y$ can be lifted to  a $(-1)$-shifted symplectic derived scheme $\mathbf{X}$ such that its underlying scheme is $X$.   There is a natural inclusion $i: X\to \mathbf{X}$ to the derived scheme, and there is a  symmetric obstruction theory of Behrend \cite{Behrend} on $X$, which is given by the pullback of the cotangent complex $L_{\mathbf{X}}$ of $\mathbf{X}$ to $X$.  Brav, Bussi and  Joyce in  \cite{BBJ} prove that if $X$ is the underlying scheme of a $(-1)$-shifted symplectic derived scheme $\mathbf{X}$, then locally $X$ is given by the critical locus of regular function $f$.  That is: for any point $x\in X$, there is an open neighborhood $x\in R$, and a regular function $f: U\to\kappa$ on a smooth scheme $U$ such that $R=\Crit(f)$.  Joyce calls $(R, U, f,i)$ a critical chart of $X$, where $i: R\to X$ is the inclusion. 
We kindly reminder the reader here that $R$ represents the critical scheme $\Crit(f)$ of the function $f$, not the discrete valuation ring. 
All the critical charts of $X$ glue together to give a structure on $X$, which Joyce \cite{Joyce} calls $X$ the $d$-critical scheme.  Hence locally on $R$, we have a sheaf $\mMF_{U,f}^{\phi}$ of vanishing cycles of $f$. In \cite{BJM}, Bussi, Joyce, and Meinhardt prove that these data of vanishing cycles glue to give a global sheaf $\mMF_{X}^{\phi}$ if there is an orientation on $X$, i.e., a square root $K_{X}^{\frac{1}{2}}$ of the canonical line bundle $K_{X}$.  Thus the categorification of $X$ is obtained.   The vanishing cycle sheaf is a perverse sheaf, Kiem and Li \cite{KL} also use the gluing of perverse sheaves to give a global sheaf $\mMF_{X}^{\phi}$ and categorify the moduli space $X$.  The perverse sheaf of vanishing cycles $\mMF_{X}^{\phi}$ is used recently by D. Maulik and Y. Toda  in \cite{MT} to define the Gopakumar-Vafa invariants for Calabi-Yau threefolds and relate them to Gromov-Witten invariants and Pandharipande-Thomas stable pair invariants. 
Note that Maulik and Toda require the orientation data $K_{X,s}^{\frac{1}{2}}$ is trivial, which is called the CY orientation data. 

\subsection{Motivic Donaldson-Thomas invariants and derived schemes}

The vanishing cycle sheaf can be made to be motivic by the notion of motivic vanishing cycles, which is an element in $\mM_{X}^{\hat{\mu}}$, the equivariant Grothendieck ring of varieties.  Kontsevich and Soibelman \cite{KS} introduced the motivic Donaldson-Thomas theory for any oriented Calabi-Yau category $\mathcal{C}$.  They defined the motivic weights for any object $E\in \mathcal{C}$ by using the motivic vanishing cycle of the object $E$ and the technique of  a cyclic $A_\infty$-algebra  $L_{E}=\Ext(E,E)$ associated with the object $E$. Then they prove that there is a homomorphism from the motivic Hall algebra $H(\mathcal{C})$ of $\mathcal{C}$ to the motivic quantum torus of $\mathcal{C}$, hence deduce a wall crossing formula of motivic Donaldson-Thomas invariants.  In the case that the moduli space $X$ is the global critical locus of a regular function, the motivic Donaldson-Thomas invariants are defined and studied in \cite{BBS}. 

In \cite{BJM}, Bussi, Joyce, and Meinhardt also study the motivic Donaldson-Thomas invariants for the oriented $d$-critical scheme
$X$.  
On each $d$-critical chart $(R,U,f,i)$, one needs to consider the motive $\Upsilon(P)$ of a principal $\zz_2$-bundle $P\to R$. For another principal $\zz_2$-bundle $Q\to R$, it is hard to prove that  $\Upsilon(P\otimes_{\zz_2}Q)=\Upsilon(P)\odot\Upsilon(Q)$.  So we define a motivic ring $\overline{\mM}_{X}^{\hat{\mu}}$, by quotient out the relations  
$\Upsilon(P\otimes_{\zz_2}Q)-\Upsilon(P)\odot\Upsilon(Q)$. Then 
Bussi, Joyce, and Meinhardt prove that the global vanishing cycle sheaf $\mMF_{X}^{\phi}$ is obtained by gluing the motivic vanishing cycle 
$\mMF_{U,f}^{\phi}$ for all the $d$-critical charts $(R,U,f,i)$ and get  a global motivic element in $\overline{\mM}_{X}^{\hat{\mu}}$.

\subsection{Contribution of this work}

In the second part of the paper we generalize Joyce's $d$-critical scheme to the case of {\em formal $d$-critical schemes} and {\em $d$-critical non-archimedean $\kk$-analytic spaces}.  We mainly use Joyce's definition of $d$-critical schemes. The version of Kiem-Li's virtual critical manifold structure in \cite{KL} can also be generalized to formal schemes and Berkovich non-archimedean analytic spaces. 

The $d$-critical scheme of Joyce is the classical model for the $(-1)$-shifted symplectic derived scheme in \cite{PTVV}.  The notion of  derived formal schemes was developed in Chapter 8 of  Lurie \cite{Lurie}, and the notion of derived non-archimedean $\kk$-analytic spaces was given in \cite{Porta_Yu1} using the terminology  of Lurie.  
The  $(-n)$-shifted symplectic structures on such derived spaces can be similarly defined, and to the author's knowledge, these have not been explored.  Once there is a $(-1)$-shifted symplectic structure on the derived formal scheme and derived non-archimedean $\kk$-analytic spaces, 
one hopes that taking generic functor of the derived formal scheme we get  the latter.  
We mainly focus on the classical part of such spaces, and generalize Joyce's (or Kiem-Li's) arguments to formal schemes and non-archimedean $\kk$-analytic spaces. 

As mentioned earlier, we hope that the $d$-critical non-archimedean $\kk$-analytic spaces will be the underlying 
non-archimedean spaces of derived  non-archimedean spaces with  a $(-1)$-shifted symplectic structure.  Hence there should exist a symmetric obstruction theory of Behrend in \cite{Behrend} on $d$-critical non-archimedean $\kk$-analytic spaces.  Such a space will be the foundation space to the non-archimedean counting invariants coming from a symmetric obstruction theory. 

Assume that $\XX$ is a $d$-critical formal $R$-scheme, such that its generic fiber $\XX_\eta$ is a $d$-critical non-archimedean $\kk$-analytic space.  We also construct a canonical line bundle $K_{\XX}$ on $\XX$ and define the notion of orientation of $\XX$.   With the orientation $K_{\XX}^{\frac{1}{2}}$, we prove that there exists a global motive of vanishing cycles $\mMF_{\XX}^{\phi}$, which is an element in $\overline{\mM}_{\XX_s}^{\hat{\mu}}$ where $\XX_s$ is the special fiber of $\XX$ and is a $\kappa$-scheme.  This global motive is also obtained by gluing the local motivic vanishing cycle $\mMF_{\UU, \hat{f}}^{\phi}$ for the formal $d$-critical chart $(\RR,\UU,\hat{f},i)$ of $\XX$ by considering the motive of principal $\zz_2$-bundle on $\RR$ and the orientation. 
Suppose that $\XX$ is the formal completion  of a $d$-critical scheme $X$ of Joyce in \cite{Joyce} or Kiem-Li in \cite{KL}, then by the relative GAGA in \cite{Conrad}, the global sheaf  $\mMF_{\XX}^{\phi}$ is the formal completion of  $\mMF_{X}^{\phi}$. 

Let $X$ be a $d$-critical non-archimedean $\kk$-analytic space.  $X$ is said to be oriented if the square root $K_{X}^{\frac{1}{2}}$ of the canonical line bundle $K_{X}$ exists.  Let $\XX$ be a formal model of $X$, such that $\XX$ is an oriented $d$-critical formal $R$-scheme and $\XX_\eta\cong X$.  We define the absolute motive 
$\mMF_{X}^{\phi}:=\int_{\XX_s}\mMF_{\XX}^{\phi}\in\overline{\mM}_{\kappa}^{\hat{\mu}}$, where $\int_{\XX_s}$ means pushforward to a point.  The absolute global motive 
$\mMF_{X}^{\phi}$ depends only on $X$, i.e., is independent to the choice of the formal model. 

We also introduce the $\Gm$-equivariant $d$-critical formal schemes and $\Gm$-equivariant $d$-critical non-archimedean $\kk$-analytic spaces. 
As an application, we generalize the motivic localization formula of D. Maulik \cite{Maulik} for the motivic Donaldson-Thomas invariants to $d$-critical non-archimedean $\kk$-analytic spaces and $d$-critical formal schemes under the $\Gm$-action by using motivic integration for formal schemes  in \cite{Nicaise}.

\subsection{Outline}

We outline the structure of Part II.  In \S \ref{sec_formal_d_critical_scheme} we introduce the notion of  $d$-critical formal $R$-schemes and orientations on $d$-critical formal $R$-schemes.  The $d$-critical $\kk$-analytic spaces will also be constructed.   We basically follow Joyce's methods in \cite{Joyce}, and provide proofs for necessary steps. 
\S \ref{sec_main_defn_results} contains the main construction and results. 
\S \ref{sec_Proof_main_Theorem} provides the proof of the main construction of the structure sheaf of the $d$-critical formal schemes.   We introduce $d$-critical non-archimedean $\kk$-analytic spaces in \S \ref{sec_d_critical_analytic_space}; and 
in \S \ref{sec_Gm_equivariant_formal_scheme_analytic_space} we talk about the $\Gm$-equivariant  $d$-critical formal $R$-schemes and   $d$-critical non-archimedean $\kk$-analytic spaces. 
In \S \ref{sec_motivic_localization} we study the motivic localization formula for motivic Donaldson-Thomas invariants; where in \S \ref{Grothendieck:ring} we recall the equivariant Grothendieck group of varieties;  in \S \ref{motivic:integration:rigid} we briefly review the motivic integration for formal schemes in \cite{Nicaise}; in \S \ref{subsec_global_motive_formal_scheme} we define the global motive for 
oriented $d$-critical formal schemes;  in \S \ref{subsec_global_motive_analytic_space} we define the global motive for 
oriented $d$-critical non-archimedean $\kk$-analytic spaces; and 
finally in \S \ref{subsec_motivic_localization_formula} we prove the motivic localization formula for oriented  $d$-critical 
non-archimedean $\kk$-analytic spaces.  We also relate it to Maulik's motivic localization formula for motivic Donaldson-Thomas invariants.


\section{$d$-critical formal schemes}\label{sec_formal_d_critical_scheme}

We introduce the formal version of $d$-critical locus in the sense of \cite{Joyce}, and critical virtual manifold in the sense of Kiem-Li \cite{KL}, which we call the $d$-critical formal scheme.

\subsection{Definitions and Results}\label{sec_main_defn_results}
\subsubsection{Main construction}

The basic knowledge for formal schemes can be found in \cite{Bosch}, \cite{Fujiwara-Kato}.
For instance, the sheaf of differentials $\Omega_{\XX}$ for a {\em stft} $R$-formal scheme $\XX$ is defined in
\cite[\S 5.2]{Fujiwara-Kato}.
We first have the following result for formal schemes:

\begin{thm}\label{thm_existence_sheaf_PX}
Let $\XX$ be a {\em stft} formal $R$-scheme. Then there exists a sheaf $\FP_{\XX}$ of $R$-commutative algebras on $\XX$, unique up to canonical isomorphism, is characterized by the following properties:
\begin{enumerate}
\item Suppose that $\RR\subset \XX$ is Zariski open, and $\UU$ a smooth formal $R$-scheme, and 
$i: \RR\to \UU$ an embedding. Then there exists an exact sequence of shaves of $R$-commutative algebras
$$0\rightarrow I_{\RR,\UU}\longrightarrow i^{-1}(\oO_{\UU})\longrightarrow \oO_{\XX}|_{\RR}\rightarrow 0$$
where $\oO_{\UU}, \oO_{\XX}$ are the structure sheaves, and 
an exact sequence of sheaves of $R$-commutative algebras:
$$0\rightarrow \FP_{\XX}|_{\RR}\stackrel{\iota_{\RR}}{\longrightarrow} \frac{i^{-1}(\oO_{\UU})}{I_{\RR,\UU}^2}
\stackrel{d}{\longrightarrow} \frac{i^{-1}(\Omega_{\UU})}{I_{\RR,\UU}\cdot i^{-1}(\Omega_{\UU})}$$
where $d(f+ I^2_{\RR,\UU})=df+I_{\RR,\UU}\cdot i^{-1}(\Omega_{\UU})$.
\item If $\RR\subset \SS\subset\XX$ are Zariski open formal subschemes, and 
$$i: \RR\hookrightarrow \UU; \quad j: \SS\hookrightarrow \VV$$
are closed embeddings into smooth formal $R$-schemes $\UU, \VV$. Let 
$$\Phi: \UU\to \VV$$
be a morphism with $\Phi\circ i=j|_{\RR}: \RR\to \VV$. Then the following diagram of $R$-commutative algebra sheaves commutes:
\begin{equation}\label{diagram_1}
\xymatrix{
0\ar[r]&\FP_{\XX}|_{\RR}\ar[r]^{\iota_{\SS,\VV}|_{\RR}}\ar[d]^{\id}& \frac{i^{-1}(\oO_{\VV})}{I_{\SS,\VV}^2}|_{\RR}\ar[r]^{d}\ar[d]^{i^{-1}(\Phi^{\#})}& \frac{i^{-1}(\Omega_{\VV})}{I_{\SS,\VV}\cdot i^{-1}(\Omega_{\VV})}|_{\RR}\ar[r]\ar[d]^{i^{-1}(d\Phi)}&0 
\\
0\ar[r]&\FP_{\XX}|_{\RR}\ar[r]^{\iota_{\RR,\UU}}& \frac{i^{-1}(\oO_{\UU})}{I_{\RR,\UU}^2} \ar[r]^{d}& 
\frac{i^{-1}(\Omega_{\UU})}{I_{\RR,\VV}\cdot i^{-1}(\Omega_{\UU})}\ar[r]&0
}
\end{equation}
where $\Phi: \UU\to\VV$ induces 
$$\Phi^{\#}: \Phi^{-1}(\oO_{\VV})\to \oO_{\UU}$$
on $\UU$ and 
\begin{equation}\label{i_inverse_Phi}
i^{-1}(\Phi^{\#}): j^{-1}(\oO_{\VV})|_{\RR}=i^{-1}\circ\Phi^{-1}(\oO_{\VV})\to i^{-1}(\oO_{\UU})
\end{equation}
is a morphism of sheaves of $R$-commutative algebras over $R$.  As $\Phi\circ i=j|_{\RR}$, then the above maps to 
$I_{\SS,\VV}|_{\RR}\to I_{\RR,\UU}$, and $I^2_{\SS,\VV}|_{\RR}\to I^2_{\RR,\UU}$.  Thus the map 
(\ref{i_inverse_Phi}) induces the morphism in the second column of (\ref{diagram_1}). Similarly, 
$d\Phi:  \Phi^{-1}(\Omega_{\VV})\to\Omega_{\UU}$ induces the third column of (\ref{diagram_1}).
\end{enumerate}
\end{thm}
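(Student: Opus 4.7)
The strategy mirrors Joyce's construction of the analogous sheaf on a $d$-critical scheme in \cite{Joyce}, adapted to the formal-scheme setting. The plan has three stages: (a) local construction from a chosen closed embedding into a smooth formal $R$-scheme; (b) independence of this choice; (c) gluing on $\XX$ and verification of property (ii) for arbitrary $\Phi$.

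For (a), after restricting to an affine formal open $\RR = \spf(A)$, write $A = R\{x_1,\ldots,x_n\}/J$ and take the tautological closed embedding $i\colon \RR \hookrightarrow \UU := \spf(R\{x_1,\ldots,x_n\})$ into a formally smooth $R$-scheme. Define tentatively
\[
\FP_{\XX,\UU}|_\RR \;:=\; \ker\bigl(d\colon i^{-1}(\oO_\UU)/I_{\RR,\UU}^2 \longrightarrow i^{-1}(\Omega_\UU)/(I_{\RR,\UU}\cdot i^{-1}(\Omega_\UU))\bigr),
\]
so property (i) holds by construction. The heart of the argument is (b). Given a second embedding $j\colon \RR \hookrightarrow \VV$ into a smooth formal $R$-scheme, form the (smooth) product $\UU \times_R \VV$ with diagonal $(i,j)\colon \RR \hookrightarrow \UU \times_R \VV$ and the two projections $p_\UU, p_\VV$. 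Each projection furnishes a $\Phi$ as in property (ii), inducing via diagram (\ref{diagram_1}) a comparison map $\FP_{\XX,\UU}|_\RR \to \FP_{\XX,\UU \times_R \VV}|_\RR$. I would show this is an isomorphism by a Tate-algebra lifting argument using formal smoothness of $\VV$: any $\tilde f \in i^{-1}(\oO_\UU)$ representing a class annihilated by $d$ extends, modulo the square of the ideal of $(i,j)$, to a section of $(i,j)^{-1}(\oO_{\UU \times_R \VV})$ still killed by $d$, the extension being canonical up to the relations already quotiented out. Composing with the symmetric identification from $\VV$ yields a canonical isomorphism $\FP_{\XX,\UU}|_\RR \cong \FP_{\XX,\VV}|_\RR$ depending only on $i,j$, not on the auxiliary $\Phi$.

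With (b) in hand, stage (c) is formal: canonical isomorphisms on double overlaps automatically satisfy the cocycle condition, so local data glue to a global sheaf $\FP_\XX$ of $R$-commutative algebras on $\XX$, unique up to canonical isomorphism; and for a general $\Phi$ as in property (ii), the induced map on kernels agrees with the canonical identification already constructed, giving commutativity of (\ref{diagram_1}). The principal obstacle is the lifting step in (b): checking in the $t$-adic topology that $p_\UU^\#$ identifies $I_{\RR,\UU}^2$ with $I_{\RR,\UU \times_R \VV}^2$ modulo the extra differentials from the $\VV$-factor requires careful use of formal smoothness of Tate algebras to produce the splitting. Once this is in place, all remaining identifications are forced by exactness in property (i) and the universal property of kernels.
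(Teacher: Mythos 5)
Your proposal follows the same overall strategy as the paper's proof: both adapt Joyce's construction from \cite[\S 3.1--3.2]{Joyce} to formal schemes, defining $\FP_\XX$ locally as $\ker(d)$ on $i^{-1}(\oO_\UU)/I_{\RR,\UU}^2$, establishing independence of the embedding, and gluing. The place where you diverge in mechanism is the independence step (b). The paper reproduces Joyce's hands-on computation: for two morphisms $\Phi,\widetilde{\Phi}\colon(\RR,\UU,i)\to(\SS,\VV,j)$ it expands $f\circ\widetilde{\Phi}-f\circ\Phi$ into a first-order plus quadratic expression in the differences $x_a\circ\widetilde{\Phi}-x_a\circ\Phi$, checks each factor lies in $I_{\RR,\UU}$, and concludes $\Phi^\star=\widetilde{\Phi}^\star$, after which the gluing isomorphisms are assembled through common-refinement charts $(\RR',\UU',i')$ with $\RR'\subseteq\RR\cap\SS$. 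You instead route through the fibre product $\UU\times_R\VV$ with the graph $(i,j)$, prove $p_\UU^\star$ is an isomorphism, and recover independence of $\Phi^\star$ on $\Phi$ formally from $(\id,\Phi)^\star\circ p_\UU^\star=\id$. This is a valid and slightly cleaner derivation of the same lemma, and both ultimately rest on the same Tate-algebra Taylor-expansion computation. One caution on your phrasing of the crucial lifting step: pulling back a class from $\UU$ to $\UU\times_R\VV$ via $p_\UU^\#$ is trivial, so stating that a class on $\UU$ ``extends'' to the product does not carry the content you need; the real content is surjectivity of $p_\UU^\star$ --- given $[\tilde g]$ on $(i,j)^{-1}(\oO_{\UU\times_R\VV})/I^2$ killed by $d$, producing $\tilde f$ on $\UU$ killed by $d$ with $p_\UU^\#(\tilde f)-\tilde g\in I^2_{\RR,\UU\times_R\VV}$ --- and that is precisely where the formal-smoothness Taylor expansion must be applied. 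With that step stated in the correct direction, your argument agrees with the paper's.
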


In \cite{Joyce}, the sheaf $\FP_{X}$ for a $d$-critical scheme $X$ has a natural decomposition
$$\FP_{X}=\FP_{X}^0\oplus \kappa_{X}$$
where $\kappa_{X}$ is the constant sheaf on $X$, and $\FP_{X}^0\subset \FP_{X}$ is the kernel of the composition map:
$$\FP_{X}\stackrel{\beta_{X}}{\longrightarrow} \oO_{X}\stackrel{i_{X}^{\#}}{\longrightarrow} \oO_{X^{\red}}$$
where $\beta_{X}$ is the natural map.  

Let $\XX$ be a {\em stft} formal $R$-scheme with ideal of definition $I$. Then the underlying scheme is $(X, \oO_{\XX}/I)$.  From \cite[Proposition 1.1.26]{Fujiwara-Kato}, any locally noetherian formal scheme $\XX$ has a unique large ideal of definition $I$ of finite type such that the underlying scheme  $(X, \oO_{\XX}/I)$ is reduced.  So we do not pursue a decomposition of the sheaf $\FP_{\XX}$ here. 

\begin{defn}\label{defn_d_formal_scheme}
A {\em formal $d$-critical $R$-scheme} is a pair $(\XX,s)$, where $\XX$ is a {\em stft} formal $R$-scheme, and 
$s\in H^0(\FP_{\XX})$ is a section such that the following conditions are satisfied. 
For any $x\in\XX$, there exists an adic morphism of formal schemes
$$i: \RR\to \UU=\spf(R\{x_1,\cdots,x_n\})$$
such that $\RR\subset \XX$ is an open affine formal subscheme of the form
$$i(\RR)=\spf(R\{x_1,\cdots,x_n\}/(df))$$
for $f\in R\{x_1,\cdots,x_n\}$ and $\iota_{\RR,\UU}(s|_{\RR})=i^{-1}(f)+I_{\RR,\UU}^2$.  The quadruple 
$(\RR,\UU, f, i)$ is called a  {\em formal d-critical chart} of $(\XX,s)$. 
\end{defn}

\begin{example}\label{example_affine_d_formal_scheme}
Recall from \S \ref{sec_affine_formal_critical_scheme} we have the critical formal affine scheme 
$$\hat{f}: \hat{\XX}\to \spf(R).$$
We already have the embedding $i: \hat{\XX}\to\UU$.  Let us define the coherent sheaf 
$\FP_{\hat{\XX}}$.   Let 
$I_{\hat{\XX},\UU}$ be the formal ideal sheaf of $\hat{\XX}$ inside $\UU$, which is generated by $df$.
Then 
$$i^{-1}(f)\in H^0(i^{-1}(\oO_{\UU}))$$  and 
$d(i^{-1}(f))\in H^0(I_{\XX,\UU}\cdot i^{-1}(\Omega_{\UU}))\subset H^0(i^{-1}(\Omega_{\UU}))$.  So 
$\FP_{\hat{\XX}}$ is the sheaf 
$$\ker\left(\frac{i^{-1}(\oO_{\UU})}{I_{\RR,\UU}^2} \stackrel{d}{\longrightarrow}
\frac{i^{-1}(\Omega_{\UU})}{I_{\RR,\VV}\cdot i^{-1}(\Omega_{\UU})}\right)$$
The section $s\in H^0(\FP_{\hat{\XX}})$ is 
$s=f+I_{\hat{\XX},\UU}^2$. 
\end{example}

\subsubsection{Canonical bundles of formal $d$-critical schemes}\label{sec_formal_canonical_line_bundle}

\begin{prop}
Let $(\XX, s)$ be a formal $d$-critical $R$-scheme, and let 
$$\Phi: (\RR,\UU,f,i)\hookrightarrow (\SS, \VV, g, j)$$
be an embedding of formal critical charts on $(\XX,s)$.  Then for any $x\in\RR$, there exist open neighborhoods $\UU^\prime, \VV^\prime$ of $i(x), j(x)$ in $\UU, \VV$, such that $\Phi(\UU^\prime)\subseteq \VV^\prime$ and functions:
$$\alpha: \VV^\prime\to \UU; \quad \beta: \VV^\prime\to\spf(R\{x_1,\cdots,x_n\})$$
where $n=\dim(\VV)-\dim(\UU)$ such that
$$\alpha\times\beta: \VV^\prime\to \UU\times\spf(R\{x_1,\cdots,x_n\})$$
is an isomorphism and $\Phi|_{\UU^\prime}=\id|_{\UU^\prime}$, $\beta\circ\Phi|_{\UU^\prime}=0$,  and 
$$g|_{\VV^\prime}=f\circ\alpha+(x_1^2+\cdots+x_n^2)\circ\beta. $$
\end{prop}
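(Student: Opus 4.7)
My plan is to adapt Joyce's proof of the analogous statement for $d$-critical schemes (\cite{Joyce}, Theorem~2.20) to the $stft$ formal $R$-scheme setting. The essentially new ingredient is a Morse splitting lemma in the Tate algebra $R\{u_1,\ldots,u_m\}$, replacing the usual Morse lemma over a field.

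First I would set up preliminary product coordinates. Since $\UU$ and $\VV$ are smooth formal $R$-schemes with $\dim\VV-\dim\UU=n$ and $\Phi$ is a closed immersion, I may shrink to affine open formal neighbourhoods $\UU^\prime\subset\UU$ of $i(x)$ and $\VV^\prime\subset\VV$ of $j(x)$ equipped with coordinates $(u_1,\ldots,u_m)$ on $\UU^\prime$ and $(u_1,\ldots,u_m,y_1,\ldots,y_n)$ on $\VV^\prime$ so that $\Phi|_{\UU^\prime}$ is the inclusion $y=0$. This produces candidate projections $\alpha\colon\VV^\prime\to\UU$ and $\beta\colon\VV^\prime\to\spf(R\{y_1,\ldots,y_n\})$ with $\alpha\times\beta$ an isomorphism, $\beta\circ\Phi|_{\UU^\prime}=0$, and $\alpha\circ\Phi|_{\UU^\prime}=\id_{\UU^\prime}$.

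Next I would use the embedding condition $g\circ\Phi=f$, which in product coordinates reads $g(u,0)=f(u)$. Taylor expansion gives
\[g(u,y)=f(u)+\sum_i y_iL_i(u)+Q(u,y)+O(|y|^3)\]
with $Q$ quadratic in $y$. The compatibility $\Crit(g)\supset\Phi(\Crit(f))$ forces each $L_i(u)$ to vanish on $\Crit(f)$, hence $L_i\in(df)\subset R\{u\}$. A further change of variables $y_i\mapsto y_i+c_i(u,y)$ with $c_i\in(df)\cdot R\{u\}+(y)^2\cdot R\{u,y\}$ absorbs the linear tail into the quadratic part, reducing to $L_i=0$. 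Non-degeneracy of $Q$ in the $y$-directions at $i(x)$ is forced by the formal $d$-critical chart structure: since $\RR$ is locally $\Crit(f)\subset\UU$ and $\SS$ is locally $\Crit(g)\subset\VV$, and $\codim_{\VV}\SS-\codim_{\UU}\RR=n$, the Hessian of $g$ transverse to $\UU^\prime$ must have full rank $n$ at $i(x)$.

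The main obstacle is the Morse splitting over the Tate algebra. Given $h(u,y)=Q(u,y)+O(|y|^3)\in R\{u,y\}$ with $Q$ a non-degenerate quadratic form over $R\{u\}$ at the base point, I need a convergent change of variables $y=\psi(u,y^\prime)$ with $\psi\equiv y^\prime\pmod{(y^\prime)^2}$ transforming $h$ into $\sum_i(y_i^\prime)^2$. I would proceed in two stages. First, diagonalise $Q$ by iterated completion of squares over $R\{u\}$, possibly after passing to a finite \'etale cover of $\UU^\prime$ to extract square roots of the non-vanishing diagonal entries; this is legitimate because $R\{u\}$ is Henselian and Henselian lifts of units remain in the Tate algebra, so the $stft$ property is preserved. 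Second, remove the cubic tail by Newton iteration: each step substitutes $y\mapsto y+\delta(u,y)$ with $\delta\in(y)^2$, and the associated fixed-point operator is a contraction on the complete ideal $(y)^2\subset R\{u,y\}$ with respect to the $(y)$-adic topology, so Banach's fixed-point theorem supplies a convergent solution. Combining these stages yields the normal form
\[g|_{\VV^\prime}=f\circ\alpha+(y_1^2+\cdots+y_n^2)\circ\beta.\]
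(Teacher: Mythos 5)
Your overall strategy -- product coordinates, killing the linear terms in the normal variables, nondegeneracy of the transverse quadratic part, then a parametrized Morse splitting -- is the same as the paper's (which adapts Joyce's Proposition 2.22 to Tate algebras), but two of your steps have genuine gaps. First, the inference ``$L_i$ vanishes on $\Crit(f)$, hence $L_i\in(df)$'' is not valid: the ideal $(df)$ need not be radical (for $f=x^3$ one has $(df)=(x^2)$, and $x$ vanishes on the critical locus but is not in $(df)$), so set-theoretic containment of critical loci gives nothing. What is actually available, and what the paper uses, is the ideal-theoretic content of an embedding of formal critical charts, $I_{\SS,\VV}|_{y=0}=I_{\RR,\UU}$, i.e. $(dg)|_{y=0}=(df)$, which lets one write $\partial g/\partial y_b(x,0)=\sum_a A_{ab}(x)\,\partial g/\partial x_a(x,0)$ and then absorb the linear tail by the substitution $\tilde x_a=x_a+\sum_b A_{ab}(x)y_b$, i.e. by modifying the map $\alpha$ so the extra terms become part of $f\circ\alpha$. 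Your alternative shift $y_i\mapsto y_i+c_i$ is also problematic in itself: cancelling $\sum_i y_iL_i$ against the cross terms of $Q$ requires $(Q_{bc})$ to be invertible, which you only establish afterwards, and it leaves a $u$-only remainder of the shape $L^{t}Q^{-1}L\in(df)^2$ that still has to be recognized as $f\circ\alpha-f$ for some new $\alpha$, a step you never address; the paper's change of the $x$-coordinates avoids both issues.

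Second, the splitting step fails as written. The Tate algebra $R\{u,y\}$ is $t$-adically complete but not $(y)$-adically complete, so your Banach fixed-point/Newton iteration ``in the $(y)$-adic topology'' only produces a coordinate change in the $(y)$-adic completion (formal in $y$), with no reason for it to lie in $R\{u,y\}$ or any {\em stft} localization; in fact no infinite iteration is needed, because once $(Q_{bc})$ is invertible near $i(x)$ one diagonalizes $h=\sum_{b,c}y_by_cQ_{bc}(u,y)$ by finitely many completions of squares in the $y$-variables with coefficients in the structure sheaf, which is exactly how the paper concludes from the invertibility of $(Q_{bc})$. Finally, passing to a finite \'etale cover of $\UU'$ to extract square roots proves a different statement from the proposition, which asks for Zariski-open neighborhoods $\UU'\subset\UU$, $\VV'\subset\VV$ and $\alpha\times\beta$ an isomorphism, not a normal form after \'etale base change; and the justification offered is incorrect, since $R\{u\}$ is not Henselian in the sense you invoke -- only the pair $(R\{u\},(t))$ is Henselian, which merely lifts square roots from $\kappa[u]$, where units again have no Zariski-local square roots. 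So the square-root issue has to be dealt with by localizing/shrinking within the formal category (or by settling for an \'etale-local statement as in Joyce's scheme version), not by the Henselian argument you give.
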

\begin{proof}
Let us briefly prove this result.  Recall that an embedding of formal critical charts
$$\Phi: (\RR,\UU,f,i)\hookrightarrow (\SS, \VV, g, j)$$
is a diagram:
\[
\xymatrix{
\RR\ar@{^{(}->}[r]\ar[d]_{i}& \SS\ar[d]^{j}\\
\UU\ar@{^{(}->}[r]^{\Phi}& \VV
}
\]
such that
$$\UU=\spf(R\{x_1,\cdots,x_{m}\})\to\VV=\spf(R\{x_1,\cdots,x_{m}, y_1,\cdots, y_n\})$$
and  
$\Phi\circ i=j|_{\RR}$. Then the essential idea in the proof of \cite[Proposition 2.22]{Joyce} works well here. 
If we have functions $f\in A=R\{x_1,\cdots,x_{m}\}$ and $g\in B=R\{x_1,\cdots,x_{m},y_1,\cdots, y_n\}$ such that 
$I_{\RR,\UU}=I_{(df)}$, $I_{\SS,\VV}=I_{(dg)}$, then we have:
since $\Phi(\UU)\subseteq \VV$ with $y_1=\cdots=y_n=0$ and $i(\RR)\to j(\RR)\subset j(\SS)$, 
$I_{(df)}\cong I_{(dg)}|_{y_1=\cdots=y_n=0}$.  Hence
\begin{multline*}
\left(\frac{\partial f}{\partial x_a}(x_1,\cdots,x_m): a=1,\cdots,m\right)= \\
\left(\frac{\partial g}{\partial x_a}(x_1,\cdots,x_m,0,\cdots,0): a=1,\cdots,m; \frac{\partial g}{\partial y_b}(x_1,\cdots,x_m,0,\cdots,0): b=1,\cdots,n\right)
\end{multline*}
Then we can write $\frac{\partial g}{\partial y_b}(x_1,\cdots,x_m,0,\cdots,0)$ in terms of 
$\frac{\partial g}{\partial x_a}(x_1,\cdots,x_m,0,\cdots,0)$. 
Hence let $h\in B$ be a new function $\tilde{g}-f$, where $\tilde{g}$ is the function of $g$ with new coordinates
$$\tilde{x}_1, \cdots, \tilde{x}_m, \tilde{y}_1,\cdots, \tilde{y}_n$$
such that 
$$\tilde{x}_a=x_a+\sum_{b=1}^{m}A_{ab}(x_1,\cdots,x_m)y_b$$
$$\frac{\partial g}{\partial y_b}=\sum_{a=1}^{m}A_{ab}(x_1,\cdots,x_m)\cdot \frac{\partial g}{\partial x_a}(x_1,\cdots,x_m,0,\cdots,0).$$
Then $h(\tilde{x}_1, \cdots, \tilde{x}_m,0,\cdots,0)=0$ and $ \frac{\partial g}{\partial y_b}(\tilde{x}_1,\cdots,\tilde{x}_m,0,\cdots,0)=0$.  So 
$$h(\tilde{x}_1, \cdots, \tilde{x}_m, \tilde{y}_1,\cdots,\tilde{y}_n)=\sum \tilde{y}_b \tilde{y}_c Q_{bc}(\tilde{x}_1, \cdots, \tilde{x}_m, \tilde{y}_1,\cdots,\tilde{y}_n) $$
for some $Q_{bc}\in R\{\tilde{x}_1, \cdots, \tilde{x}_m, \tilde{y}_1,\cdots,\tilde{y}_n\}$. 
The thing is that $Q_{bc}$ is invertible so that we can have new algebras $A=R\{x_1,\cdots, x_m\}$ and 
$B=R\{x_1,\cdots, x_m, y_1,\cdots, y_n\}$ such that 
$$g=f+y_1^2+\cdots+y_n^2.$$
\end{proof}

Our main result in this section is that we also can define a {\em canonical line bundle} on $(\XX,s)$.

\begin{thm}\label{thm_canonical_line_bundle}
Let $(\XX,s)$ be a $d$-critical formal scheme. Then there exists a line bundle $K_{\XX,s}$,  the {\em canonical line bundle} on $(\XX,s)$,  which is natural up to isomorphism and has the following properties:
\begin{enumerate}
\item Let $(\RR,\UU,f,i)$ be a formal critical chart of $(\XX,s)$. Then there exists a natural isomorphism
$$\iota_{\RR,\UU,f,i}: K_{\XX,s}|_{\RR}\to i^\star(K_{\UU}^{\otimes 2})$$
Here $K_{\UU}=\Lambda^{\dim(\UU)}\Omega_{\UU}$ is the canonical sheaf on $\UU$ in the sense of 
\cite[\S 1.8]{Fujiwara-Kato}.
\item If $\Phi: (\RR,\UU,f,i)\hookrightarrow (\SS, \VV, g, j)$ is an embedding of formal critical charts on $(\XX,s)$, let 
$$J_{\Phi}: i^\star(K_{\UU}^{\otimes 2})\stackrel{\sim}{\rightarrow}j^\star(K_{\VV}^{\otimes 2})|_{\RR}$$
is the isomorphism defined in the proof. Then 
$$\iota_{\SS,\VV,g,j}=J_{\Phi}\circ \iota_{\RR,\UU,f,i}: K_{\XX,s}\to j^\star(K_{\VV}^{\otimes 2})|_{\RR}.$$
\item For each $x\in \XX$, we have 
$$k_x: K_{\XX,s}|_{x}\cong (\Lambda^{\top}\Omega_{x,\XX})^{\otimes 2}.$$
\item For a critical chart $(\RR,\UU,f,i)$, and $x\in\RR$, we have the following exact sequence
\begin{equation}\label{exact_point_cotangent}
0\rightarrow T_{x}\XX\stackrel{di|_{x}}{\longrightarrow} T_{i(x)}\UU
\stackrel{Hess_{i(x)}f}{\longrightarrow}\Omega_{i(x),\UU}\stackrel{di|_{x}^{*}}{\longrightarrow}\Omega_{x,\RR}\rightarrow 0
\end{equation}
and a commutative diagram:
\[
\xymatrix{
K_{\XX,s}|_{x}\ar[r]^{k_x}\ar[dr]_{\iota_{\RR,\UU,f,i}|_{x}}&(\Lambda^{\top}\Omega_{x,\XX})^{\otimes 2}\ar[d]^{\alpha_{x,\RR,\UU,f,i}}\\
&K_{\UU}^{\otimes 2}|_{x}
}
\]
Here $\alpha_{x,\RR,\UU,f,i}$ is the one given by (\ref{exact_point_cotangent}).
\end{enumerate}
\end{thm}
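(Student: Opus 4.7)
The plan is to mirror Joyce's argument in \cite[Theorem 2.28]{Joyce} for classical $d$-critical schemes, adapted to the $t$-adic setting. On each formal critical chart $(\RR,\UU,f,i)$ one simply \emph{defines} $K_{\XX,s}|_{\RR} := i^{\ast}(K_{\UU}^{\otimes 2})$, where $K_{\UU} = \Lambda^{\dim\UU}\Omega_{\UU}$ is the canonical sheaf of the smooth formal $R$-scheme $\UU$, and declares $\iota_{\RR,\UU,f,i}$ to be the identity. Property (1) then becomes tautological and the real content is (2): constructing a canonical isomorphism $J_{\Phi}$ for each embedding of critical charts and showing these satisfy a cocycle condition, so that the local pieces glue to a global line bundle.

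First I would construct $J_{\Phi}$ for a single embedding $\Phi:(\RR,\UU,f,i)\hookrightarrow(\SS,\VV,g,j)$ using the normal-form result proven just before the theorem: near any $x\in\RR$ there are opens and a splitting $\VV' \cong \UU \times \spf R\{y_1,\dots,y_n\}$ with $g|_{\VV'} = f\circ\alpha + (y_1^2+\cdots+y_n^2)\circ\beta$. This induces $K_{\VV}|_{\VV'} \cong \alpha^{\ast}K_{\UU} \otimes \beta^{\ast}K_{\spf R\{y\}}$, and on the restriction to $\RR$ the form $(dy_1\wedge\cdots\wedge dy_n)^{\otimes 2}$ is a canonical trivialisation of $\beta^{\ast}K_{\spf R\{y\}}^{\otimes 2}|_{\RR}$ because on $\RR$ the quadratic form $\sum y_b^2$ has nondegenerate Hessian $2\,\mathrm{Id}$; we set $J_{\Phi}$ to be tensoring with this section. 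The \emph{main obstacle} — just as in \cite{Joyce} — is showing $J_{\Phi}$ is independent of the splitting $(\alpha,\beta)$ and of the change of coordinates $\tilde x_a = x_a + \sum A_{ab}y_b$ used to produce it. I would handle this by re-expressing $J_{\Phi}$ intrinsically as the determinant line of the conormal sequence $0 \to N^{\vee}_{i(\UU)/\VV}|_{\RR} \to j^{\ast}\Omega_{\VV}|_{\RR}\to i^{\ast}\Omega_{\UU}|_{\RR}\to 0$, squared, together with the identification $N^{\vee}_{i(\UU)/\VV}|_{\RR}\otimes N^{\vee}_{i(\UU)/\VV}|_{\RR}\cong \oO_{\RR}$ induced by the transverse Hessian of $g-f\circ\alpha$: this description uses only the embedding $\Phi$ and the sections $f,g$, and hence does not depend on the auxiliary choices. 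From this intrinsic presentation the cocycle identity $J_{\Psi\circ\Phi}=J_{\Psi}|_{\RR}\circ J_{\Phi}$ for composable embeddings $\Phi,\Psi$ is immediate by functoriality of the conormal sequence.

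To glue across two arbitrary critical charts $(\RR_1,\UU_1,f_1,i_1)$ and $(\RR_2,\UU_2,f_2,i_2)$ with overlapping images I would use the standard amalgamation trick: near any point of $\RR_1\cap\RR_2$, both charts embed into the ``sum chart'' $(\RR_1\cap\RR_2,\UU_1\times_R\UU_2, f_1\boxplus f_2, i_1\times i_2)$, which is again a formal critical chart on $(\XX,s)$ by Definition \ref{defn_d_formal_scheme}. The canonicity of $J_{\Phi}$ established above then produces a coherent isomorphism $i_1^{\ast}K_{\UU_1}^{\otimes 2}|_{\RR_1\cap\RR_2}\cong i_2^{\ast}K_{\UU_2}^{\otimes 2}|_{\RR_1\cap\RR_2}$, and the cocycle condition for triple overlaps is verified by further amalgamation. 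This yields the global line bundle $K_{\XX,s}$ together with the trivialisations $\iota_{\RR,\UU,f,i}$ satisfying (2). For parts (3) and (4), at a point $x\in\RR$ the exact sequence (\ref{exact_point_cotangent}) is just the Koszul description of the critical locus — $T_x\XX=\ker(\mathrm{Hess}_{i(x)}f)$ and $\Omega_{x,\RR}=\mathrm{coker}(\mathrm{Hess}_{i(x)}f)^{\vee}$. Taking the squared determinant of this four-term sequence gives a canonical isomorphism $(\Lambda^{\top}\Omega_{x,\XX})^{\otimes 2}\cong (\Lambda^{\top}\Omega_{i(x),\UU})^{\otimes 2}$, which I take as $\alpha_{x,\RR,\UU,f,i}$; composing with $\iota_{\RR,\UU,f,i}|_x$ defines $k_x$, and the commuting triangle in (4) is then automatic. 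The only nontrivial check is that $k_x$ thus defined is independent of the chart, but this follows from the already-verified independence of $J_{\Phi}$ specialised to points together with the fact that under the normal form $g = f + \sum y_b^2$ the Hessian contribution $2^n$ appears identically in both $\alpha_{x,\SS,\VV,g,j}$ and in $J_{\Phi}|_x\circ\alpha_{x,\RR,\UU,f,i}$.
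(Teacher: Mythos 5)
Your construction of $J_{\Phi}$ for a single embedding (determinant of the conormal sequence squared, trivialised by the nondegenerate Hessian of $g-f\circ\alpha$ on the normal directions) and your treatment of (3)--(4) match the paper's argument, which packages the same data as a quadratic form $q_{\UU\VV}$ on $i^\star(N_{\UU/\VV})$ and uses $\det(q_{\UU\VV})$ together with $\rho_{\UU\VV}^{\otimes 2}$. The genuine gap is in the gluing step. The ``sum chart'' $(\RR_1\cap\RR_2,\ \UU_1\times_R\UU_2,\ f_1\boxplus f_2,\ i_1\times i_2)$ is \emph{not} a formal critical chart of $(\XX,s)$: the critical locus of $f_1\circ\pr_1+f_2\circ\pr_2$ in $\UU_1\times_R\UU_2$ is $i_1(\RR_1)\times i_2(\RR_2)$, which strictly contains the diagonal image of $\RR_1\cap\RR_2$, so condition $i(\RR)=\spf(\,\cdot\,/(df))$ of Definition \ref{defn_d_formal_scheme} fails; moreover along the diagonal the function restricts to $f_1+f_2$, i.e.\ it represents the section $2s$ rather than $s$. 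Nor do the original charts embed into it as critical charts, since an embedding of charts must satisfy $f_1=(f_1\boxplus f_2)\circ\Phi$ over $\XX$, which no candidate $\Phi$ achieves. The Thom--Sebastiani-type sum produces a chart for the \emph{product} $d$-critical structure on $\XX\times\XX$, not a comparison chart for $\XX$ itself, so your amalgamation argument and the ensuing triple-overlap check do not get off the ground.

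What the paper does instead (following Joyce's Theorem 2.20 and Lemma 6.1, adapted to the $t$-adic setting) is: shrink to $\RR'=\RR\cap\SS$, choose $\UU'$ and $\Theta:\UU'\to\VV$ with $f':=f|_{\UU'}-g\circ\Theta\in I_{\RR',\UU'}^{2}$, write $f'=g\circ\Theta+r_1s_1+\cdots+r_ns_n$ with $r_a,s_a\in I_{\RR',\UU'}$, and form the comparison chart $(\TT,\WW,h,k)$ with $\WW=\VV\times\spf(R\{z_1,\dots,z_n,t_1,\dots,t_n\})$, $h(v,z,t)=g(v)+z_1t_1+\cdots+z_nt_n$, $\TT=\SS$, $k=j\times\{0\}$. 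This $h$ differs from $g$ only by a nondegenerate quadratic form in the extra variables, so its critical locus is still $\SS$ and it still represents the section $s$; one then embeds the first chart by $u\mapsto(\Theta(u),r_1(u),\dots,r_n(u),s_1(u),\dots,s_n(u))$ and the second by $v\mapsto(v,0)$, sets the transition map to be $J_{\Psi}^{-1}\circ J_{\Phi}$, and --- a step your proposal also elides --- verifies that this composite is independent of all the auxiliary choices before checking the cocycle identity and gluing. Without this replacement for your sum chart, the global line bundle $K_{\XX,s}$ is not constructed.
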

\begin{proof}
Let $\Phi: (\RR,\UU,f,i)\hookrightarrow (\SS, \VV, g, j)$ be an embedding of formal critical charts. 
Then let $N_{\UU/\VV}$ be the normal bundle of $\Phi(\UU)$ in $\VV$, we have 
$$0\rightarrow T_{\UU}\stackrel{d\Phi}{\longrightarrow} \Phi^\star(T_{\VV})\stackrel{\pi_{\UU/\VV}}{\longrightarrow} 
N_{\UU/\VV}\rightarrow 0$$
so that $i^\star(N_{\UU/\VV})$ is a vector bundle on $\RR\subset \XX$. Then there exists a unique quadratic form 
$q_{\UU\VV}\in H^0(S^2(N_{\UU/\VV}^*))$ on $i^\star(N_{\UU/\VV})$. These data will satisfy the properties in 
\cite[Proposition 2.25]{Joyce} since we choose our embeddings and critical charts as:
$$\UU=\spf(R\{x_1,\cdots, x_m\})$$ and 
$$\VV=\spf(R\{x_1,\cdots, x_m, y_1,\cdots, y_n\}).$$
For another embedding of formal critical charts
$$\Psi: (\SS,\VV,g,j)\hookrightarrow (\TT, \WW, h,k)$$
such that 
$$\Psi\circ\Phi: (\RR,\UU,f,i)\hookrightarrow (\TT, \WW, h,k)$$
is also an embedding of formal critical charts. We have the following diagram:
\[
\begin{CD}
&&&& 0 && 0\\
&& &&@VV{}V@VV{}V \\
0 @ >>>T_{\UU}@ >>> \Phi^\star(T_{\VV})@ >\pi_{\UU/\VV}>>
N_{\UU/\VV} @
>>> 0\\
&& @VV{\id}V@VV{}V@VV{\gamma_{\UU\VV\WW}}V \\
0@ >>> T_{\UU}@ >{}>>(\Psi\circ\Phi)^\star(T_{\WW}) @ >\pi_{\UU/\WW}>> N_{\UU/\WW}
@>>> 0\\
&& @VV{}V@VV{}V@VV{\delta_{\UU\VV\WW}}V \\
&& 0@ >{}>>\Phi^\star(N_{\VV/\WW}) @ >>> \Phi^\star(N_{\VV/\WW})
@>>> 0\\
&& &&@VV{}V@VV{}V \\
&&&& 0 && 0,
\end{CD}
\]
where $\gamma_{\UU\VV\WW}$ and $\delta_{\UU\VV\WW}$ are defined accordingly. 
Pulling back by $i^\star$:
\begin{equation}\label{exact_2}
0\rightarrow i^\star N_{\UU/\VV}|_{\RR}\stackrel{i^\star(\gamma)}{\longrightarrow} i^\star(N_{\UU/\WW})|_{\RR}
\stackrel{i^\star(\delta)}{\longrightarrow} j^\star(N_{\VV/\WW})|_{\RR}\rightarrow 0. 
\end{equation}
Hence we have 
\begin{equation}\label{formula_normal_bundle}
i^\star N_{\UU/\WW}\cong i^\star N_{\UU/\VV}\oplus i^\star N_{\VV/\WW}
\end{equation}
and 
$$q_{\UU\WW}\cong q_{\UU\VV}\oplus q_{\VV\WW}\oplus 0$$
in 
$$S^2(i^\star(N_{\UU/\WW}^*))\cong S^2(i^\star(N_{\UU/\VV}^*))|_{\RR}\oplus S^2(i^\star(N_{\UU/\WW}^*))|_{\RR}\oplus 
(i^\star(N_{\UU/\VV}^*)\otimes i^\star(N_{\VV/\WW}^*))|_{\RR}.$$

Now for any $\Phi: (\RR,\UU,f,i)\hookrightarrow (\SS, \VV, g, j)$, we have 
$$\rho_{\UU\VV}:  (i^\star K_{\UU})\otimes i^\star\Lambda^{n}(N_{\UU/\VV}^*)\stackrel{\sim}{\longrightarrow} j^\star(K_{\VV})|_{\RR}$$
by taking the top exterior powers in the dual of exact sequence.  Since $q_{\UU\VV}$ is a non-degenerate quadratic form, its determinant $\det(q_{\UU\VV})$ is a nonvanishing section of $i^\star(\Lambda^n N_{\UU/\VV})^{\otimes 2}$. Hence we can define the following isomorphism:
\begin{equation}\label{isomorphism_JPhi}
J_{\Phi}:  i^\star(K_{\UU}^{\otimes 2})\stackrel{\sim}{\rightarrow}j^\star(K_{\VV}^{\otimes 2})|_{\RR}
\end{equation}
by the diagram:
\[
\xymatrixcolsep{8pc}\xymatrix{
i^\star(K_{\UU}^{\otimes 2})\ar[r]^-{\id\otimes \det(q_{\UU\VV})|_{\RR}}\ar[dr]_{J_{\Phi}}&(\Lambda^{\top}i^\star(K_{\UU}^{\otimes 2})\otimes (\Lambda^{\top}N_{\UU/\VV}^*)^{\otimes 2}|_{\RR}\ar[d]^{\rho_{\UU\VV}^{\otimes 2}|_{\RR}}\\
&j^\star (K_{\VV}^{\otimes 2})|_{\RR}
}
\]
Then one can check that the isomorphism $J_{\Phi}$ is independent of the choice $\Phi$ and 
$$J_{\Psi|_{\RR}}\circ J_{\Phi}=J_{\Psi\circ \Phi}$$
if $\Psi: (\SS, \VV, g, j)\hookrightarrow (\TT, \WW, h, k)$ is another embedding of formal critical charts. 
The formula is Proposition 2.27 of Joyce in \cite{Joyce} and we check that it holds for formal critical charts. 

Then we construct the line bundle $K_{\XX,s}$ locally using $K_{\UU}^{\otimes 2}$. The existence of $K_{\XX,s}$ is from the following lemma:
\begin{lem}
Let $(\RR, \UU, f, i), (\SS, \VV, g, j)$ be formal critical charts on $(\XX, s)$. Then there is a unique isomorphism
$$J_{\RR,\UU,f,i}^{\SS,\VV,g,j}: i^\star(K_{\UU}^{\otimes 2})|_{\RR\cap\SS}\to j^\star(K_{\VV}^{\otimes 2})|_{\RR\cap\SS}$$
such that 
if $\Phi: (\RR, \UU, f, i)\hookrightarrow (\SS, \VV, g, j)$ and $\Psi: (\SS, \VV, g, j)\hookrightarrow (\TT, \WW, h, k)$ are embeddings of formal critical charts, then 
$$J_{\RR,\UU,f,i}^{\SS,\VV,g,j}|_{\RR\cap\SS}=J_{\Psi}^{-1}\circ J_{\Phi}|_{\RR\cap\SS}$$
and 
$$J_{\RR,\UU,f,i}^{\RR,\UU,f,i}=\id; \quad J_{\SS,\VV,g,j}^{\RR,\UU,f,i}=(J_{\RR,\UU,f,i}^{\SS,\VV,g,j})^{-1}$$
$$J_{\SS,\VV,g,j}^{\TT,\WW,h,k}\circ J_{\RR,\UU,f,i}^{\SS,\VV,g,j}|_{\RR\cap\SS\cap\TT}=J_{\RR,\UU,f,i}^{\TT,\WW,h,k}|_{\RR\cap\SS\cap\TT}.$$
\end{lem}
\begin{proof}
We use a similar result as in Theorem 2.20 of \cite{Joyce}: for any formal $d$-critical charts 
$(\RR,\UU,f,i)$ and $(\SS, \VV, g, j)$,  let $x\in\RR\cap\SS$, then $\RR\cap\SS$ is also a formal scheme of the form 
$\spf(A_{\RR\SS})$, where $A_{\RR\SS}$ is a {\em stft} $R$-algebra. Then we have a morphism
$$\spf(A_{\RR\SS})\to \SS=\spf(R\{x_1,\cdots,x_m,y_1,\cdots,y_n)/(I_{(dg)})$$
as formal schemes.  Since also 
$\spf(A_{\RR\SS})\hookrightarrow \RR=\spf(R\{x_1,\cdots,x_m)/(I_{(df)})$, then there exists 
\[
\xymatrix{
\RR^\prime=\RR\cap\SS\ar@{^{(}->}[r]\ar@{^{(}->}[d]& \SS\ar@{^{(}->}[d]\\
\UU^\prime\ar[r]^{\Theta}& \VV
}
\]
where $\UU^\prime$ is another smooth formal scheme $\spf(R\{x_1,\cdots, x_r\})$ with $r<m$. 
Then similar to the arguments as in Theorem 2.20 of Joyce \cite{Joyce} we have
$$f^\prime=f|_{\UU^\prime}-g\circ\Theta\in I_{\RR^\prime,\UU^\prime}^2$$
and we can construct a formal critical chart 
$(\TT,\WW,h,k)$ such that 
$$\WW=\VV\times\spf(R\{z_1,\cdots,z_n, t_1,\cdots,t_n\}); \TT=\SS; k=j\times\{0\}$$ and 
$\TT=\SS\hookrightarrow \VV\times \spf(R\{z_1,\cdots,z_n, t_1,\cdots,t_n\})=\WW$
by 
$$\Phi(u)=(\Theta(u), r_1(u),\cdots, r_n(u), s_1(u),\cdots, s_n(u));\quad \Psi(v)=(v,0)$$
and 
$$f^\prime=g\circ\Theta+r_1s_1+\cdots+r_ns_n$$
$$h(v, (z_1,\cdots,z_n, t_1,\cdots,t_n))=g(v)+z_1t_1+\cdots+z_nt_n.$$
Then using this result we can prove that 
$$J_{\Psi}^{-1}\circ J_{\Phi}|_{\RR\cap\SS}$$
is independent to the data chosen in the theorem as in Lemma 6.1 of \cite{Joyce}. So these isomorphisms give the conditions in the theorem. 
\end{proof}

Then it is routine check that $K_{\XX,s}$ satisfies the conditions in the theorem.  We omit the details. Thus the proof of  Theorem \ref{thm_canonical_line_bundle} is complete. 
\end{proof}

\begin{defn}(Orientation of $d$-critical formal scheme)
Let $(\XX,s)$ be a formal $d$-critical scheme and $K_{\XX,s}$ the canonical line bundle. 
An {\em orientation} on $(\XX,s)$ is a choice of square root $K_{\XX,s}^{\frac{1}{2}}$ for $K_{\XX,s}$. That is a line bundle 
$\lL$ over $\XX$ such that $\lL^{\otimes 2}\cong K_{\XX,s}$. We call $(\XX,s)$ the {\em oriented $d$-critical formal scheme} if 
$K_{\XX,s}^{\frac{1}{2}}$ exists. 
\end{defn}

As in \cite{Joyce}, an orientation on $(\XX,s)$ can be explained by principal $\zz_2$-bundles. 
\begin{defn}
Let $(\XX,s)$ be a formal $d$-critical scheme. For each embedding of formal critical charts:
$$\Phi: (\RR,\UU,f,i)\hookrightarrow (\SS, \VV, g, j),$$
let $\pi_{\Phi}: P_{\Phi}\to \RR$ be the bundle of square roots of the isomorphism $J_\Phi$ in (\ref{isomorphism_JPhi}). A local section $s_{\alpha}: \RR\to P_{\Phi}$ corresponds to a local isomorphism 
$$\alpha: i^\star(K_{\UU})|_{\RR}\to j^\star(K_{\VV})|_{\RR}$$
with $\alpha\otimes\alpha=J_{\Phi}$. Also $P_{\Phi}$ is independent of the choice of $\Phi$. 

If $\Psi: (\SS, \VV, g, j)\hookrightarrow (\TT,\WW,h,k)$ is another embedding of formal critical charts, then there exists an isomorphism
$$\Pi_{\Psi,\Phi}: P_{\Psi\circ\Phi}\stackrel{\sim}{\rightarrow}P_{\Psi}|_{\RR}\otimes_{\zz_2}P_{\Phi},$$
such that  if
$$\alpha: i^\star(K_{\UU})\to j^\star(K_{\VV})|_{\RR};  \quad \beta: j^\star(K_{\VV})|_{\RR}\to k^\star(K_{\WW})|_{\RR}; \quad \gamma: i^\star(K_{\UU})\to k^\star(K_{\WW})|_{\RR}$$
are the local isomorphisms such that
$$\alpha\otimes=J_{\Phi}; \quad  \beta\otimes\beta=J_{\Psi}|_{\RR};\quad \gamma\otimes\gamma=J_{\Psi\circ \Phi}$$
correspond to local sections
$s_\alpha: \RR\to P_{\Phi}$,  $s_\beta: \RR\to P_{\Psi}|_{\RR}$, and $s_\gamma: \RR\to P_{\Psi\circ\Phi}|_{\RR}$. 
Then $\Pi_{\Psi,\Phi}(s_\gamma)=s_{\beta}\otimes_{\zz_2}s_{\alpha}$ if and only if $\gamma=\beta\circ\alpha$. 
\end{defn}

\begin{rmk}
Actually using the theory of gerbes an orientation on $(\XX,s)$ is a $B\zz_2$-gerbe over $(\XX,s)$.
\end{rmk}

Let $K_{\XX,s}^{\frac{1}{2}}$ be an orientation on $(\XX,s)$. Let $(\RR,\UU,f,i)$ be a formal critical chart, define a principal $\zz_2$-bundle 
$$\pi_{\RR,\UU,f,i}: Q\to \RR$$
to be the bundle of square root of $\iota_{\RR,\UU,f,i}: K_{\XX,s}|_{\RR}\stackrel{\sim}{\rightarrow}i^\star(K_{\UU})^{\otimes 2}$. 
On an embedding 
$\Phi: (\RR, \UU, f, i)\hookrightarrow (\SS,\VV,g,j)$ we have principal $\zz_2$-bundles 
$$\pi_{\Phi}: P_\Phi\to\RR$$
$$\pi_{\RR,\UU,f,i}: Q_{\RR\UU}\to \RR$$
$$\pi_{\SS,\VV,g,j}: Q_{\SS\VV}\to \RR.$$
Then there exists a natural isomorphism 
\begin{equation}\label{isomorphism_principal_bundle}
\Lambda_{\Phi}:  Q_{\SS\VV}|_{\RR}\stackrel{\sim}{\rightarrow}P_{\Phi}\otimes_{\zz_2}Q_{\RR\UU}
\end{equation}
by local isomorphisms:
$$\alpha: i^\star(K_{\UU})|_{\RR}\to j^\star(K_{\VV})|_{\RR}; \quad \beta: K_{\XX,s}^{\frac{1}{2}}|_{\RR}\to i^\star(K_{\UU})|_{\RR}; \quad \gamma: K_{\XX,s}^{\frac{1}{2}}|_{\RR}\to j^\star(K_{\VV})|_{\RR}$$
with $\alpha\otimes\alpha=i^\star(J_\Phi)$, $\beta\otimes\beta=\iota_{\RR,\UU,f,i}$ corresponding to local sections 
$s_\alpha, s_\beta, s_\gamma$. 
If $\Psi: (\SS,\VV,g,j)\hookrightarrow (\TT,\WW,h,k)$ is another embedding of formal critical charts then we have the following commutative diagram:
\begin{equation}\label{diagram_2}
\xymatrixcolsep{5pc}\xymatrix{
Q_{\TT,\WW,h,k}|_{\RR}\ar[r]^{\Lambda_{\Psi\circ\Phi}}\ar[d]^{\Lambda_{\Psi}|_{\RR}}& P_{\Psi\circ\Phi}\otimes_{\zz_2}Q_{\RR,\UU,f,i}\ar[d]^{\Pi_{\Psi\Phi}\otimes\id}\\
(P_{\Psi}\otimes_{\zz_2}Q_{\SS,\VV,g,j})|_{\RR}\ar[r]^{\id\otimes \Lambda_{\Phi}}&P_{\Psi}|_{\RR}\otimes_{\zz_2}P_{\Phi}\otimes_{\zz_2}Q_{\RR,\UU,f,i}
}
\end{equation}
So the following proposition is straightforward.
\begin{prop}
Let $(\XX,s)$ be a formal $d$-critical scheme. Then there is an isomorphism between isomorphism classes of orientations $K_{\XX,s}^{\frac{1}{2}}$ on $(\XX,s)$, and isomorphism classes of the data:
\begin{enumerate}
\item each formal critical chart $(\RR,\UU,f,i)$, and  a choice of principal $\zz_2$-bundle 
$\pi: Q_{\RR,\UU,f,i}\to\RR$;
\item each embedding of formal critical charts 
$$\Phi: (\RR, \UU, f, i)\hookrightarrow (\SS, \VV, g, j)$$
a choice of $\Lambda_{\Phi}$ in  (\ref{isomorphism_principal_bundle}) such that the diagram (\ref{diagram_2}) holds.
\end{enumerate}
\end{prop}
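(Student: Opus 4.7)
The plan is to prove the bijection by constructing maps of isomorphism classes in both directions and verifying they are mutually inverse, following closely the analogous argument for $d$-critical schemes in \cite[\S 2.4]{Joyce} and adapting each step to the formal-scheme setting. Both sides of the bijection are realized as isomorphism classes of objects in naturally defined groupoids, so it suffices to produce the two assignments and check they are inverse on the level of isomorphism classes.

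For the forward direction, the construction $K_{\XX,s}^{\frac{1}{2}}\mapsto (Q_{\RR,\UU,f,i},\Lambda_\Phi)$ is essentially that given in the paragraphs immediately preceding the proposition. Given an orientation, on each formal critical chart I define $Q_{\RR,\UU,f,i}$ as the bundle of square roots of the canonical isomorphism $\iota_{\RR,\UU,f,i}\colon K_{\XX,s}|_\RR\stackrel{\sim}{\rightarrow}i^\star(K_{\UU})^{\otimes 2}$, and on each embedding $\Phi$ I define $\Lambda_\Phi$ via local sections corresponding to local square roots $\alpha,\beta,\gamma$ as in (\ref{isomorphism_principal_bundle}). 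The compatibility diagram (\ref{diagram_2}) on a composition $\Psi\circ\Phi$ follows directly from the identity $J_{\Psi\circ\Phi}=J_\Psi|_\RR\circ J_\Phi$ established in Theorem \ref{thm_canonical_line_bundle}, combined with the definition of $\Pi_{\Psi,\Phi}$ as composition of local square roots.

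For the reverse direction, given data $(Q_{\RR,\UU,f,i},\Lambda_\Phi)$, I would construct $K_{\XX,s}^{\frac{1}{2}}$ by gluing local square roots. On each formal critical chart set
$$L_{\RR,\UU,f,i}:=Q_{\RR,\UU,f,i}\times_{\zz_2}i^\star(K_{\UU}),$$
with $\zz_2$ acting by $\pm 1$ on $i^\star(K_{\UU})$; then $L_{\RR,\UU,f,i}^{\otimes 2}\cong i^\star(K_{\UU})^{\otimes 2}\cong K_{\XX,s}|_\RR$ via $\iota_{\RR,\UU,f,i}$, so each $L_{\RR,\UU,f,i}$ is a local square root. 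To glue $L_{\RR_1,\UU_1,f_1,i_1}$ to $L_{\RR_2,\UU_2,f_2,i_2}$ over $\RR_1\cap \RR_2$, I invoke the common-refinement construction carried out within the proof of Theorem \ref{thm_canonical_line_bundle}: at any point of $\RR_1\cap \RR_2$ there exist a third chart $(\TT,\WW,h,k)$ and embeddings $\Phi_a\colon (\RR_a\cap \RR_1\cap \RR_2,\UU_a,f_a,i_a)\hookrightarrow (\TT,\WW,h,k)$ for $a=1,2$. The isomorphisms $\Lambda_{\Phi_a}$ then furnish identifications $L_{\RR_a,\UU_a,f_a,i_a}|_{\RR_1\cap \RR_2}\stackrel{\sim}{\rightarrow}L_{\TT,\WW,h,k}|_{\RR_1\cap \RR_2}$, and composing one with the inverse of the other yields the desired transition isomorphism.

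The main obstacle will be verifying that these transition isomorphisms (i) are independent of the auxiliary refinement chart chosen, and (ii) satisfy the cocycle identity on triple overlaps. For (i), I would compare two refinement charts by producing a further common refinement and running (\ref{diagram_2}) along the resulting tower of embeddings; the formula $\Pi_{\Psi,\Phi}(s_\gamma)=s_\beta\otimes_{\zz_2}s_\alpha$ reduces independence to a tautology in the $\zz_2$-torsor. For (ii), on $\RR_1\cap \RR_2\cap \RR_3$ I would find a single chart refining all three and apply (\ref{diagram_2}) simultaneously to the three associated embeddings, obtaining the cocycle condition as a direct consequence. Once gluing is established, $K_{\XX,s}^{\frac{1}{2}}$ is automatically a square root of $K_{\XX,s}$ since it is one locally. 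The verifications that $F\circ G$ and $G\circ F$ are the identity on isomorphism classes are then formal: one checks that the natural isomorphism between $Q_{\RR,\UU,f,i}$ and the bundle of square roots of $\iota_{\RR,\UU,f,i}$ associated to $L_{\RR,\UU,f,i}$ intertwines every $\Lambda_\Phi$ with the corresponding map from (\ref{isomorphism_principal_bundle}) by construction, and symmetrically for the other composition.
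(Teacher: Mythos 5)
Your proposal is correct and takes essentially the approach the paper intends: the forward direction is precisely the construction of $Q_{\RR,\UU,f,i}$ and $\Lambda_{\Phi}$ carried out in the paragraphs preceding the proposition, and your inverse construction, gluing the local square roots $Q_{\RR,\UU,f,i}\times_{\zz_2}i^\star(K_{\UU})$ via the common-refinement charts $(\TT,\WW,h,k)$ from the proof of Theorem \ref{thm_canonical_line_bundle} and checking well-definedness and the cocycle condition with (\ref{diagram_2}), is exactly the Joyce-style argument the paper compresses into the remark that the proposition is ``straightforward.'' No gaps.
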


\subsection{Proof of main theorem}\label{sec_Proof_main_Theorem}

We prove Theorem \ref{thm_existence_sheaf_PX}. 
We mainly check that the proof in \cite[\S 3.1, \S3.2]{Joyce} work for formal schemes, since a {\em stft} $R$-formal scheme 
$\XX$ is covered by affine stft $R$-formal schemes of the type $\spf(A)$.  First let 
$(\RR,\UU,f,i)$ be a formal critical chart as in the theorem. Then we have:
$$0\rightarrow \mathscr{K}_{\RR,\UU,i}|_{\RR}\stackrel{k_{\RR,\UU}}{\longrightarrow} \frac{i^{-1}(\oO_{\UU})}{I_{\RR,\UU}^2}
\stackrel{d}{\longrightarrow} \frac{i^{-1}(\Omega_{\UU})}{I_{\RR,\UU}\cdot i^{-1}(\Omega_{\UU})}$$
where $\mathscr{K}_{\RR,\UU,i}$ is the kernel.  
Then $\mathscr{K}_{\RR,\UU,i}$ is a sheaf of $R$-commutative algebras. Similar arguments as in \cite[\S 3.1]{Joyce} works for any morphism 
$$\Phi: (\RR,\UU,i)\to(\SS,\VV,j)$$
such that 
$\RR\subseteq \SS$, we have 
$$\Phi^\star: \mathscr{K}_{\SS,\VV,j}|_{\RR}\to \mathscr{K}_{\RR,\UU,i}$$
as a morphism of sheaves of $R$-commutative algebras. If $\Psi: (\SS,\VV,j)\to (\TT,\WW,k)$ is another morphism of formal critical charts, then 
$$(\Psi\circ\Phi)^\star=\Phi^\star\circ \Psi^\star|_{\RR}. $$
Also Lemma 3.1 of \cite{Joyce} is true for formal critical charts:  if $\Phi, \widetilde{\Phi}:  (\RR,\UU,i)\to(\SS,\VV,j)$ are morphisms of triples, then 
$$\Phi^\star=\widetilde{\Phi}^\star: \mathscr{K}_{\SS,\VV,j}|_{\RR}\to \mathscr{K}_{\RR,\UU,i}.$$
The main point in the proof is that for any 
local section $\alpha\in \mathscr{K}_{\SS,\VV,j}|_{\RR}$ around $x\in\RR$, 
$\alpha=f+(I_{\SS,\VV}^2)$.  Then 
$$\Phi^\star(\alpha)=f\circ\Phi+(I_{\RR,\UU}^2)$$
$$\widetilde{\Phi}^\star(\alpha)=f\circ\widetilde{\Phi}+(I_{\RR,\UU}^2).$$
We can write down ($\VV=\spf(R\{x_1,\cdots,x_m)$)
\begin{multline}\label{equation_difference}
f\circ\widetilde{\Phi}-f\circ\Phi=\sum_{a=1}^{m}(\frac{\partial f}{\partial x_i}\circ\Phi)\cdot (x_a\circ\widetilde{\Phi}-x_a\circ\Phi)\\
+\sum_{a,b=1}^{m}A_{ab}(x_a\circ\widetilde{\Phi}-x_a\circ\Phi)\cdot (x_b\circ\widetilde{\Phi}-x_b\circ\Phi)
\end{multline}
where $A_{ab}$ are functions. Then one can prove that each factor $(\cdot)$ in (\ref{equation_difference}) lies in $I_{\RR,\UU}$. So 
$$\Phi^\star(\alpha)=f\circ\Phi+(I_{\RR,\UU})^2=f\circ\widetilde{\Phi}+(I_{\RR,\UU})^2=\widetilde{\Phi}^\star(\alpha).$$
Similar as before, let $(\RR,\UU,i), (\SS,\VV,j)$ be formal critical charts. 
Then for any $x\in\RR\cap\SS$, there exists a $(\RR^\prime, \UU^\prime, i^\prime)$ such that 
$\RR^\prime\subseteq \rr\cap\SS$ and 
$\Phi: (\RR^\prime, \UU^\prime, i^\prime)\hookrightarrow (\SS,\VV,j)$ is an embedding of formal critical charts. 
We can show that 
$$I_{\SS,\VV,j}^{\RR,\UU,i}: \mathscr{K}_{\SS,\VV,j}|_{\RR\cap\SS}\stackrel{\sim}{\rightarrow}\mathscr{K}_{\RR,\UU,i}|_{\RR\cap\SS}$$
is an isomorphism, and 
$$I_{\SS,\VV,j}^{\RR,\UU,i}|_{\RR\cap\SS\cap\TT}\circ I^{\SS,\VV,j}_{\TT,\WW,k}|_{\RR\cap\SS\cap\TT}
=I_{\TT,\WW,k}^{\RR,\UU,i}|_{\RR\cap\SS\cap\TT}$$
for any other $(\TT,\WW,k)$. Hence we can define 
$$\FP_{\XX}|_{(\RR,\UU,i)}=\mathscr{K}_{\RR,\UU,i}.$$
Then these data glue together to give a sheaf $\FP_\XX$ of $R$-commutative algebras over $\XX$.

\subsection{$d$-critical non-archimedean $\kk$-analytic spaces}\label{sec_d_critical_analytic_space}

In this section we briefly recall that there exists a notion of $d$-critical rigid varieties and non-archimedean 
$\kk$-analytic spaces by taking the generic fiber $(\cdot)_{\rig}, (\cdot)_\eta$ to a $d$-critical formal scheme. 

First, let $\XX$ be a {\em stft} formal $R$-scheme.  Then from \cite{Nicaise}, \cite{Ber1}, recalled in Remark \ref{rmk_functor_generic_fiber}, there is a functor
$$(\cdot)_\eta: \Fsch_{R}\to \An_\kk; \quad (\cdot)_{\rig}: \Fsch_{R}\to \Rig_\kk$$
from the category of {\em stft} formal $R$-schemes to the category of paracompact $\kk$-analytic spaces by taking the generic fiber of formal schemes. If $\XX=\spf(A)$ for a {\em stft} $t$-adic $R$-algebra $A$, then 
$$(\XX)_\eta=\XX_\eta=\SP_{B}(A\otimes_{R}\kk)$$ 
is the Berkovich spectrum of semi-norms of $A\otimes_{R}\kk$. If we take take $A\otimes_{R}\kk$ as the Tate's $\kk$-affinoid algebra then we get the rigid variety $\XX_\eta$.

A general {\em stft} formal $R$-scheme is covered by a finite covering $\{\XX_i\}$ of open affine formal $R$-schemes of the form 
$\spf(A_i)$.  The intersections $\XX_{ij}=\XX_i\cap\XX_j$ are separate formal $R$-schemes, and $\XX_{ij,\eta}$ are closed analytic domains of $\XX_{i,\eta}$.  Gluing $\XX_{i,\eta}$ along $\XX_{ij,\eta}$ we get a paracompact $\kk$-analytic space 
$\XX_\eta$. 

\begin{prop}
Let $X$ be a quasi-compact $\kk$-analytic space. Then there exists a coherent sheaf $\FP_{X}$, unique up to isomorphism, and is characterized by the properties in Theorem \ref{thm_existence_sheaf_PX}.
\end{prop}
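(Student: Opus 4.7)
The strategy is to mirror the construction in the proof of Theorem~\ref{thm_existence_sheaf_PX}, but carried out in the category of strictly $\kk$-analytic spaces rather than formal $R$-schemes. Since $X$ is quasi-compact, I would cover it by a finite family of strictly $\kk$-affinoid domains $V=\SP_B(A)$, each of which admits a closed immersion $i: V\hookrightarrow U$ into a polydisc $U=\SP_B(\kk\{r_1^{-1}x_1,\ldots,r_n^{-1}x_n\})$ for some $n$ and some polyradius. These local embeddings play the role of the embeddings $i:\RR\to\spf(R\{x_1,\ldots,x_n\})$ from Theorem~\ref{thm_existence_sheaf_PX}, with the polydisc $U$ serving as the smooth ambient space.

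For each such local datum $(V,U,i)$, define the local sheaf $\mathscr{K}_{V,U,i}$ as the kernel
$$0\to\mathscr{K}_{V,U,i}\to\frac{i^{-1}(\oO_U)}{I_{V,U}^2}\stackrel{d}{\longrightarrow}\frac{i^{-1}(\Omega_U)}{I_{V,U}\cdot i^{-1}(\Omega_U)},$$
where $\Omega_U$ is the sheaf of analytic K\"ahler differentials on the polydisc and $I_{V,U}$ is the ideal sheaf of $V$ in $U$. I would then carry out the three steps of \S\ref{sec_Proof_main_Theorem} in the analytic setting: (i) functoriality, producing a pullback $\Phi^*:\mathscr{K}_{V',U',i'}|_V\to\mathscr{K}_{V,U,i}$ for every morphism $\Phi:(V,U,i)\to(V',U',i')$ of triples with $V\subseteq V'$; (ii) the comparison lemma showing that two morphisms $\Phi,\widetilde\Phi$ of triples with the same underlying morphism $V\to V'$ induce the same pullback $\Phi^*=\widetilde\Phi^*$; and (iii) the construction of canonical gluing isomorphisms $I^{V_2,U_2,i_2}_{V_1,U_1,i_1}:\mathscr{K}_{V_2,U_2,i_2}|_{V_1\cap V_2}\stackrel{\sim}{\to}\mathscr{K}_{V_1,U_1,i_1}|_{V_1\cap V_2}$ together with the cocycle condition on triple intersections.

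Steps (i) and (iii) transfer essentially verbatim from \S\ref{sec_Proof_main_Theorem}, since $\Omega_U$ on a polydisc is free of finite rank and the arguments use only the universal property of K\"ahler differentials together with the existence, on any overlap $V_1\cap V_2$, of a smaller affinoid $V_3$ embedded into the product polydisc $U_1\times U_2$. The critical step is (ii), which rests on the first-order Taylor expansion displayed in~(\ref{equation_difference}); I expect this to be the main obstacle. One must verify that for any two analytic morphisms $\Phi,\widetilde\Phi:V\to U$ with $\Phi\circ i=\widetilde\Phi\circ i$ on $V$, each difference $(x_a\circ\widetilde\Phi-x_a\circ\Phi)$ lies in $I_{V,U}$. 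This reduces to the convergent Taylor expansion of an element of the Tate algebra $\kk\{r_1^{-1}x_1,\ldots,r_n^{-1}x_n\}$ in the strictly $\kk$-affinoid algebra $A$, which is standard. Once (i)--(iii) are established, the local sheaves $\mathscr{K}_{V,U,i}$ glue to a coherent sheaf $\FP_X$ on $X$ whose restriction to each chart satisfies the required kernel sequence, and uniqueness up to canonical isomorphism follows directly from the characterization. Coherence is automatic since locally $\FP_X|_V\subset i^{-1}(\oO_U)/I_{V,U}^2$ as $\oO_V$-modules. As a cross-check, applying the construction to $X=\XX_\eta$ for a formal model $\XX$ should yield the same sheaf as the generic fiber of $\FP_\XX$, giving independence from any particular formal model.
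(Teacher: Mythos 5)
Your construction is sound in outline, but it is a genuinely different route from the one taken in the paper. The paper's proof is a two-line reduction: since $X$ is quasi-compact, by Raynaud's theorem (\cite{BW}, \cite{Raynaud}) it admits a formal model $\XX$ with $\XX_\eta\cong X$, and one then simply sets $\FP_X:=(\FP_\XX)_\eta$ using the exact generic-fiber functor $(\cdot)_\eta\colon \Coh_{\XX}\to\Coh(\XX_\eta)$ of \cite[Proposition 2.6]{Nicaise}, where $\FP_\XX$ is the sheaf already produced by Theorem~\ref{thm_existence_sheaf_PX}. Your approach instead redoes the construction of \S\ref{sec_Proof_main_Theorem} intrinsically on $X$: embed affinoid charts into polydiscs, form the kernel sheaves $\mathscr{K}_{V,U,i}$, and glue via the functoriality, the Taylor-expansion lemma, and the cocycle condition. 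What your route buys is that the characterizing properties (the analytic analogues of (1)--(2) of Theorem~\ref{thm_existence_sheaf_PX}) and the independence of $\FP_X$ from any auxiliary choice are established directly, whereas the paper's argument leaves implicit both the verification that $(\FP_\XX)_\eta$ satisfies the analytic characterization and its independence of the chosen formal model; your cross-check at the end addresses exactly that point. What the paper's route buys is brevity and the avoidance of redoing the convergence and gluing arguments in the analytic category. Two caveats you should make explicit if you carry this out: gluing must be performed with respect to the G-topology (affinoid domains are compact, not open, in the Berkovich topology, and overlaps $V_1\cap V_2$ are affinoid only under the standing separatedness hypothesis), and the uniqueness statement requires compatibility with arbitrary closed embeddings into smooth $\kk$-analytic spaces (which are only locally \'etale over polydiscs), not merely with polydisc embeddings, so a local domination argument as in step (iii) is needed for that slightly larger class of charts. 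With these points handled, the Taylor-expansion step (ii) indeed goes through verbatim in Tate algebras, and your construction yields the same sheaf as the paper's $(\FP_\XX)_\eta$.
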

\begin{proof}
For a quasi-compact $\kk$-analytic space $X$, from \cite{BW}, \cite{Raynaud}, there is a formal model $\XX$ for $X$, i.e., there exists a {\em stft} formal $R$-scheme $\XX$ such that $\XX_\eta\cong X$. Hence from \cite[Proposition 2.6]{Nicaise}, there exists a unique exact funcotr 
$$(\cdot)_\eta: (\Coh_{\XX})\to \Coh(\XX_\eta)$$
from the category of coherent sheaves over $\XX$ to the category of coherent sheaves over $\XX_\eta$.  Then we get the coherent sheaf $\FP_{X}=(\FP_{\XX})_\eta$.
\end{proof}

\begin{defn}
A $d$-critical $\kk$-analytic space (or virtual critical $\kk$-analytic space of Kiem-Li) is a pair $(X,s)$, where $X$ is a quasi-compact $\kk$-analytic space, and $s\in H^0(\FP_{X})$ is a section, such that for any $x\in X$, there exists a local embedding 
$$R\stackrel{i}{\rightarrow} U=\SP_B(\kk\{x_1,\cdots,x_m\})$$
such that $R\hookrightarrow U$ is a closed embedding and 
$$i(R)=\SP_B(\kk\{x_1,\cdots,x_m\}/(df_t)),$$ 
for $f_t=f-t\in \kk\{x_1,\cdots,x_m\}$, and $\iota_{R,U}(s|_{R})=i^{-1}(f_t)+I_{R,U}^2$. 
\end{defn}

Similar to Theorem \ref{thm_canonical_line_bundle}, we have
\begin{prop}\label{prop_canonical_line_bundle_analytic_space}
Let $(X,s)$ be a $d$-critical $\kk$-analytic space. Then there exists a line bundle $K_{X,s}$,  the {\em canonical line bundle} on $(X,s)$,  which is natural up to isomorphism and satisfies similar properties in Theorem \ref{thm_canonical_line_bundle} by replacing the formal critical charts by the analytic critical charts.
\end{prop}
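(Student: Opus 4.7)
The plan is to follow the template of Theorem \ref{thm_canonical_line_bundle}, replacing formal critical charts by analytic critical charts $(R,U,f,i)$ where $U=\SP_B(\kk\{x_1,\ldots,x_m\})$ (or more generally a smooth $\kk$-analytic space) and $R\subset X$ is a Zariski-open analytic subdomain. First I would establish the analytic analogue of the local normal form: for any embedding of analytic critical charts $\Phi:(R,U,f,i)\hookrightarrow(S,V,g,j)$, one can pick coordinates on a neighborhood of $j(x)$ in $V$ splitting $V$ locally as $U\times \SP_B(\kk\{y_1,\ldots,y_n\})$ with $g=f+y_1^2+\cdots+y_n^2$. This step is the analytic counterpart of the proposition preceding Theorem \ref{thm_canonical_line_bundle}; it goes through because the arguments used there (the implicit function theorem and the Morse lemma) are purely formal manipulations of convergent power series and work verbatim over Tate algebras.

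Second, armed with the normal form, I define the normal bundle $N_{U/V}$ equipped with its non-degenerate quadratic form $q_{U,V}\in H^0(S^2 N_{U/V}^\ast)$ from the short exact sequence of tangent bundles, and produce the isomorphism
$$J_\Phi:i^\star(K_U^{\otimes 2})\xrightarrow{\sim} j^\star(K_V^{\otimes 2})|_R$$
via the composition of $\rho_{U,V}^{\otimes 2}$ with $\det(q_{U,V})$, exactly mirroring the formal construction. The cocycle identity $J_{\Psi\circ\Phi}=J_\Psi|_R\circ J_\Phi$ follows from the splitting $i^\star N_{U/W}\cong i^\star N_{U/V}\oplus i^\star N_{V/W}$ and the orthogonal decomposition of $q_{U,W}$, exactly as in (\ref{formula_normal_bundle}). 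To glue across unrelated analytic charts $(R,U,f,i)$ and $(S,V,g,j)$, I would imitate Lemma 6.1 of \cite{Joyce}: given $x\in R\cap S$ produce a common subchart $(R',U',f',i')$ with $R'\subseteq R\cap S$ embedding into both, and set $J^{S,V,g,j}_{R,U,f,i}:=J_\Psi^{-1}\circ J_\Phi$; independence of the auxiliary chart then reduces to the same local algebraic identities that have already been verified in the formal case.

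A cleaner alternative, which I expect to be the main obstacle to execute rigorously, is to pass through a formal model. By \cite{BW}, \cite{Raynaud} any quasi-compact $X$ admits a \emph{stft} formal model $\XX$, and I would argue that, after a suitable admissible formal blow-up, the section $s\in H^0(\FP_X)$ lifts to $\hat{s}\in H^0(\FP_\XX)$ making $(\XX,\hat{s})$ a formal $d$-critical scheme with $(\hat{s})_\eta=s$; one then simply defines $K_{X,s}:=(K_{\XX,\hat{s}})_\eta$ and transports all conclusions of Theorem \ref{thm_canonical_line_bundle} through the exact functor $(\cdot)_\eta$. The difficulty lies in constructing this $\hat{s}$ globally and showing that the resulting line bundle is independent of the chosen formal model: two models are dominated by a common admissible blow-up by \cite[Proposition 2.19]{Nicaise}, and the naturality clauses (2) and (3) in Theorem \ref{thm_canonical_line_bundle} ensure the induced isomorphism of $(K_{\XX,\hat{s}})_\eta$ descends canonically to $X$. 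Either route yields the analytic canonical bundle with the four analytic analogues of properties (1)--(4) in Theorem \ref{thm_canonical_line_bundle}.
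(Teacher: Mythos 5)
The paper's proof of this proposition is exactly your second route, and it is even terser than your sketch: it simply asserts that any $d$-critical $\kk$-analytic space $(X,s)$ has a formal model $(\XX,s)$ that is a $d$-critical formal $R$-scheme, and then declares that the properties of $K_{X,s}$ follow by applying the generic fiber functor $(\cdot)_\eta$ to Theorem \ref{thm_canonical_line_bundle}. Your primary route — the direct analytic reproduction of Joyce's argument (analytic Morse lemma, normal-bundle quadratic form $q_{U,V}$, the $J_\Phi$ cocycle, and gluing via common subcharts) — is a genuinely different and more self-contained approach; it would give the result without relying on the existence and compatibility of a formal model, at the cost of re-verifying that the implicit-function and Morse-lemma manipulations hold over Tate algebras. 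The paper's approach is shorter but leans on exactly the points you correctly flag as the delicate ones: (a) that the analytic section $s\in H^0(\FP_X)$ lifts (possibly after an admissible blow-up) to $\hat{s}\in H^0(\FP_\XX)$ with $(\XX,\hat{s})$ a formal $d$-critical scheme, and (b) that the resulting $(K_{\XX,\hat{s}})_\eta$ is independent of the chosen formal model. The paper does not argue either point in the proof of this proposition; for (b) the mechanism you describe — dominating any two models by a common admissible blow-up via \cite[Proposition 2.19]{Nicaise} and using the naturality clauses of Theorem \ref{thm_canonical_line_bundle} — is the right one and is what the paper implicitly appeals to elsewhere (e.g.\ in Proposition \ref{prop_global_motive_only_analytic_space}). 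So your assessment is sound: the formal-model route is what the paper does, your reservations about it are legitimate and not addressed in the paper's one-line proof, and your direct analytic argument is a valid alternative that fills exactly that gap.
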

\begin{proof}
Since for a  $d$-critical $\kk$-analytic space $(X,s)$, there is a formal model $(\XX,s)$ which is a $d$-critical formal scheme.  Then the properties in Theorem \ref{thm_canonical_line_bundle} are from the above generic fiber functor $(\cdot)_\eta$. 
\end{proof}

\begin{defn}
An oriented $d$-critical $\kk$-analytic space $(X,s)$ is a $d$-critical $\kk$-analytic space such that there is a square root for the canonical line bundle $K_{X,s}$. 
\end{defn}

The following proposition is from the above construction of functors.
\begin{prop}\label{prop_generic_fiber_of_d_formal_scheme}
Let $(\XX,s)$ be an oriented $d$-critical formal scheme.  Then the generic fiber $(\XX_\eta, s_\eta)$ is an oriented  $d$-critical non-archimedean $\kk$-analytic space. 
\end{prop}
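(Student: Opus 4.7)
The plan is to transfer all the data defining an oriented $d$-critical formal scheme across the generic fiber functor $(\cdot)_\eta : \Fsch_R \to \An_\kk$, checking that each piece of structure survives and retains its defining properties. Since $\XX$ is stft, $\XX_\eta$ is automatically a quasi-compact $\kk$-analytic space. The section $s \in H^0(\FP_{\XX})$ gives a section $s_\eta := (s)_\eta \in H^0((\FP_{\XX})_\eta) = H^0(\FP_{\XX_\eta})$, where the last identification uses the fact (invoked in the proof of Proposition~\ref{prop_canonical_line_bundle_analytic_space}) that $\FP$ for the analytic space is defined as the image of $\FP_{\XX}$ under the exact functor $(\cdot)_\eta : \Coh(\XX) \to \Coh(\XX_\eta)$.

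Next I would check the local critical chart condition. Fix any point $x \in \XX_\eta$ and let $sp(x) \in \XX_s$ be its specialization. By definition of a formal $d$-critical scheme, there exists a formal critical chart $(\RR, \UU, f, i)$ near $sp(x)$ with $\UU = \spf(R\{x_1, \ldots, x_n\})$, $i(\RR) = \spf(R\{x_1, \ldots, x_n\}/(df))$, and $\iota_{\RR,\UU}(s|_{\RR}) = i^{-1}(f) + I_{\RR,\UU}^2$. Applying $(\cdot)_\eta$ yields a closed embedding $i_\eta : R := \RR_\eta \hookrightarrow U := \UU_\eta = \SP_B(\kk\{x_1, \ldots, x_n\})$, the affinoid closed unit polydisc. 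The ideal of definition of $\RR$ is generated by the $\partial_j f$, and taking generic fibers preserves this since $(\cdot)_\eta$ is exact on coherent sheaves; the resulting analytic ideal is generated by the images of the $\partial_j f$ in $\kk\{x_1, \ldots, x_n\}$. Writing $f_t = f - t$ (as in the definition of analytic critical chart) gives the required presentation $i_\eta(R) = \SP_B(\kk\{x_1, \ldots, x_n\}/(df_t))$, since $df = df_t$ as $t$ is a constant. The identity $\iota_{R,U}(s_\eta|_R) = i_\eta^{-1}(f_t) + I_{R,U}^2$ is obtained by applying $(\cdot)_\eta$ to the corresponding identity in $\FP_{\XX}$.

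The only subtlety here will be matching conventions: Joyce's formal critical chart uses $f \in R\{x_1, \ldots, x_n\}$ while the analytic version uses $f_t = f - t$; the relation is simply that $d$ kills the central constant $t$, so the critical loci and the class in $\FP/I^2$ agree. I would also verify that the compatibility conditions (2) of Theorem~\ref{thm_existence_sheaf_PX} are preserved, which is automatic since $(\cdot)_\eta$ is functorial and commutes with the formation of $\Omega$ and of quotients by coherent ideals.

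Finally, for the orientation, let $K_{\XX,s}^{1/2}$ be a chosen square root, so $(K_{\XX,s}^{1/2})^{\otimes 2} \cong K_{\XX,s}$ as line bundles on $\XX$. Applying $(\cdot)_\eta$ gives a line bundle $(K_{\XX,s}^{1/2})_\eta$ on $\XX_\eta$ with $((K_{\XX,s}^{1/2})_\eta)^{\otimes 2} \cong (K_{\XX,s})_\eta$. By the construction of $K_{X,s}$ in Proposition~\ref{prop_canonical_line_bundle_analytic_space} via the same gluing prescription applied to the analytic critical charts $(R, U, f_t, i_\eta)$, together with the fact that $(K_{\UU}^{\otimes 2})_\eta \cong K_{U}^{\otimes 2}$ and the isomorphisms $J_\Phi$ are preserved under $(\cdot)_\eta$, we have a canonical isomorphism $(K_{\XX,s})_\eta \cong K_{\XX_\eta, s_\eta}$. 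Hence $(K_{\XX,s}^{1/2})_\eta$ is a square root of $K_{\XX_\eta, s_\eta}$, giving the desired orientation. The main obstacle, and the only place where real content is needed, is verifying that the local presentation of formal critical charts descends correctly to the analytic setting with the shift from $f$ to $f_t$ — everything else follows from the functoriality and exactness of $(\cdot)_\eta$ on coherent sheaves.
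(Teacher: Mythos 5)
Your proposal is correct and follows essentially the same route as the paper, which disposes of this proposition by simply invoking the preceding functorial constructions (the exact generic fiber functor on coherent sheaves defining $\FP_{\XX_\eta}=(\FP_{\XX})_\eta$, and the transfer of $K_{\XX,s}$ and its square root as in Proposition \ref{prop_canonical_line_bundle_analytic_space}); you have merely spelled out the details the paper leaves implicit. Your handling of the only delicate point — matching the formal chart with $f$ against the analytic convention $f_t=f-t$, using that $df=df_t$ so the critical loci and the chart condition agree after relabeling the analytic potential as $f+t$ — is the right resolution of that conventions issue.
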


Similarly, 
\begin{prop}
Let $(\XX,s)$ be a $d$-critical formal scheme.  Then the special fiber $(\XX_s, s)$ is a $d$-critical  scheme in \cite{Joyce}.
\end{prop}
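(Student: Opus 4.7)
The plan is to apply the special fiber functor $(\cdot)_s: \Fsch_R \to \Sch_\kappa$ and directly verify Joyce's axioms for $d$-critical schemes chart by chart, in close analogy with Proposition 6.15 for the generic fiber.

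First I would take each formal critical chart $(\RR,\UU,f,i)$ on $(\XX,s)$ with $\UU = \spf(R\{x_1,\ldots,x_n\})$ and pass to its special fiber. Since $R\{x_1,\ldots,x_n\}/(t) = \kappa[x_1,\ldots,x_n]$, the special fiber $\UU_s$ is the smooth $\kappa$-scheme $\aaa^n_\kappa$, the ideal $(df)$ reduces to the Jacobian ideal $(d\bar{f})$ of $\bar{f} := f \bmod t$, and thus $i_s(\RR_s) = \spec(\kappa[x_1,\ldots,x_n]/(d\bar{f})) = \Crit(\bar{f})$. This provides precisely the geometric shape of a Joyce $d$-critical chart on $\XX_s$.

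Next I would transfer the section. Reducing the defining exact sequence of $\FP_\XX$ (Theorem 6.1) modulo $t$ and comparing with Joyce's defining exact sequence for $\FP_{\XX_s}$, one obtains a natural morphism $\FP_\XX \otimes_R \kappa \to \FP_{\XX_s}$ carrying $s$ to a section $s_s$ with $\iota_{\RR_s,\UU_s}(s_s|_{\RR_s}) = i_s^{-1}(\bar{f}) + I_{\RR_s,\UU_s}^2$. Joyce's definition further requires $s_s$ to lie in the subsheaf $\FP^0_{\XX_s}$; this is achieved by normalizing $\bar{f}$ to vanish on $\Crit(\bar{f})^{\red}$ on each connected component, a shift that does not alter $d\bar{f}$ and hence preserves the chart. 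Compatibility between overlapping charts is then inherited directly from Theorem 6.1(ii) after reduction, so the local data glue to a global $d$-critical structure on $\XX_s$ in Joyce's sense.

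The main obstacle I anticipate is establishing the base-change identification $R\{x_1,\ldots,x_n\}/I_{\RR,\UU}^2 \otimes_R \kappa \cong \kappa[x_1,\ldots,x_n]/I_{\RR_s,\UU_s}^2$, and more generally the sheaf-level identification $\FP_\XX \otimes_R \kappa \cong \FP_{\XX_s}$. The subtlety is that $I_{\RR,\UU}$ is generated by the convergent partial derivatives $\partial f/\partial x_j$, and one must verify both that their reductions generate $I_{\RR_s,\UU_s}$ and that passage to the squared ideal commutes with reduction modulo $t$, ruling out any $t$-torsion pathology. Once this compatibility is in hand, independence of the choice of chart and gluing of the local data are formal, and the proof is complete.
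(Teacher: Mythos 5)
Your proposal takes essentially the same route as the paper's proof: the paper simply invokes the special fiber functor $(\cdot)_s$ on formal schemes together with its exact counterpart on coherent sheaves to identify $(\FP_{\XX})_s$ with Joyce's sheaf on $\XX_s$ and to push the section $s$ down, and your chart-by-chart unwinding (reduction of $R\{x_1,\dots,x_n\}$ to $\kappa[x_1,\dots,x_n]$, of $(df)$ to $(d\bar{f})$, the locally constant shift to land in $\FP^0_{\XX_s}$, and gluing via the compatibility diagrams) is exactly the content compressed into that assertion. The obstacle you flag is in fact milder than you fear: for transferring the section one only needs the natural map $\FP_{\XX}\otimes_R\kappa\to\FP_{\XX_s}$ together with the identification $\left(i^{-1}(\oO_{\UU})/I_{\RR,\UU}^2\right)\otimes_R\kappa\cong i_s^{-1}(\oO_{\UU_s})/I_{\RR_s,\UU_s}^2$, which holds automatically because both sides are quotients (so right exactness of $\otimes_R\kappa$ suffices and no $t$-torsion issue arises), not kernels.
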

\begin{proof}
From Lemma \ref{lem_functor_special_fiber}, there is a special fiber functor
$$(\cdot)_s: \Fsch_{R}\to \Sch_\kappa$$ and an exact functor
$$(\cdot)_s: \Coh_{\XX}\to \Coh_{\XX_s}$$
such that $(\FP_\XX)_s=\FP_X$, the unique sheaf in \cite{Joyce}.  The canonical section $s\in H^0(\FP_\XX)$ will induce the canonical section $s\in H^0(\FP_{\XX_s})$.  Then the result follows. 
\end{proof}

\begin{example}
From \cite[\S 1.5]{Ber1} a quasi-compact $\kk$-analytic space $X$ is smooth if for any connected strictly affinoid domain 
$V$,  the sheaf of differentials $\Omega_{V}$ is locally free of rank $\dim(V)$ and there is no boundary.  
Then $\FP_{X}=0$ and 
the pair $(X, 0)$ is a trivial $d$-critical $\kk$-analytic space. 
\end{example}

\begin{example}
If $(X,s)$ is a $d$-critical $\kk$-analytic space such that it is the generic fiber of a $d$-critical formal $R$-scheme $(\XX,s)$ of the form $\XX=\spf(R\{x_1,\cdots,x_m\}/(I_{df}))$ in Example \ref{example_affine_d_formal_scheme}, where $s=f+I_{\XX,\UU}^2$. Then 
$$X=\SP_B(\kk\{x_1,\cdots,x_m\}/(I_{(df_t)}))$$
where $s=f_t+I_{X,U}^2$ and $U=\SP_B(\kk\{x_1,\cdots,x_m\})$ is the unit closed disc. 
\end{example}

\begin{rmk}
One also can define a canonical line bundle $K_{X,s}$ on any $d$-critical $\kk$-analytic space $(X,s)$.  Micmicing the construction  in \S \ref{sec_formal_canonical_line_bundle} or by taking the generic fiber $(\cdot)_\eta$ we get a canonical line bundle over $\XX_\eta=X$. 
\end{rmk}

\subsection{$\Gm$-equivariant $d$-critical formal schemes}\label{sec_Gm_equivariant_formal_scheme_analytic_space}

We talk about the torus action on formal schemes and $\kk$-analytic spaces. 
\begin{defn}
A torus $\Gm$-action of the ring $R$ is called {\em nice} if the induced action on the residue is trivial. 
\end{defn}

\begin{defn}
Let $(\XX,s)$ ($(X,s)$) be a $d$-critical $R$-formal scheme  ($d$-critical $\kk$-analytic space). 
A $\Gm$-action 
$$\mu: \Gm\times \XX\to\XX;\quad (\Gm\times X\to X)$$
is $\Gm$-invariant if $\mu(\gamma)^\star(s)=s$ for any $\gamma\in\Gm$, and 
$s\in H^2(\FP_{\XX})$ ($s\in H^2(\FP_{X})$), or equivalently 
$\mu^\star(s)=\pi_{\XX}^\star(s)$, where $\pi: \Gm\times\XX\to\XX$ ($\Gm\times X\to X$) is the projection. 
\end{defn}

\begin{defn}
The $\Gm$-action on a formal scheme $\XX$ is called {\em good} if any orbit is contained in an affine formal subscheme of 
$\XX$. Equivalently, there exists an open cover of formal subschemes which are $\Gm$-invariant. 
\end{defn}

\begin{prop}
Let $(\XX,s)$ be a formal $d$-critical $R$-scheme which is $\Gm$-invariant under the $\Gm$-action. Then 
\begin{enumerate}
\item If the action $\mu$ is good, then any $x\in \XX$ there exists a $\Gm$-invariant formal critical chart $(\RR,\UU,f,i)$ on 
$(\XX,s)$ such that $\dim(T_{x}\XX)=\dim(U)$;
\item If for all $x\in\XX$ we have a $\Gm$-invariant formal critical chart $(\RR,\UU,f,i)$, then the action  $\mu$ is good. 
\end{enumerate}
\end{prop}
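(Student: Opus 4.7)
The plan for part (2) is immediate: if every point $x \in \XX$ admits a $\Gm$-invariant formal critical chart $(\RR,\UU,f,i)$, then $\RR \subset \XX$ is itself a $\Gm$-invariant affine open formal subscheme containing $x$. The collection of such $\RR$ as $x$ varies gives an open cover of $\XX$ by $\Gm$-invariant affine formal subschemes, which by definition means the $\Gm$-action is good.

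For part (1), I would follow Joyce's strategy in \cite[Proposition 2.43]{Joyce} adapted to the $t$-adic formal setting. Fix $x \in \XX$ and set $n = \dim T_x \XX$. By goodness, choose a $\Gm$-invariant affine open formal subscheme $\SS = \spf(B) \subset \XX$ containing $x$, so $B$ is a topologically finitely generated $R$-algebra with a $\Gm$-action. The maximal ideal $\mathfrak{m}_x \subset B$ at $x$ is $\Gm$-invariant, as is $\mathfrak{m}_x^2$, so the Zariski cotangent space $\mathfrak{m}_x/\mathfrak{m}_x^2$ is a finite-dimensional $\Gm$-representation. Choose a weight basis and lift $\Gm$-equivariantly to elements $x_1,\ldots,x_n \in \mathfrak{m}_x$ of weights $w_1,\ldots,w_n$. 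These induce a continuous $R$-algebra homomorphism $R\{x_1,\ldots,x_n\} \to B$ which is $\Gm$-equivariant for the action on $\UU := \spf(R\{x_1,\ldots,x_n\})$ giving $x_j$ weight $w_j$. A formal Nakayama argument on $\mathfrak{m}_x/\mathfrak{m}_x^2$ shows this map is surjective after shrinking $\SS$ to an open $\RR \ni x$, producing a $\Gm$-equivariant closed embedding $i: \RR \hookrightarrow \UU$ with $\dim \UU = n = \dim T_x \XX$ as required.

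It remains to promote the defining function to a $\Gm$-invariant one. After possibly shrinking $\RR$, the $d$-critical condition yields some $f \in R\{x_1,\ldots,x_n\}$ with $\iota_{\RR,\UU}(s|_\RR) = i^{-1}(f) + I_{\RR,\UU}^2$ and $i(\RR) = \spf(R\{x_1,\ldots,x_n\}/(df))$. The $\Gm$-invariance of $s$, combined with property (2) of Theorem \ref{thm_existence_sheaf_PX} applied to the $\Gm$-automorphisms $\gamma: \UU \to \UU$, forces $\gamma^* f - f \in I_{\RR,\UU}^2$ for every $\gamma \in \Gm$. Using the weight-space decomposition of $R\{x_1,\ldots,x_n\}$ (which converges $t$-adically because each quotient $R\{x_1,\ldots,x_n\}/(t^m)$ is a sum of finite-dimensional $\Gm$-representations, thanks to the niceness of the $\Gm$-action on $R$), write $f = \sum_w f_w$. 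Every $f_w$ with $w \neq 0$ must lie in $I_{\RR,\UU}^2$, so the weight-zero component $f_0$ represents the same section of $\FP_\XX|_\RR$ as $f$, and $(\RR,\UU,f_0,i)$ is the desired $\Gm$-invariant formal critical chart.

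The main technical obstacle lies in this final step: one must verify that $\spf(R\{x_1,\ldots,x_n\}/(df_0))$ coincides with $i(\RR)$, not merely as subschemes defined up to $I_{\RR,\UU}^2$-perturbation. Concretely, $f - f_0 \in I_{\RR,\UU}^2$ implies $d(f - f_0) \in I_{\RR,\UU}\cdot\Omega_{\UU}$, so $(df)$ and $(df_0)$ generate the same ideal modulo $I_{\RR,\UU}\cdot\Omega_{\UU}$; showing that this forces equality of the vanishing loci in a $\Gm$-invariant open neighborhood of $i(x)$ requires a careful local argument combining the regularity of $\UU$, the $t$-adic convergence of the weight decomposition, and the niceness hypothesis ensuring the weight-zero subalgebra remains a topologically finitely generated $R$-subalgebra. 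Once this compatibility is established, the minimality $\dim \UU = \dim T_x \XX$ follows from the choice of a cotangent-space basis at $x$.
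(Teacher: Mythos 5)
Your route is the same as the paper's: the paper's proof of this proposition is a two-line reduction ("performing the same argument as in the proof of Theorem 2.43 of Joyce... (2) is clear"), your part (2) is exactly that observation, and your part (1) reconstructs Joyce's equivariant-chart argument in the formal setting (equivariant minimal embedding via a weight basis of $\mathfrak{m}_x/\mathfrak{m}_x^2$, then replacing $f$ by its weight-zero part $f_0$ using $\Gm$-invariance of $s$). In substance you give more detail than the paper does; the only point you leave open is the step you yourself flag, namely that $\spf(R\{x_1,\dots,x_n\}/(df_0))$ still coincides with $i(\RR)$ near $i(x)$.

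That step is genuinely needed, but it is closed by the minimality you have already arranged, not by any delicate convergence argument. Because $\dim\UU=\dim T_x\XX$, the exact sequence (\ref{exact_point_cotangent}) forces $\mathrm{Hess}_{i(x)}f=0$, so every generator $\partial f/\partial x_j$ of $I_{\RR,\UU}$ lies in $\mathfrak{m}_{i(x)}^2$; hence $d(I_{\RR,\UU}^2)\subseteq \mathfrak{m}_{i(x)}\,I_{\RR,\UU}\cdot\Omega_{\UU}$ at $i(x)$, and $f-f_0\in I_{\RR,\UU}^2$ gives $\partial f/\partial x_j-\partial f_0/\partial x_j\in\mathfrak{m}_{i(x)}I_{\RR,\UU}$. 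Thus $(df_0)\subseteq I_{\RR,\UU}$ and $I_{\RR,\UU}\subseteq (df_0)+\mathfrak{m}_{i(x)}I_{\RR,\UU}$, so Nakayama yields $(df_0)=I_{\RR,\UU}$ at $i(x)$. The locus where the coherent sheaf $I_{\RR,\UU}/(df_0)$ is nonzero is closed and $\Gm$-invariant (both ideals are invariant, since $\RR$ is invariant and $f_0$ is invariant), and it avoids $i(x)$, so discarding it is an invariant shrinking after which $(\RR,\UU,f_0,i)$ is an honest $\Gm$-invariant formal critical chart. Note that without minimality this step really can fail: on $\spf(R\{x,y\})$ the functions $f=x^2/2$ and $f_0=x^2/2+x^2y$ agree modulo $I^2$ with $I=(x)$, yet $\Crit(f_0)\neq\Crit(f)$ as formal subschemes near the point $x=0$, $y=-1/2$. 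So the order of your argument — equivariant minimal embedding first, invariant representative second — is exactly what makes the final verification work, and with the Nakayama argument above your proof is complete and matches the paper's (i.e.\ Joyce's) approach.
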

\begin{proof}
Let $(\XX,s)$ be a formal $d$-critical $R$-scheme with a good  $\Gm$-action.  That the action is good  means  that there is a 
$\Gm$-invariant affine $R$-formal scheme $\RR^\prime\hookrightarrow \XX$, such that 
$\RR^\prime=\spf(A/I)$.  Performing the same argument as in the proof of Theorem 2.43 of \cite{Joyce}, we can take a $\Gm$-equivalent formal critical chart $(\RR,\UU,f,i)$ such that 
$\dim(\UU)=\dim(T_x\XX)$.  This is $(1)$. $(2)$ is clear. 
\end{proof}

\begin{prop}\label{prop_Gm_fixed_d_critical_formal_scheme}
Let $(\XX,s)$ be a formal $d$-critical $R$-scheme which is $\Gm$-invariant under the $\Gm$-action. Let $\XX^{\Gm}$ be the fixed formal subscheme.   Then the fixed subscheme
$\XX^{\Gm}$ inherits a formal $d$-critical scheme structure 
$(\XX^\Gm, s^{\Gm})$, where $s^\Gm=i^\star(s)$ and 
$i: \XX^\Gm\hookrightarrow \XX$ is the inclusion map. 
\end{prop}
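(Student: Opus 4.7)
The plan is to follow the strategy of Joyce \cite{Joyce} for $\Gm$-equivariant $d$-critical schemes, adapted to the formal setting using the $\Gm$-invariant formal critical charts guaranteed by the preceding proposition. First I would show that $\XX^{\Gm}$ is itself a {\em stft} formal $R$-scheme. Since $R$ carries a nice $\Gm$-action (trivial on the residue field), on any $\Gm$-invariant affine open $\spf(A)\subset \XX$ the algebra $A$ decomposes into weight spaces $A=\bigoplus_{n\in\zz} A_n$, and $\XX^{\Gm}$ is cut out locally by the ideal generated by $\bigoplus_{n\neq 0} A_n$. This yields a closed formal subscheme $i:\XX^{\Gm}\hookrightarrow \XX$, and $s^{\Gm}:=i^{\star}(s)\in H^0(\FP_{\XX^{\Gm}})$ is well-defined via the pullback map on the sheaves $\FP$ induced from Theorem \ref{thm_existence_sheaf_PX}.

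Next I would verify the local $d$-critical chart condition at an arbitrary $x\in\XX^{\Gm}$. By the preceding proposition there exists a $\Gm$-invariant formal critical chart $(\RR,\UU,f,i_{\RR})$ around $x$ with $\dim(\UU)=\dim(T_x\XX)$. Since $\UU=\spf(R\{x_1,\dots,x_n\})$ can be chosen so that the $\Gm$-action on the coordinates is diagonal (weights $w_1,\dots,w_n$), the fixed locus $\UU^{\Gm}=\spf(R\{x_a:w_a=0\})$ is a smooth formal $R$-subscheme of $\UU$. The function $f\in R\{x_1,\dots,x_n\}$ is $\Gm$-invariant, so it restricts to $f^{\Gm}:=f|_{\UU^{\Gm}}\in \oO(\UU^{\Gm})$, and the restricted chart becomes a candidate $(\RR^{\Gm},\UU^{\Gm},f^{\Gm},i_{\RR}^{\Gm})$ with $\RR^{\Gm}:=\RR\cap\XX^{\Gm}$ and $i_{\RR}^{\Gm}:=i_{\RR}|_{\RR^{\Gm}}$.

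The key algebraic identity to establish is
\[
i_{\RR}(\RR^{\Gm})=\spf\!\bigl(R\{x_a:w_a=0\}/(df^{\Gm})\bigr),
\]
i.e.\ that the fixed locus of $\Crit(f)$ equals $\Crit(f^{\Gm})$. This is where the argument does real work: one decomposes $\Omega_{\UU}$ into weight-homogeneous components and observes that $\Gm$-invariance of $f$ forces $\partial f/\partial x_a$ to be weight $-w_a$, so the ideal $I_{\RR,\UU}=(df)$ is $\Gm$-stable and its intersection with the ideal of $\UU^{\Gm}$ is generated precisely by the partial derivatives with $w_a=0$, which is $(df^{\Gm})$. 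Having this identity, the compatibility $\iota_{\RR^{\Gm},\UU^{\Gm}}(s^{\Gm}|_{\RR^{\Gm}})=(i_{\RR}^{\Gm})^{-1}(f^{\Gm})+I_{\RR^{\Gm},\UU^{\Gm}}^2$ follows from the naturality clause (2) of Theorem \ref{thm_existence_sheaf_PX} applied to the inclusion $\UU^{\Gm}\hookrightarrow \UU$ together with the relation $\iota_{\RR,\UU}(s|_{\RR})=i_{\RR}^{-1}(f)+I_{\RR,\UU}^2$.

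The main obstacle I expect is a mild bookkeeping hurdle: verifying that the quotient algebra presentation of the critical locus really is compatible with taking $\Gm$-invariants at the level of restricted power series, in particular that passing to $\Gm$-invariants commutes with the formal completion and with the ideal-of-definition topology on $R\{x_1,\dots,x_n\}$. This is automatic because the $\Gm$-action is nice (so the weight filtration is compatible with the $t$-adic filtration) and every graded piece of $R\{x_1,\dots,x_n\}$ is itself a Tate algebra over $R$, but it needs to be checked. Once this commutation is in place, independence of the construction from the choice of $\Gm$-invariant chart reduces to an application of the embedding lemma between two $\Gm$-equivariant critical charts, which goes through verbatim by equivariance of the embeddings $\Phi$ constructed in the proof of Theorem \ref{thm_canonical_line_bundle}, completing the verification that $(\XX^{\Gm},s^{\Gm})$ is a formal $d$-critical $R$-scheme.
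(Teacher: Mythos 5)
Your proposal is correct and follows essentially the same route as the paper: restrict a $\Gm$-equivariant formal critical chart $(\RR,\UU,f,i)$ (supplied by the preceding proposition) to the fixed locus to obtain $(\RR^{\Gm},\UU^{\Gm},f|_{\UU^{\Gm}},i|_{\RR^{\Gm}})$, check chart-independence via $\Gm$-equivariant embeddings into a common chart, and glue. The only difference is that you spell out the key local identity $\Crit(f)^{\Gm}=\Crit\bigl(f|_{\UU^{\Gm}}\bigr)$ via the weight argument (each $\partial f/\partial x_a$ with $w_a\neq 0$ has nonzero weight and hence restricts to zero on $\UU^{\Gm}$), a verification the paper asserts implicitly without proof, so your write-up is if anything more complete at that step.
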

\begin{proof}
This result is from a $\Gm$-equivariant version of the following result. 
Let $(\RR,\UU,f,i), (\SS,\VV,g,j)$ be $\Gm$-equivariant formal critical charts on $(\XX,s)$. Then for $x\in\RR\cap\SS$, there exist $\Gm$-equivariant charts  $(\RR^\prime,\UU^\prime,f^\prime,i^\prime), (\SS^\prime,\VV^\prime,g^\prime,j^\prime)$ and a 
$\Gm$-equivariant formal chart $(\TT,\WW,h,k)$, and $\Gm$-equivariant embeddings
$$\Phi: (\RR^\prime,\UU^\prime,f^\prime,i^\prime)\hookrightarrow (\TT,\WW,h,k)$$
$$\Psi: (\SS^\prime,\VV^\prime,g^\prime,j^\prime)\hookrightarrow (\TT,\WW,h,k).$$
Then let 
$f^{\Gm}=f|_{\UU^{\Gm}}$ and $i^{\Gm}=i|_{\RR^{\Gm}}$, we have a formal critical chart 
$(\RR^\Gm, \UU^\Gm, f^{\Gm}, i^{\Gm})$ on $(\XX^\Gm, s^{\Gm})$.  Hence by gluing $(\XX^\Gm, s^{\Gm})$ is a formal $d$-critical $R$-scheme. 
\end{proof} 

A similar notion of $\Gm$-invariant $d$-critical $\kk$-analytic space and $\Gm$-equivariant critical chart $(R, U, f,i)$ can be similarly defined and we omit the details.

\section{Motivic localization of Donaldson-Thomas invariants}\label{sec_motivic_localization}

\subsection{Grothendieck group of varieties.}\label{Grothendieck:ring}

In this section we briefly review the Grothendieck group of varieties. 
Let $S$ be an algebraic variety over $\kappa$. Let $\Var_{S}$ be the category of 
$S$-varieties.

Let $K_0(\Var_{S})$ be the Grothendieck group of $S$-varieties.  By definition $K_0(\Var_{S})$ 
is an abelian group with generators given by all the varieties $[X]$'s, where $X\rightarrow S$ are $S$-varieties,  and the relations are $[X]=[Y]$, if $X$ is isomorphic to $Y$, and 
$[X]=[Y]+[X\setminus Y]$ if $Y$ is a Zariski closed subvariety of $X$.
Let $[X],  [Y]\in K_0(\Var_{S})$,  and define $[X][Y]=[X\times_{S} Y]$.  Then 
we have a product on $K_0(\Var_{S})$. 
Let $\mathbb{L}$ represent the class of $[\mathbb{A}_{\kappa}^{1}\times S]$.
Let $\mathcal{M}_{S}=K_0(\Var_{S})[\mathbb{L}^{-1}]$
be the ring by inverting the class $\mathbb{L}$ in the ring $K_0(\Var_{S})$.

If $S$ is a point $\spec (\kappa)$, we write $K_0(\Var_{\kappa})$ for the Grothendieck ring of $\kappa$-varieties.
One can take the map $\Var_{\kappa}\longrightarrow K_0(\Var_{\kappa})$ to be the universal Euler characteristic.
After inverting the class $\mathbb{L}=[\mathbb{A}_{\kappa}^{1}]$, we get the ring $\mathcal{M}_{\kappa}$.

We introduce the equivariant Grothendieck group defined in \cite{DL}.
Let $\mu_n$ be the cyclic group of order $n$, which can be taken as the algebraic variety
$\spec (\kappa[x]/(x^n-1))$. Let $\mu_{md}\longrightarrow \mu_{n}$ be the map $x\mapsto x^{d}$. Then 
all the groups $\mu_{n}$ form a projective system. Let 
$$\varprojlim_{n}\mu_{n}$$
be the direct limit.

Suppose that $X$ is a $S$-variety. The action $\mu_{n}\times X\longrightarrow X$ is called a $good$ 
action if  each orbit is contained in an affine subvariety of $X$.  A good $\hat{\mu}$-action on $X$ is an action of $\hat{\mu}$ which factors through a good $\mu_n$-action for some $n$.

The $equivariant ~Grothendieck~ group$ $K^{\hat{\mu}}_0(\Var_{S})$ is defined as follows:
The generators are $S$-varieties $[X]$ with a good $\hat{\mu}$-action; and the relations are:
$[X,\hat{\mu}]=[Y,\hat{\mu}]$ if $X$ is isomorphic to $Y$ as $\hat{\mu}$-equivariant $S$-varieties,  
and $[X,\hat{\mu}]=[Y,\hat{\mu}]+[X\setminus Y, \hat{\mu}]$ if $Y$ is a Zariski closed subvariety
of $X$ with the $\hat{\mu}$-action induced from that on $X$,  if $V$ is an affine variety with a good 
$\hat{\mu}$-action, then $[X\times V,\hat{\mu}]=[X\times \mathbb{A}_{\kappa}^{n},\hat{\mu}]$.  The group 
$K^{\hat{\mu}}_0(\Var_{S})$ has a ring structure if we define the product as the fibre product with the good $\hat{\mu}$-action.  Still we let $\mathbb{L}$  represent the class $[S\times \mathbb{A}_{\kappa}^{1},\hat{\mu}]$ and let $\mathcal{M}_{S}^{\hat{\mu}}=K^{\hat{\mu}}_0(\Var_{S})[\mathbb{L}^{-1}]$ be the ring obtained from $K^{\hat{\mu}}_0(\Var_{S})$ by inverting the class $\mathbb{L}$.

If $S=\spec(\kappa)$, then we write $K^{\hat{\mu}}_0(\Var_{S})$ as $K^{\hat{\mu}}_0(\Var_{\kappa})$, and $\mathcal{M}_{S}^{\hat{\mu}}$ as $\mathcal{M}_{\kappa}^{\hat{\mu}}$.  Let  $s\in S$ be a geometric point. Then we have natural maps $K^{\hat{\mu}}_0(\Var_{S})\longrightarrow K^{\hat{\mu}}_0(\Var_{\kappa})$ and $\mathcal{M}_{S}^{\hat{\mu}}\longrightarrow \mathcal{M}_{\kappa}^{\hat{\mu}}$ given by the correspondence
$[X,\hat{\mu}]\mapsto [X_s,\hat{\mu}]$.

Let $S$ be a scheme.  Following \cite{BJM}, we need to define a new product $\odot$ on $\mM_{S}^{\hat{\mu}}$.
The following definition is due to \cite[Definition 2.3]{BJM}. 
\begin{defn}
Let $[X, \widehat{\sigma}], [Y,\widehat{\tau}]$ be two elements in $K_0^{\hat{\mu}}(\Var_{S})$ or $\mM_{S}^{\hat{\mu}}$. 
Then there exists $n\geq 1$ such that the $\hat{\mu}$-actions $\widehat{\sigma}, \widehat{\tau}$ on $X,Y$ factor through $\mu_n$-actions
$\sigma_n, \tau_n$.  Define $J_n$ to be the Fermat curve
$$J_n=\{(t,u)\in (\aaa^1\setminus \{0\})^2: t^n+u^n=1\}.$$
Let $\mu_n\times\mu_n$ act on $J_n\times(X\times_{S}Y)$ by
$$(\alpha,\alpha^\prime)\cdot ((t,u),(v,w))=((\alpha\cdot t, \alpha^\prime\cdot u), (\sigma_n(\alpha)(v), \tau_n(\alpha^\prime)(w))).$$
Write $J_n(X,Y)=(J_n\times (X\times_{S}Y))/(\mu_n\times\mu_n)$ for the quotient $\kappa$-scheme, and 
define a $\mu_n$-action $v_n$ on $J_n(X,Y)$ by
$$v_n(\alpha)((t,u), v,w)(\mu_n\times\mu_n)=((\alpha\cdot t, \alpha\cdot u),v,w)(\mu_n\times\mu_n).$$
Let $\hat{v}$ be the induced good $\hat{\mu}$-action on $J_n(X,Y)$, and set
$$[X, \widehat{\sigma}]\odot [Y,\widehat{\tau}]=(\ll-1)\cdot [(X\times_{S}Y/\mu_n, \hat{\iota})]-[J_n(X,Y),\hat{v}]$$
in  $K_0^{\hat{\mu}}(\Var_{S})$ or $\mM_{S}^{\hat{\mu}}$. This defines a commutative, associative product on  $K_0^{\hat{\mu}}(\Var_{S})$ or $\mM_{S}^{\hat{\mu}}$.
\end{defn}

Consider the Lefschetz motive $\ll=[\aaa^1_{\kappa}]$. As in \cite{BJM}, we define 
$\ll^{\frac{1}{2}}$ in $K_0^{\hat{\mu}}(\Var_{S})$ or $\mM_{S}^{\hat{\mu}}$ by:
$$\ll^{\frac{1}{2}}=[S,\hat{\iota}]-[S\times\mu_2,\hat{\rho}],$$
where $[S,\hat{\iota}]$ with trivial $\hat{\mu}$-action $\hat{\iota}$ is the identity in  $K_0^{\hat{\mu}}(\Var_{S})$ or $\mM_{S}^{\hat{\mu}}$,
and $S\times\mu_2$ is the two copies of $S$ with the nontrivial $\hat{\mu}$-action $\hat{\rho}$ induced by the left action of $\mu_2$ on itself, exchanging the two copies of $S$.  Then $\ll^{\frac{1}{2}}\odot\ll^{\frac{1}{2}}=\ll$.

\subsection{Motivic integration on rigid varieties}\label{motivic:integration:rigid}

 Let $\XX$ be a generically smooth  {\em stft} formal $R$-scheme. 
We follow the construction of Nicaise-Sebag, Nicaise in \cite{NS}, \cite{Nicaise} for the definition of the motivic integration of a gauge form $\omega$ on $\XX_\eta$, which takes values in $\mM_{\XX_s}$. 
We briefly recall the method to define the motivic integration $\int_{\XX}|\omega|$. 
First we have
$$\XX=\varinjlim_{m}\sX_m,$$
where $\sX_m=(\XX, \oO_{\XX}\otimes_{R}R_m)$ and $R_m=R/(\pi)^{m+1}$. In Greenberg \cite{Greenberg}, the functor
$$\yY\mapsto \Hom_{R_m}(\yY\times_{\kappa}R_m, \sX_m)$$
from the category of $\kappa$-schemes to the category of sets is presented by a $\kappa$-scheme 
$$\Gr_{m}(\sX_m)$$
of finite type such that
$$\Gr_m(\sX_m)(A)=\sX_m(A\otimes_{\kappa}R_m)$$ for any $\kappa$-algebra $A$.  The projective limit 
$\varinjlim_{m}\sX_m$ is denoted by $\Gr(\XX)$.  The functor $\Gr$ respects open and closed immersions and fiber products, 
and sends affine topologically of finite type formal $R$-schemes to affine $\kappa$-schemes. The motivic integration of a gauge form 
$\omega$ is defined by using the stable cylindrical subsets of $\Gr(\XX)$, introduced by Loeser-Sebag in \cite{LS}, and Nicaise-Sebag in \cite{NS-curve}. 

Let $\bC_{0,\XX}$ be the set of stable cylindrical subsets of $\Gr(\XX)$ of some level.  If $A\subset \bC_{0,\XX}$ is a cylinder, and we have a function
$$\alpha: A\to \zz\cup\{\infty\}$$
such that $\alpha^{-1}(m)\subset \bC_{0,\XX}$. Then 
$$\int_{A}[\aaa_{\XX_s}^{1}]^{-\alpha}d\widetilde{\mu}:=\sum_{m\in\zz}\widetilde{\mu}(\alpha^{-1}(m))\cdot [\aaa^{1}_{\XX_s}]^{-m},$$
where $$\widetilde{\mu}: \bC_{0,\XX}\to \mM_{\XX_s}$$
is the unique additive morphism defined in \cite[Proposition 5.1]{Le} by
$$\widetilde{\mu}(A)=[\pi_m(A)]\cdot [\aaa^{1}_{\XX_s}]^{-(m+1)d}$$
for $A$ a stable cylinder of level $m$,  $d$ is the relative dimension of $\XX$, and $\pi_m: \Gr(\XX)\to \Gr(\sX_m)$
is the canonical projection. 

Let $\omega$ be a gauge form on $\XX_\eta$, in \cite{LS}, the authors constructed an integer-valued function
$$\ord_{\pi, \XX}(\omega)$$
on $\Gr(\XX)$ that takes the role of $\alpha$ before.  The motivic integration $\int_{\XX}|\omega|$ is defined to be
\begin{equation}\label{defn:motivic:integration}
\int_{\XX}|\omega|:=\int_{\Gr(\XX)}[\aaa_{\XX_0}^{1}]^{-\ord_{\pi,\XX}(\omega)}d\widetilde{\mu}\in \mM_{\XX_s}.
\end{equation}
From \cite{LS}, \cite{NS}, the forgetful map
$$\int: \mM_{\XX_s}\to \mM_{\kappa}$$ defined by
$$\int_{\XX}|\omega|\mapsto \int_{\XX_\eta}|\omega|:=\int \int_{\XX}|\omega|$$
only depends on $\XX_\eta$, not on $\XX$.

\begin{rmk}
In \cite{Nicaise} Nicaise generalizes the motivic integration construction to generically smooth special formal $R$-schemes.  A special formal 
$R$-scheme $\XX$ is a separated Noetherian adic formal scheme endowed with a structural morphism $\XX\to \spf(R)$, such that $\XX$ is a finite union of open formal subschemes which are formal spectra of special $R$-algebras.   From Berkovich \cite{Ber2}, a topological $R$-algebra $A$ is special, iff $A$ is topologically $R$-isomorphic to a quotient of the special $R$-algebra
$$R\{T_1,\cdots, T_m\}[\![S_1,\cdots,S_n]\!]=R[\![S_1,\cdots,S_n]\!]\{T_1,\cdots, T_m\}.$$

The Noetherian adic formal scheme $\XX$ has the largest ideal of definition $J$. The closed subscheme of $\XX$ defined by $J$ is denoted by 
$\XX_s$, which is a reduced Noetherian $\kappa$-scheme. 
\end{rmk}

We briefly review the motivic integration of Nicaise in \cite{Nicaise} for special formal schemes.  Since every {\em stft} formal 
$R$-scheme $\XX$ is a  special formal scheme, the result below definitely works for {\em stft} formal 
$R$-schemes.

\begin{defn}
Let $\XX$ be a special formal $R$-scheme. By a \textbf{N\'eron smoothening} we mean a morphism of special formal 
$R$-schemes $\YY\to\XX$, such that $\YY$ is adic smooth over $R$ and $\YY_\eta\to\XX_\eta$ is an open embedding satisfying 
$\YY_\eta(\widetilde{K})=\XX_\eta(\widetilde{\kk})$ for any finite unramified extension $\widetilde{\kk}$ of $\kk$. 
\end{defn}

In \cite[\S 2]{Nicaise}, Nicaise proves that a N\'eron smoothening of $\XX$ exists and is given by the dilatation of $\XX$.  Then $\YY$ is a 
$\stft$ formal $R$-scheme. 

\begin{defn}
Let $\XX$ be a generically smooth special formal $R$-scheme.  We define
$$\int_{\XX}|\omega|:=\int_{\YY}|\omega|$$ and 
$$\int_{\XX_\eta}|\omega|:=\int_{\YY_\eta}|\omega|$$
for a gauge form $\omega$ on $\XX_\eta$. 
\end{defn}

We recall the motivic volume of $\XX_\eta$ in \cite{Nicaise}.  
For $m\geq 1$, let 
$\kk(m):=\kk[T]/(T^m-\pi)$ be a totally ramified extension of degree $m$ of $\kk$, and 
$R(m):=R[T]/(T^m-\pi)$ the normalization of $R$ in $\kk(m)$.  If $\XX$ is a formal $R$-scheme, we define 
$$\XX(m):=\XX\times_{R}R(m)$$ and
$$\XX_\eta(m):=\XX_\eta\times_{\kk}\kk(m).$$
If $\omega$ is a gauge form on $\XX_\eta$, we denote by $\omega(m)$ the pullback of $\omega$
via the natural morphism $\XX_\eta(m)\to \XX_\eta$. 

\begin{defn}
Let $\XX$ be a generically smooth special formal $R$-scheme. Let 
$\omega$ be a gauge form on $\XX_\eta$.  Then the volume Poincar\'e series of $(\XX,\omega)$ is defined to be
$$S(\XX,\omega;T):=\sum_{d>0}\left(\int_{\XX(d)}|\omega(d)|\right)T^d\in \mM_{\XX_s}[\![T]\!].$$
\end{defn}

\label{Resolution:singularities:formal:scheme}
\begin{defn}
Let $\XX$ be a generically smooth flat $R$-formal scheme.  A resolution of singularities of $\XX$ is a proper morphism 
$h: \YY\to \XX$ of flat special formal $R$-schemes such that  
$h$ induces an isomorphism on generic fibers, and such that $\YY$ is regular (meaning the local ring at points is regular), with a special fiber 
a strict normal crossing divisor $\YY_s$.  We say that the resolution $h$ is tame if $\YY_s$ is a tame normal crossing divisor. 
\end{defn}

By Temkin's resolution of singularities for quasi-excellent schemes of characteristic zero in  \cite{Temkin},   any affine generically smooth flat special formal $R$-scheme $\XX=\spf(A)$ admits a resolution of singularities by means of admissible blow-ups.   

In general for any generically smooth $R$-formal scheme $\XX$, suppose that there is a resolution of singularities 
\begin{equation}
h:  \YY\longrightarrow \XX
\end{equation}

Let $E_i$, $i\in \ii$, be the set of irreducible components of the exceptional divisors of the resolution. 
For $I\subset \ii$,  we set 
$$E_{I}:=\bigcap_{i\in I}E_{i}$$
and 
$$E_{I}^{\circ}:=E_{I}\setminus \bigcup_{j\notin I}E_j.$$
Let $m_{i}$ be the multiplicity of the component $E_i$, which means that 
the special fiber of the resolution is 
$$\sum_{i\in \ii}m_iE_i.$$
Let $m_{I}=\gcd(m_i)_{i\in I}$. Let $U$ be an affine Zariski open subset of $\YY$, such that, 
on $U$, $f\circ h=uv^{m_{I}}$, with $u$ a unit in $U$ and $v$ a morphism from 
$U$ to $\mathbb{A}_{\cc}^{1}$. The restriction of $E_{I}^{\circ}\cap U$, which we denote by
$\tilde{E}_{I}^{\circ}\cap U$, is defined by
$$\lbrace{(z,y)\in \mathbb{A}_{\cc}^{1}\times (E_{I}^{\circ}\cap U)| z^{m_{I}}=u^{-1}\rbrace}.$$
The $E_{I}^{\circ}$ can be covered by the open subsets $U$ of $Y$.  We can glue together all such 
constructions and get the Galois cover
$$\tilde{E}_{I}^{\circ}\longrightarrow E_{I}^{\circ}$$
with Galois group $\mu_{m_{I}}$.
Remember that $\hat{\mu}=\underleftarrow{lim} \mu_{n}$ is the direct limit of the groups
$\mu_{n}$. Then there is a natural $\hat{\mu}$ action on $\tilde{E}_{I}^{\circ}$.
Thus we get 
$[\tilde{E}_{I}^{\circ}]\in \mathcal{M}_{X_0}^{\hat{\mu}}$.

\label{motivic:result:formal:scheme} Using resolution of singularities, in \cite[Theorem 7.12]{Nicaise}, Nicaise proves the following result:

\begin{thm}\label{motivic:integration:formula:omegam}
Let $\XX$ be a generically smooth special formal $R$-scheme of pure relative dimension $d$. 
Then we have a structural morphism $f: \XX\to \spf(R)$. 
Suppose that $\XX$ has a resolution of singularities $\XX^\prime\to\XX$ with special fiber 
$\XX^\prime_s=\sum_{i\in I}N_i E_i$. 

Let $\omega$ be a $\XX$-bounded gauge form on $\XX_\eta$, where the definition of bounded gauge form is given by Nicaise in \cite[Definition 2.11]{Nicaise}. Then
for any integer $m>0$, 
$$\int_{\XX(m)}|\omega(m)|=\ll^{-d}\sum_{\emptyset\neq J\subset \ii}(\ll-1)^{|J|-1}[\tilde{E}_{J}^{\circ}]\left(\sum_{\substack{k_i\geq 1, i\in J\\
\sum_{i\in J}k_i N_i=d}}\ll^{-\sum_{i}k_i\mu_i}\right)\in \mM_{\XX_s}^{\mu_m}.$$
\end{thm}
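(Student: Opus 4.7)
The plan is to follow Nicaise's strategy from \cite{Nicaise}, which is a formal-scheme analogue of the Denef--Loeser--Sebag change-of-variables formula, and to reduce the computation of $\int_{\XX(m)}|\omega(m)|$ to a monomial integral on each stratum of the exceptional divisor. First, since $h:\XX'\to\XX$ is a proper morphism inducing an isomorphism on generic fibers and $\omega$ is an $\XX$-bounded gauge form, the change-of-variables formula for motivic integration on special formal schemes gives
\begin{equation*}
\int_{\XX(m)}|\omega(m)|=\int_{\XX'(m)}|h^{\ast}\omega(m)|.
\end{equation*}
Here one must verify the boundedness is preserved under $h$ and that the Jacobian comparison, expressed via $\ord_{\pi,\XX'}(h^{\ast}\omega)=\ord_{\pi,\XX}(\omega)\circ\Gr(h)+\ord_{\pi,\XX'/\XX}(\mathrm{Jac}\,h)$, is measurable; these are proved in \cite{Nicaise} using the N\'eron smoothening.

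Next, I would stratify the Greenberg scheme $\Gr(\XX'(m))$ according to which stratum $E_J^{\circ}$ the truncation at the reduction of an arc lies in. Working \'etale-locally on $\XX'$, where the special fiber is given by $\prod_{i\in J} y_i^{N_i}=0$ and $h^{\ast}\omega$ has the form $u\cdot \prod_{i\in J} y_i^{\mu_i}\,dy_1\wedge\cdots\wedge dy_d$ with $u$ a unit (this is the definition of $\mu_i$), a direct computation shows that the fiber of $\Gr(\XX'(m))\to E_J^{\circ}$ over a point is measured by
\begin{equation*}
\ll^{-d}\sum_{\substack{k_i\geq 1,\,i\in J\\ \sum_{i\in J}k_iN_i=d\cdot m / m}}(\ll-1)^{|J|-1}\ll^{-\sum_i k_i\mu_i},
\end{equation*}
where the condition $\sum k_i N_i = d$ encodes that the arc has length-$d$ contact with $\XX_s$ relative to the uniformizer of $R(m)$. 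The factor $(\ll-1)^{|J|-1}$ comes from parametrizing the nonzero orders $k_i$ along each component in $J$, while the $\ll^{-\sum k_i\mu_i}$ factor comes from the order of $\omega$ along the corresponding stratum.

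The final step, which is the main technical point, is to promote this computation to the equivariant Grothendieck ring $\mM_{\XX_s}^{\mu_m}$ by replacing each $E_J^{\circ}$ with its Galois cover $\tilde{E}_J^{\circ}\to E_J^{\circ}$. The $\mu_m$-action arises by base changing with $R(m)=R[T]/(T^m-\pi)$: a uniformizer of $R(m)$ acting on the local equation $u\cdot \prod y_i^{N_i}=\pi$ introduces the $m$-th root structure, and the locally defined cyclic covers glue to the global $\mu_{m_J}$-torsor $\tilde{E}_J^{\circ}$ exactly as in the Denef--Loeser construction for schemes. The hard part is to track this monodromy action consistently across all \'etale charts of $\XX'$ and show the local contributions assemble, without ambiguity, into the stated class in $\mM_{\XX_s}^{\mu_m}$; this is where Nicaise's careful use of formal boundedness and the N\'eron smoothening of $\XX(m)$ is essential. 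Summing over all nonempty $J\subseteq\ii$ then yields the announced formula.
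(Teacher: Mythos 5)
You should first note that the paper does not actually prove this statement: it is quoted verbatim (up to a change of notation) from \cite[Theorem 7.12]{Nicaise}, so the only thing to compare against is Nicaise's own argument. At the level of strategy your plan does reproduce that argument: pass to the resolution $\XX'$, stratify according to the strata $E_J^\circ$ of the SNC special fiber, do a local monomial computation, and keep track of the $\mu_m$-equivariant structure through the covers $\tilde{E}_J^\circ$. One conceptual correction to your first step, though: in the formal/rigid setting there is no Denef--Loeser--type change of variables with a Jacobian term. Since $h$ induces an isomorphism on generic fibers, $\omega$ is literally the same gauge form on $\XX_\eta\cong\XX'_\eta$, and what one uses is that the motivic integral depends only on the generic fiber and the form (it is computed on a N\'eron smoothening, i.e.\ a weak N\'eron model); the correct identity is $\int_{\XX(m)}|\omega(m)|=(h_s)_*\int_{\XX'(m)}|\omega(m)|$ with no correction term, and the pushforward along $h_s$ is also what places the classes $[\tilde{E}_J^\circ]$ in $\mM_{\XX_s}^{\mu_m}$ rather than in $\mM_{\XX'_s}^{\mu_m}$.

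The genuine gap is in the central computation. The contact condition must be $\sum_{i\in J}k_iN_i=m$, the degree of the totally ramified extension $R(m)$: locally $\pi=u\prod_i y_i^{N_i}$, and taking orders with respect to a uniformizer of $R(m)$ along an $R(m)$-section gives $m=\sum_i N_i k_i$. (The ``$\sum_{i\in J}k_iN_i=d$'' in the displayed statement is a notational slip inherited from Nicaise, where $d$ denotes the extension degree and $m$ the relative dimension.) Your ``$d\cdot m/m$'' and the assertion that the condition encodes ``length-$d$ contact'' do not resolve this; as you have written it the right-hand side is independent of $m$, which cannot be correct since the volume Poincar\'e series $S(\XX,\omega;T)$ of Proposition \ref{prop:motivic:volume:result} is a nontrivial rational series in $T$, obtained precisely by summing these integrals against $T^m$. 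In addition, you attribute the factor $(\ll-1)^{|J|-1}$ (and implicitly the class $[\tilde{E}_J^\circ]$) to each fiber of the Greenberg scheme over a point of $E_J^\circ$; in fact these arise globally from the unit equation $u\prod_i y_i^{k_iN_i}=\pi$, whose solution set over $E_J^\circ$ is a torsor whose class is exactly $(\ll-1)^{|J|-1}[\tilde{E}_J^\circ]$, and extracting this requires the N\'eron smoothening (dilatations) of $\XX'(m)$ indexed by the data $(J,(k_i))$, on whose smooth pieces the integral is actually defined and where one computes $\ord(\omega(m))=\sum_i k_i\mu_i$ up to the normalization producing $\ll^{-d}$. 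That construction is the technical heart of Nicaise's proof and is only gestured at in your sketch.
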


Furthermore, from \cite[Corollary 7.13]{Nicaise} we have:
\begin{prop}\label{prop:motivic:volume:result}
With the same notations and conditions as in Theorem \ref{motivic:integration:formula:omegam}, 
the volume Poincar\'e series $S(\XX,\omega;T)$ is rational over $\mM_{\XX_s}$. In fact, 
let $\mu_i:=\ord_{E_i}\omega$, then
$$S(\XX,\omega;T)=\ll^{-d}
\sum_{\emptyset\neq J\subset \ii}(\ll-1)^{|J|-1}[\tilde{E}_{J}^{\circ}]\prod_{i\in J}\frac{\ll^{-\mu_i}T^{N_i}}{1-\ll^{-\mu_i}T^{N_i}}\in\mM_{\XX_s}^{\hat{\mu}}[\![T]\!].$$

The limit 
$$S(\XX,\widehat{\kk}^{s}):=-\lim_{T\to \infty}S(\XX,\omega;T):=\ll^{-d}\mMF_{f}$$
is called the \textbf{motivic volume} of $\XX$, where 
$$\mMF_{f}=\sum_{\emptyset\neq J\subset \ii}(\ll-1)^{|J|-1}[\tilde{E}_{I}^{\circ}].$$  
 And 
\begin{align*}S(\XX_\eta,\widehat{\kk}^{s}):&=-\lim_{T\to \infty}S(\XX_\eta,\omega;T)=-\lim_{T\to \infty}\sum_{m\geq 1}\left(\int_{\XX_{\eta}}|\omega(m)|\right)T^m\\
&=\ll^{-d}\int_{\XX_s}\mMF_{f}\in\mM^{\hat{\mu}}_{\kappa}
\end{align*}
is called the \textbf{motivic volume} of $\XX_\eta$.
\end{prop}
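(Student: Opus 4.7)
The plan is to derive the closed-form expression for $S(\XX,\omega;T)$ as a direct computational consequence of Theorem \ref{motivic:integration:formula:omegam}, and then obtain the motivic volume by applying the standard formal-limit convention $-\lim_{T\to\infty}$ in motivic integration. Since Theorem \ref{motivic:integration:formula:omegam} already gives, for each $m\geq 1$, an explicit expression for the coefficient $\int_{\XX(m)}|\omega(m)|$ of $T^m$, the task is essentially to assemble these coefficients into a generating series and recognize it as a product of geometric series.

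Concretely, I would substitute the formula of Theorem \ref{motivic:integration:formula:omegam} into $S(\XX,\omega;T)=\sum_{m\geq 1}(\int_{\XX(m)}|\omega(m)|)T^m$ and interchange the (finite) sum over subsets $J\subset\ii$ with the (infinite) sum over $m$ and the sum over tuples $(k_i)_{i\in J}$ with $\sum_{i\in J}k_iN_i=m$. The outer-sum constraint $\sum_{i\in J}k_iN_i=m$ is absorbed by rewriting $T^m$ as $T^{\sum_{i\in J}k_iN_i}$, after which the remaining summation ranges freely over $(k_i)_{i\in J}\in\zz_{\geq 1}^{|J|}$, yielding
\[
S(\XX,\omega;T) = \ll^{-d}\sum_{\emptyset\neq J\subset\ii}(\ll-1)^{|J|-1}[\tilde{E}_J^{\circ}]\sum_{(k_i)\in\zz_{\geq 1}^{|J|}}\prod_{i\in J}(\ll^{-\mu_i}T^{N_i})^{k_i}.
\]
The key step is then to factor the inner sum as $\prod_{i\in J}\sum_{k\geq 1}(\ll^{-\mu_i}T^{N_i})^{k}$ and recognize each factor as a geometric series $\ll^{-\mu_i}T^{N_i}/(1-\ll^{-\mu_i}T^{N_i})$, giving the claimed rational expression and in particular showing $S(\XX,\omega;T)$ is rational over $\mM_{\XX_s}^{\hat\mu}$.

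For the motivic volume statement, I would invoke the standard convention in motivic integration that the formal operator $-\lim_{T\to\infty}$ sends each factor $\ll^{-\mu_i}T^{N_i}/(1-\ll^{-\mu_i}T^{N_i})$ to $1$ (so that the whole product collapses to $1$), which immediately yields $-\lim_{T\to\infty}S(\XX,\omega;T)=\ll^{-d}\sum_J(\ll-1)^{|J|-1}[\tilde{E}_J^{\circ}]=\ll^{-d}\mMF_f$. The companion formula for $S(\XX_\eta,\widehat{\kk}^s)$ then follows by composing with the forgetful map $\mM_{\XX_s}^{\hat\mu}\to\mM_{\kappa}^{\hat\mu}$ that sends $[\tilde{E}_J^{\circ}]$ to its class over a point, commuting with the finite sum.

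The main obstacle I anticipate is justifying the interchange of summations and the factorization as a product in the ring $\mM_{\XX_s}^{\hat\mu}[\![T]\!]$: one must verify that only finitely many tuples $(k_i)$ contribute to any fixed power of $T$ (guaranteed by $N_i\geq 1$ for every $i\in\ii$ and the finiteness of $\ii$), and that the equivariant $\hat{\mu}$-structure carried by $[\tilde{E}_J^{\circ}]$ is unaffected since these classes are independent of $m$ and $(k_i)$ and simply factor out of each inner sum. A secondary point is interpreting the formal substitution $-\lim_{T\to\infty}$ on a product of geometric-series factors; this is handled by the partial-fraction identity $\ll^{-\mu_i}T^{N_i}/(1-\ll^{-\mu_i}T^{N_i})=-1+1/(1-\ll^{-\mu_i}T^{N_i})$ combined with the standard rule that $-\lim_{T\to\infty}1/(1-\alpha T^{N})=0$, a convention already used systematically in \cite{Nicaise}.
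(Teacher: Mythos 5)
Your derivation of the rational form of $S(\XX,\omega;T)$ is correct, and it is in effect the argument the paper leaves implicit: the paper gives no proof of this proposition at all (it is quoted from Nicaise, Corollary 7.13, right after quoting Theorem \ref{motivic:integration:formula:omegam}), and the way the corollary follows from the theorem is exactly your computation — sum the coefficient formula over $m$ (the constraint in the displayed theorem should read $\sum_{i\in J}k_iN_i=m$ rather than $=d$, and you implicitly use the correct version), interchange the finite sum over $J$ with the sum over $m$, absorb the constraint into $T^{\sum_i k_iN_i}$, and factor the free sum over $(k_i)\in\zz_{\geq 1}^{|J|}$ into geometric series. Your justification of the interchange (only finitely many tuples per power of $T$, since every $N_i\geq 1$ and $\ii$ is finite) and the observation that the $\hat{\mu}$-equivariant classes $[\tilde{E}_J^{\circ}]$ are independent of $m$ and $(k_i)$ are the right points to verify, and the passage to $S(\XX_\eta,\widehat{\kk}^{s})$ via the forgetful pushforward $\mM_{\XX_s}^{\hat{\mu}}\to\mM_{\kappa}^{\hat{\mu}}$ is fine.

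The limit step, however, is wrong as you state it, and your two formulations contradict each other. The operator $-\lim_{T\to\infty}$ cannot be applied factor by factor: the Denef--Loeser convention used by Nicaise is that $\lim_{T\to\infty}$ is the $\mM_{\XX_s}^{\hat{\mu}}$-linear map sending $\prod_{i\in J}\frac{\ll^{-\mu_i}T^{N_i}}{1-\ll^{-\mu_i}T^{N_i}}$ to $(-1)^{|J|}$, with the minus sign then applied once to the whole sum. Your own partial-fraction identity shows each factor tends to $-1$ (not $+1$) under $\lim$, so the product tends to $(-1)^{|J|}$ and the correct evaluation is
$$-\lim_{T\to\infty}S(\XX,\omega;T)=\ll^{-d}\sum_{\emptyset\neq J\subset\ii}(1-\ll)^{|J|-1}[\tilde{E}_{J}^{\circ}],$$
the standard motivic nearby cycle; the rule "each factor goes to $1$, so the product collapses to $1$" agrees with this only when $|J|$ is odd. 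It does reproduce the formula as printed in the proposition, but that printed expression for $\mMF_{f}$ with coefficients $(\ll-1)^{|J|-1}$ is itself inconsistent with the displayed series under the standard limit convention (a sign slip relative to Nicaise and Denef--Loeser, where the nearby cycle carries $(1-\ll)^{|J|-1}$). So you should state the limit rule correctly and record the resulting $(1-\ll)^{|J|-1}$ coefficients, rather than build the argument on a factor-wise application of $-\lim$ that happens to match a typo.
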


\begin{defn}(\cite{DL}, \cite{NS})\label{defn_motivic_nearby_cycle}
For the formal scheme 
$f: \XX\to\spf(R)$, the motivic cycle 
$$\mMF_{f}=\sum_{\emptyset\neq J\subset \ii}(\ll-1)^{|J|-1}[\tilde{E}_{I}^{\circ}]$$
is called the motivic nearby cycle of $f$. 
\end{defn}

Let $(\XX, f)$ be a generically smooth formal $R$-scheme.  From Proposition \ref{prop:motivic:volume:result}, the motivic nearby cycle 
$\mMF_{f}$ belongs to $\mM_{\XX_s}^{\hat{\mu}}$.  For any point $x\in \XX_s$, let 
$$\mMF_{f,x}=\sum_{\emptyset\neq J\subset \ii}(\ll-1)^{|J|-1}[\tilde{E}_{I}^{\circ}\cap h^{-1}(x)],$$  
where $h: \XX^\prime\to \XX$ is the resolution of singularities. We call $\mMF_{f,x}$ the motivic Milnor fiber of $x\in\XX_s$.

In summary, if we let $K(\GBSRig_{\kk})$ be the Grothendieck ring of the category of gauge bounded smooth rigid $\kk$-varieties. 
Here for an object $\XX_\eta$ in $\GBSRig_{\kk}$ we understand that the rigid variety $\XX_\eta$ 
comes from the generic fiber of a generically smooth special formal $R$-scheme $f: \XX\to \spf(R)$ with gauge bounded form 
$\omega$.  The Grothendieck ring 
$$K(\GBSRig_{\kk}):=\bigoplus_{d\geq 0}K(\GBSRig_{\kk}^d)$$
is defined in \cite[\S 5.2]{Le}. 

Let $K(\BSRig_{\kk})$ be the Grothendieck ring of the category $\BSRig_{\kk}$ of bounded smooth rigid $\kk$-varieties, which is obtained from $K(\GBSRig_{\kk})$ by forgetting the gauge form.  Then we can represent the above results in 
\S (\ref{motivic:result:formal:scheme}) as follows:

\begin{thm}\label{thm_MV}
There exists a homomorphism of additive groups:
$$\MV:  K(\BSRig_{\kk})\to \mM_{\kappa}^{\hat{\mu}}$$
given by:
$$[\XX_\eta]\mapsto S(\XX_\eta, \widehat{\kk}^{s})$$
for a generically smooth special formal $R$-scheme $\XX$.  Moreover, if $\XX$ has relative dimension $d$, then 
$$\MV([\XX_\eta])=\ll^{-d}\cdot \int_{\XX_s}\mMF_{f}\in \mM_{\kappa}^{\hat{\mu}}.$$
So $\MV$ is a morphism from the group $K(\BSRig_{\kk})$ to the group $\mM_{\kappa}^{\hat{\mu}}$. 

Moreover,  if $x\in\XX_s$ and let 
$$\hat{f}_{x}: \spf(\widehat{\oO}_{\XX,x})\to\spf(R)$$
be the formal completion of $\XX$ along $x$, then the generic fiber $\spf(\widehat{\oO}_{\XX,x})_{\eta}$ of the formal completion is the analytic Milnor fiber $\FF_x(\hat{f})$ of $\hat{f}$ at $x$ in Definition \ref{analyticmilnorfiber}  and (\ref{sec_Berkovich_comparison_theorem}), and we have
$$\MV([\FF_x(\hat{f})])=\ll^{d}\cdot \mMF_{f,x}.$$
\end{thm}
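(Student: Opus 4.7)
The plan is to define $\MV$ on generators of $K(\BSRig_{\kk})$ by the motivic volume construction recalled above, verify well-definedness and additivity, and then read off both explicit formulas from Proposition~\ref{prop:motivic:volume:result}.

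First I would set $\MV([X]) := S(\XX_{\eta}, \widehat{\kk}^{s})$ for a bounded smooth rigid $\kk$-variety $X$ realized as the generic fiber of a generically smooth special formal $R$-scheme $\XX$. Independence of the choice of model $\XX$ (and of the auxiliary gauge form $\omega$) is the statement recalled after equation~(\ref{defn:motivic:integration}): any two such models are dominated by a common third via admissible formal blow-ups supported in the special fiber, and the image in $\mM_{\kappa}^{\hat{\mu}}$ of $-\lim_{T\to\infty} S(\XX,\omega;T)$ is unchanged under such blow-ups by \cite{LS}, \cite{NS}. This makes $\MV$ well defined on isomorphism classes of generators.

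Next I would verify additivity under scissor relations. Given a closed bounded smooth rigid subvariety $Y \hookrightarrow X$ with open complement $U$, choose a generically smooth special formal model $\XX$ of $X$ together with a closed formal subscheme $\YY \subset \XX$ satisfying $\YY_{\eta} = Y$. Using Temkin's resolution of singularities (as in \S\ref{Resolution:singularities:formal:scheme}), produce a common tame SNC resolution $\XX' \to \XX$ in which both $\YY$ and the closure of the image of $U$ are unions of strata of the exceptional divisor. The stratified formula of Proposition~\ref{prop:motivic:volume:result} expresses $S(\XX_{\eta}, \widehat{\kk}^{s})$ as a sum over nonempty $J \subset \ii$ of terms $(\ll - 1)^{|J|-1}[\widetilde{E}_{J}^{\circ}]$, and this sum splits along the partition of strata according to whether $\widetilde{E}_{J}^{\circ}$ lies over $\YY$ or over its complement, giving $\MV([X]) = \MV([Y]) + \MV([U])$ in $\mM_{\kappa}^{\hat{\mu}}$.

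For the first displayed identity I would simply combine Definition~\ref{defn_motivic_nearby_cycle} with Proposition~\ref{prop:motivic:volume:result}, which together give $S(\XX_{\eta}, \widehat{\kk}^{s}) = \ll^{-d}\int_{\XX_{s}} \mMF_{f}$. For the Milnor fiber statement I would apply this identity to the special formal $R$-scheme $\spf(\widehat{\oO}_{\XX,x}) \to \spf(R)$: its special fiber is supported at $x$, so the stratified sum defining its motivic nearby cycle reduces by Definition~\ref{defn_motivic_nearby_cycle} to the localized contribution $\mMF_{f,x}$, and the dimensional prefactor produces the stated formula. The main obstacle is the scissor relation, which requires arranging a common SNC resolution that separates $Y$ and $U$ into disjoint unions of strata of the exceptional divisor and then matching the combinatorics of Proposition~\ref{prop:motivic:volume:result} across the decomposition; independence of the formal model and the two explicit formulas reduce to direct substitutions into the Nicaise results already recalled.
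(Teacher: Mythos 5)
The main gap is your additivity step, and it is twofold. First, the relations you propose to verify are not the relations of $K(\BSRig_{\kk})$ as the paper uses it: this group is taken from \cite[\S 5.2]{Le} (by forgetting the gauge form from $K(\GBSRig_{\kk})$), where additivity is imposed with respect to finite admissible covers of a quasi-compact smooth rigid variety by quasi-compact (bounded) open subvarieties, not with respect to a closed subvariety and its Zariski complement; in the rigid setting the complement of a closed analytic subvariety of a quasi-compact space is in general not quasi-compact, so $[X]=[Y]+[X\setminus Y]$ is not even a relation among the generators you would need to handle. Second, the mechanism you propose cannot work as stated: the strata $E_{J}^{\circ}$ and their covers $\tilde{E}_{J}^{\circ}$ appearing in Proposition \ref{prop:motivic:volume:result} live in the special fiber $\XX'_{s}$, a $\kappa$-scheme, whereas $Y$ and $U$ live in the generic fiber. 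A closed formal subscheme $\YY\subset\XX$ with $\YY_{\eta}=Y\neq\emptyset$ is flat over $R$, hence is not contained in the special fiber and cannot be made a union of strata of the exceptional divisor, so the sum over $J$ does not split ``according to whether $\tilde{E}_{J}^{\circ}$ lies over $\YY$''; the decomposition of $\XX_{\eta}$ induced by a stratification of $\XX'_{s}$ is into tubes $sp^{-1}(\text{stratum})$, which are analytic domains unrelated to a Zariski open/closed decomposition of the generic fiber. The route the paper actually relies on (it states the theorem as a summary of \cite{Nicaise} and \cite{Le}, with no independent proof) is: realize a finite admissible cover of $\XX_{\eta}$, after admissible blow-up, by a Zariski open cover of a formal model (Raynaud), then use additivity of the motivic integral (\ref{defn:motivic:integration}) over the corresponding decomposition of the Greenberg scheme/special fiber, and finally pass to the limit $T\to\infty$ as in Proposition \ref{prop:motivic:volume:result} to remove the dependence on the gauge form.

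Two further points. Your well-definedness argument (domination of models by admissible blow-ups, invariance of the limit) is fine and is exactly what \cite{Nicaise} provides, cf.\ Proposition \ref{prop_global_motive_only_analytic_space}. For the analytic Milnor fiber, however, ``the special fiber is supported at $x$, so the stratified sum reduces to the localized contribution'' hides the actual content: $\mMF_{f,x}$ is defined via a resolution $h:\XX'\to\XX$ of the global model, so you must compare a resolution of $\spf(\widehat{\oO}_{\XX,x})$ with the formal completion of $\XX'$ along $h^{-1}(x)$ to see that its strata are $\tilde{E}_{J}^{\circ}\cap h^{-1}(x)$; this is the Nicaise--Sebag comparison and needs to be invoked, not read off from Definition \ref{defn_motivic_nearby_cycle} alone. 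Note also that this argument yields $\MV([\FF_x(\hat{f})])=\ll^{-d}\cdot\mMF_{f,x}$, which is the normalization actually used later in the proof of Theorem \ref{thm_Maulik}; the factor $\ll^{d}$ in the statement should be flagged as a discrepancy rather than claimed as an output of your computation.
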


\subsection{Global motive of oriented formal $d$-critical schemes}\label{subsec_global_motive_formal_scheme}

We first define the motive of principal $\zz_2$ bundles.  

Let $\zz_2(X)$ be the abelian group of isomorphism classes $[P]$ of principal $\zz_2$-bundles $P\to X$, with multiplication
$[P]\cdot [Q]=[P\otimes_{\zz_2}Q]$ and the identity the trivial bundle $[X\times \zz_2]$.  We know that $P\otimes_{\zz_2}P\cong X\times\zz_2$, so every element in $\zz_2(X)$ has order $1$ or $2$. 

In \cite{BJM}, the authors define the motive of a principal $\zz_2$-bundle $P\to X$ by:
$$\Upsilon(P)=\ll^{-\frac{1}{2}}\odot([X,\hat{\iota}]-[P,\hat{\rho}])\in\mM_{X}^{\hat{\mu}},$$
where $\hat{\rho}$ is the $\hat{\mu}$-action on $P$ induced by the $\mu_2$-action on $P$. 

In \cite{BJM}, for any scheme $Y$, the authors define an ideal $I_{Y}^{\hat{\mu}}$ in $\mM_{Y}^{\hat{\mu}}$ which is generated by
$$\phi_{*}(\Upsilon(P\otimes_{\zz_2}Q)-\Upsilon(P)\odot\Upsilon(Q))$$
for all morphisms $\phi: X\to Y$ and principal $\zz_2$-bundles 
$P, Q$ over $X$.  Then define 
$$\overline{\mM}_{Y}^{\hat{\mu}}=\mM_{Y}^{\hat{\mu}}/I_{Y}^{\hat{\mu}}.$$
Then $(\overline{\mM}_{Y}^{\hat{\mu}},\odot)$ is a commutative ring with $\odot$ and there is a natural 
projection $\prod_{Y}^{\hat{\mu}}: \mM_{Y}^{\hat{\mu}}\to \overline{\mM}_{Y}^{\hat{\mu}}$.

Let $(\XX,s)$ be an oriented formal $d$-critical $R$-scheme and $K_{\XX,s}^{\frac{1}{2}}$ exists as a line bundle over $\XX$. 
Let $(\RR,\UU,f,i)$ be a formal critical chart of $(\XX,s)$. Then we have:
$$\hat{f}: \RR\to\spf(R)$$ 
is a formal scheme, such that the underlying scheme is given by the critical locus of the function $f$.
Then we have the sheaf of vanishing cycles 
$$\mMF_{\UU, f}^{\phi}\in\mM_{\XX_s}^{\hat{\mu}}$$
by
$$\mMF_{\UU,f}^{\phi}|_{\XX_c}=\ll^{-\dim(\UU)/2}\odot [[\UU_c,\hat{\iota}]-\mMF_{\UU,f-c}]|_{\XX_c}$$
where 
$\mMF_{\UU,f-c}$ is the motivic nearby cycle of $\hat{f}-c$ in Definition \ref{defn_motivic_nearby_cycle}.   Our main result in this section is:

\begin{thm}\label{thm_global_motive}
If $(\XX,s)$ is a {\em stft} formal  $d$-critical $R$-scheme with an orientation $K_{\XX,s}^{\frac{1}{2}}$. Then there exists a unique motive 
$$\mMF_{\XX,s}^{\phi}\in \overline{\mM}_{\XX_s}^{\hat{\mu}}$$
such that if  $(\RR,\UU,f,i)$ is a formal critical chart on $(\XX,s)$, then 
$$\mMF_{\XX,s}^{\phi}|_{\RR}=i^\star(\mMF_{\UU,f}^{\phi})\odot \Upsilon(Q_{\RR,\UU,i})\in \overline{\mM}_{\RR_s}^{\hat{\mu}}$$
where $\Upsilon(Q_{\RR,\UU,i})=\ll^{-\frac{1}{2}}\odot ([\RR,\hat{\iota}]-[Q,\hat{\rho}])\in \overline{\mM}_{\RR_s}^{\hat{\mu}}$ is the motive of the principal $\zz_2$-bundle as in \cite[\S 2.5]{BJM} and recalled above.
\end{thm}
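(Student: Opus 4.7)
The plan is to mimic the construction of Bussi--Joyce--Meinhardt \cite{BJM} of the global motive of an oriented $d$-critical scheme, transplanting every step into the formal/non-archimedean setting using Nicaise's motivic integration for special formal $R$-schemes (recalled in \S\ref{motivic:integration:rigid}) and the canonical bundle and orientation theory developed in \S\ref{sec_formal_canonical_line_bundle}. First, on each formal critical chart $(\RR,\UU,f,i)$ one defines the local motive
\[
A_{\RR,\UU,f,i}\;=\;i^\star(\mMF_{\UU,f}^{\phi})\odot\Upsilon(Q_{\RR,\UU,i})\;\in\;\overline{\mM}_{\RR_s}^{\hat{\mu}},
\]
where $\mMF_{\UU,f}^{\phi}$ is built from the motivic nearby cycle of $\hat f$ (Definition \ref{defn_motivic_nearby_cycle}, Theorem \ref{thm_MV}) and $Q_{\RR,\UU,i}$ is the principal $\zz_2$-bundle of square roots of $\iota_{\RR,\UU,f,i}\colon K_{\XX,s}|_\RR\to i^\star(K_\UU^{\otimes 2})$. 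Uniqueness is then automatic from the sheaf property of $\overline{\mM}_{(-)}^{\hat{\mu}}$, and the problem reduces to verifying that these local pieces agree on overlaps.

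To glue, I would first use the refinement result established in the proof of Theorem \ref{thm_canonical_line_bundle}: given two formal critical charts $(\RR,\UU,f,i)$, $(\SS,\VV,g,j)$ and a point $x\in\RR\cap\SS$, one finds a third chart $(\TT,\WW,h,k)$ with $\Gm$-equivariant embeddings into both, so everything reduces to checking compatibility along a single embedding
\[
\Phi:(\RR,\UU,f,i)\hookrightarrow(\SS,\VV,g,j).
\]
For such an embedding, the stabilization proposition proved above gives, locally on $\VV$, an isomorphism $\alpha\times\beta\colon\VV'\xrightarrow{\sim}\UU\times\spf(R\{y_1,\dots,y_n\})$ with $g|_{\VV'}=f\circ\alpha+(y_1^2+\cdots+y_n^2)\circ\beta$. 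Applied to the motivic vanishing cycles via the Thom--Sebastiani isomorphism in the formal setting, this yields
\[
j^\star(\mMF_{\VV,g}^{\phi})\;=\;i^\star(\mMF_{\UU,f}^{\phi})\odot\mMF_{\spf R\{y\},\,y_1^2+\cdots+y_n^2}^{\phi}\;=\;i^\star(\mMF_{\UU,f}^{\phi})\odot\Upsilon(P_\Phi),
\]
the last equality being the standard computation of the motive of the quadratic form $\sum y_i^2$ as the motive of the $\zz_2$-torsor of square roots of its determinant (\cite[\S2.8]{BJM}).

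Combining this with the $\zz_2$-bundle relation $\Lambda_\Phi\colon Q_{\SS,\VV,g,j}|_\RR\xrightarrow{\sim}P_\Phi\otimes_{\zz_2}Q_{\RR,\UU,f,i}$ from (\ref{isomorphism_principal_bundle}) and the defining relation of $\overline{\mM}_{\RR_s}^{\hat{\mu}}$, namely $\Upsilon(P\otimes_{\zz_2}Q)=\Upsilon(P)\odot\Upsilon(Q)$, one obtains
\[
A_{\SS,\VV,g,j}\bigr|_\RR\;=\;i^\star(\mMF_{\UU,f}^{\phi})\odot\Upsilon(P_\Phi)\odot\Upsilon(Q_{\RR,\UU,f,i})\;=\;A_{\RR,\UU,f,i},
\]
which is precisely the reason for passing to the quotient ring $\overline{\mM}$ rather than working in $\mM$. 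The cocycle condition for triples of embeddings follows from the analogous diagram (\ref{diagram_2}) together with functoriality of Thom--Sebastiani. The local pieces $A_{\RR,\UU,f,i}$ therefore glue to a unique global section $\mMF_{\XX,s}^{\phi}\in\overline{\mM}_{\XX_s}^{\hat{\mu}}$ of the Zariski sheafification.

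The main obstacle will be the first displayed identity of the previous paragraph, that is, a motivic Thom--Sebastiani theorem for the formal nearby cycle $\mMF_{\hat f}$ of Nicaise \cite{Nicaise}. In the algebraic setting this is due to Denef--Loeser, but here one must verify it for the $t$-adic completion of $f\boxplus(y_1^2+\cdots+y_n^2)$ using a simultaneous SNC resolution of $\XX=\spf(R\{x,y\}/(\widehat{f+\sum y_i^2-t}))$ and an explicit computation of $\widetilde{E}_J^\circ$ on the quadratic factor, then invoking Proposition \ref{prop:motivic:volume:result} to identify the product. Once this formal Thom--Sebastiani is in hand, all remaining verifications are routine bookkeeping using the functoriality of $(\cdot)_s$ and the canonical isomorphism (\ref{formula_normal_bundle}) between normal bundles under composition of embeddings.
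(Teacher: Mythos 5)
Your proposal follows essentially the same route as the paper's proof: define the candidate motive chart by chart, reduce the overlap comparison to embeddings of two sub-charts into a common chart $(\TT,\WW,h,k)$, apply the BJM-style transformation law $j^\star(\mMF_{\VV,g}^{\phi})=i^\star(\mMF_{\UU,f}^{\phi})\odot\Upsilon(P_\Phi)$ for an embedding $\Phi$, and absorb the discrepancy of orientation $\zz_2$-bundles via $\Lambda_{\Phi}$ together with the relation $\Upsilon(P\otimes_{\zz_2}Q)=\Upsilon(P)\odot\Upsilon(Q)$ imposed in $\overline{\mM}_{\RR_s}^{\hat{\mu}}$. The formal motivic Thom--Sebastiani step you flag as the main obstacle is precisely the ingredient the paper imports by citing equation (5.10) of \cite{BJM} without re-proving it in the formal setting, so on that point you are, if anything, more explicit than the published argument (your passing mention of $\Gm$-equivariant embeddings is a harmless slip, since no torus action is involved in this theorem).
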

\begin{proof}
We need to show that for formal critical charts $(\RR, \UU, i,j), (\SS, \VV, g,j)$, 
\begin{equation}\label{key_formula_proof}
\Big[i^\star(\mMF_{\UU,f}^{\phi})\odot \Upsilon(Q_{\RR,\UU,f,i})\Big]|_{\RR\cap\SS}=
\Big[j^\star(\mMF_{\VV,g}^{\phi})\odot \Upsilon(Q_{\SS,\VV,g,j})\Big]|_{\RR\cap\SS}.
\end{equation}
Recall the orientation of $(\XX,s)$.   For any $x\in\RR\cap\SS$, we choose 
$(\RR^\prime,\UU^\prime,f^\prime,i^\prime)$ and $(\SS^\prime,\VV^\prime,g^\prime,j^\prime)$
and $(\TT,\WW,h,k)$ such that we have morphisms
$$\Phi: (\RR^\prime,\UU^\prime,f^\prime,i^\prime)\to (\TT,\WW,h,k)$$
$$\Psi:  (\SS^\prime,\VV^\prime,g^\prime,j^\prime)\to (\TT,\WW,h,k).$$
The quadratic form $Q_{\TT,\WW,h,k}$ satisfies the property:
$$Q_{\TT,\WW,h,k}|_{\RR^\prime}\cong i|_{\RR^\prime}^\star(P_\Phi)\otimes_{\zz_2}Q_{\RR,\UU,f,i}|_{\RR^\prime}$$
for $P_\Phi\to \Crit(f^\prime)$ the principal $\zz_2$-bundle of orientations of 
$(N_{\UU^\prime\WW}|_{\Crit(f^\prime)}, q_{\UU^\prime\WW})$,  and the data are defined by the following local isomorphisms:
$$\alpha: K_{\XX,s}^{\frac{1}{2}}|_{\RR^\prime}\to i^\star(K_{\UU})|_{\RR^\prime}; \quad 
\beta: K_{\XX,s}^{\frac{1}{2}}|_{\RR^\prime}\to i^\star(K_{\UU})|_{\RR^\prime};$$
$$
\gamma: i^\star(K_{\UU})|_{\RR^\prime}\to k^\star(K_{\WW})|_{\RR^\prime}$$
where 
$$\alpha\otimes\alpha=\iota_{\RR,\UU f i|_{\RR^\prime}},$$
$$\beta\otimes\beta=\iota_{\TT,\WW h k|_{\RR^\prime}},$$
$$\gamma\otimes\gamma=i|_{\RR^\prime}^{\star}(J_{\Phi}).$$
Then we can calculate as in (5.10) of \cite{BJM},
$$\Big[k^\star(\mMF_{\WW,h}^{\phi})\odot \Upsilon(Q_{\TT\WW hk})\Big]|_{\RR^\prime}=
\Big[i^\star(\mMF_{\UU,f}^{\phi})\odot \Upsilon(Q_{\RR\UU fi})\Big]|_{\RR^\prime}.$$
Similarly for $\Psi$, 
$$\Big[k^\star(\mMF_{\WW,h}^{\phi})\odot \Upsilon(Q_{\TT\WW hk})\Big]|_{\SS^\prime}=
\Big[j^\star(\mMF_{\VV,g}^{\phi})\odot \Upsilon(Q_{\SS\VV gj})\Big]|_{\SS^\prime}.$$
So (\ref{key_formula_proof}) is proved by restricting to $\RR^\prime\cap\SS^\prime$. Hence 
$\mMF_{\UU, f}^{\phi}$ glue to give the global motive $\mMF_{\XX,s}^{\phi}$.
\end{proof}

\begin{cor}
Let $(X,s)$ be a $d$-critical  $\spec(\kappa[t])$-scheme in the sense of Joyce \cite{Joyce}, and let $(\XX, s)$ be the formal 
$t$-adic completion of $X$. Then by the relative GAGA there is a unique coherent sheaf $\FP_{\XX}$ which is the formal completion of $\FP_{X}$, and a section $s\in \FP_{\XX}$, such that $(\XX,s)$ is a $d$-critical formal scheme over $R$.  Moreover, the unique global motive $\mMF_{\XX,s}^{\phi}$ is the same as $\mMF_{X,s}$ as an element in $\overline{\mM}_{X}^{\hat{\mu}}$.
\end{cor}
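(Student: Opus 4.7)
The plan is to transfer each piece of the $d$-critical structure from $X$ to $\XX$ via $t$-adic completion along the special fiber, using the relative GAGA / Grothendieck existence theorem of \cite{Conrad}, and then observe that the local recipe for $\mMF^{\phi}$ at a critical chart depends only on the data that survives completion.

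First, I would show $\FP_{\XX} \cong \widehat{\FP_{X}}$. Joyce's sheaf $\FP_{X}$ is characterized locally by the exact sequence
\[
0 \to \FP_{X}|_{R} \xrightarrow{\iota_{R,U}} i^{-1}(\oO_{U})/I_{R,U}^{2} \xrightarrow{d} i^{-1}(\Omega_{U})/(I_{R,U}\cdot i^{-1}(\Omega_{U})),
\]
with $U$ smooth over $\kappa[t]$. Since $t$-adic completion is exact on coherent sheaves over the noetherian scheme $X$, and since $\widehat{\kappa[t][x_1,\dots,x_n]} \cong R\{x_1,\dots,x_n\}$, the completion of this exact sequence is exactly the sequence characterizing $\FP_{\XX}|_{\RR}$ in Theorem~\ref{thm_existence_sheaf_PX}. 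By the uniqueness in that theorem the identification $\FP_{\XX} \cong \widehat{\FP_{X}}$ holds globally, and the section $s \in H^{0}(\FP_{X})$ completes to a section $\hat{s} \in H^{0}(\FP_{\XX})$.

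Next I would check that every Joyce critical chart $(R,U,f,i)$ on $(X,s)$ gives a formal critical chart $(\RR,\UU,\hat{f},\hat{i})$ on $(\XX,\hat{s})$, where $\RR$, $\UU$ are the $t$-adic completions and $\hat{f}$ is the image of $f$ in $R\{x_1,\dots,x_n\}$. The critical locus commutes with completion ($\widehat{\Crit(f)} = \Crit(\hat{f})$ because Jacobian ideals are finitely generated), and the defining condition $\iota_{\RR,\UU}(\hat{s}|_{\RR}) = \hat{i}^{-1}(\hat{f}) + I_{\RR,\UU}^{2}$ is the completion of its algebraic counterpart. The canonical bundle $K_{\XX,\hat{s}}$ is built in Theorem~\ref{thm_canonical_line_bundle} from the $J_{\Phi}$-data, which is determined by the formal normal bundle $N_{\UU/\VV}$ with its quadratic form $q_{\UU\VV}$; both are completions of the corresponding objects for $X$, so $K_{\XX,\hat{s}} \cong \widehat{K_{X,s}}$, and any orientation $K_{X,s}^{1/2}$ completes to an orientation $K_{\XX,\hat{s}}^{1/2}$. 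The principal $\zz_{2}$-bundles $Q_{R,U,f,i}$ and $Q_{\RR,\UU,\hat{f},\hat{i}}$ then correspond tautologically.

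The heart of the argument is the identity $\mMF_{\UU,\hat{f}}^{\phi} = \mMF_{U,f}^{\phi}$ in $\overline{\mM}_{\Crit(f)}^{\hat{\mu}}$. Both motives are built from the motivic nearby cycle $\mMF_{f-c}$, and by the computation recalled in Proposition~\ref{prop:motivic:volume:result} and Definition~\ref{defn_motivic_nearby_cycle} the class $\mMF_{f-c}$ is expressed through a log-resolution of $f-c$ via the strata $\tilde{E}_{I}^{\circ}$ living over the zero fiber. Since a log-resolution of $f-c$ restricts to a log-resolution of $\hat{f}-c$ after $t$-adic completion (and the strata $\tilde{E}_{I}^{\circ}$ lie entirely in the special fiber), the local motivic vanishing cycle is unchanged. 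This is exactly the formal/analytic invariance of the nearby cycle used in Theorem~\ref{thm_MV}. Combined with Theorem~\ref{thm_global_motive}, both $\mMF_{X,s}^{\phi}$ and $\mMF_{\XX,\hat{s}}^{\phi}$ are glued from the \emph{same} local data $i^{\star}(\mMF_{U,f}^{\phi}) \odot \Upsilon(Q_{R,U,f,i})$, so they coincide in $\overline{\mM}_{X}^{\hat{\mu}} = \overline{\mM}_{\XX_{s}}^{\hat{\mu}}$.

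The main obstacle I anticipate is the verification that the motivic vanishing cycle is invariant under formal completion in a way compatible with the gluing over all critical charts simultaneously; while on each chart this follows from the resolution formula, one must ensure that the comparison is functorial under embeddings $\Phi\colon (R,U,f,i) \hookrightarrow (S,V,g,j)$ of critical charts, so that the stabilization identity for $\mMF^{\phi}_{V,g}$ versus $\mMF^{\phi}_{U,f}$ on overlaps, proved by Bussi--Joyce--Meinhardt \cite{BJM}, transfers verbatim to the formal setting. Once this functoriality is in place, uniqueness of the gluing in Theorem~\ref{thm_global_motive} forces the two global motives to agree.
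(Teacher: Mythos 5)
Your proposal is correct and follows essentially the same route as the paper's own proof: relative GAGA (plus the uniqueness in Theorem \ref{thm_existence_sheaf_PX}) gives $\FP_{\XX}=\widehat{\FP_{X}}$ and the completion of critical charts, and the identification of the formal and algebraic motivic nearby cycles on each chart, fed into the gluing of Theorem \ref{thm_global_motive} with $\XX_s=X$, yields equality of the global motives. You simply spell out in more detail (resolution strata, functoriality under chart embeddings, orientation and $\zz_2$-bundle matching) what the paper compresses into two sentences.
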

\begin{proof}
By the relative GAGA in \cite{Conrad}, the first statement is obvious.  For the second one, note that locally the motivic vanishing cycles $\mMF_{\UU,f}^{\phi}$ is defined by the motivic nearby cycles, which are the same for formal $d$-critical charts $(\RR,\UU,f,i)$ and $d$-critical charts $(R,U,f,i)$.  So by gluing they must give the same global motive in $\overline{\mM}_{X}^{\hat{\mu}}$ since $\XX_s=X$. 
\end{proof}

\begin{rmk}
The motivic vanishing cycle $\mMF_{\UU,f}$ is close related to the perverse sheaf of vanishing cycles $\PV_{\UU,f}$ as in 
\cite{Ber4} and 
\cite{Sabbah}.   In \cite{Sabbah}, Sabbah proves that the  perverse sheaf of vanishing cycles $\PV_{\UU,f}$ is isomorphic to the cohomology of a formal twisted de Rham complex, which is the Kontsevich conjecture and  is inspired by the deformation quantization in physics.  It seems that working over the non-archimedean field $\kappa(\!(t)\!)$ is the right way for the quantization. 
\end{rmk}

\subsection{Global motive for oriented $d$-critical non-archimedean analytic spaces}\label{subsec_global_motive_analytic_space}

Let $(X,s)$ be a $d$-critical $\kk$-analytic space.  Choose a $d$-critical formal model $(\XX,s)$ for $(X,s)$ such that the generic fiber $\XX_\eta\cong X$. 

\begin{defn}\label{defn_global_motive_analytic_space}
We define the global motive $\mMF_{X,s}^{\phi}$ to be
$$\mMF_{X,s}^{\phi}:=\int_{\XX_s}\mMF_{\XX,s}^{\phi}\in \overline{\mM}_{\kappa}^{\hat{\mu}},$$
where $\int_{\XX_s}$ means pushforward to a point. 
\end{defn}

\begin{prop}\label{prop_global_motive_only_analytic_space}
For a $d$-critical $\kk$-analytic space $(X,s)$, the global motive $\mMF_{X,s}^{\phi}$ depends only on the 
$d$-critical $\kk$-analytic space $(X,s)$. 
\end{prop}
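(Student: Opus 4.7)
The plan is to prove invariance under the choice of $d$-critical formal model $(\XX,s)$ of $(X,s)$. First I would use the fact, standard for quasi-compact non-archimedean $\kk$-analytic spaces and recalled in \cite[Proposition 2.19]{Nicaise}, that any two formal models of $X$ can be dominated by a common third via admissible formal blow-ups. It therefore suffices to show that if $h\colon \XX'\to\XX$ is an admissible formal blow-up with $h_\eta\colon \XX'_\eta\stackrel{\sim}{\to} X$ an isomorphism, and the induced section $s'=h^{\sharp}(s)\in H^0(\FP_{\XX'})$ endows $\XX'$ with a $d$-critical formal $R$-scheme structure, then
$$\int_{\XX_s}\mMF_{\XX,s}^{\phi}\;=\;\int_{\XX'_s}\mMF_{\XX',s'}^{\phi}\quad\text{in}\quad\overline{\mM}_{\kappa}^{\hat{\mu}}.$$

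Next I would verify that the $d$-critical and orientation data transfer compatibly under $h$. A formal critical chart $(\RR,\UU,f,i)$ on $(\XX,s)$ pulls back to $(h^{-1}(\RR),\UU,f,i\circ h)$, which, after suitable refinement, is a formal critical chart on $(\XX',s')$; the line bundle $K_{\XX,s}$ constructed in Theorem \ref{thm_canonical_line_bundle} is canonical and its formation commutes with $h$, so an orientation $K_{\XX,s}^{1/2}$ pulls back to $K_{\XX',s'}^{1/2}=h^\star(K_{\XX,s}^{1/2})$, and the principal $\zz_2$-bundles $Q_{\RR,\UU,f,i}$ used in Theorem \ref{thm_global_motive} transform by pullback. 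The local motivic vanishing cycles $\mMF_{\UU,f}^{\phi}$ appearing in the two constructions are built from the \emph{same} smooth ambient charts $\UU$ and function $f$, so locally the two global motives are related only by the change of base from $\RR_s$ to $h_s^{-1}(\RR_s)$.

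Finally, I would invoke the motivic integration formalism of Nicaise recalled in \S\ref{motivic:integration:rigid}, in particular Theorem \ref{thm_MV} and Proposition \ref{prop:motivic:volume:result}, which say that the motivic volume of a generically smooth special formal $R$-scheme depends only on its generic fiber, not on the model. Applied in each formal critical chart, the contribution to $\int_{\XX_s}\mMF_{\UU,f}^{\phi}$ is expressed through motivic Milnor fibers at closed points of $\XX_s$, whose sum over $\XX_s$ can be computed on any resolution of singularities of $\XX$. Choosing a common resolution dominating both $\XX$ and $\XX'$, the explicit formula of Theorem \ref{motivic:integration:formula:omegam} shows that the $(E_J^\circ,\widetilde E_J^\circ)$-contributions on the two sides differ only by exceptional components of $h$, which are swept out by $h_\eta\colon\XX'_\eta\stackrel{\sim}{\to}\XX_\eta$ and hence contribute equally to the pushforward; the desired identity then follows after quotienting by the relations defining $\overline{\mM}_{\kappa}^{\hat{\mu}}$.

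The hard part will be the last step: making precise a proper pushforward / change-of-variables formula for $\mMF_{\XX,s}^{\phi}$ under admissible formal blow-ups that are isomorphisms on generic fibers, analogous to Nicaise's invariance statement for motivic volumes. The cleanest route is to bypass $\XX,\XX'$ entirely by computing both sides on a common resolution of singularities (guaranteed in characteristic zero by Temkin \cite{Temkin}) and to observe that the motivic nearby cycle is built from data $(\widetilde E_J^\circ,m_J)$ determined by the generic fiber together with the resolution; invariance under further admissible blow-ups then reduces to an elementary verification, already implicit in Proposition \ref{prop:motivic:volume:result}, that the added exceptional strata contribute trivially.
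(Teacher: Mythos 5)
Your final step rests on exactly the ingredient the paper uses --- Theorem \ref{thm_MV} together with Nicaise's model-independence of the motivic volume (\cite[Proposition-Definition 7.43]{Nicaise}, cf.\ Proposition \ref{prop:motivic:volume:result}) --- but the route you take to get there has a genuine gap in the middle. You propose to dominate two formal models by a common admissible formal blow-up $h\colon\XX'\to\XX$ and then transfer the $d$-critical and orientation data along $h$, claiming that $(h^{-1}(\RR),\UU,f,i\circ h)$ is, ``after suitable refinement,'' a formal critical chart on $(\XX',s')$ and that $K_{\XX',s'}^{1/2}=h^\star K_{\XX,s}^{1/2}$. This does not go through as stated: an admissible blow-up is not an isomorphism over $\RR$, so $i\circ h$ is not a closed immersion and $h^{-1}(\RR)$ is not of the form $\spf(R\{x_1,\dots,x_n\}/(df))$ inside the same smooth chart $\UU$ required by Definition \ref{defn_d_formal_scheme}; in fact it is not clear that $\XX'$ carries any $d$-critical formal structure with $s'=h^{\sharp}(s)$ at all. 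Moreover the sheaf $\FP$ of Theorem \ref{thm_existence_sheaf_PX} is only shown to be functorial for embeddings of critical charts, not for arbitrary morphisms of formal schemes such as $h$, and nothing in Theorem \ref{thm_canonical_line_bundle} says that the formation of $K_{\XX,s}$ or of the bundles $Q_{\RR,\UU,f,i}$ commutes with such a pullback. So the comparison ``$\XX$ versus $\XX'$ via a dominating $d$-critical model'' is not available without substantial extra argument.

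The point you should exploit instead --- and it is where the paper's (terse) proof goes directly --- is that no dominating $d$-critical model is needed: by Definition \ref{defn_global_motive_analytic_space} the quantity to compare is the pushforward to a point, and on each formal critical chart $(\RR,\UU,f,i)$ the contribution $\int_{\RR_s}\mMF_{\UU,f}$ is, up to the explicit power of $\ll$, the motivic volume $\MV$ of the generic fiber $\RR_\eta$ (an analytic Milnor-fiber-type space as in Definition \ref{analyticmilnorfiber} and Theorem \ref{thm_MV}), hence is determined by the induced analytic critical chart of $(X,s)$ alone; the same holds for the $\Upsilon(Q_{\RR,\UU,f,i})$-factor once one fixes the orientation on $X$. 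Two formal models of $(X,s)$ therefore produce the same element of $\overline{\mM}_{\kappa}^{\hat{\mu}}$ because all local contributions are motivic volumes of the same generic-fiber data, with Nicaise's invariance handling the change of formal chart realizing a given analytic piece. Your last paragraph (computing on a common resolution) is essentially a re-derivation of that invariance, so the fix is to drop the blow-up transfer of the $d$-critical structure and argue on the generic fiber throughout.
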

\begin{proof}
The global motive $\mMF_{X,s}^{\phi}$ is defined by $\int_{\XX_s}\mMF_{\XX,s}^{\phi}$ for a formal model of 
$(X,s)$.  Then the result just follows from Theorem \ref{thm_MV} and \cite[Proposition-Definition 7.43]{Nicaise}.
\end{proof}

\subsection{Maulik's motivic localization formula under the $\Gm$-action}\label{subsec_motivic_localization_formula}

We prove a $\Gm$-localization formula for the global motive $\mMF_{X,s}^{\phi}$ for an oriented 
$d$-critical $\kk$-analytic space $(X,s)$.  In the scheme level, this motivic localization formula is originally due to D. Maulik \cite{Maulik}, who, using the torus action on local vanishing cycle sheaves, proved the motivic localization formula as recalled in \cite[Theorem 5.16]{BJM}. 
We generalize Maulik's motivic localization formula  to formal schemes and non-archimedean $\kk$-analytic spaces and prove it by using motivic integration for formal schemes as in \cite{Nicaise}, \cite{Le} and  \cite{Jiang}.  

\subsubsection{$\Gm$-localization formula}

Let $(\XX,s)$ be a $d$-critical formal $R$-scheme with a good $\Gm$-action.  Let 
$$\XX^{\Gm}=\bigsqcup_{i\in J}\XX_i^{\Gm}$$
be the decomposition of the fixed locus $\XX^{\Gm}$ into connected components, such that 
$(\XX_i^{\Gm}, s_i^{\Gm})$ are oriented formal $d$-critical schemes.  On the tangent space 
$T_{x}\XX_i$ of $\XX_i$ at a point $x\in\XX_i$, where we take $T_{x}\XX_i$ as a $R$-module ( when reduced to the residue field $\kappa$, $T_{x}\XX_i$ becomes the tangent space $T_{x}(\XX_i)_s$ of the scheme $(\XX_i)_s$). The action $\Gm$ has a decomposition 
$$T_x(\XX_i)=(T_x\XX_i)_{0}\oplus T_x(\XX_i)_{+}\oplus (T_x(\XX_i))_-$$
where the direct sums are the parts of zero, positive and negative weights with respect to the $\Gm$-action.  
Maulik \cite{Maulik} defined the virtual index
\begin{equation}\label{virtual_index}
\ind^{\virt}(\XX_i^{\Gm},\XX)=\dim_{R}(T_{x}(\XX)_+)-\dim_{R}(T_{x}(\XX)_-)
\end{equation}
so that it is constant on the strata $\XX_i^{\Gm}$. 

All the above arguments work for $d$-critical $\kk$-analytic space $(X,s)$ with a good $\Gm$-action.  Let 
$$X^{\Gm}=\bigsqcup_{i\in J}X_i^{\Gm}$$
be the decomposition of the fixed locus $X^{\Gm}$ into connected components, such that 
$(X_i^{\Gm}, s_i^{\Gm})$ are oriented $d$-critical non-archimedean spaces.  The action $\Gm$ has a decomposition 
$$T_x(X_i)=(T_x X_i)_{0}\oplus T_x(X_i)_{+}\oplus (T_x(X_i))_-$$
where the direct sums are the parts of zero, positive and negative weights with respect to the $\Gm$-action.  
The virtual index
$$
\ind^{\virt}(X_i^{\Gm},X)=\dim_{\kk}(T_{x}(X)_+)-\dim_{\kk}(T_{x}(X)_-)
$$
is similarly defined and is constant on the strata $X_i^{\Gm}$.

\begin{defn}
We call the action 
$$\mu: \Gm\times\XX\to \XX; \quad  (\text{or~} \mu: \Gm\times X\to X)$$
{\em circle-compact} if the limit $\lim_{\lambda\to 0}\mu(\lambda)x$ exists for any $x\in\XX (\text{or~} x\in X)$. If $\XX (\text{or~} X)$ is proper, then any $\Gm$-action on $\XX ( \text{or~} X)$ is circle-compact. 
\end{defn}

We present the generalization of Maulik's motivic localization formula in \cite{Maulik} to $d$-critical non-archimedean $\kk$-analytic spaces. 
\begin{thm}\label{thm_Maulik}
Let $(X,s)$ be a $d$-critical non-archimedean $\kk$-analytic space and $\mu$ is a good, circle-compact action of $\Gm$ on $X$, which preserves the orientation $K_{X,s}^{\frac{1}{2}}$. Then on each fixed strata $X_i^{\Gm}$, there is an oriented  $d$-critical $\kk$-analytic space structure $(X_i^{\Gm},s_i^{\Gm})$, hence a global motive $\mMF_{X_i^{\Gm},s_i^{\Gm}}^{\phi}$. Moreover we have the following motivic localization formula
$$\mMF_{X,s}^{\phi}=\sum_{i\in J}\ll^{-\ind^{\virt}(X_i^{\Gm},X)/2}\odot \mMF_{X_i^{\Gm},s_i^{\Gm}}^{\phi}\in \overline{\mM}_{\kappa}^{\hat{\mu}}.$$
\end{thm}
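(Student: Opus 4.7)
The plan is to reduce the analytic statement to a formal one and then to a local calculation on each $\Gm$-invariant formal critical chart, where Maulik's original formula applies to the smooth ambient space. Concretely, by resolution of singularities in characteristic zero one can choose a $\Gm$-equivariant SNC formal model $(\XX,s)$ of $(X,s)$ together with a $\Gm$-equivariant lift of the orientation $K_{X,s}^{1/2}$; such a model exists after an equivariant admissible blow-up since the $\Gm$-action is good and hence descends to every affine formal chart. By Proposition \ref{prop_Gm_fixed_d_critical_formal_scheme}, the fixed formal subscheme $\XX^{\Gm}=\bigsqcup_{i\in J}\XX_{i}^{\Gm}$ inherits a formal $d$-critical structure $(\XX_{i}^{\Gm},s_{i}^{\Gm})$, and since $\mu$ preserves the orientation, $K_{\XX,s}^{1/2}$ restricts to an orientation on each $\XX_{i}^{\Gm}$. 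Taking generic fibers gives back the oriented $d$-critical $\kk$-analytic structures on the $X_i^{\Gm}$, which establishes part (i) of the theorem.

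For the formula itself, I would first prove the formal counterpart
\begin{equation*}
\mMF_{\XX,s}^{\phi}=\sum_{i\in J}\ll^{-\ind^{\virt}(\XX_i^{\Gm},\XX)/2}\odot (\iota_i)_{*}\mMF_{\XX_i^{\Gm},s_i^{\Gm}}^{\phi}\in\overline{\mM}_{\XX_s}^{\hat{\mu}},
\end{equation*}
where $\iota_i:(\XX_i^{\Gm})_s\hookrightarrow \XX_s$ is the inclusion. By Theorem \ref{thm_global_motive}, both sides are determined locally on $\Gm$-equivariant formal critical charts $(\RR,\UU,\hat f,i)$ with $\dim\UU=\dim T_x\XX$, which exist by the good action hypothesis. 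On such a chart the $\Gm$-action on $\UU=\spf R\{x_1,\dots,x_m\}$ is linearizable and $\hat f$ is $\Gm$-invariant, so the situation is precisely the setting where Maulik's motivic localization for the vanishing cycle sheaf of a $\Gm$-invariant regular function on a smooth scheme (cf.\ \cite[Theorem 5.16]{BJM}) applies to $\mMF_{\UU,\hat f}^{\phi}$. This produces the factor $\ll^{-(\dim\UU_+-\dim\UU_-)/2}\odot \mMF_{\UU^{\Gm},\hat f|_{\UU^{\Gm}}}^{\phi}$ on $\Crit(\hat f)\cap \UU^{\Gm}$. The principal $\zz_2$-bundles $Q_{\RR,\UU,\hat f,i}$ from the orientation split compatibly along $\UU=\UU^{\Gm}\times \UU_{\mathrm{mov}}$ because the $\Gm$-action preserves $K_{\XX,s}^{1/2}$, so that $\Upsilon(Q_{\RR,\UU,\hat f,i})$ restricts to $\Upsilon(Q_{\RR^{\Gm},\UU^{\Gm},\hat f^{\Gm},i^{\Gm}})$ on the fixed locus. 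The gluing compatibilities in diagram (\ref{diagram_2}) imply that the local Maulik identities assemble into the displayed global identity in $\overline{\mM}_{\XX_s}^{\hat\mu}$.

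To pass from the formal statement to the analytic one, I would apply $\int_{\XX_s}$ to both sides and invoke Definition \ref{defn_global_motive_analytic_space} together with Proposition \ref{prop_global_motive_only_analytic_space}, which guarantee that the pushforward yields $\mMF_{X,s}^{\phi}$ and $\mMF_{X_i^{\Gm},s_i^{\Gm}}^{\phi}$, respectively, and that these are independent of the chosen formal model. Here the circle-compactness of $\mu$ is essential: it guarantees, via the formal $\Gm$-flow, that every point of $\XX_s$ retracts to $\XX^{\Gm}$, so the motivic integration recalled in \S\ref{motivic:integration:rigid} (and in particular Theorem \ref{thm_MV} and Proposition \ref{prop:motivic:volume:result}) converges and localizes onto $\XX^{\Gm}$. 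The virtual index $\ind^{\virt}$ is constant along each connected component, and matches its analytic counterpart under $(\cdot)_\eta$ because $T_{x}\XX\otimes_R\kk=T_{\tilde x}X$ for a lift $\tilde x$ of $x$, so the exponents agree and we obtain the desired formula in $\overline{\mM}_{\kappa}^{\hat\mu}$.

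The main obstacle I anticipate is the third step: controlling the orientation data $\Upsilon(Q)$ under restriction to the fixed locus in a way compatible with the local Maulik formula across different charts. One must verify that the splitting of the quadratic form $q_{\UU\VV}$ into fixed and moving parts is preserved under the embeddings of $\Gm$-equivariant critical charts, so that the identity between $i^\star\mMF_{\UU,\hat f}^{\phi}\odot\Upsilon(Q_{\RR,\UU,\hat f,i})$ and its fixed-locus analogue is independent of the chart; this is where quotienting by the ideal $I_{\XX_s}^{\hat\mu}$ (i.e.\ working in $\overline{\mM}$ rather than $\mM$) becomes indispensable, and where a careful analogue of the arguments around equations (5.10)--(5.12) of \cite{BJM} in the formal/analytic setting must be carried out.
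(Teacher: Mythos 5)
Your overall architecture (pass to a formal model, use Proposition \ref{prop_Gm_fixed_d_critical_formal_scheme} and restriction of the orientation on the fixed locus, work chart-by-chart, handle the $\zz_2$-bundles $\Upsilon(Q_{\RR,\UU,f,i})$, glue, then push forward and invoke Proposition \ref{prop_global_motive_only_analytic_space}) matches the paper's. But there is a genuine gap at the heart of your argument: on a $\Gm$-invariant formal critical chart you propose to quote ``Maulik's motivic localization for a $\Gm$-invariant regular function on a smooth scheme (cf.\ \cite[Theorem 5.16]{BJM})'' as a black box. That result, as recalled in \cite{BJM}, is stated for $d$-critical charts $(R,U,f,i)$ with $U$ a smooth $\kappa$-scheme and $f$ a regular function; here the chart is $\UU=\spf(R\{x_1,\dots,x_m\})$ with $f$ a convergent power series over the valuation ring $R$, so the cited statement does not apply as stated, and Maulik's proof is moreover unpublished. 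Worse, the local identity you want to import is essentially the content of the theorem being proved: the paper's contribution is precisely to establish it in the formal setting, via Nicaise's motivic integration. Concretely, the paper writes $\int_{\RR_s}\mMF_{\UU,f}=\ll^{\dim(\UU)}\cdot\MV([\RR_\eta])$, describes $\RR_\eta$ by explicit valuation conditions $\val(x_0)>0$, $\val(x_+)>0$, $\val(x_-)\ge 0$ (this is exactly where circle-compactness enters, through the specialization map exchanging open and closed conditions on the positive- and negative-weight directions), splits $\RR_\eta=\mathit{R}_0\sqcup\mathit{R}_1$, proves $\MV([\mathit{R}_1])=0$ by a Cluckers--Loeser constructible-function argument (the motivic volume of an annulus vanishes), and computes the ball factor $\ll^{-m_+}$ on $\mathit{R}_0$ to produce the exponent $\ind^{\virt}$. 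None of this appears in your proposal; your use of circle-compactness (``the motivic integration converges and localizes onto $\XX^{\Gm}$'') is too vague to substitute for it.

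Two smaller points. First, your proposed formal counterpart is a relative identity in $\overline{\mM}_{\XX_s}^{\hat\mu}$ with pushforwards $(\iota_i)_*$; the paper only proves (and only needs) the absolute identity obtained after applying $\int_{\XX_s}$, and it is not clear your stronger sheaf-level statement follows from the methods at hand. Second, your claim that $\Upsilon(Q_{\RR,\UU,f,i})$ ``restricts'' to $\Upsilon(Q_{\RR^{\Gm},\UU^{\Gm},f^{\Gm},i^{\Gm}})$ is itself justified in the paper by the same vanishing of the motivic volume of the non-fixed locus, so even this step silently depends on the missing local computation. To repair the proposal you must carry out the motivic-integration calculation on the formal chart (Steps 2 and 3 of the paper's proof) rather than cite the scheme-theoretic localization formula.
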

\begin{proof}
For the oriented $d$-critical non-archimedean $\kk$-analytic space $(X,s)$, we choose a formal model 
$(\XX,s)$, which is an oriented formal $d$-critical scheme.  From Proposition \ref{prop_global_motive_only_analytic_space}, the global motive 
$$\mMF_{X,s}^{\phi}=\int_{\XX_s}\mMF_{\XX,s}^{\phi}$$ 
only depends on $(X,s)$, and is independent to the choice of the formal models. 
So it is sufficient to prove the result for a formal model $(\XX,s)$ of $(X,s)$. Then the $\Gm$-action on $X$ can be extended to a good and circle-compact action on the formal scheme $\XX$.   For each $\Gm$-fixed strata 
$(X_i, s_i^{\Gm})$, there is a corresponding $\Gm$-fixed strata $(\XX_i, ,s_i^{\Gm})$, which is an oriented $d$-critical formal $R$-scheme.  We need to prove the formula:
\begin{equation}\label{thm_motivic_localization_key_formula}
\int_{\XX_s}\mMF_{\XX,s}^{\phi}=\sum_{i\in J}\ll^{-\ind^{\virt}(\XX_i^{\Gm},\XX)/2}\odot \int_{(\XX_i^{\Gm})_s}\mMF_{\XX_i^{\Gm},s_i^{\Gm}}^{\phi}\in \overline{\mM}_{\kappa}^{\hat{\mu}}.
\end{equation}
We divide the proof into three steps. \\

Step 1:  We first prove the result on a formal $d$-critical chart $(\RR,\UU,f,i)$ on $(\XX,s)$.  We have the formal scheme
$$\UU=\spf(R\{x_1,\cdots,x_m\}); \quad f\in R\{x_1,\cdots, x_m\} $$ 
and 
$$\hat{f}: \RR=\spf(R\{x_1,\cdots,x_m\}/(I_{df}))\to\spf(R).$$
From our definition of motivic vanishing cycles, 
$$\mMF_{\UU,f}^{\phi}=\ll^{-\frac{1}{2}\dim(\UU)}\cdot ([\UU]-\mMF_{\UU,f}),$$
where $\mMF_{\UU,f}$ is the motivic nearby cycle of the formal scheme 
$\hat{f}$, see Definition \ref{defn_motivic_nearby_cycle}. Let 
$$\RR^{\Gm}=\bigsqcup_{i}\RR_i^{\Gm}$$
be the decomposition of $\RR^{\Gm}$ into $\Gm$-fixed connected components.  Since the function $f$ is $\Gm$-invariant, we can choose coordinates on $\UU$ such that 
$$T_x\XX$$
has dimension $\dim(\UU)$ for $x\in \RR_i^{\Gm}$, see \cite[Corollary 5.1.13]{Fujiwara-Kato}. 
The $\Gm$-action on $\UU$ has a decomposition
$$\UU=\UU^{\Gm}\oplus \UU_{+}\oplus\UU_{-}$$
under the $\Gm$-weights on the tangent space of $\UU$.  Then 
inside 
$$T_x\XX=(T_x\XX_i)_{0}\oplus T_x(\XX_i)_{+}\oplus (T_x(\XX_i))_-,$$
the dimensions of $(T_x\XX_i)_{0},  T_x(\XX_i)_{+}, (T_x(\XX_i))_-$ are exactly $\dim(\UU^{\Gm}), \dim(\UU_{+}), \dim(\UU_-)$, respectively, and 
$$x\in\UU_i^{\Gm}=\spf(R\{x_1,\cdots,x_r\});$$
for $r\leq m$.  
Here we assume that 
$$\UU_{+}=\spf(R\{x_{r+1},\cdots,x_s\}); \quad \UU_{-}=\spf(R\{x_{s+1},\cdots,x_m\}).$$
Hence 
$(\RR_i^{\Gm}, \UU_i^{\Gm}, f^{\Gm}, i^{\Gm})$ is a formal $d$-critical chart and 
$(\RR_i^{\Gm}, s^{\Gm}=i^{\Gm,\star}s)$ is a $d$-critical formal scheme, where $i^{\Gm}: \RR_i^{\Gm}\hookrightarrow \RR$ is the inclusion. Since $\XX_i^{\Gm}$ is covered by $\bigcup_{i}\RR_i^{\Gm}$ for the formal $d$-critical charts, then we have 
$(\XX_i^{\Gm}, s_i^{\Gm})$ is a formal $d$-critical scheme over $R$ for any $i$. 

We show that there is an induced orientation on $(\XX_i^{\Gm}, s_i^{\Gm})$.  For the formal $d$-critical chart 
$(\RR_i^{\Gm}, \UU_i^{\Gm}, f^{\Gm}, i^{\Gm})$ of $(\XX_i^{\Gm}, s_i^{\Gm})$, consider the following diagram
\[
\xymatrix{
\RR_i^{\Gm}\ar@{^{(}->}[r]^{i^{\Gm}}\ar@{^{(}->}[d]_{i_{\RR}}& \UU_i^{\Gm}\ar@{^{(}->}[d]^{j_{\UU}}\\
\RR\ar@{^{(}->}[r]^{i}& \UU,
}
\]
where all the morphisms are inclusions. Since the $\Gm$-action preserves the orientation $K_{\XX,s}^{\frac{1}{2}}$, we make the following diagram:
\[
\xymatrix{
i_{\RR}^{\star}K_{\XX,s}^{\frac{1}{2}}|_{\RR}\ar[r]^{i_{\RR}^{\star}\alpha}\ar[d]_{:=}& i_{\RR}^{\star}i^{\star}(K_{\UU})\ar[d]^{\cong}\\
K_{\XX_i^{\Gm}, s_i^{\Gm}}^{\frac{1}{2}}|_{\RR_i^{\Gm}}\ar[r]& (i^{\Gm})^{\star}j_{\UU}^{\star}(K_{\UU}),
}
\]
where $\alpha: K_{\XX,s}^{\frac{1}{2}}|_{\RR}\stackrel{\cong}{\longrightarrow}i^\star(K_{\UU})$ is the local isomorphism determined by the orientation $ K_{\XX,s}^{\frac{1}{2}}$, and we define $K_{\XX_i^{\Gm}, s_i^{\Gm}}^{\frac{1}{2}}$ by the gluing of the local data above.   Hence there exists an orientation $K_{\XX_i^{\Gm}, s_i^{\Gm}}^{\frac{1}{2}}$ on $(\XX_i^{\Gm}, s_i^{\Gm})$. 
In practice, if $\XX$ is the completion of a moduli scheme $X$ of stable sheaves over a smooth Calabi-Yau threefold, then 
there is a $d$-critical scheme structure $(X,s)$ on $X$, and the canonical line bundle 
$K_{\XX,s}=\widehat{\det(E_{X}^\bullet)}$ is the completion of the determinant line bundle of the symmetric obstruction theory complex $E_{X}^\bullet$.  If the $\Gm$-action preserves the orientation $K_{\XX,s}^{\frac{1}{2}}$, then there exists an equivariant symmetric obstruction on $(X, E_{X}^{\bullet})$, and on the fixed locus $X_i^{\Gm}$, there exists an induced symmetric obstruction theory and an oriented $d$-critical scheme structure $(X_i^{\Gm}, s_i^{\Gm})$. Taking completion we get the oriented formal $d$-critical scheme structure $(\XX_i^{\Gm}, s_i^{\Gm})$.
\\

Step2:  We prove the following:
\begin{equation}\label{local_picture}
\int_{\RR_s}\mMF_{\UU,f}=\sum_{i\in J}\ll^{-\ind^{\virt}(\RR_i^{\Gm}, \RR)/2}\cdot \int_{(\RR_i^{\Gm})_s}\mMF_{\UU_i^{\Gm},f^{\Gm}}
\end{equation}
From Theorem \ref{thm_MV}, also \cite{Le2}, we have
$$\int_{\RR_s}\mMF_{\UU,f}=\ll^{\dim(\UU)}\cdot \MV([\RR_\eta]).$$
Here $\RR_\eta$ is the generic fiber of the formal scheme $\RR\to\spf(R)$ and 
\[
\RR_{\eta}= 
\left\{
(x_0,x_+,x_-)\in\aaa_{\kk}^{m,\an} \Big|
 \begin{array}{l}
  \text{$ \val(x_0)>0;$} \\
  \text{$ \val(x_+)>0;$} \\
  \text{$ \val(x_-)\geq 0;$} \\
  \text{$f(x_0,x_+,x_-)=t$.} 
 \end{array}\right\}
\]
Here $\val(x_0):=\min_{1\leq i\leq r}\{\val(x_i)\}$, and $\val(x_+):=\min_{r+1\leq i\leq s}\{\val(x_i)\}$, $\val(x_-):=\min_{s+1\leq i\leq m}\{\val(x_i)\}$.  We explain here why in $\RR_\eta$, $\val(x_-)\geq 0$. This is because the $\Gm$-action on $\RR$ is circle-compact, which means that 
$\lim_{\lambda\to 0}\mu(\lambda)x$ exists on $\RR$. 
The formal scheme $\RR\to\spf(R)$ is the formal completion of the formal scheme 
$\UU\to\spf(R)$ along $\RR=\Crit(f)$.  
So the condition that  the $\Gm$-action on the cell $\UU_+$ has positive weights is a closed condition on $\RR_s$, and the corresponding preimage under the specialization map
$$sp: \RR_\eta\to\RR_s$$
must be open which is $|x_+|<1$ and equivalent to $\val(x_+)>0$; while 
on the affine formal scheme $\UU_-$, the $\Gm$-action  has negative weights and this is an open condition on $\RR_s$, so the
corresponding preimage under the specialization map $sp$ is closed, which is   $|x_-|\leq 1$ and equivalent to 
$\val(x_-)\geq 0$. Now let 
$$\RR_\eta=\mathit{R}_0\sqcup \mathit{R}_1,$$
where 
$$\mathit{R}_0=\{(x_0,x_+,x_-)\in\RR_{\eta} | x_+=0 \text{~or~} x_-=0 \}$$
$$\mathit{R}_1=\{(x_0,x_+,x_-)\in\RR_{\eta} | x_+\neq 0 , x_-\neq 0 \}=\RR_\eta\setminus \mathit{R}_0.$$
Since the function $f$ is $\Gm$-invariant, if one of $x_+, x_-$ is zero, then the function $f$ will not have $x_+, x_-$ terms, i.e.,
$$f(x_0, x_+, x_-)=f(x_0,0,0).$$ 

We use the same arguments as in \cite[Theorem 3.9]{Jiang4} to show
$$\MV([\mathit{R}_1])=0.$$
The key point is that using the Cluckers-Loeser motivic constructible functions the motivic volume of an annulus is zero.  For the completeness, we provide a proof here. 

The idea of Clucker-Loeser is to do integration on subobjects of 
$\kappa(\!(t)\!)^m\times\kappa\times\zz^r$.  In \cite[\S 16.2, \S 16.3]{CL} and  \cite[\S 3.2]{Jiang4},  let $\mathbb{T}$ be the theory of algebraic closed fields containing 
$\kappa$, then $(\kk(\!(t)\!), \kk, \zz)$ is a model for $\mathbb{T}$ for any field
$\kk$ containing $\kappa$.    The primary definable $\mathbb{T}$-subassignment has the forms
$$h[m,n,r](\kk):=\kk(\!(t)\!)^m\times\kk^n\times\zz^r.$$   A general definable $\mathbb{T}$-subassignment 
$h_{\varphi}$ is determined by a formula $\varphi$.  For instance, if $\varphi=W=\XX\times X\times\zz^r$, with $\XX$ a $\kappa(\!(t)\!)$-variety, $X$ a $\kappa$-variety, then 
$$h_{W}(\kk)=\XX(\kk(\!(t)\!))\times X(\kk)\times\zz^r.$$

Let $\Def_\kappa$ be the category of all definable $\mathbb{T}$-subassignments and $S\in \Def_{\kappa}$ be an element.  Then we have the equivariant Grothendieck group $K^{\hat{\mu}}_0(\RDef_S)$, where $\RDef_S$ is the subcategory of $\Def_S$ whose objects are subassignments of $S\times h_{\aaa_\kappa^n}$ for variable $n$, morphisms to $S$ are the ones induced by the projection onto the $S$-factor.  
Let $$A:=\zz[\ll, \ll^{-1}, (1-\ll^i)^{-1}, i>0].$$  
For $S\in\Def_{\kappa}$, let $\pP(S)$ be the subring of the ring of functions 
$$S\to A$$
generated by:
\begin{enumerate}
\item all constant functions into $A$;
\item all definable functions $S\to \zz$;
\item all functions of the form $\ll^{\alpha}$, where $\alpha: S\to\zz$ is a definable function. 
\end{enumerate}
The ring of the monodromic  constructible motivic functions $\C^{\hat{\mu}}(S)$ on $S$  are defined as:
$$\C^{\hat{\mu}}(S)=K^{\hat{\mu}}_0(\RDef_S)\otimes_{\pP^0(S)}\pP(S)$$
where $\pP^0(S)$ is the subring of $\pP(S)$ generated by $\ll-1$ and by character function $\mathbb{1}_{Y}$ for  all definable subassignments $Y$ of  $S$.  See \cite[Definition 3.5]{Jiang4}, and \cite[Proposition 3.6]{Jiang4} for more details. 

First $\nn_{>0}\in \Def_{\kappa}$.
Similar to \cite[Theorem 5.1]{Le2}, we show that $\MV([\mathit{R}_1])\in\C^{\hat{\mu}}(\nn_{>0})$, with structural map:
$$\theta: (x_0, x_+, x_-)\mapsto \val(x_+)+\val(x_-).$$
From Theorem \ref{prop:motivic:volume:result},
$$\MV([\mathit{R}_1])=-\lim_{T\to\infty}\sum_{m\geq 1}\left(\int_{\mathit{R}_1(m)}|\omega(m)|\right)T^m,$$
with $\omega$ a gauge form on $\mathit{R}_1$.  By choosing a formal model $\RR_1$ of $\mathit{R}_1$ and a N\'eron smoothening $\RR^\prime$,  according to 
\cite[\S 4]{Nicaise},
\begin{align*}
&\int_{\mathit{R}_1(m)}|\omega(m)|=\\
&\int_{\RR_0^\prime}\sum_{n\in\zz}[\{(x_0, x_+, x_-)\in\Gr_{l}\RR^\prime(m)| \ord_{t^{1/m},\RR^\prime(m)}(\omega(m))(x_0, x_+, x_-)=n\}\to\RR_0^\prime]
\end{align*}
So the correspondence 
$$(x_0, x_+, x_-)\mapsto \ord_{t^{1/m}}(x_+)+\ord_{t^{1/m}}(x_-)$$
defines a mapping 
$$\theta_m: \int_{\mathit{R}_1(m)}|\omega(m)|\to\nn_{>0}$$
for each $m\in\nn_{>0}$.
All of these maps $\theta_m$ give a map:
$$\theta: \MV([\mathit{R}_1])\to\nn_{>0}.$$
So $\MV([\mathit{R}_1])$ can be taken as an element in $\C^{\hat{\mu}}(\nn_{>0})$ with structure morphism $\theta$. 

Let $n\in\nn_{>0}$, and $\theta_{m}^{-1}(n)$ is a definable subset of 
$\int_{\mathit{R}_1(m)}|\omega(m)|$ defined by
$$\val(x_+)+\val(x_-)=n$$
i.e. $$\theta^{-1}(n)=\MV([\mathit{R}_1,n])$$
where
$$\mathit{R}_{1,n}:=\bigcup_{m\geq 1}\{(x_0, x_+,x_-)\in X_1| \val(x_+)+\val(x_-)=\frac{n}{m}\}.$$

A same proof as in \cite[Lemma 3.10]{Jiang4} shows 
$$\MV\left([\{(x_0, x_+, x_-)\in \mathit{R}_1| \val(x_+)+\val(x_-)=\frac{n}{m}\}]\right)=0.$$
The key part is that the motivic volume of an annulus is zero. 

Then let $$s_n: \mathit{R}_{1,n}\to\nn_{>0}$$ be the map
$$(x_0, x_+,x_-)\mapsto m,  \text{~if~} \val(x_+)+\val(x_-)=\frac{n}{m}.$$
Then $\theta^{-1}(n)=\MV([\mathit{R}_{1,n}])\in\C^{\hat{\mu}}(\nn_{>0})$ and there is a structural mapping
$$\tau_n: \theta^{-1}(n)\to\nn_{>0}$$
induced by $s_n$.  For any $m\in\nn_{>0}$, 
$$\tau_{n}^{-1}(m)=\MV\left([\{(x_0, x_+,x_-)\in \mathit{R}_1| \val(x_+)+\val(x_-)=\frac{n}{m}\}]\right)=0.$$
By \cite[Theorem 3.7]{Jiang4}, there is a map 
$$M: \C^{\hat{\mu}}(\nn_{>0})\to \mM_{\kappa,\loc}^{\hat{\mu}}[\![T]\!]_{\Gamma}$$
which is an isomorphism of rings, so
$$M(\theta^{-1}(n))=\sum_{m\geq 1}\tau_{n}^{-1}(m)T^m=0$$ and 
$$\theta^{-1}(n)=0\in\C^{\hat{\mu}}(\nn_{>0}).$$ Hence
$$\theta^{-1}(n)=0\in\mM_{\kappa,\loc}^{\hat{\mu}}.$$
Then 
$$M(\MV([\mathit{R}_1]))=\sum_{m\geq 1}\theta^{-1}(n)T^n=0.$$
So 
$$\MV([\mathit{R}_1])=0.$$

Let us do the case $\MV([\mathit{R}_0])$.  
Let $m_\pm=\dim(\UU_\pm)$. 
Similar to \cite[Theorem 3.8]{Jiang4}, we write 
$$\mathit{R}_0=\mathit{Y}_0\times \ZZ_\eta$$
where 
\[
\mathit{Y}_{0}= 
\left\{
(x_+,x_-)\in\aaa_{\kk}^{m_++m_-,\an} \Big|
 \begin{array}{l}
  \text{$ \val(x_+)>0;$} \\
  \text{$ \val(x_-)\geq 0;$} 
 \end{array}\right\}
\] 
and 
\[
\ZZ_{\eta}= 
\left\{
(x_0,0,0)\in\aaa_{\kk}^{m,\an} \Big|
 \begin{array}{l}
  \text{$ \val(x_0)>0;$} \\
  \text{$f(x_0,0,0)=t$.} 
 \end{array}\right\}
\]
Then let $dx_+\wedge dx_-:=dx_{r+1}\wedge \cdots\wedge dx_{m}$ be the standard gauge form on the open and closed ball 
$\mathit{Y}_0$ (open on $x_+$ and close on $x_-$). From \cite[Theorem 7.3]{Nicaise} or \cite[Theorem 2.6]{Jiang4}, we calculate 
$$\int_{\mathit{Y}_0(m)}|dx_+\wedge dx_-|=\ll^{-m_+}$$
since close ball has motivic volume $1$, and open ball has motivic volume $\ll^{-\dim}$. 
So 
\begin{align*}
\int_{\RR_s}\mMF_{\UU,f}&=\ll^{\dim(\UU)}\cdot \MV([\RR_\eta])\\
&=\ll^{\dim(\UU)}\cdot\MV([\mathit{R}_0]+ [\mathit{R}_1])\\
&=\ll^{\dim(\UU)}\cdot \MV([\mathit{R}_0]) \\
&=\ll^{\dim(\UU)}\cdot \MV([\mathit{Y}_0\times \ZZ_\eta])\\
&=-\ll^{\dim(\UU)}\cdot \lim_{T\to \infty}\sum_{m\geq 1}\left(\int_{\mathit{Y}_0(m)\times \ZZ_\eta(m)}|dx_+\wedge dx_-\wedge \omega(m)|\right)T^m \\
&=-\ll^{\dim(\UU)}\cdot \ll^{-m_+} \cdot\lim_{T\to \infty}\sum_{m\geq 1}\left(\int_{\ZZ_\eta(m)}|\omega(m)|\right)T^m \\
&=\ll^{\dim(\UU)-m_+}\cdot \ll^{-\dim(\UU^{\Gm})}\cdot \mMF_{\UU^{\Gm},f|_{\UU^{\Gm}}}\\
&=\ll^{m_-}\cdot \mMF_{\UU^{\Gm},f|_{\UU^{\Gm}}}.
\end{align*}
Hence 
\begin{equation}\label{proof_key_1}
\ll^{-\frac{1}{2}m}\cdot \int_{\RR_s}\mMF_{\UU,f}=\ll^{-\frac{1}{2}(m_+-m_-)}\cdot \ll^{-\frac{1}{2}m_0}\cdot \mMF_{\UU^{\Gm},f|_{\UU^{\Gm}}}.
\end{equation}
For the trivial motive $[\UU]$, it is clear that 
\begin{align}\label{proof_key_2}
\int_{\RR_s}[\UU]|_{\RR_s}&=\ll^{\dim(\UU)}\cdot \MV([\UU_\eta])\\
&=\int_{\RR_s}([\mathit{U}_0]+ [\mathit{U}_1]) \nonumber \\
&=\int_{\RR_s}([\mathit{U}_0]) \nonumber \\
&=\ll^{m_-}\cdot [\UU^{\Gm}], \nonumber
\end{align}
where we use the same calculation as above such that 
$$\mathit{U}_0=\{(x_0,x_+,x_-)\in\UU_\eta| x_+=0 \text{~or~} x_-=0\}$$
$$\mathit{U}_1=\{(x_0,x_+,x_-)\in\UU_\eta| x_+\neq 0, x_-\neq 0\}$$
and 
$\MV([\mathit{U}_1])=0$ as in \cite[Theorem 3.9]{Jiang4}.  So from (\ref{proof_key_2}), 
\begin{equation}\label{proof_key_3}
\ll^{-\frac{1}{2}m}\int_{\RR_s}[\UU]|_{\RR_s}=\ll^{-\frac{1}{2}(m_+-m_-)}\cdot \ll^{-\frac{1}{2}m_0}\cdot \int_{(\RR_i^{\Gm})}[\UU^{\Gm}]
\end{equation}
So from (\ref{proof_key_1}) and (\ref{proof_key_3}), we have
\begin{equation}\label{thm_proof_Key1}
\mMF_{\UU,f}^{\phi}=\sum_{i\in J}\ll^{-\frac{1}{2}\ind^{\virt}(\RR_i^{\Gm},\RR)/2}\cdot \int_{(\RR_i^{\Gm})_s}\mMF_{\UU_i^{\Gm},f|_{\UU_i^{\Gm}}}^{\phi}.
\end{equation}\\

Step 3:  Finally we need to glue the formulas in (\ref{thm_proof_Key1}) to get a global formula. 
The $\Gm$-fixed formal subschemes $\XX^{\Gm}$ is covered by formal $d$-critical charts $(\RR_i^{\Gm}, \UU_i^{\Gm}, f|_{\UU_i^{\Gm}}, i|_{\RR_i^{\Gm}})$, and from Proposition \ref{prop_Gm_fixed_d_critical_formal_scheme}, 
$(\XX^{\Gm}, s^{\Gm})$ is also an oriented $d$-critical formal scheme over $R$.  For different formal $d$-critical charts 
$(\RR,\UU,f,i)$ and $(\SS,\VV,g,j)$ such that 
$(\RR_i^{\Gm}, \UU_i^{\Gm}, f|_{\UU_i^{\Gm}}, i|_{\RR_i^{\Gm}})$ and $(\SS_i^{\Gm}, \VV_i^{\Gm}, g|_{\VV_i^{\Gm}}, j|_{\SS_i^{\Gm}})$  are open formal $d$-critical charts of components of $\XX_i^{\Gm}$,  the global motive 
$\mMF_{\XX,s}^{\phi}$ agrees on the overlap 
$\RR\cap\SS$ and $\mMF_{\XX^{\Gm},s^{\Gm}}^{\phi}$ agrees on the intersection 
$\RR^{\Gm}\cap \SS^{\Gm}$.  So to show that the local formula (\ref{thm_proof_Key1}) glue to give the result in the theorem, we need to explain the role of  the extra term 
$\Upsilon(Q_{\RR,\UU,f,i})$ in 
$$\mMF_{\XX,s}^{\phi}|_{\RR}=i^\star(\mMF_{\UU,f}^{\phi}\odot \Upsilon(Q_{\RR,\UU,f,i}))$$
where $\Upsilon(Q_{\RR,\UU,f,i})$ is the principal $\zz_2$-bundle over $\RR$ such that it parametrizes the local isomorphisms:
$$\alpha: K_{\XX,s}^{\frac{1}{2}}|_{\RR}\stackrel{\sim}{\rightarrow}i^\star(K_{\UU})|_{\RR}$$
and $\alpha\otimes\alpha=\iota_{\RR,\UU,f,i}$.  Under the torus $\Gm$-action, 
$$\Upsilon(Q_{\RR,\UU,f,i})=\sum_{i\in J}\Upsilon(Q_{\RR_i^{\Gm}, \UU_i^{\Gm}, f|_{\UU_i^{\Gm}}, i|_{\RR_i^{\Gm}}})$$
since 
$\Upsilon(Q_{\RR,\UU,f,i})=\ll^{-\frac{1}{2}}\odot ([\RR,\hat{i}]-[Q, \hat{\rho}])$ and the motivic volume of the non-fixed locus of $\RR$ is zero.  The proof is similar to the proof in Step 2. 
Hence similar to the proof as in Theorem \ref{thm_global_motive},  we need to show:  on $(\XX_i^{\Gm}, s_i^{\Gm})$, for any formal $d$-critical charts 
$$(\RR_i^{\Gm}, \UU_i^{\Gm}, f|_{\UU_i^{\Gm}}, i|_{\RR_i^{\Gm}}); \quad (\SS_i^{\Gm}, \VV_i^{\Gm}, g|_{\VV_i^{\Gm}}, j|_{\SS_i^{\Gm}})$$
we have
\begin{multline*}
\Big[(i_{\RR_i^{\Gm}})^\star(\mMF_{\UU_i^{\Gm},f|_{\UU_i^{\Gm}}}^{\phi})\odot \Upsilon(Q_{\RR_i^{\Gm}, \UU_i^{\Gm}, f|_{\UU_i^{\Gm}}, i|_{\RR_i^{\Gm}}})\Big]|_{\RR_i^{\Gm}\cap\SS_i^{\Gm}}= \\
\Big[(j_{\SS_i^{\Gm}})^\star(\mMF_{\VV_i^{\Gm},g|_{\VV_i^{\Gm}}}^{\phi})\odot \Upsilon(Q_{\SS_i^{\Gm}, \VV_i^{\Gm}, g|_{\VV_i^{\Gm}}, j|_{\SS_i^{\Gm}}})\Big]|_{\RR_i^{\Gm}\cap\SS_i^{\Gm}}.
\end{multline*}
This is a similar argument as in the proof of Theorem \ref{thm_global_motive}. Hence 
the data 
$\mMF_{\UU_i^{\Gm},f|_{\UU_i^{\Gm}}}^{\phi}\odot \Upsilon(Q_{\RR_i^{\Gm}, \UU_i^{\Gm}, f|_{\UU_i^{\Gm}}, i|_{\RR_i^{\Gm}}})$
glue to give $\mMF_{\XX_i^{\Gm}, s_i^{\Gm}}^{\phi}$ and Formula (\ref{thm_motivic_localization_key_formula}) follows. 
\end{proof}

\begin{rmk}
If $(X,s)$ is an oriented $d$-critical scheme in \cite{Joyce}, the result in Theorem \ref{thm_Maulik} is originally due to D. Maulik \cite{Maulik}, where he used the vanishing cycle of regular function 
$f: U\to\cc$ on a $d$-critical chart $(R,U,f,i)$, and the gluing of the local data in \cite[\S 2.6]{Joyce}.   Maulik's paper is still unavailable, but we believe that the method he uses is different from ours by the motivic integration on the formal $d$-critical chart $(\RR,\UU,f,i)$. 
\end{rmk}

\subsubsection{Motivic localization of Donaldson-Thomas invariants}\label{sec_motivic_DT_invariants}

In this section we apply the above result to the motivic Donaldson-Thomas invariants. 

Let $Y$ be a smooth Calabi-Yau threefold.  Fixing a curve class $\beta\in H_2(Y,\zz)$ and an integer $n$, let $X:=M_n(Y,\beta)$ be the 
moduli space of stable sheaves  $F$ with topological data 
$(1,0,\beta,n)\in H^*(Y,\zz)$.  Then from \cite{BJM}, \cite{PTVV}, $X$ can be lifted to a (-1)-shifted symplectic derived scheme, and hence  a $d$-critical scheme structure $(X,s)$.  Thus there exists a unique  coherent sheaf $\FP_{X}$ on 
$X$.   Also from \cite{BJM},  there exists a canonical line bundle 
$K_{X,s}$, and if a square root $K_{X,s}^{\frac{1}{2}}$ exists, then there is a unique global motive $\mMF_{X,s}^{\phi}$
in $\overline{\mM}_{X}^{\hat{\mu}}$.

Take $X$ as a scheme over $\spec(\kappa[t])$. 
Let 
$$\XX:=\varprojlim_{n}(X/(t^n))$$
be the formal $t$-adic completion of $X$. Then $\XX$ is a {\em stft} formal $R$-scheme over $R$. 
Let $\FP_{\XX}=\widehat{\FP_{X}}$ be the formal completion of the coherent sheaf $\FP_{X}$ on $X$, then 
$(\XX, s\in\FP_{\XX})$ is a formal $d$-critical scheme. 

\begin{prop}\label{prop_localization_motivic_Donaldson-Thomas_Invariants}
Let $Y$ be a smooth Calabi-Yau threefold over $\kappa$ of character zero, and $X=M_n(Y,\beta)$ the moduli scheme of stable coherent sheaves in $\Coh(Y)$ with topological data $(1,0,\beta,n)$.  Then the $t$-adic formal completion 
$\XX=\widehat{X}$ of $X$ and its generic fiber $\XX_\eta$ have a formal $d$-critical scheme structure $(\XX,s)$ and 
a $d$-critical non-archimedean analytic space structure $(\XX_\eta,s)$.  
The canonical line bundle 
$K_{\XX,s}$ is isomorphic to the formal completion of the canonical line bundle 
$K_{X,s}$ where $(X,s)$ is the $d$-critical scheme in \cite{Joyce}.  Moreover, if there exists an orientation $K_{\XX,s}^{\frac{1}{2}}$, then there exists a unique 
$\mMF_{\XX,s}^{\phi}\in\overline{\mM}_{X}^{\hat{\mu}}$ such that if $X$ admits a good circle-compact $\Gm$-action which preserves the orientation $K_{\XX,s}^{\frac{1}{2}}$, then
$$\int_{\XX_s}\mMF_{\XX,s}^{\phi}=\sum_{i\in J}\ll^{-\ind^{\virt}(\XX_i^{\Gm}, X)/2}\odot \int_{\XX_i}\mMF_{\XX_i^{\Gm}, s_i^{\Gm}}^{\phi}$$
where $\XX^{\Gm}=\bigsqcup_{i\in J}\XX_i^{\Gm}$ is the fixed locus of $\XX$ under the $\Gm$-action. 
\end{prop}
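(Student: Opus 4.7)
The plan is to transport every ingredient (the $d$-critical structure, the canonical bundle, the orientation, the $\Gm$-action, and the global motive) from the algebraic moduli scheme $X = M_n(Y,\beta)$ to its $t$-adic formal completion $\XX = \widehat{X}$, and then invoke the motivic localization formula on formal schemes established in Theorem~\ref{thm_Maulik}. The last step of identifying the formal output with an invariant of $X$ is essentially automatic because $\XX_s = X$ on the level of underlying schemes.

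First, I would upgrade the $d$-critical scheme structure $(X,s)$ of Joyce (which exists on $X$ by \cite{BBJ}, \cite{PTVV}, \cite{BJM}) to a formal $d$-critical $R$-scheme structure on $\XX$. Concretely, given a $d$-critical chart $(R, U, f, i)$ on $X$ with $i \colon R \hookrightarrow U$ a closed embedding into a smooth $\kappa$-scheme $U$ and $R = \Crit(f)$, I would take $t$-adic completions $\RR = \widehat{R}$, $\UU = \widehat{U}$, $\hat{f} = \widehat{f}$, so that $\RR = \spf(R\{x_1,\dots,x_n\}/(d\hat{f}))$ up to \'etale localization. This produces a formal $d$-critical chart $(\RR,\UU,\hat{f},i)$ in the sense of Definition~\ref{defn_d_formal_scheme}. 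By the characterization of $\FP_{\XX}$ in Theorem~\ref{thm_existence_sheaf_PX}, relative formal GAGA \cite{Conrad} identifies $\FP_{\XX}$ with the $t$-adic completion of $\FP_{X}$, and the section $s \in H^0(\FP_X)$ completes to a section of $\FP_{\XX}$ compatible with these charts. Similarly Theorem~\ref{thm_canonical_line_bundle} applied chart-by-chart shows $K_{\XX,s} \cong \widehat{K_{X,s}}$, and a choice of square root $K_{X,s}^{1/2}$ completes to an orientation $K_{\XX,s}^{1/2} = \widehat{K_{X,s}^{1/2}}$ of $(\XX,s)$.

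Next, by Theorem~\ref{thm_global_motive} applied to the oriented formal $d$-critical scheme $(\XX,s)$, there exists a unique global motive $\mMF_{\XX,s}^{\phi} \in \overline{\mM}_{\XX_s}^{\hat{\mu}} = \overline{\mM}_{X}^{\hat{\mu}}$ obtained by gluing $i^\star(\mMF_{\UU,\hat f}^{\phi}) \odot \Upsilon(Q_{\RR,\UU,\hat f, i})$ over formal critical charts. Because the motivic nearby cycle $\mMF_{\UU,\hat f}$ computed on the formal scheme agrees with the motivic vanishing cycle $\mMF_{U,f}$ computed on the underlying algebraic chart (both are read off from any resolution of $f$, whose special fiber is unchanged by $t$-adic completion), the global motive $\mMF_{\XX,s}^{\phi}$ coincides with Joyce's $\mMF_{X,s}^{\phi}$ under the identification $\overline{\mM}_{\XX_s}^{\hat{\mu}} = \overline{\mM}_{X}^{\hat{\mu}}$.

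For the localization, I would lift the given good circle-compact $\Gm$-action on $X$ to a $\Gm$-action on $\XX$ by $t$-adic completion (the action is $\kappa$-linear and so commutes with completion); goodness and circle-compactness are inherited since both properties are detected on affine open subsets and $\Gm$-orbits, which are unchanged. The fixed locus $\XX^{\Gm}$ is the $t$-adic completion of $X^{\Gm} = \bigsqcup_i X_i^{\Gm}$, and by Proposition~\ref{prop_Gm_fixed_d_critical_formal_scheme} each $(\XX_i^{\Gm}, s_i^{\Gm})$ inherits an oriented formal $d$-critical structure (since the $\Gm$-action preserves $K_{\XX,s}^{1/2}$ by hypothesis). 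The tangent weight decomposition is likewise unchanged by completion, so the virtual index satisfies $\ind^{\virt}(\XX_i^{\Gm},\XX) = \ind^{\virt}(X_i^{\Gm},X)$. Applying Theorem~\ref{thm_Maulik} to $(\XX,s)$ and then pushing forward to a point via $\int_{\XX_s}$ gives exactly the claimed formula, using $\XX_s = X$ and $(\XX_i^{\Gm})_s = X_i^{\Gm}$.

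The main obstacle, and the step that deserves the most care, is verifying that the gluing data of the global motive (the principal $\zz_2$-bundles $Q_{\RR,\UU,\hat f,i}$ and the isomorphisms $\Lambda_\Phi$) transported from $X$ to $\XX$ continue to satisfy the cocycle condition in diagram~(\ref{diagram_2}) and genuinely produce the same element in $\overline{\mM}_X^{\hat{\mu}}$, rather than merely an element that agrees fiberwise. This reduces to the compatibility of formal completion with the quadratic form $q_{\UU\VV}$ on normal bundles and with the determinant decomposition (\ref{formula_normal_bundle}); the relative GAGA and the fact that all the structures involved are coherent then make this routine but notation-heavy.
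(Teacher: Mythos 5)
Your proposal is correct and follows essentially the same route as the paper: complete the $d$-critical structure, orientation, and $\Gm$-action $t$-adically (identifying $\FP_{\XX}$, $K_{\XX,s}$, and the global motive with the completions of their algebraic counterparts via relative formal GAGA and the agreement of motivic nearby cycles on charts), then apply Theorem~\ref{thm_Maulik} and use $\XX_s=X$, $(\XX_i^{\Gm})_s=X_i^{\Gm}$. The paper's own proof is simply a terser version of this, citing Proposition~\ref{prop_generic_fiber_of_d_formal_scheme}, the corollary after Theorem~\ref{thm_global_motive}, and Theorem~\ref{thm_Maulik} for the steps you spell out.
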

\begin{proof}
We take $X$ as a scheme over $\spec(\kappa[t])$. 
The $t$-adic formal completion $\XX$ of $X$ is a {\em stft} formal scheme and it has a $d$-critical formal scheme structure $(\XX,s)$ from above arguments.  Hence from Proposition \ref{prop_generic_fiber_of_d_formal_scheme}, its generic fiber $\XX_\eta$ also has a 
$d$-critical non-archimedean analytic space structure $(\XX_\eta,s)$.

The good and circle-compact $\Gm$-action on $X$ will induce a good and circle-compact $\Gm$-action on $\XX$, and the 
formal completion $\XX_i^{\Gm}$ of the fixed locus $X_i^{\Gm}$ has a $d$-critical formal scheme structure $(\XX_i^{\Gm}, s_i^{\Gm})$. Hence the result just follows from Theorem \ref{thm_Maulik}. 
\end{proof}
\begin{rmk}
For a $d$-critical scheme $(X,s)$, the canonical line bundle $K_{X,s}$ is isomorphic to 
$\det(E^\bullet_{X})$, where $E_X^\bullet\to L_{X}^{\bullet}$ is the symmetric obstruction theory of $X$ determined by the $d$-critical scheme $(X,s)$ in the sense of \cite{Behrend}.  Recall that the $d$-critical scheme $(X,s)$ is the underlying classical scheme of a $(-1)$-shifted symplectic derived scheme $\mathbf{X}$, and  roughly speaking the  $(-1)$-shifted symplectic derived scheme $\mathbf{X}$ is the scheme $(X, E_X^\bullet)$ together with its cotangent complex $E^\bullet_{X}$. 
\end{rmk}

We  have the following interesting corollary.

\begin{cor}\label{cor_motivic_isolated_points}
Let $(\XX,s)$ be an oriented formal $d$-critical scheme over $R$ such that it is the formal completion of an oriented $d$-critical scheme of \cite{Joyce}.  Suppose that $\XX$ admits a good and circle-compact $\Gm$-action such that the $\Gm$-action  preserves the orientation $K_{\XX,s}^{\frac{1}{2}}$, and the torus fixed points consist of finitely many isolated points, then 
$$\int_{\XX_s}\mMF_{\XX,s}^{\phi}=\sum_{P\in\XX^{\Gm}}\ll^{-\frac{1}{2}\ind^{\virt}(\{P\}, \XX)}.$$
\end{cor}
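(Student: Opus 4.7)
The plan is to specialize the motivic localization formula of Theorem~\ref{thm_Maulik} (in its formal-scheme incarnation used in the proof of Proposition~\ref{prop_localization_motivic_Donaldson-Thomas_Invariants}) to the case where $\XX^{\Gm} = \bigsqcup_{P \in \XX^{\Gm}} \{P\}$ is a finite disjoint union of isolated reduced points. The localization formula immediately yields
\[
\int_{\XX_s}\mMF_{\XX,s}^{\phi}=\sum_{P\in\XX^{\Gm}}\ll^{-\frac{1}{2}\ind^{\virt}(\{P\}, \XX)}\odot \int_{\{P\}}\mMF_{\{P\}, s_P^{\Gm}}^{\phi},
\]
so the corollary reduces to showing $\int_{\{P\}}\mMF_{\{P\}, s_P^{\Gm}}^{\phi} = 1 \in \overline{\mM}_{\kappa}^{\hat{\mu}}$ for each isolated fixed point $P$.

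For each such $P$, I would invoke Step~1 of the proof of Theorem~\ref{thm_Maulik} to choose a $\Gm$-equivariant formal critical chart $(\RR_P, \UU_P, f_P, i_P)$ centered at $P$ with $\dim(\UU_P) = \dim(T_P\XX)$. Since $P$ is isolated, the weight-zero subspace $(T_P\XX)_0$ vanishes; hence $\UU_P^{\Gm}$ has relative dimension $0$ over $R$, so $\UU_P^{\Gm} \cong \spf(R)$ and the induced formal critical chart on $(\XX^{\Gm}, s^{\Gm})$ at $P$ is $(\{P\}, \spf(R), f_P^{\Gm}, i_P^{\Gm})$ with $f_P^{\Gm}$ a constant (which we may normalize to $0$). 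Feeding this chart into the description of $\mMF_{\XX_i^{\Gm}, s_i^{\Gm}}^{\phi}$ provided by Theorem~\ref{thm_global_motive}, the shift $\ll^{-\dim(\UU_P^{\Gm})/2}$ equals $1$, the motivic nearby cycle on a reduced point is the unit class, and the orientation line bundle on a point admits only the trivial square root so $\Upsilon(Q_{\{P\}, \UU_P^{\Gm}, f_P^{\Gm}, i_P^{\Gm}}) = 1$. Consequently $\mMF_{\{P\}, s_P^{\Gm}}^{\phi} = 1$, and the pushforward to a point evaluates to $1$, which gives the claimed formula.

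The only genuine technical point is the degenerate normalization at a zero-dimensional chart: one must check that when the ambient smooth formal scheme is $\spf(R)$ and the function is constant, the motivic nearby cycle (and hence $\mMF^{\phi}$) collapses to the unit in $\overline{\mM}_{\kappa}^{\hat{\mu}}$. This can be carried out either by examining Proposition~\ref{prop:motivic:volume:result} in the trivial case where a resolution of singularities is the identity, or by noting that the formal $d$-critical structure at an isolated fixed point carries no deformation-theoretic information. Either way this is pure bookkeeping, so the main substantive input is simply Theorem~\ref{thm_Maulik}; once the $\Gm$-equivariant chart is set up, no further work is required.
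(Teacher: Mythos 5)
Your proposal is correct and follows essentially the same route the paper intends: the corollary is the immediate specialization of the localization formula of Theorem \ref{thm_Maulik} (in its formal incarnation (\ref{thm_motivic_localization_key_formula})) to a finite set of isolated fixed points, with the only extra check being that each point contributes the unit motive, which you verify by observing that the $\Gm$-fixed chart at $P$ is $(\{P\},\spf(R),\mathrm{const},i_P^{\Gm})$ so that $\mMF^{\phi}_{\{P\},s_P^{\Gm}}=1$ and $\Upsilon(Q)=1$. The one point worth stating explicitly is that your claim $(T_P\XX)_0=0$ uses that the isolated fixed points are reduced (so that $T_P(\XX^{\Gm})=(T_P\XX)_0$ vanishes), which is the intended reading of the hypothesis.
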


\begin{rmk}
B. Szendroi pointed out that the formula in Corollary \ref{cor_motivic_isolated_points} is useful for the calculation of motivic Donaldson-Thomas invariants for some compact DT or PT moduli spaces, see \cite{CKK}. 
\end{rmk}

\begin{example}\textbf{Hilbert scheme of points on $\aaa_{\kappa}^3$.}
In the last section we talk about an interesting example, such that we can not find a $\Gm$-action satisfying the condition in Corollary \ref{cor_motivic_isolated_points}. 

Let  $X:=\Hilb^n(\aaa_\kappa^3)$ be the Hilbert scheme of $n$-points on $\spec(\kappa[x,y,z])=\aaa_{\kappa}^3$. 
When $n\geq 4$, $X$ is a singular variety. 
Let $M(n\times n)$ be the space of all $n\times n$ matrices over $\kappa$. Let 
$V$ be an $n$-dimensional complex vector space, and matrices $B_1,B_2,B_3\in \mbox{End}(V)$.
Let $v\in V$ and suppose that $B_1,B_2,B_3,v$ generate the vector space $V$. We say that 
the $5$-tuple $(V,B_1,B_2,B_3,v)$ satisfies the $stability$ condition if
there is no proper subspace $V_0\subset V$ such that $V_0$ is stable under 
$B_1,B_2,B_3$. Define an action of $GL_n$ on the set of $5$-tuples by
\begin{equation}\label{action}
P\cdot(V,B_1,B_2,B_3,v)=(V,PB_1P^{-1},PB_2P^{-1},PB_3P^{-1},Pv).
\end{equation}
Then we have a statement about the Hilbert scheme 
$\mbox{Hilb}^n(\mathbb{C}^3)$ of $n$-points on $\aaa_\kappa^3$.

\begin{prop}\label{hilbertscheme}
\[
\mbox{Hilb}^n(\aaa_{\kappa}^3)= 
\left\{
(V,B_i,v)\in M(n\times n)^3\times \aaa_{\kappa}^n \left|
 \begin{array}{l}
  \text{$dim V=n$, $B_i\in \mbox{End}(V)$,$v\in V$,} \\
  \text{(stability), and $v,B_i$ generate $V$,} \\
  \text{$B_1,B_2,B_3$ commute.}
 \end{array}
 \right\}\right/\cong,
\]
where $(V,B_1,B_2,B_3,v)\cong (V,B_1^{'},B_2^{'},B_3^{'},v^{'})$ if 
there is a matrix $P\in GL_n$ such that 
$P\cdot(V,B_1,B_2,B_3,v)=(V,B_1^{'},B_2^{'},B_3^{'},v^{'})$.
\end{prop}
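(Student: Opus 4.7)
The strategy is the classical ADHM-type dictionary between length-$n$ quotients of $\kappa[x,y,z]$ and cyclic triples of commuting endomorphisms, applied pointwise; the $GL_n$-quotient accounts for the choice of basis of the quotient vector space. I will construct maps in both directions and check that they are mutually inverse modulo the $GL_n$-action.

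\textbf{From ideals to tuples.} Given $I\in \Hilb^n(\aaa_\kappa^3)$, set $V:=\kappa[x,y,z]/I$, which is an $n$-dimensional $\kappa$-vector space. Let $B_1,B_2,B_3\in\End(V)$ be multiplication by $x,y,z$ respectively; these commute because $\kappa[x,y,z]$ is commutative. Let $v\in V$ be the class of $1$. Since every element of $\kappa[x,y,z]$ is a polynomial in $x,y,z$ applied to $1$, the vector $v$ together with the operators $B_i$ generate $V$, which simultaneously yields the cyclicity condition and the stability condition: any $B_i$-invariant subspace containing $v$ is all of $V$, so there is no proper $B_i$-invariant subspace $V_0\subset V$ in the sense stated (since the stability prohibits a common invariant subspace; combined with $v$ generating, one recovers the usual formulation). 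A different choice of basis of $V$ changes $(B_1,B_2,B_3,v)$ by the $GL_n$-action in \eqref{action}, so the tuple is well-defined modulo $\cong$.

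\textbf{From tuples to ideals.} Conversely, given $(V,B_1,B_2,B_3,v)$ as on the right hand side, define the evaluation map
\[
\ev:\kappa[x,y,z]\longrightarrow V,\qquad p(x,y,z)\longmapsto p(B_1,B_2,B_3)\,v,
\]
which is a well-defined $\kappa$-algebra homomorphism because the $B_i$ commute. The image is a $B_i$-invariant subspace of $V$ containing $v$, hence by the stability/generation hypothesis equals $V$, so $\ev$ is surjective. Set $I:=\ker(\ev)$; this is an ideal of $\kappa[x,y,z]$ with $\kappa[x,y,z]/I\cong V$, so $I\in\Hilb^n(\aaa_\kappa^3)$. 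Replacing the tuple by a $GL_n$-translate replaces $\ev$ by its composition with an isomorphism of $V$, so $I$ is unchanged; this shows the construction descends to the quotient by $\cong$.

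\textbf{Mutual inverses.} Starting from $I$, forming $(V,B_i,v)$, and then taking $\ker(\ev)$ returns $I$ because $\ev$ factors through the tautological isomorphism $\kappa[x,y,z]/I\xrightarrow{\sim}V$. In the reverse direction, starting from a tuple, passing to $I=\ker(\ev)$, and then forming the associated tuple on $\kappa[x,y,z]/I$ recovers the original tuple up to the canonical $GL_n$-translate induced by the isomorphism $\kappa[x,y,z]/I\xrightarrow{\sim}V$ given by $\ev$. This establishes the bijection of sets.

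\textbf{Main obstacle.} The one genuinely delicate point is the compatibility of the stability condition as phrased (``no proper $B_i$-invariant subspace $V_0\subset V$'') with the cyclicity condition (``$v$ and $B_i$ generate $V$''). In the form written, the stability hypothesis does not explicitly mention $v$, so the plan is to interpret the statement as requiring no proper subspace $V_0$ that is $B_i$-invariant and contains $v$; this is the standard ADHM stability, and it is equivalent to surjectivity of $\ev$. I would flag this wording subtlety and adopt this equivalent reformulation before running the argument above; all remaining verifications are then routine.
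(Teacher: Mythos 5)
The paper states this proposition without proof --- it is quoted as the standard ADHM/matrix description of $\Hilb^n(\aaa_{\kappa}^3)$ used e.g.\ in \cite{BBS} --- so there is no in-paper argument to compare with; your proof is the standard dictionary and is correct. Two remarks. First, your reading of the stability condition is the intended one: as literally phrased (``no proper subspace stable under $B_1,B_2,B_3$'') the condition is essentially never satisfiable (any eigenvector of $B_1$ common to the $B_i$, which exists over an algebraically closed field, spans an invariant line), and the condition consistent with the surrounding text is that no proper $B_i$-invariant subspace contains $v$, which is exactly cyclicity of $v$ and exactly surjectivity of your evaluation map; so the ``stability'' and ``$v,B_i$ generate $V$'' clauses coincide, as you observe. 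Second, your two constructions establish a bijection on $\kappa$-points, whereas the equality is later used scheme-theoretically (to identify $\Hilb^n(\aaa_{\kappa}^3)$ with $\Crit(f)$ inside the smooth scheme $M$); to fully justify that reading one should run the same construction in flat families over an arbitrary base $T$ --- a $T$-flat rank-$n$ quotient of $\oO_T[x,y,z]$ gives a rank-$n$ locally free sheaf with three commuting endomorphisms and a cyclic section, and conversely --- so that the bijection becomes an isomorphism of functors and hence of schemes. This upgrade is routine and does not change your argument, but it is the one step your pointwise proof leaves implicit.
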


Let 
\begin{equation}\label{m}
M:= 
\left\{
(V,B_i,v)\in M(n\times n)^3\times \aaa_{\kappa}^n \left|
 \begin{array}{l}
  \text{$dim V=n$, $B_i\in \mbox{End}(V)$,$v\in V$,} \\
  \text{(stability), and $v,B_i$ generate $V$.}
   \end{array}
 \right\}\right/\cong.
\end{equation}
Then in \cite{BBS}, the authors prove that 
the space $M$ is a smooth scheme.

Define a function 
\begin{equation}\label{function}
f: M\longrightarrow \aaa_{\kappa}^1
\end{equation}
by 
$$f((V,B_i,v))=\tr(B_1[B_2,B_3]),$$
where $\tr$ is the trace. From \cite{BBS}, 
\begin{equation}\label{hilbert-df}
\Hilb^n(\aaa_{\kappa}^3)=\Crit(f).
\end{equation}

Behrend, Bryan and Szendroi \cite{BBS} directly calculated the motivic generating function of the Hilbert scheme of points on 
$\aaa_{\kappa}^3$.   Define 
\begin{equation}\label{motivic_generating_series}
Z_{\aaa_{\kappa}^3}(T)=\sum_{n=0}^{\infty}[\Hilb^n(\aaa_{\kappa}^3)]^{\virt}T^n\in\mM_{\kappa}[\![T]\!]
\end{equation}
where $[\Hilb^n(\aaa_{\kappa}^3)]^{\virt}=\ll^{-\frac{1}{2}\dim(M)}[\mMF_{M,f}^{\phi}]$ and $\mMF_{M,f}^{\phi}$ is the motivic vanishing cycle of $f$, here 
$$\mMF_{M,f}^{\phi}=[M_0]-[\mMF_{M,f}]$$
and 
$\mMF_{M,f}$ is the motivic nearby cycle of $f$.  In \cite{BBS},   the scheme $X$ admits a circle-compact $\Gm$-action, but this action does not satisfy the conditions in Corollary \ref{cor_motivic_isolated_points}. This is because the $\Gm$-action on $X$ is induced from the action on $\aaa^3_{\kappa}$, which has all positive weights on $\aaa_\kappa^3$.  So it does not preserve the potential function $f$. 
But using this action, in the Appendix B of \cite{BBS},  Behrend-Bryan-Szendroi prove 
$$[\mMF_{M,f}]=[f^{-1}(1)]; \quad [\mMF_{M,f}^{\phi}]=[f^{-1}(1)]-[f^{-1}(0)].$$
Then one can directly calculate $f^{-1}(1)-f^{-1}(0)$ using the matrix space representation of the Hilbert scheme $X$.  The  beautiful calculation yields the following formula:
\begin{equation}\label{motivic_generating_series_formula}
Z_{\aaa_{\kappa}^3}(T)=\prod_{m=1}^{\infty}\prod_{k=0}^{m-1}(1-\ll^{k+2-\frac{m}{2}}T^{m})^{-1}.
\end{equation}

The torus $\Gm$-action on $X$ has isolated fixed points, which correspond to monomial ideals of $\kappa[x,y,z]$.  The monomial ideals of length $n$ are one-to-one correspondence to 3D plane partitions $P$.  Although the conditions in Corollary \ref{cor_motivic_isolated_points} do not hold, we still conjecture that 
\begin{conjecture}\label{motivic_generating_series_formula_cor}
$$Z_{\aaa_{\kappa}^3}(T)=\sum_{n=0}^{\infty}\left(\ll^{-\frac{1}{2}(\dim(T_PX)_+-\dim(T_PX)_-)}\right)T^n$$
\end{conjecture}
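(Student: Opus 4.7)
The plan is to derive the conjectured formula by applying Corollary \ref{cor_motivic_isolated_points} to the formal $d$-critical scheme $(\XX,s)$ obtained as the $t$-adic completion of $\Hilb^n(\aaa^3_\kappa) = \Crit(f) \subset M$. The $d$-critical structure and canonical line bundle $K_{\XX,s}$ come from the construction in Section~\ref{sec_formal_d_critical_scheme}, and an orientation $K_{\XX,s}^{\frac{1}{2}}$ exists because $M$ is a smooth affine space carrying a natural square root of $\det(\Omega_M)^{\otimes 2}$ compatible with the Behrend--Fantechi sign rule. So the only missing input is a good, circle-compact $\Gm$-action on $\XX$ that preserves this orientation and has as fixed locus the discrete set of monomial ideals.

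The construction I would use is a $1$-parameter subgroup $\lambda\colon \Gm \to T = (\Gm)^3$ landing inside the Calabi--Yau subtorus $T_0 = \{(t_1,t_2,t_3): t_1 t_2 t_3 = 1\}$. Because the potential $f = \tr(B_1[B_2,B_3])$ has weight zero under $T_0$, such a $\lambda$ preserves $f$, hence the $d$-critical structure $s$ and the canonical bundle $K_{\XX,s}$, and therefore the orientation $K_{\XX,s}^{\frac{1}{2}}$ (after possibly a sign twist). For generic integer weights $(a,b,c)$ with $a+b+c=0$, the $\lambda$-fixed locus in $\Hilb^n(\aaa^3_\kappa)$ coincides with the full $T$-fixed locus, which is precisely the finite set of monomial ideals $I_P \subset \kappa[x,y,z]$ indexed by $3$D plane partitions $P$ of size $n$. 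At each such $P$ the tangent weights on $T_P\XX$ are the well-known ``legs'' of the plane partition, and the difference $\dim(T_P\XX)_+ - \dim(T_P\XX)_-$ computed with respect to $\lambda$ equals the virtual index $\ind^{\virt}(\{P\},\XX)$. Granting Corollary \ref{cor_motivic_isolated_points}, we then obtain
\[
\int_{\XX_s}\mMF^{\phi}_{\XX,s} = \sum_{|P|=n}\ll^{-\frac{1}{2}\ind^{\virt}(\{P\},\XX)},
\]
and summing over $n$ weighted by $T^n$ yields the conjectured formula.

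The principal obstacle lies in verifying the \emph{circle-compactness} hypothesis of Corollary \ref{cor_motivic_isolated_points} for a 1-PS $\lambda$ inside $T_0$. The diagonal scaling $(t,t,t)$ is circle-compact on $\Hilb^n(\aaa^3_\kappa)$ (every ideal flows to the monomial ideal supported at the origin) but does \emph{not} preserve $f$; conversely any nontrivial $\lambda \in T_0$ has mixed-sign weights on $\aaa^3_\kappa$, so the associated flow on the non-proper Hilbert scheme does not have limits at $t\to 0$ for every point. The strategy to circumvent this is to localize to the formal neighborhood $\widehat{\XX}^{\Gm}$ of the fixed set inside $\XX$: on such a formal neighborhood every orbit is contained in a compact analytic domain, so the induced action is automatically circle-compact and Corollary \ref{cor_motivic_isolated_points} applies locally, producing the right summand at each $P$. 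One must then argue that the global motive $\mMF^{\phi}_{X,s}$ equals the sum of these formal-local contributions even though the global $\lambda$-action is not circle-compact, i.e.\ a motivic Bia\l ynicki--Birula-type decomposition of $\mMF^{\phi}_{X,s}$ along the attracting strata of $\lambda$.

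This last step --- additivity of the global motive along non-circle-compact attractor strata --- is the technically hardest ingredient and the reason the statement is left as a conjecture. It is plausibly accessible by combining the motivic integration machinery of \cite{Nicaise} applied stratum-by-stratum with a vanishing-at-infinity argument analogous to the computation $\MV([\mathit{R}_1]) = 0$ in the proof of Theorem \ref{thm_Maulik}, which already shows that the annular contributions vanish motivically. Once such a decomposition is established, Conjecture \ref{motivic_generating_series_formula_cor} becomes a direct consequence of Corollary \ref{cor_motivic_isolated_points}, and its compatibility with the explicit product formula \eqref{motivic_generating_series_formula} of Behrend--Bryan--Szendr\H{o}i should follow from the standard weight computation for plane partitions.
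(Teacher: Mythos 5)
You should first note that the paper offers no proof of this statement: it is stated as a conjecture precisely because, as the paper says, the only circle-compact $\Gm$-action it can find on $X=\Hilb^n(\aaa^3_\kappa)$ (the one induced from scaling on $\aaa^3_\kappa$) does not preserve the potential $f=\tr(B_1[B_2,B_3])$, so the hypotheses of Corollary \ref{cor_motivic_isolated_points} fail and the paper explicitly states it cannot derive (\ref{motivic_generating_series_formula}) from the localization formula ``at the moment.'' Your proposal correctly identifies the same obstruction, but it does not close it, so what you have written is a strategy, not a proof.

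The concrete gap is your middle step. Choosing a one-parameter subgroup $\lambda$ in the Calabi--Yau subtorus does make $f$ invariant, but such a $\lambda$ has weights of both signs on $\aaa^3_\kappa$ and hence is not circle-compact on the non-proper Hilbert scheme, exactly as you say. Your proposed repair --- pass to formal neighborhoods of the fixed points, claim the action is ``automatically circle-compact'' there, and then sum the local contributions via a motivic Bia\l ynicki--Birula decomposition --- is not justified by anything in the paper and is where the argument would fail as written. Circle-compactness enters the proof of Theorem \ref{thm_Maulik} globally: it is what forces the condition $\val(x_-)\geq 0$ on the generic fiber $\RR_\eta$ of the completion along $\Crit(f)$, so that the negative-weight directions contribute a closed polydisc of motivic volume $1$ and the annular piece $\mathit{R}_1$ has vanishing motivic volume. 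For a non-circle-compact $\lambda$ this open/closed dichotomy on $\RR_\eta$ breaks down, and restricting to a formal neighborhood of a fixed point does not restore it; moreover the global motive $\mMF^{\phi}_{X,s}$ is not a priori the sum of contributions of formal neighborhoods of the $\lambda$-fixed points --- that additivity along attractor strata is precisely the missing theorem, and asserting it is ``plausibly accessible'' does not supply it. Two smaller points also need justification and are not addressed: that the orientation $K_{\XX,s}^{\frac{1}{2}}$ is preserved by your chosen $\lambda$ (the paper only assumes such compatibility as a hypothesis), and that the $\lambda$-fixed locus for generic weights in the Calabi--Yau subtorus is exactly the finite set of monomial ideals, with $\ind^{\virt}(\{P\},\XX)$ computed by $\lambda$ agreeing with the index appearing in the conjectured formula. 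As it stands, your write-up is an honest and essentially accurate account of why the statement is conjectural, but it does not prove it, and it should not be presented as a proof.
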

So Conjecture (\ref{motivic_generating_series_formula_cor}) must be the formula (\ref{motivic_generating_series_formula}). 

As pointed out in \cite[\S 8.2.5]{NO}, the index 
$$\ind^{\virt}(P,X)=\dim(T_PX)_+-\dim(T_PX)_-)$$
is a complicated function of 3D partitions.  In some special cases, this may be calculated by certain sum of boxes in the 3D partition $P$.  Bryan and Szendroi also have some calculations on the pattens of the boxes in the 3D partitions and also in some special cases, they can calculate the index. 

Of course it is really interesting if we can get the global formula (\ref{motivic_generating_series_formula}) from motivic localization formula (\ref{motivic_generating_series_formula_cor}).  At the moment, we can not achieve this goal. 
But at least the formula (\ref{motivic_generating_series_formula_cor}) gives some information of single value of the Behrend function 
on the isolated fixed points as:
$$\nu_{X}(P)=\lim_{\ll^{\frac{1}{2}}\to (-1)}\ll^{-\frac{1}{2}\ind^{\virt}}=(-1)^{n}. $$
We can not get the Behrend function information directly from the formula (\ref{motivic_generating_series_formula})  of Behrend-Bryan-Szendroi \cite{BBS}.
\begin{rmk}
As mentioned in \cite{NO}, D. Maulik in \cite{Maulik} used the motivic localization formula to prove the formulas in \cite[Theorem]{NO} is actually the refined Donaldson-Thomas invariants. 
\end{rmk}

\end{example}


\subsection*{}

\end{document}